\documentclass[11pt]{amsart}
\usepackage{hyperref}
\usepackage{amsfonts,amssymb,amsmath,amsthm,amscd,euscript,array,mathrsfs,stmaryrd}
\usepackage[all]{xy}
\usepackage{bbm}
\usepackage{booktabs}
\usepackage{upgreek}
\usepackage{color}

\newcommand{\Q}{\mathbb Q}

\newcommand{\C}{\mathbb C}

\newcommand{\N}{\mathbb N}

\newcommand{\Z}{\mathbb Z}
\newcommand{\bls}{\backslash}
\newcommand{\spn}{\mathrm{Span}}

\newcommand{\cC}{\mathcal C}

\newcommand{\cA}{\mathcal A}
\newcommand{\cB}{\mathcal B}

\newcommand{\cI}{\mathcal I}

\newcommand{\cR}{\mathcal R}

\newcommand{\cL}{\mathcal L}

\newcommand{\g}{\mathfrak}

\newcommand{\lag}{\langle}
\newcommand{\rag}{\rangle}
\newcommand{\eps}{\varepsilon}

\newcommand{\oline}{\overline}
\newcommand{\sseq}{\subseteq}

\newtheorem{theorem}{\textbf{Theorem}}[section]

\newtheorem{definition}[theorem]{\textbf{Definition}}
\newtheorem{prp}[theorem]{\textbf{Proposition}}
\newtheorem{lem}[theorem]{\textbf{Lemma}}
\newtheorem{cor}[theorem]{\textbf{Corollary}}
\newtheorem{example}[theorem]{\textbf{Example}}

\newtheorem{remark}[theorem]{\textbf{Remark}}
\newenvironment{rmk}{\begin{remark}\rmfamily\upshape}{\end{remark}}
\newenvironment{ex}{\begin{example}\rmfamily\upshape}{\end{example}}
\newenvironment{dfn}{\begin{definition}\rmfamily\upshape}{\end{definition}}

\newtheorem{notation}[theorem]{\textbf{Notation}}

\newtheorem*{notat*}{Notation}

\newtheorem{thmx}{Theorem}

\newtheorem{thmy}{Theorem}

\numberwithin{theorem}{subsection}

\usepackage{enumitem}
\setitemize{itemsep=.5mm,topsep=1.5mm,parsep=0pt,
partopsep=0pt}



\newcommand{\ad}{\mathrm{ad}}

\newcommand{\into}{\hookrightarrow}

\newcommand{\gl}{\mathfrak{gl}}

\newcommand{\sPD}{\mathscr{PD}}

\newcommand{\sD}{\mathscr{D}}
\newcommand{\sP}{\mathscr{P}}

\newcommand{\Hom}{\mathrm{Hom}}

\newcommand{\sfm}{\mathsf{m}}

\newcommand{\sfi}{\mathsf{i}}

\newcommand{\la}{\lambda}


\newcommand{\Mat}{\mathrm{Mat}}


\usepackage{youngtab}


\newcommand{\del}{\partial}

\newcommand{\End}{\mathrm{End}}

\newcommand{\Lm}{\mathscr L}
\newcommand{\Rn}{\mathscr R}
\newcommand{\Lmw}{{\mathscr L_\bullet}}
\newcommand{\Rnw}{{\mathscr R_\bullet}}

\newcommand{\ULw}{\mathring{U}_L}
\newcommand{\URw}{\mathring{U}_R}
\newcommand{\ULRw}{\mathring{U}_{LR}}

\newcommand{\Lhb}{\mathscr L_{\g h,\bullet}}
\newcommand{\Rhb}{\mathscr R_{\g h,\bullet}}

\newcommand{\Emb}{\boldsymbol{e}}

\addtolength{\topmargin}{-.7cm}
\addtolength{\textheight}{1.4cm}
\addtolength\textwidth{4.4cm}
\addtolength{\oddsidemargin}{-2.2cm}
\addtolength{\evensidemargin}{-2.2cm}

\title[Quantized Weyl algebras, double centralizers, and FFT for $U_q(\gl_n)$]{
Quantized Weyl algebras, 
the double centralizer property, and a new First Fundamental Theorem  for $U_q(\gl_n)$}
\author{Gail Letzter}
\address{Gail Letzter, Mathematics Research Group, National Security Agency}
\email{ gletzter@verizon.net}
\author{Siddhartha Sahi}
\address{Siddhartha Sahi, Department of Mathematics, Rutgers University}
\email{sahi@math.rutgers.edu}
\author{Hadi Salmasian}
\address{Hadi Salmasian, Department of Mathematics and Statistics, University of Ottawa}
\email{ hadi.salmasian@uottawa.ca}

\begin{document}
\maketitle 

\begin{abstract}
Let $\sP:=\sP_{m\times n}$ denote the quantized coordinate ring of the space of $m\times n$ matrices, equipped with natural actions of the quantized enveloping algebras $U_q(\gl_m)$ and $U_q(\gl_n)$. Let $\Lm$ and $\Rn$ denote the images of $U_q(\gl_m)$ and
$U_q(\gl_n)$ in $\mathrm{End}(\sP)$, respectively. 
We define a $q$-analogue  of the algebra of polynomial-coefficient differential operators inside $\End(\sP)$, henceforth denoted by
$\sPD$,
and we prove that $\Lm\cap \sPD$ and $\Rn\cap \sPD$ are mutual centralizers inside $\sPD$. Using this, we establish a new  First Fundamental Theorem of invariant theory for $U_q(\gl_n)$. 
We also compute  explicit formulas in terms of $q$-determinants for generators of the 
algebras $\mathscr L_{\g h}\cap\sPD$ and $\mathscr R_{\g h}\cap\sPD$,
where $\mathscr L_{\g h}$ and $\mathscr R_{\g h}$ denote the images of the 
Cartan subalgebras of $U_q(\gl_m)$ 
and
$U_q(\gl_n)$ in $\mathrm{End}(\sP)$, respectively.
Our algebra $\sPD$  
and the algebra $\mathrm{Pol}(\mathrm{Mat}_{m,n})_{q}$
that is defined in \cite{SSV04} are related by extension of scalars, but we give a new construction of $\sPD$ using deformed twisted tensor products. 
 \end{abstract}

\section{Introduction}
\label{sec:Introduction}
The First Fundamental Theorem (FFT) is one of the pinnacles of invariant theory with a history as old as Hermann Weyl's influential book, \emph{The Classical Groups}~\cite{We39}. In its original form, the FFT for the group
$\mathrm{GL}_n$
describes the generators of the subalgebra of $\mathrm{GL}_n$-invariants in the polynomial algebra $\mathcal P(V^{\oplus k}\oplus (V^*)^{\oplus l})$, where $V:=\C^n$ denotes the standard $\mathrm{GL}_n$-module.
   
It was pointed out by R. Howe~\cite[Sec. 2.3]{Ho95} that 
the FFT has an equivalent formulation as a double centralizer property, which we now recall.   Let $\mathrm{Mat}_{m\times n}$ denote the vector space of complex $m\times n$ matrices. Then $\mathrm{Mat}_{m\times n}$ has a  natural  $\mathrm{GL}_m\times\mathrm{GL}_n$-module structure by left and right matrix multiplication. We equip the 
algebra $\mathcal P:=\mathcal P(\mathrm{Mat}_{m\times n})$ of polynomials on $\mathrm{Mat}_{m\times n}$
and the
algebra $\mathcal{PD}:=\mathcal{PD}(\mathrm{Mat}_{m\times n})$ of polynomial-coefficient differential operators
on $\mathrm{Mat}_{m\times n}$ with their canonical 
$\mathrm{GL}_m\times\mathrm{GL}_n$-module structures. Recall that $\mathcal{P}$ is a $\mathcal{PD}$-module. The (infinitesimal) actions of the  Lie algebras $\gl_m$ and $\gl_n$ on 
$\mathcal{P}$ 
are given by certain differential operators of order one, which are usually called \emph{polarization operators}. 
It follows that there exists a homomorphism of algebras $\phi:U_{m,n}\to\mathcal{PD}$, where 
$U_{m,n}:=U(\gl_m)\otimes U(\gl_n)$ is the tensor product of the  universal enveloping algebras of $\gl_m$ and $\gl_n$,
such that the diagram
\begin{equation}
\label{eq:commdia}
\vcenter{
\xymatrix{
U_{m,n}\otimes  \mathcal P
 \ar[rd]_{x\otimes f\mapsto \phi(x)\otimes f\hspace{5mm}}\ar[rr]^{\hspace{5mm}x\otimes f\mapsto x\cdot f}& & \mathcal{P}\\
& \mathcal{PD}\otimes \mathcal{P} \ar[ru]_{D\otimes f\mapsto D\cdot f}& }
}
\end{equation}
commutes. 
The \emph{operator commutant version} of the FFT, according to~\cite[Thm 2.3.3]{Ho95}, states that
 the subalgebra 
$\mathcal{PD}^{\mathrm{GL}_m}$
of $\mathrm{GL}_m$-invariants in $\mathcal{PD}$ is generated by the image of $U(\gl_n)$. 
Since 
$\mathcal{PD}^{\mathrm{GL}_m}=
\mathcal{PD}^{U(\gl_m)}$, 
the latter assertion is equivalent to the following: the images of $U(\gl_m)$ and $U(\gl_n)$ in $\mathcal{PD}$ are mutual centralizers.

In \cite[Sec. 6]{LZZ11} the authors 
extend the original form of the FFT to the quantized enveloping algebra $U_q(\gl_n)$  by considering a $q$-analogue  of  $\mathcal{P}(V^{\oplus k}\oplus (V^*)^{l})$ that carries a $U_q(\gl_n)$-action, and then describing the generators of the subalgebra of invariants. It is then natural to ask if the operator commutant version of the FFT 
also has a $q$-analogue. It turns out that in the quantized setting, the situation for the operator commutant FFT is more subtle than in the classical case.  One major issue is how to quantize the Weyl algebra $\mathcal{PD}$ and, more importantly, the map $\phi:U_{m,n}\to \mathcal{PD}$. Indeed we provide some justification that the latter map cannot be fully quantized (see Proposition~\ref{rmk-nogo}). 
Nevertheless, our first main result (Theorem~\ref{thm-Main-A})
is a positive answer to the 
above question.

From now on let
$\Bbbk:=\C(q)$ be the field of rational functions in a parameter $q$.
For the operator commutant FFT in the quantized setting we need a quantized Weyl algebra $\sPD:=\sPD_{m\times n}$. The  $\Bbbk$-algebra $\sPD$ that we consider is closely related to the algebra
$\mathrm{Pol}(\mathrm{Mat}_{m,n})_{q}$
of
\cite{SSV04,BKV06} (see Corollary~\ref{cor:POP}).
We give a different construction of $\sPD$ as the \emph{deformed twisted tensor product} of $\sP:=\sP_{m\times n}$, the quantized coordinate ring of $\mathrm{Mat}_{m\times n}$, and $\sD:=\sD_{m\times n}$, the quantized algebra of  constant-coefficient differential operators
on $\mathrm{Mat}_{m\times n}$ 
(see Section~\ref{sec-PDmn} for precise definitions). 
The construction of  $\sP$ and $\sD$ is  analogous to the FRT 
construction~\cite[Sec. 9.1]{KS97}.
Concretely, the algebra  $\sPD$  is  generated by $2mn$ generators $t_{i,j}$ and $\del_{i,j}$, where $1\leq i\leq m$ and $1\leq j\leq n$, modulo the relations that are described in Section~\ref{sec-PDmn} (see Definition~\ref{dfn-PDPD2}).
 From now on we set
\[
U_L:=U_q(\gl_m)\quad,\quad
U_R:=U_q(\gl_n)\quad,\quad
U_{LR}:=U_L\otimes U_R.
\]
Both $\sP$ and $\sPD$ are 
$U_{LR}$-module algebras (the explicit formulas for the $U_{LR}$-action on generators are given  in  Remark~\ref{rmk:actionformulas}).
Furthermore, $\sP$ is naturally a $\sPD$-module.
In particular, we have homomorphisms of associative algebras
\[
\phi_U:U_{LR}\to\End_\Bbbk(\sP)\quad\text{and}\quad
\phi_{PD}:\sPD\to
\End_\Bbbk(\sP).
\]
Since $\sP$ is a faithful $\sPD$-module (see Proposition~\ref{prp:faithful-action}), we can identify $\sPD$ with $\phi_{PD}(\sPD)$. Using the latter identification, 
we set
\[
\Lm:=\phi_U(U_L\otimes 1),
\quad \Rn:=\phi_U
(1\otimes U_R),
\quad
\Lmw:=\Lm\cap \sPD,\quad
\Rnw:=\Rn\cap \sPD.
\]
Note that in general 
we have  $\Lmw\subsetneq \Lm$ and 
$\Rnw\subsetneq \Rn$. In fact if $m\leq n$ then the restriction of $\phi_U$ to $U_L\otimes 1$ yields an isomorphism $U_L\cong \Lm$ but one can show that $\phi_U^{-1}(\Lmw)$ is properly contained in the locally finite part of $U_L$ (see Proposition~\ref{prp:ULWFULUR/}
 and  Example~\ref{ex:1}).
Set
\begin{equation}
\label{eq:Liii}
\mathsf{L}_{i,j}:=\sum_{r=1}^nt_{i,r}\del_{j,r}\ \text{ for }1\leq i,j\leq m \ \text{ and }\  \mathsf{R}_{i,j}:=\sum_{r=1}^mt_{r,i}\del_{r,j}\ \text{ for }1\leq i,j\leq n.
\end{equation}
The $\mathsf L_{i,j}$ and the $\mathsf R_{i,j}$ are natural analogues in $\sPD$ of the 
 polarization operators of the (non-quantized) Weyl algebra $\mathcal{PD}$.  
We have $\mathsf L_{i,j}\in\Lmw $ and $\mathsf R_{i,j}\in\Rnw$ (see Corollary~\ref{cor:Rij-in-L}). Of course, similar inclusions hold in the non-quantized case. However, the situation with the preimages of 
the
$\mathsf L_{i,j}$ and the $\mathsf R_{i,j}$ in $U_L$ and $U_R$ is more complicated. For example,   the root vector $E_{\eps_i-\eps_j}$ of $U_L$ (respectively, $U_R$), where $i<j$, does not lie in the preimage of $\mathsf L_{i,j}$ (respectively, $\mathsf R_{i,j}$).

Henceforth we adopt the following notation: for  subsets $\mathcal Y,\mathcal Z$ of an algebra $\mathcal X$, we set \begin{equation}
\label{eq:YtoZ}
\mathcal Y^\mathcal Z:=\{y\in\mathcal Y\,:\,yz=zy\text{ for all }z\in\mathcal Z\}
.\end{equation} 
Our first main theorem is the following. 
\begin{thmx}
\label{thm-Main-A}
Let $\Lm$, $\Rn$, $\Lmw$, and $\Rnw$ be the subalgebras of $\End_\Bbbk(\sP)$ defined above. We identify $\sPD$ with $\phi_{PD}(\sPD)\sseq \End_\Bbbk(\sP)$. Then the following statements hold.
\begin{itemize}
\item[\rm(i)] 
$\sPD^{\Rnw}=\sPD^\Rn=\Lmw
$. Furthermore, $\Lmw$ is generated 
by the $\mathsf L_{i,j}$ for $1\leq i,j\leq m$.

\item[\rm(ii)]
$
\sPD^{\Lmw}=\sPD^\Lm=\Rnw
$.
Furthermore, $\Rnw$ is generated by the $\mathsf{R}_{i,j}$ for  $1\leq i,j\leq n$.
\end{itemize}

\end{thmx}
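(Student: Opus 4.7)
My proof strategy is to reduce the theorem to a $q$-analogue of classical Howe duality for $(\gl_m,\gl_n)$ acting on the polynomial ring. The plan has three stages: establish a multiplicity-free bimodule decomposition of $\sP$, deduce a ``weak'' double centralizer statement for $\Lm$ and $\Rn$ inside $\End_\Bbbk(\sP)$, and finally prove that the polarization operators $\mathsf L_{i,j}$ and $\mathsf R_{i,j}$ act irreducibly on each isotypic component---this last ingredient will simultaneously yield the equalities $\sPD^{\Rnw}=\sPD^\Rn$, $\sPD^{\Lmw}=\sPD^\Lm$, and the generation claims.

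First I would prove the decomposition
\[
\sP \;\cong\; \bigoplus_\lambda V^L_\lambda\boxtimes V^R_\lambda,
\]
where $\lambda$ ranges over partitions with at most $\min(m,n)$ parts and $V^L_\lambda$ (resp.\ $V^R_\lambda$) is the irreducible $U_L$-module (resp.\ $U_R$-module) of highest weight $\lambda$. This quantum Howe duality can be obtained by comparing graded characters to the classical $q\to1$ limit (using that both actions preserve the natural $\Z_{\geq 0}$-grading on $\sP$ by total degree in the $t_{i,j}$), or by invoking the results of \cite{LZZ11}. Given multiplicity-freeness, a standard double-commutant argument shows that $\Lm$ and $\Rn$ are mutual centralizers in $\End_\Bbbk(\sP)$. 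Intersecting with $\sPD$, I immediately get $\sPD^\Rn=\sPD\cap\Lm=\Lmw$, and symmetrically $\sPD^\Lm=\Rnw$.

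For the remaining equalities and the generation claims, let $\mathcal A_L$ and $\mathcal A_R$ denote the subalgebras of $\sPD$ generated by the $\mathsf L_{i,j}$ and $\mathsf R_{i,j}$, respectively; by Corollary~\ref{cor:Rij-in-L} we have $\mathcal A_L\subseteq\Lmw$ and $\mathcal A_R\subseteq\Rnw$. The key claim I would prove is that, for every $\lambda$, the algebra $\mathcal A_L$ acts irreducibly on $V^L_\lambda$ and $\mathcal A_R$ acts irreducibly on $V^R_\lambda$. Granting this, Schur's lemma forces any element of $\sPD^{\mathcal A_R}$ to act as a scalar on each $V^R_\lambda$-factor, hence to commute with all of $\Rn$, yielding $\sPD^{\Rnw}=\sPD^\Rn$. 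Moreover, the Jacobson density theorem applied to the irreducible $\mathcal A_L$-modules $V^L_\lambda$ shows that $\mathcal A_L$ surjects onto $\bigoplus_\lambda\End_\Bbbk(V^L_\lambda)$, which is precisely the image of $\Lmw$ in $\End_\Bbbk(\sP)$; combined with faithfulness of the $\sPD$-action (Proposition~\ref{prp:faithful-action}), this gives $\mathcal A_L=\Lmw$.

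The hard part is the irreducibility of $\mathcal A_L$ and $\mathcal A_R$ on the isotypic components. Unlike the classical case, the polarization operators $\mathsf L_{i,j}$ are \emph{not} the images of the Chevalley generators of $U_L$---in fact, as the excerpt notes, the preimage of $\Lmw$ is properly contained in the locally finite part of $U_L$, and even root vectors like $E_{\eps_i-\eps_j}$ do not project to the $\mathsf L_{i,j}$. My plan is to extract, from the explicit commutation relations of the $\mathsf L_{i,j}$ with the generators $t_{k,\ell}$ and $\del_{k,\ell}$ of $\sPD$ (read off from the module-algebra structure of Remark~\ref{rmk:actionformulas}), enough raising and lowering operators to move between any two weight vectors of $V^L_\lambda$. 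I expect this step to require careful manipulation of $q$-commutator identities, and possibly a comparison with a $\lambda$-twisted FRT-type presentation of $U_L$, so that irreducibility of $V^L_\lambda$ as a $U_L$-module can be transported to irreducibility as an $\mathcal A_L$-module.
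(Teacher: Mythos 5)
Your first stage is fine: the multiplicity-free decomposition of $\sP$ is Proposition~\ref{prp:glmglndecom} in the paper. The gap is in the second stage. For an infinite-dimensional multiplicity-free bimodule $\sP\cong\bigoplus_\lambda V^L_\lambda\otimes V^R_\lambda$ with infinitely many distinct $\lambda$, the ``standard double-commutant argument'' does \emph{not} show that $\Lm$ and $\Rn$ are mutual centralizers in $\End_\Bbbk(\sP)$: Schur's lemma only gives $\End_\Bbbk(\sP)^{\Rn}=\prod_\lambda\End_\Bbbk(V^L_\lambda)\otimes 1$, an uncountable-dimensional product that strictly contains the countable-dimensional algebra $\Lm=\phi_U(U_L\otimes 1)$. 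So the identity $\sPD^{\Rn}=\sPD\cap\Lm$ is not ``immediate''; it is essentially the theorem itself, namely the assertion that the part of this huge commutant which happens to lie in the quantized Weyl algebra is exactly $\Lmw$ and is generated by the $\mathsf L_{i,j}$. Your third stage inherits the same problem: even granting that $\mathcal A_L$ acts irreducibly on each $V^L_\lambda$, Jacobson density only gives that $\mathcal A_L$ surjects onto $\End_\Bbbk(V^L_\lambda)$ for each fixed $\lambda$ (equivalently, is dense in the product in the finite topology); it cannot give $\mathcal A_L=\Lmw$, because $\bigoplus_\lambda\End_\Bbbk(V^L_\lambda)$ is not the image of $\Lmw$ (which contains $1$ and $\mathsf L_{m,m}$, supported on all $\lambda$), and density in an infinite product is strictly weaker than equality. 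There is also a smaller unaddressed point in the Schur step for $\sPD^{\Rnw}=\sPD^{\Rn}$: one must rule out that two non-isomorphic $U_R$-constituents $V^R_\lambda,V^R_\mu$ become isomorphic over the subalgebra $\mathcal A_R$, otherwise $\sPD^{\mathcal A_R}$ could contain intertwiners not commuting with $\Rn$.

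The irreducibility of the $\mathcal A_L$-action, which you correctly identify as the hard part, is left as a sketch, and ``moving between weight vectors'' is in any case not by itself a proof of irreducibility. The paper takes a different route that avoids both issues. For $\sPD^{\Rnw}=\sPD^{\Rn}$ it shows (Propositions~\ref{prp:xm-yn-EF}, \ref{prp:UwvsU(g)}, Lemma~\ref{lem:commwithUW}) that explicit Cartan elements $K_{\la_{L,a}}$ land in $\sPD$ and that $\URw$ together with the $K_{\eps_i}$ generates $U_R$. For the identification $\sPD^{\Rn}=\Lmw=\langle\mathsf L_{i,j}\rangle$ it works degree by degree: Lemma~\ref{prp:u-EiFiKi} converts the centralizer condition into $U_R$-invariance, the map $\Gamma_{k,l,n}$ of Definition~\ref{dfn:PsI} identifies the bidegree-$(r,r)$ invariants of the associated graded algebra with $\sP_{k\times l}^{(r)}$ (an exact dimension count, Lemma~\ref{lem:PsiSURJ}), the deformed product $\star_{k,l,n}$ makes $\Gamma_{k,l,n}$ multiplicative with surjective degree-$1$ multiplication (Corollary~\ref{cor:star-surj}), and Proposition~\ref{prp:filtgrAF} transfers generation from the graded to the filtered algebra. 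To repair your argument you would need, at minimum, a finite-dimensional substitute for the global double-commutant step --- e.g.\ the degree-by-degree dimension count that the paper's $\Gamma_{k,l,n}$ provides.
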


Let us elucidate the relation between 
Theorem~\ref{thm-Main-A} and the 
literature on Howe duality and the 
FFT in the quantized setting. 
Quantized analogues of $(\gl_m,\gl_n)$-duality have been established in~\cite{Zh02} and~\cite{NYM93}, but these works do not consider the double centralizer property inside a quantized Weyl algebra. 
To compare our results with those of 
Lehrer--Zhang--Zhang~\cite{LZZ11}, we
 briefly explain their formulation of the FFT for $U_q(\gl_n)$. In~\cite[Sec. 6]{LZZ11} the authors define a $q$-analogue of the algebra
$\mathcal P(V^{\oplus k}\oplus (V^*)^{\oplus l})$, which they call $\mathcal A_{k,l}$ (this algebra tacitly depends on $n$ as well). The algebra $\mathcal A_{k,l}$ is isomorphic to a  twisted tensor product of $\sP_{k\times n}$ and $\sD_{l\times n}$, but the twisting is only with respect to the universal $R$-matrix of $U_q(\gl_n)$. In particular, in the special case $k=l$ the relations on the generators of $\mathcal A_{k,k}$ are not symmetric with respect to their indices. Because of this asymmetry,  $\mathcal A_{k,k}$ does not appear to be  the desired object for proving a double centralizer statement. The twisting that we consider to define $\sPD$ uses  the universal $R$-matrices of both $U_L$ and $U_R$.  
In addition, unlike $\mathcal A_{k,l}$ whose relations are homogeneous, the relation (R6) of $\sPD$ is not homogeneous. From this viewpoint, $\sPD$ resembles the classical Weyl algebra more than $\mathcal A_{k,k}$. 

Our second main result (Theorem~\ref{thm:C} below)
is a new  FFT for $U_q(\gl_n)$, 
in the spirit of the aforementioned result of \cite{LZZ11},
for a family of algebras 
 $\mathscr A_{k,l,n}$ where $k,l,n$ are positive integers. 
The latter algebras generalize $\sPD$ and 
indeed we have
$
\mathscr A_{m,m,n}=\sPD_{m\times n}
$. 
An explicit presentation of $\mathscr A_{k,l,n}$ by generators and relations is given in Proposition~\ref{prp:explicit-prs-Akln}. 
To state Theorem~\ref{thm:C}, we need some notation. 
For integers $1\leq a\leq m$ and $1\leq b\leq n$
there exists an embedding of associative algebras 
\begin{equation}
\label{eq:embeD}
\Emb=\Emb_{a\times b}^{m\times n}:\sPD_{a\times b}\into \sPD_{m\times n}
\end{equation}
that is defined as follows.
We relabel the generators of $\sPD_{a\times b}$ by setting 
\begin{equation}
\label{eq:tildetd}
\tilde{t}_{i,j}:=t_{a+1-i,b+1-j}^{}
\ \ \text{ and }\ \ 
\tilde{\del}_{i,j}:=\del_{a+1-i,b+1-j}^{}
\quad\text{ for }1\leq i\leq a,\ 1\leq j\leq b.
\end{equation}
We relabel the generators of $\sPD_{m\times n}$ similarly, with $a$ and $b$ replaced by $m$ and $n$ respectively.
The map $\Emb$ of~\eqref{eq:embeD} is uniquely determined by the  assignments
$
\Emb(\tilde{t}_{i,j}):=\tilde{t}_{i,j}$ and
$\Emb(\tilde{\del}_{i,j}):=\tilde{\del}_{i,j}$
(see Proposition~\ref{prp:existence(6)}).
Concretely, the map $\Emb$ identifies the $a\times b$ matrices $[t_{i,j}]$ and $[\del_{i,j}]$ formed by the generators of $\sPD_{a\times b}$ with the intersections of the lowest $a$ rows and the rightmost $b$ columns in the analogous $m\times n$ matrices formed by the generators of $\sPD_{m\times n}$.
Now fix integers $k,l,n\geq 1$ and set $m:=\max\{k,l\}$.
We define $\mathscr A_{k,l,n}$  to be the subalgebra 
of $\sPD_{m\times n}$ that is generated by 
the $\tilde t_{i,j}$ and the  $\tilde\del_{i',j}$, 
where 
$1\leq i\leq k$, $1\leq i'\leq l$, and $1\leq j\leq n$.  
The $U_R$-action on $\sPD_{m\times n}$ leaves  $\mathscr A_{k,l,n}$ invariant and thus 
$\mathscr A_{k,l,n}$ is a $U_R$-module algebra. 
The standard degree filtration of $\mathscr A_{k,l,n}$  (corresponding to setting $\deg \tilde t_{i,j}= \deg\tilde \del_{i',j}=1$)
 is $U_R$-stable, hence the associated graded algebra $\mathrm{gr}(\mathscr A_{k,l,n})$ is also a $U_R$-module algebra.  
Let $\epsilon_R$ be the counit of $U_R$ and denote the subalgebra of $U_R$-invariants in $\mathscr A_{k,l,n}$
by 
$\left(\mathscr A_{k,l,n}\right)_{(\epsilon_R)}
$, that is, 
\[
\left(\mathscr A_{k,l,n}\right)_{(\epsilon_R)}
:=
\left\{
D\in\mathscr A_{k,l,n}\,:\,x\cdot D=\epsilon_R(x)D\text{ for }x\in U_R\right\}.
\]
We denote the subalgebra of $U_R$-invariants in  
$\mathrm{gr}(\mathscr A_{k,l,n})$ by 
$\big(\mathrm{gr}(\mathscr A_{k,l,n})\big)_{(\epsilon_R)}$ as well. 
For $1\leq i\leq k$ and $1\leq j\leq l$
we define elements $\tilde{\mathsf L}_{i,j}\in\mathscr A_{k,l,n}$
by the formula 
\begin{equation}
\label{eq:glLij}
\tilde{\mathsf L}_{i,j}:=\sum_{r=1}^n\tilde t_{i,r}\tilde \del_{j,r}
=\sum_{r=1}^nt_{m-i+1,r}^{}\del_{m-j+1,r}^{}.
\end{equation}
 By the same formula we can define analogous elements in   $\mathrm{gr}(\mathscr A_{k,l,n})$. By a slight abuse of the symbol $\mathrm{gr}(\cdot)$, we denote these elements of 
$\mathrm{gr}(\mathscr A_{k,l,n})$ 
 by 
$
\mathrm{gr}(\tilde{\mathsf L}_{i,j})$.
 
\begin{thmx}
\label{thm:C}
The algebras $\left(\mathscr A_{k,l,n}\right)_{(\epsilon_R)}$ 
and 
$\big(\mathrm{gr}(\mathscr A_{k,l,n})\big)_{(\epsilon_R)}$ are
 generated by the $\tilde{\mathsf L}_{i,j}$ 
and the 
$
\mathrm{gr}(\tilde{\mathsf L}_{i,j})$
respectively, 
  for $1\leq i\leq k$ and $1\leq j\leq l$. 
\end{thmx}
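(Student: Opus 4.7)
The plan is to deduce Theorem~\ref{thm:C} from Theorem~\ref{thm-Main-A} by a row-multiplicity argument, establishing the associated-graded statement first and then lifting it to the filtered statement by the standard symbol-and-subtract method. By Theorem~\ref{thm-Main-A}(i) applied to $\sPD_{m\times n}$ with $m=\max\{k,l\}$, the centralizer $\sPD_{m\times n}^{\Rn}=\Lmw$ coincides with the space of $U_R$-invariants in $\sPD_{m\times n}$ and is generated by the operators $\mathsf L_{i,j}$ for $1\le i,j\le m$. Hence $(\mathscr A_{k,l,n})_{(\epsilon_R)}=\mathscr A_{k,l,n}\cap\Lmw$, and it suffices to show that any element of this intersection can be written as a polynomial in the subfamily $\tilde{\mathsf L}_{i,j}=\mathsf L_{m+1-i,m+1-j}$ with $1\le i\le k$ and $1\le j\le l$.

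The key structural input will be a bi-grading of $\mathrm{gr}(\sPD_{m\times n})$ by pairs of row-multiplicity vectors $(\mu,\nu)\in\N^m\times\N^m$, where $\mu_i$ counts the number of $t$-generators and $\nu_i$ the number of $\del$-generators with row-index $i$ appearing in a given monomial. I would verify that this is indeed a grading on the associated graded by inspecting the defining relations (R1)-(R6) of $\sPD_{m\times n}$: each relation either $q$-permutes a pair of generators of the same type (preserving both $\mu$ and $\nu$) or is a Heisenberg-type relation between a $t$ and a $\del$ whose constant term vanishes in $\mathrm{gr}$ while the remaining terms preserve $(\mu,\nu)$. Under this bi-grading, $\mathrm{gr}(\mathscr A_{k,l,n})$ is the direct sum of components with $\mathrm{supp}(\mu)\subseteq\{m-k+1,\dots,m\}$ and $\mathrm{supp}(\nu)\subseteq\{m-l+1,\dots,m\}$, and each $\mathrm{gr}(\mathsf L_{i,j})$ is bi-homogeneous of degree $(\bfe_i,\bfe_j)$. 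Consequently, writing an arbitrary $U_R$-invariant element of $\mathrm{gr}(\mathscr A_{k,l,n})$ as a polynomial in the $\mathrm{gr}(\mathsf L_{i,j})$'s and projecting onto the surviving bi-degrees, only monomials involving indices $(i,j)$ with $i\ge m-k+1$ and $j\ge m-l+1$ can contribute, which yields the graded version of the theorem.

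To lift from the graded to the filtered setting, I would take $D\in(\mathscr A_{k,l,n})_{(\epsilon_R)}$ of filtration degree $N$, note that its top-degree symbol is $U_R$-invariant and lies in $\mathrm{gr}^N(\mathscr A_{k,l,n})$, and by the graded result write it as $P(\mathrm{gr}(\tilde{\mathsf L}_{i,j}))$ for some polynomial $P$. Substituting $\tilde{\mathsf L}_{i,j}$ for each $\mathrm{gr}(\tilde{\mathsf L}_{i,j})$ yields an element $P(\tilde{\mathsf L}_{i,j})\in\mathscr A_{k,l,n}$ whose top symbol matches that of $D$, so $D-P(\tilde{\mathsf L}_{i,j})$ is a $U_R$-invariant element of $\mathscr A_{k,l,n}$ of strictly smaller filtration degree, and induction on $N$ completes the proof.

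The main obstacle will be verifying that the row-multiplicity bi-grading actually descends to a grading on $\mathrm{gr}(\sPD_{m\times n})$; this amounts to a relation-by-relation check against the explicit presentation of $\sPD_{m\times n}$. Its plausibility rests on the fact that in the FRT-type construction of $\sPD$, the $R$-matrices governing the commutation relations act on column rather than row indices, so the row content is preserved by all cross-terms of top filtration degree, and the failure of bi-homogeneity in relation (R6) is confined to the scalar correction that is killed upon passing to $\mathrm{gr}$. Once this compatibility is in hand, the rest of the argument is essentially formal.
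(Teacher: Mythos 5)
Your reduction hinges on the claim that the row-multiplicity bi-grading $(\mu,\nu)\in\N^m\times\N^m$ descends to an algebra grading on $\mathrm{gr}(\sPD_{m\times n})$, justified by the assertion that the $R$-matrices governing the cross relations act only on column indices. This is false, and it is where the proof breaks. The twisting that defines $\sPD$ uses the universal $R$-matrix of $U_L\otimes U_R$, and the $U_L$-factor acts precisely on the row indices. Concretely, relation (R4) reads $\del_{c,b}t_{c,a}=qt_{c,a}\del_{c,b}+\sum_{c'>c}(q-q^{-1})t_{c',a}\del_{c',b}$, and (R6$'$) likewise produces terms $t_{c',d'}\del_{c',d'}$ with $c'\geq c$: the left-hand sides have bi-degree $(\bfe_c,\bfe_c)$ while the right-hand sides contain terms of bi-degree $(\bfe_{c'},\bfe_{c'})$ with $c'>c$. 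For instance with $m=3$, normal-ordering $\mathrm{gr}(\mathsf L_{1,1})\,\mathrm{gr}(\mathsf L_{1,2})$ produces terms such as $t_{1,r}t_{2,s}\del_{2,r}\del_{2,s}$, whose row content differs from the expected $(2\bfe_1,\bfe_1+\bfe_2)$. So the $\mathrm{gr}(\mathsf L_{i,j})$ are not bi-homogeneous, and the step ``project onto the surviving bi-degrees'' is not available. This is exactly the difficulty the paper singles out in the introduction and in Subsection~\ref{subsec-prep-A(ii)} (the polarization operators and even their graded analogues are not weight vectors for the Cartan subalgebras of the two copies of $U_L$ acting on $\sPD$), and it cannot be dismissed: a $U_R$-invariant of $\mathscr A_{k,l,n}$, written as a polynomial in all the $\mathsf L_{i,j}$, could a priori require monomials with row indices outside $\{m-k+1,\dots,m\}\times\{m-l+1,\dots,m\}$ whose normal-ordered expansions conspire to land inside $\mathscr A_{k,l,n}$.

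There is also a logical problem with the starting point: in the paper, Theorem~\ref{thm-Main-A}(i) is obtained as the special case $k=l=m$ of Theorem~\ref{thm:C} (via Lemma~\ref{prp:u-EiFiKi}), so invoking Theorem~\ref{thm-Main-A} to prove Theorem~\ref{thm:C} is circular unless you supply an independent proof of the case $k=l=m$ — which is the hard part. Your final graded-to-filtered induction is sound and is essentially Proposition~\ref{prp:filtgrAF} of the paper. But the graded statement itself requires the machinery of Sections~\ref{sec:pfofThmA}--\ref{subsec"ThmAm=n}: the bijection $\Gamma_{k,l,n}$ from $\sP_{k\times l}$ onto $\big(\mathscr A^{\mathrm{gr}}_{k,l,n}\big)_{(\epsilon_R)}$ together with the deformed product $\star_{k,l,n}$ when $n\geq\max\{k,l\}$, and, when $n<\max\{k,l\}$, a representation-theoretic reduction (Propositions~\ref{prp:ULkl} and~\ref{prp:Holine-stab}, resting on Proposition~\ref{prp:AotBUrUR}) in place of any grading argument.
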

It would be interesting to relate Theorem~\ref{thm:C} to the quantized FFT of~\cite[Thm 6.10]{LZZ11} for example by a deformation argument. However, we are unable to establish such a connection. 


Our third main theorem 
(Theorem~\ref{thm:MainthmB} below)
explicitly describes the images in $\sPD=\sPD_{m\times n}$ of the Cartan subalgebras of $U_L$ and $U_R$. To state Theorem~\ref{thm:MainthmB}
 we need to define certain  elements of $\sPD$ that are constructed using $q$-determinants. 
%
Let $\mathbf i:=(i_1,\ldots,i_r)$ and $
\mathbf j:=(j_1,\ldots,j_r)$ be $r$-tuples of integers satisfying   $1\leq i_1<\cdots<i_r\leq m$ and $1\leq j_1<\cdots<j_r\leq n$. Define
quantum minors $M^{\mathbf i}_{\mathbf j}\in\sP$
and
$\oline M^{\mathbf i}_{\mathbf j}\in\sD$
by \begin{equation}
\label{eq:lem:qdet2exp}
M^{\mathbf i}_{\mathbf j}:=
\sum_{\sigma}(-q)^{\ell(\sigma)}
t_{i_{\sigma(1)},j_1}
\cdots
t_{i_{\sigma(r)},j_r}\quad
\text{and}\quad
\oline M^{\mathbf i}_{\mathbf j}:=
\sum_{\sigma}(-q^{-1})^{\ell(\sigma)}
\del_{i_{\sigma(1)},j_1}
\cdots
\del_{i_{\sigma(r)},j_r},
\end{equation}
where the summations are over  permutations in $r$ letters, and $\ell(\sigma)$ denotes the length of $\sigma$ (in the sense of Coxeter groups). 
 For $a,b,r\geq 1$ define $\mathbf D(r,a,b)\in\sPD_{a\times b}$~by 
\begin{equation}
\label{eq:Drab}
\mathbf D(r,a,b):=\sum_{\mathbf i}\sum_{\mathbf j} M^\mathbf i_\mathbf j{\oline M}^\mathbf i_\mathbf j,
\end{equation}
where the summation indices 
$\mathbf i:=(i_1,\ldots,i_r)$ and $\mathbf j:=(j_1,\ldots,j_r)$ 
satisfy
$1\leq i_1<\cdots <i_r\leq a$ and
$1\leq j_1<\cdots <j_r\leq b$.
We also set $\mathbf D(0,a,b)=1$. 
For $0\leq r \leq n$ and $1\leq k\leq n$ 
we define $\mathbf D_{k,r}\in\sPD_{m\times n}$ by 
\[
\mathbf D_{k,r}:=\boldsymbol e_{m\times k}^{m\times n}\left({\mathbf D}(r,m,k)\right).
\]
Similarly, for $0\leq r\leq m$ and $ 1\leq k\leq m$ we define $\mathbf D_{k,r}'\in\sPD_{m\times n}$ by
\[
\mathbf D'_{k,r}:=\boldsymbol e_{k\times n}^{m\times n}\left(\mathbf D(r,k,n)\right)
.\]
Note that $\mathbf D_{k,r}=\mathbf D'_{k,r}=0$ when $r>\min\{k,m,n\}$ and $\mathbf D_{k,0}=\mathbf D'_{k,0}=1$. 
Set
\begin{equation}
\mathbf R_a:=
\sum_{r=0}^{a}(q^2-1)^r\mathbf D_{a,r}
\ \text{ for }
1\leq a\leq n\quad\text{and}\quad
\mathbf L_b:=
\sum_{r=0}^{b}(q^2-1)^r\mathbf D'_{b,r}
\ \text{ for }
1\leq b\leq m.
\end{equation}
Furthermore, let $U_{\g h,L}$ and $U_{\g h,R}$ denote the Cartan subalgebras of $U_L$ and $U_R$, respectively (see Subsection~\ref{subsec:Uqglndf}). 
We set
\begin{equation}
\mathscr L_\g h:=
\phi_U(U_{\g h,L}\otimes 1)\quad,\quad
\Lhb:=\sPD\cap \mathscr L_\g h\quad,\quad
\mathscr R_\g h:=
\phi_U(1\otimes U_{\g h,R})\quad,\quad
\Rhb:=\sPD\cap \mathscr R_\g h. 
\end{equation}
\begin{thmx}

\label{thm:MainthmB}

The following statements hold. 
\begin{itemize}

\item[\rm (i)]
$\Rhb$  is generated by $\mathbf R_1,\ldots,\mathbf R_n$.

\item[\rm (ii)]
$\Lhb$ is  generated by $\mathbf L_1,\ldots,\mathbf L_m$.

\end{itemize}
\end{thmx}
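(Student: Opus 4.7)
The plan is to prove (i) and (ii) in parallel via a quantum Capelli identity. By the obvious symmetry of $\sPD_{m\times n}$ that transposes the matrix of generators (interchanging the roles of $U_L$ and $U_R$, and sending the $\mathbf L_b$ to the $\mathbf R_a$-analogues), it suffices to prove (i). The argument breaks into two steps: (a) verify $\mathbf R_a \in \Rhb$ for each $a$, and (b) show every $X \in \Rhb$ is a polynomial in the $\mathbf R_a$'s.

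For (a), the inclusion $\mathbf R_a \in \sPD$ is clear from the definition, so we must show $\mathbf R_a \in \mathscr R_\g h$. The $\N^n$-grading of $\sP$ by column multi-degree yields a weight decomposition $\sP = \bigoplus_{\mu \in \N^n} \sP_\mu$ for $\mathscr R_\g h$, and each summand $M^{\mathbf i}_{\mathbf j}\oline M^{\mathbf i}_{\mathbf j}$ appearing in $\mathbf D_{a,r}$ preserves column multi-degree (the degree raised by the multiplicative minor $M^{\mathbf i}_{\mathbf j}$ matches the degree lowered by the differential minor $\oline M^{\mathbf i}_{\mathbf j}$). Hence $\mathbf R_a$ preserves each $\sP_\mu$. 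I would compute the scalar by which $\mathbf R_a$ acts on $\sP_\mu$---by induction on $|\mu|$ using the relations from Definition~\ref{dfn-PDPD2} and the straightening identities for quantum minors---and identify it with $q^{2(\mu_{n-a+1} + \cdots + \mu_n)}$. A small warm-up ($m=n=a=1$) already illustrates the mechanism: the $q$-Heisenberg relation yields $(1 + (q^2 - 1) t_{1,1}\del_{1,1})\cdot t_{1,1}^\mu = q^{2\mu}t_{1,1}^\mu$. Comparing with the eigenvalue of the Cartan monomial $\mathbf K_a := K_{n-a+1}^2 K_{n-a+2}^2 \cdots K_n^2 \in U_{\g h, R}$ then gives $\mathbf R_a = \phi_U(1 \otimes \mathbf K_a)$, so $\mathbf R_a \in \Rhb$.

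For (b), the evaluation map $\ev\colon \mathscr R_\g h \to \mathrm{Maps}(\N^n, \Bbbk)$, sending each operator to its scalar of action on the weight spaces $\sP_\mu$, is injective and identifies $\mathscr R_\g h$ with the Laurent polynomial algebra in the variables $x_j := q^{2\mu_j}$. Under this identification, step (a) gives $\ev(\mathbf R_a) = x_{n-a+1} x_{n-a+2} \cdots x_n$; these $n$ monomials are algebraically independent and generate an honest polynomial subalgebra $\mathcal S \subset \ev(\mathscr R_\g h)$. The heart of (b) is the equality $\ev(\Rhb) = \mathcal S$. The inclusion $\mathcal S \subseteq \ev(\Rhb)$ is immediate from (a). For the reverse, let $X \in \Rhb$ and expand $X$ in the PBW basis of $\sPD$ from Section~\ref{sec-PDmn}; since $X$ preserves column multi-degree, only terms $t^\alpha \del^\beta$ with the column multi-degree of $\alpha$ equal to that of $\beta$ appear. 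Evaluating such a term on a weight vector in $\sP_\mu$ yields a polynomial in the $x_j$'s whose monomials involve only nonnegative combinations of the partial-sum exponents arising from nested rightmost-column blocks; a triangular change of variables from $x_j$ to the partial-sum generators $y_a := x_{n-a+1}\cdots x_n$ then forces $\ev(X) \in \mathcal S$.

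The main obstacle is the PBW/weight analysis in step (b), which requires carefully tracking how each PBW monomial of $\sPD$ acts on column-weight vectors and showing that the only $\ev$-images that arise are polynomial (not Laurent) in the generators $y_a$. A clean route is a filtration argument: induct on the standard degree of $X$ in $\sPD$, at each step subtracting off a suitable polynomial in $\mathbf R_1, \ldots, \mathbf R_n$ to reduce the degree, using the quantum Capelli identity from step (a) for leading-term matching. A complementary---and perhaps cleaner---strategy is to pass to the associated graded algebra $\mathrm{gr}(\sPD)$, where the analogous statement reduces to the classical Capelli identification of the Cartan image with the partial-sum generators, and then lift back using the flatness of the deformation provided by the deformed twisted tensor product construction of $\sPD$.
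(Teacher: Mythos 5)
Your overall architecture coincides with the paper's: identify each $\mathbf R_a$ with the image of a Cartan element (a quantum Capelli-type identity), then show that the only elements of $U_{\g h,R}$ whose images land in $\sPD$ are those in the subalgebra generated by the $K_{\la_{R,b}}$. However, both halves are asserted rather than proved, and the gaps are substantial. In step (a), the identity $\mathbf R_a=\phi_U(1\otimes K_{\la_{R,n-a+1}})$ is not a routine induction on $|\mu|$ with straightening: only the single-row case ($a$ maximal, Lemma~\ref{lem:D'10}) is elementary. The general case sums over all $r\times r$ quantum minors for $r\le a$, and the paper's proof (Propositions~\ref{prp:eval-xL} and~\ref{prp:phiUxaphiUyb}) goes through the $q$-Wallach--Okounkov eigenvalue formula of \cite{BKV06} transported by base change, plus Okounkov's binomial theorem for interpolation Macdonald polynomials (Lemma~\ref{lem:qSchurident}). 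You need to either reproduce that computation or cite it; "straightening identities for quantum minors" does not supply it.

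The more serious gap is in step (b). Your claim that evaluating a PBW term $t^{\mathsf b}\del^{\mathsf b}$ on a weight vector yields "only nonnegative combinations of the partial-sum exponents" is false: by Lemma~\ref{lem:Dbta(i)-(iii)}, $t^{\mathsf b}\del^{\mathsf b}\cdot t^{\mathsf a}$ produces the factor $\prod_i\mathbf c(a_i-1,b_i-1)$, a polynomial in the \emph{individual} variables $x_i=q^{2a_i}$ (for instance $t_{1,1}\del_{1,1}$ contributes $(x_1-1)/(q^2-1)$, and $x_1=y_ny_{n-1}^{-1}$ is not in your subalgebra $\mathcal S$). So $\ev(X)\in\mathcal S$ cannot be read off term by term; it emerges only from the constraint that $\ev(X)$ is \emph{simultaneously} a $\Bbbk$-combination of characters $\gamma\mapsto q^{-\lag\la,\gamma\rag}$ and of the specific polynomials $\phi_{\mathsf b}$ produced by $\sPD$. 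The paper extracts the two halves of the condition "$\la$ is a $\Z^{\geq0}$-combination of the $\la_{R,b}$" by two genuinely different and delicate arguments: $\la_i-\la_{i+1}\in2\Z^{\geq0}$ uses local finiteness of the adjoint action on $\sPD$ (Proposition~\ref{prp:xinPD1}), while $\la_1\in2\Z^{\leq0}$ uses an asymptotic degree estimate followed by a parity argument on the extremal power of $q$ as $\gamma$ is scaled (Propositions~\ref{prp:xinPD2} and~\ref{prp:xinPD3}). Neither of your fallback strategies supplies this: the filtration/leading-term induction presupposes exactly the matching statement to be proved, and the associated-graded route is blocked because the $\mathbf R_a$ have distinct filtration degrees $2a$ (so their symbols do not generate a graded analogue of $\Rhb$ in any evident way) and because $\mathrm{gr}$ of an intersection such as $\sPD\cap\mathscr R_{\g h}$ need not equal the intersection of the associated graded objects.
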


In the proofs of our theorems  we borrow at least two key ideas from~\cite{LZZ11}. First, we 
use the bialgebra structure of $\sP_{n\times n}$ to define a map ${{}\Gamma}_{k,l,n}$ 
from
$\sP_{k\times l}$
onto the subalgebra of $U_R$-invariants
in $\mathrm{gr}(\mathscr A_{k,l,n})$ (assuming $n\geq \max\{k,l\}$). The map 
${{}\Gamma}_{k,l,n}$, given in Definition~\ref{dfn:PsI}, is similar to  the map introduced in~\cite[Lem. 6.11]{LZZ11}. Second, we define a new product on $\sP_{k\times l}$ such that the map ${{}\Gamma}_{k,l,n}$ becomes an isomorphism of algebras (see Definition~\ref{def:prodtPkl}). This product is analogous to the one defined  in~\cite[Lem. 6.13]{LZZ11}. However our product is given by a more complicated (and asymmetric) formula, because it needs to be simutaneously compatible with \emph{two} universal $R$-matrices.  As a consequence, establishing the desired properties of this product requires new ideas (see Section~\ref{sec:specialcasem=n}). 
Because of this, and the fact that unlike~\cite{LZZ11} the generators $\mathsf L_{i,j}$ (respectively, $\mathsf R_{i,j}$) or even their graded analogues are not weight vectors for the Cartan subalgebras of the two copies of $U_L$ (respectively, $U_R$) that act on $\sPD$, the proofs of Theorems~\ref{thm-Main-A} and~\ref{thm:C} become substantially more complicated than the analogous results of~\cite{LZZ11}.  See Subsection~\ref{subsec-prep-A(ii)} for more details.

 The results of this paper were obtained as part of a project on  Capelli operators for quantum symmetric spaces. From 
this standpoint, it is natural to ask if one can define quantized Weyl algebras in the latter setting and then realize the action of  (a large subalgebra of) the quantized enveloping algebra via elements of this Weyl algebra. We  address this question and its connection to Capelli operators in upcoming work~\cite{LSS22a,LSS22b}. 

The structure of this paper is as follows. In Section~\ref{sec:HopfTwis} we review the required background material on Hopf algebras and twisted tensor products. In Section~\ref{sec-PDmn} we construct and study the quantized Weyl algebra $\sPD$ and its variations, namely $\sPD^\mathrm{gr}$, $\mathscr A_{k,l,n}$ and $\mathscr A_{k,l,n}^\mathrm{gr}$.  The main goal of Section~\ref{subsec:Lifting} is to 
prove that under $\phi_U$ the elements $K_{\la_{L,a}}\otimes 1$ and $1\otimes K_{\la_{R,b}}$, defined in~\eqref{eq:xayb},
of the Cartan subalgebra of $U_{LR}$   are mapped into $\sPD$. In 
Section~\ref{section5} we compute explicit formulas for $\phi_U(K_{\la_{L,a}}\otimes 1)$ and $\phi_U(1\otimes K_{\la_{R,b}})$.
Section~\ref{sec-new6} is devoted to some general properties of the polarization operators
$\mathsf L_{i,j}$, $\mathsf R_{i,j}$ and their variants. 
In Section~\ref{sec:pfofThmA} we define the map ${{}\Gamma}_{k,l,n}$ and establish some of its properties. 
 Section~\ref{sec:specialcasem=n} defines 
the new product on $\sP_{k\times l}$ 
 and establishes its properties. Theorems~\ref{thm-Main-A}
and~\ref{thm:C} are proved in 
Section~\ref{subsec"ThmAm=n} and
 Theorem~\ref{thm:MainthmB} is proved in  Section~\ref{sec:genofLhbRhb}. Finally, Section~\ref{appendix} lists the commonly used notation in the paper.

\subsection*{Acknowledgement}
The research of S.S. was partially supported by NSF grants DMS-1939600, DMS-2001537, and Simons Foundation grant 509766. 
The research of H.S.
was partially supported 
by an NSERC Discovery Grant (RGPIN-2018-04044).

\section{Hopf algebras and deformed twisted tensor products}
\label{sec:HopfTwis}
 Throughout this section $\mathbb K$ will denote an arbitrary field and $H$ will be a Hopf algebra over $\mathbb K$. We denote the coproduct, counit, and antipode of $H$ by $\Delta$, $\epsilon$, and $S$. The opposite and co-opposite of $H$  are denoted by $H^\mathrm{op}$ and $H^\mathrm{cop}$
(we use the same notation for bialgebras as well). Throughout the paper, our notation 
for specific Hopf algebras $H$
will remain consistent with these choices. 

If $A$ is an associative algebra and $V$ is an $A$-module, 
a subspace $W\sseq V$ is called \emph{$A$-stable} if $A\cdot W\sseq W$. 
Finally, an associative algebra $A$ is called an \emph{$H$-module algebra} if it is equipped by an $H$-module structure such that the product of $A$ yields an $H$-module homomorphism $A\otimes A\to A$.   
\subsection{Local finiteness modulo an ideal}
\label{subsec:lofin}
Given a two-sided ideal $I$ of $H$ (considered as an associative   algebra),   we set $E(x,I):=\left\{\ad_y(x)+I\,:\,y\in H\right\}$
for $x\in H$, where $\ad_y(x):=\sum y_1xS(y_2)$ is the left adjoint action of $H$ (we use the Sweedler notation $\Delta(y)=\sum y_1\otimes y_2$ for the coproduct). We set
\begin{equation}
\label{eq:FHI}
\mathscr F(H,I):=
\left\{x\in H\,:\,\dim_\mathbb K E(x,I)<\infty\right\}.
\end{equation}
For $I=0$ this is the locally finite part of $H$ (in the sense of~\cite{JL94}), which we will denote by $\mathscr F(H)$. We have $E(xx',I)\sseq E(x,I)E(x',I)$ for $x,x'\in H$. Consequently, $\mathscr F(H,I)$ is a subalgebra of $H$.

\subsection{The finite dual of $H$}
\label{subsec:matrx}
Given a finite dimensional left $H$-module $V$, by the \emph{right dual} of $V$ we mean 
the dual space $V^*$ equipped with the $H$-action defined by 
$\lag x\cdot v^*,v\rag:=\lag v^*,S^{-1}(x)\cdot v\rag$ for $v^*\in V^*$ and $v\in V$, where  $\lag \cdot,\cdot\rag:V^*\otimes V\to \mathbb K$ is the canonical pairing. 
The \emph{matrix coefficients} of $V$ 
are the linear functionals $\sfm_{v^*,v}\in H^*$
defined by
\[
\sfm_{v^*,v}(x):=\lag v^*,x\cdot v\rag\quad\text{ for }x\in H,\ v\in V,\ v^*\in V^*.
\]
Indeed  $\sfm_{v^*,v}\in H^\circ$, where $H^\circ$ denotes the finite dual of $H$ (for the definition of $H^\circ$ see~\cite[Sec. 1.2.8]{KS97}).
Recall that $H^\circ$ has a canonical  Hopf algebra structure. The product of $H^\circ$ is given by 
\begin{equation}
\label{eq:proddual}
\la\mu(x):=\sum\la(x_1)\mu(x_2)\quad \text{for }\la,\mu\in H^\circ\text{ and }x\in H,
\end{equation}
where $\Delta(x)=\sum x_1\otimes x_2$.
Given two finite dimensional $H$-modules 
$V$ and $W$,  
 we have
\[
\sfm_{v^*\otimes w^*,v\otimes w}=\sfm_{v^*, v} \sfm_{w^*, w}\quad\text{for }v,w\in V\text{ and }v^*,w^*\in W.
\]
Let $\Delta^\circ$ denote the coproduct of $H^\circ$, so that $\Delta^\circ(\la)=\sum \la_1\otimes \la_2$ for $\la\in H^\circ$, where $\sum \la_1\otimes \la_2$ is uniquely determined by
\begin{equation}
\label{eq:laxy)copro}
\lag \la,xy\rag=\sum\lag \la_1,x\rag\lag \la_2,y\rag
\quad\text{for }x,y\in H.
\end{equation} 
If  $\{v_i\}_{i=1}^d$ is a basis of $V$ and $\{v_i^*\}_{i=1}^d$ is the dual basis of $V^*$, then 
$
\Delta^\circ(\sfm_{v^*,v})=\sum_{i=1}^d\sfm_{v^*,v_i}\otimes \sfm_{v_i^*,v}
$.
The following remark will be used in Section~\ref{sec-PDmn}.
\begin{rmk} 
\label{rmk:LRactionsHcirc}
Let $H^{\bullet}\sseq H^\circ$ be a sub-bialgebra. Then $H^{\bullet}$ is an $H$-module algebra with respect to \emph{right translation}, where the action is defined by $\lag x\cdot \la,y\rag:=\lag \la,yx\rag$ for $\la\in H^\bullet$ and $x,y\in H$.
If $H$ is equipped with a $\mathbb K$-linear map $x\mapsto x^\natural$ that yields an isomorphism of Hopf algebras $H\to H^\mathrm{op}$, then $H^{\bullet}$ has another $H$-module algebra structure defined by  
$\lag x\cdot \la,y\rag:=\lag \la,x^\natural y\rag$, which we call \emph{left translation}.
Given any homomorphism of associative algebras
$\tau:H\to H$, we can define $\tau$-twisted left and right translation actions of $H$ on $H^{\bullet}$, given respectively by the formulas\[
\lag x\cdot \la,y\rag:=\lag \la, \tau(x)^\natural y\rag
\quad\text{and}\quad
\lag x\cdot \la,y\rag:=\lag \la, y\tau(x)\rag.
\]
When $H^\bullet$ is equipped with either one of the two $\tau$-twisted actions, the following statements hold.
\begin{itemize}
\item[(i)] If $\tau:H\to H$ is a homomorphism of coalgebras, then $H^{\bullet}$ is  an $H$-module algebra. 
\item[(ii)] If $\tau:H\to H$ is an anti-homomorphism of coalgebras, then $H^{\bullet}$ is an  $H^\mathrm{cop}$-module algebra.
\end{itemize}

\end{rmk}

\subsection{The isotypic component of the trivial $H$-module}
\label{subsec-H-inv}
For any $H$-module $V$ we set \[
V_{(\epsilon)}:=\{v\in V\,:\,h\cdot v=\epsilon(h)v\},
\] where as before $\epsilon$ denotes the counit of $H$. 
Let  $\psi:H\to \End_\mathbb K(V)$ be the algebra homomorphism corresponding to this module structure (hence $h\cdot v=\psi(h)v$ for $h\in H$ and $v\in V$). 
We equip $\End_\mathbb K(V)$ with an $H$-module structure, defined by $h\cdot T:=\sum \psi(h_1)T\psi(S(h_2))$ for $h\in H$ and $T\in\End_\mathbb K(V)$, where $\Delta(h)=\sum h_1\otimes h_2$.
\begin{lem}
\label{rmk:Hopf-eps}
$\End_\mathbb K(V)_{(\epsilon)}=\End_\mathbb K(V)^{\psi(H)}$, where the right hand side is defined as in~\eqref{eq:YtoZ}. 
\end{lem}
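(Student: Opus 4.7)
The plan is to establish the two inclusions separately by standard Hopf-algebraic manipulations. I will use the Sweedler notation $\Delta(h)=\sum h_{(1)}\otimes h_{(2)}$ and the axioms $\sum h_{(1)}S(h_{(2)}) = \sum S(h_{(1)})h_{(2)} = \epsilon(h)1$ and $\sum \epsilon(h_{(1)})h_{(2)} = \sum h_{(1)}\epsilon(h_{(2)}) = h$, together with coassociativity.

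For the inclusion $\End_\mathbb K(V)^{\psi(H)}\subseteq \End_\mathbb K(V)_{(\epsilon)}$, I will take $T$ in the commutant and compute directly: since $T$ commutes with every $\psi(h_{(1)})$, the expression $h\cdot T = \sum \psi(h_{(1)})T\psi(S(h_{(2)}))$ collapses to $T\psi\bigl(\sum h_{(1)}S(h_{(2)})\bigr) = T\psi(\epsilon(h)1) = \epsilon(h)T$, which is the invariance condition.

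For the reverse inclusion, I will take $T$ invariant and aim to show $T\psi(h) = \psi(h)T$ for all $h\in H$. The idea is to insert the counit identity and then substitute the hypothesis $\epsilon(h_{(1)})T = h_{(1)}\cdot T$:
\begin{equation*}
T\psi(h) \;=\; T\psi\bigl(\textstyle\sum \epsilon(h_{(1)})h_{(2)}\bigr) \;=\; \sum \epsilon(h_{(1)})T\psi(h_{(2)}) \;=\; \sum \bigl(h_{(1)}\cdot T\bigr)\psi(h_{(2)}).
\end{equation*}
Expanding the adjoint action and using coassociativity to rewrite the iterated coproduct as $\sum h_{(1)}\otimes h_{(2)}\otimes h_{(3)}$, this becomes $\sum \psi(h_{(1)})T\psi(S(h_{(2)})h_{(3)})$. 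The antipode axiom applied to the inner factor $\sum S(h_{(2)})h_{(3)} = \epsilon(h_{(2)})1$ reduces this to $\sum \psi(h_{(1)})\epsilon(h_{(2)})T = \psi(h)T$, finishing the proof.

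There is no substantive obstacle here; the statement is the standard identification of invariants under the adjoint action with the commutant of the image, and the proof is a routine chase through the Hopf algebra axioms. The only point that requires a moment of care is the direction of the computation in the second inclusion: one must apply the counit axiom on one side of $\psi(h)$ and the antipode axiom on the other, so that the three consecutive Sweedler indices produced by coassociativity can be telescoped correctly.
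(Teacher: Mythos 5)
Your proof is correct and is essentially the paper's own argument: the first inclusion is the identical telescoping via $\sum h_{(1)}S(h_{(2)})=\epsilon(h)1$, and your second inclusion is the paper's chain read in the reverse direction (the paper starts from $\psi(h)T$, inserts the counit, un-telescopes with the antipode axiom, and then applies the invariance hypothesis). No changes needed.
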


\begin{proof}
The inclusion $\supseteq$ follows from \[
h\cdot T =\sum \psi(h_1)T\psi(S(h_2))=T\sum \psi(h_1)\psi(S(h_2))=T\psi\left(\sum h_1S(h_2)\right)=\epsilon(h)T,\]
for $T\in\End_\mathbb K(V)^{\psi(H)}$. 
For the inclusion $\subseteq $ note that if $T\in\End_\mathbb K(V)_{(\epsilon)}$
then
\begin{align*}
\psi(h) T=\sum \psi(h_1)\epsilon(h_2)T
&=\sum \psi(h_1)T\psi(S(h_2))\psi(h_3)\\
&=\sum\epsilon(h_1)T\psi(h_2)=
T\psi\left(\sum \epsilon(h_1)h_2\right)=T\psi(h). \qedhere\end{align*}
\end{proof}

\subsection{Braided triples, twisted tensor products and their deformations}
\label{subsec:twisted}
Let $\cC$ be a full subcategory of the category of $H$-modules that 
is closed under direct sums and tensor products. To ensure that $\cC$ is a monoidal category we assume that the trivial $H$-module (the one-dimensional vector space $\mathbb K$  equipped with the action $h\mapsto \epsilon(h)$ for $h\in H$) belongs to $\mathrm{Obj}(\cC)$. 

Assume that $\cC$ is \emph{braided}. The braiding 
$\check R$ of  $\cC$ is a natural family of $H$-module isomorphisms
\[
\check{R}_{V,W}:V\otimes W\to W\otimes V 
\quad
\text{for $V,W\in \mathrm{Obj}(\cC)$}
\]
 that satisfies the usual hexagon axioms (see for example~\cite[Def. 8.1.1]{EGNO15}). 
Henceforth we call $(H,\cC,\check R)$  a \emph{braided triple}.

Let $A$ and $B$ be two $H$-module algebras, with products $m_A$ and $m_B$. Assume that  $A,B\in \mathrm{Obj}(\cC)$.
\begin{lem}
\label{lem:RcheckBAprop}
The map $\check{R}_{B, A}:B\otimes A\to A\otimes B$ satisfies the following relations:
\begin{itemize}
\item[\rm (i)] 
$\check{R}_{B,A}(1\otimes a)=a\otimes 1$ for $a\in A$ and 
$\check{R}_{B,A}(b\otimes 1)=1\otimes b$ for $b\in B$. 
\item[\rm (ii)]
$
\check{R}_{B,A}\circ (\mathrm{id}_B\otimes m_A)=
(m_A\otimes \mathrm{id}_B)
(\mathrm{id}_A\otimes \check{R}_{B,A})
(\check{R}_{B,A}\otimes \mathrm{id}_A)
$.

\item[\rm (iii)] $\check{R}_{B,A}\circ (m_B\otimes \mathrm{id}_A)=(\mathrm{id}_A\otimes m_B)
(\check{R}_{B,A}\otimes \mathrm{id}_B)
(\mathrm{id}_B\otimes \check{R}_{B,A})$.

\end{itemize}

\end{lem}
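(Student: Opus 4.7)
The plan is to derive all three statements directly from the axioms of a braided monoidal category, namely naturality of $\check R$, the two hexagon axioms, and the fact that $\mathbb K$ is the unit object of $\cC$. No new constructions are needed; each item is essentially a rephrasing of one of these axioms applied to the structure maps of the $H$-module algebras $A$ and $B$.

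For part (i), I would use that the unit map $u_A\colon\mathbb K\to A$, $1\mapsto 1_A$, is an $H$-module homomorphism (because $A$ is an $H$-module algebra), and similarly for $u_B$. Naturality of the braiding gives
\[
\check R_{B,A}\circ(\mathrm{id}_B\otimes u_A)=(u_A\otimes\mathrm{id}_B)\circ\check R_{B,\mathbb K},
\]
and analogously with the roles of $A$ and $B$ swapped. Since $\mathbb K$ is the unit object of $\cC$, the braidings $\check R_{V,\mathbb K}$ and $\check R_{\mathbb K,V}$ are forced to be the canonical identifications $V\otimes\mathbb K\simeq V\simeq\mathbb K\otimes V$ (this is a standard consequence of the hexagon axioms together with the triangle/unit coherence). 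Evaluating the displayed equality on $b\otimes 1$ yields $\check R_{B,A}(b\otimes 1)=1\otimes b$, and the other identity follows symmetrically from $u_B$.

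For parts (ii) and (iii), the multiplication maps $m_A\colon A\otimes A\to A$ and $m_B\colon B\otimes B\to B$ are $H$-module homomorphisms, so naturality of $\check R$ gives
\[
\check R_{B,A}\circ(\mathrm{id}_B\otimes m_A)=(m_A\otimes\mathrm{id}_B)\circ\check R_{B,A\otimes A},
\qquad
\check R_{B,A}\circ(m_B\otimes\mathrm{id}_A)=(\mathrm{id}_A\otimes m_B)\circ\check R_{B\otimes B,A}.
\]
The two hexagon axioms for $\check R$ are exactly
\[
\check R_{B,A\otimes A}=(\mathrm{id}_A\otimes\check R_{B,A})\circ(\check R_{B,A}\otimes\mathrm{id}_A),
\qquad
\check R_{B\otimes B,A}=(\check R_{B,A}\otimes\mathrm{id}_B)\circ(\mathrm{id}_B\otimes\check R_{B,A}).
\]
Substituting these expressions into the two naturality identities produces the formulas in (ii) and (iii) verbatim.

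I do not anticipate a real obstacle here: the lemma is a packaging of naturality plus the two hexagons, and the only subtle point is ensuring that $\check R_{V,\mathbb K}$ and $\check R_{\mathbb K,V}$ are the obvious identifications, which is guaranteed by the assumption that $\mathbb K\in\mathrm{Obj}(\cC)$ and by coherence for the unit. If desired, one can alternatively verify (i) element-wise by applying (ii) with $a=1_A$ (so $m_A(1\otimes a)=a$) and comparing both sides, which gives a self-contained derivation of (i) from (ii) once one knows that $\check R_{B,A}$ restricted to the $\mathbb K$-summand behaves as the swap.
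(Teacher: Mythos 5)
Your proof is correct and follows essentially the same route as the paper's: naturality of $\check R$ with respect to the unit maps combined with the unit coherence for the braiding gives (i), and naturality with respect to $m_A$, $m_B$ combined with the two hexagon axioms gives (ii) and (iii).
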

\begin{proof}
This is well known, but we supply a proof because we did not find a reference. 

(i) Equip $\mathbb K$ with the canonical $H$-module structure induced by the counit 
$\epsilon:H\to \mathbb K$.
It is well known (see for example~\cite[Exer. 8.1.6]{EGNO15}) that one has a commuting triangle
\[
\xymatrix{\mathbb K\otimes  A\ar[rr]^{\check{R}_{\mathbb K,A}}\ar[dr] & & A\otimes \mathbb K\ar[dl]\\
& A}
\]
where the lower sides of the triangle are the canonical left and right unit isomorphisms. It follows immediately that $\check{R}_{\mathbb K,A}(1\otimes a )=a\otimes 1$. Furthermore, by naturality of $\check{R}$ the diagram
\[
\xymatrix@C+2pc{\mathbb K\otimes A \ar[r]^{\check{R}_{\mathbb K,A}} 
\ar[d]_{1_B\otimes \mathrm{id}_A}
&
A\otimes \mathbb K\ar[d]^{\mathrm{id}_A\otimes 1_B} \\
B\otimes A
\ar[r]^{\check{R}_{B,A}}
& A\otimes B}\]
 is commutative, hence $\check{R}_{B,A}(1\otimes a)=a\otimes 1$. The other
relation is proved similarly. 

(ii) Consider the diagram of maps below:
\[
\xymatrix@C+2pc{
A \otimes B\otimes A
\ar[dr]_{\mathrm{id}_A\otimes \check{R}_{B,A}}
 & \ar[l]_{\ \check{R}_{B,A}\otimes \mathrm{id}_A} 
 \ar[d]^{\check{R}_{B,A\otimes A}} B\otimes A\otimes A
\ar[r]^{\ \ \mathrm{id}_B\otimes m_A} & 
B\otimes A\ar[d]^{\check{R}_{B,A}}\\
& A\otimes A\otimes B \ar[r]_{\ \ m_A\otimes\mathrm{id}_B} & A\otimes B
}
\]
The square is commutative by naturality of $\check{R}$ (because $m_A$ and $m_B$ are $H$-module homomorphisms) and the triangle is commutative by the hexagon axiom. The assertion of (ii) follows from comparing two maps $B\otimes A\otimes A\to A\otimes B$ in the diagram: one is obtained by composition of the top and the right  edges of the square, the other is obtained by the outer edges of the triangle and the bottom edge of the square.  

(iii) Similar to (ii). 
 \end{proof}
\begin{dfn} 
\label{dfn:ARBB}
Let $A,B\in\mathrm{Obj}(\mathcal C)$ be $H$-module algebras. We denote the products of $A$ and $B$ by  $m_A$ and $m_B$, respectively. 
The 
\emph{$\check{R}$-twisted tensor product} of $A$ and $B$, denoted by $A\otimes_{\check{R}} B$, is  the vector space $A\otimes B$ equipped with the binary operation 
\begin{equation}
\label{eq:mAmBcirc}
(m_A\otimes m_B)\circ (\mathrm{id}_A\otimes \check{R}_{B,A}\otimes \mathrm{id}_B).
\end{equation}

\end{dfn}
\begin{prp}
$A\otimes_{\check{R}} B$  
is an associative algebra.
\end{prp}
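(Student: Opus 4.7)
The plan is to verify associativity by a diagram chase, relying on the three properties of $\check{R}_{B,A}$ recorded in Lemma~\ref{lem:RcheckBAprop} together with the associativity of $m_A$ and $m_B$. Write $\tau := \check{R}_{B,A}$ and $\mu := (m_A\otimes m_B)\circ(\mathrm{id}_A\otimes\tau\otimes\mathrm{id}_B)$, so that what must be shown is the equality
\[
\mu\circ(\mu\otimes\mathrm{id}_{A\otimes B}) \;=\; \mu\circ(\mathrm{id}_{A\otimes B}\otimes\mu)
\]
as linear maps $(A\otimes B)^{\otimes 3}\to A\otimes B$.

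First I would expand both sides explicitly as compositions on $A\otimes B\otimes A\otimes B\otimes A\otimes B$. Each side is a composition of three braidings (instances of $\tau$ occurring in various tensor positions) followed by two applications each of $m_A$ and $m_B$, ending with one more application of each. The strategy is to bring both compositions to a common normal form in which all three $A$-tensorands appear to the left of all three $B$-tensorands, after which one may apply $m_A^{(2)}:=m_A\circ(m_A\otimes\mathrm{id}_A)$ and $m_B^{(2)}:=m_B\circ(\mathrm{id}_B\otimes m_B)$ (which agree with the reverse bracketings by associativity of $m_A$ and $m_B$). On the left-hand side, the inner $\mu$ produces a $B$-multiplication that sits between $\tau$ and the outer braiding; applying Lemma~\ref{lem:RcheckBAprop}(iii) rewrites this as two successive braidings followed by $m_B\otimes\mathrm{id}$. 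Symmetrically, on the right-hand side the inner $\mu$ introduces an $A$-multiplication, which Lemma~\ref{lem:RcheckBAprop}(ii) reroutes past the outer braiding into two successive braidings followed by $\mathrm{id}\otimes m_A$. After these two rewrites, both sides reduce to the same composition: braid the three $B$-factors past the three $A$-factors using $\tau$ three times in the pattern dictated by moving positions $(B_1 A_2), (B_1 A_3), (B_2 A_3)$, and then apply iterated $m_A$ in the leftmost three slots and iterated $m_B$ in the rightmost three slots.

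The only genuinely delicate part is the bookkeeping: tracking which tensor positions the three successive braidings act on after each application of (ii) or (iii), and checking that the resulting braiding patterns on the two sides literally coincide (not merely up to some further relation). This is entirely mechanical once consistent notation for the positions is fixed, and it is the standard argument showing that any triple $(A,B,\tau)$ satisfying the conditions of Lemma~\ref{lem:RcheckBAprop} yields an associative multiplication on $A\otimes B$; the unit $1_A\otimes 1_B$ is then immediately a two-sided unit for $\mu$ by Lemma~\ref{lem:RcheckBAprop}(i), since $\tau(1_B\otimes a)=a\otimes 1_B$ and $\tau(b\otimes 1_A)=1_A\otimes b$ reduce the product $(1_A\otimes 1_B)(a\otimes b)$ and $(a\otimes b)(1_A\otimes 1_B)$ to $a\otimes b$.
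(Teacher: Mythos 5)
Your argument is correct, but note that the paper itself does not prove this proposition: it simply cites \cite[Prop. 2.2]{VV94}. What you have written out is essentially the standard verification that appears in that reference (and in the general theory of twisted tensor products), namely that conditions (ii) and (iii) of Lemma~\ref{lem:RcheckBAprop} are exactly what is needed for associativity, while (i) gives the unit $1_A\otimes 1_B$. The one point in your "bookkeeping" that deserves to be made explicit is the final identification of the two braiding patterns: after applying (iii) to the left-hand bracketing one obtains the composite $\tau_{B_1A_2}$, then $\tau_{B_2A_3}$, then $\tau_{B_1'A_3'}$, whereas applying (ii) to the right-hand bracketing yields $\tau_{B_2A_3}$, then $\tau_{B_1A_2}$, then $\tau_{B_1'A_3'}$; these coincide because the first two braidings act on disjoint tensor positions and therefore commute, not because of any further property of $\check R$. (Your listed order $(B_1A_2),(B_1A_3),(B_2A_3)$ is not literally either of these composites, so be careful when you fix the notation.) With that caveat, the proof is complete and self-contained, which is arguably an improvement over deferring to the literature as the paper does.
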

\begin{proof}
This is well known and proved  for example in~\cite[Prop. 2.2]{VV94}.
\end{proof}
From Lemma~\ref{lem:RcheckBAprop}(i) it follows that 
 for $a,a'\in A$ and $b,b'\in B$ the product~\eqref{eq:mAmBcirc} satisfies
 \begin{equation}
 \label{eq:lem241}
 (a\otimes 1)(a'\otimes b)(1\otimes b')=(aa'\otimes bb').
 \end{equation}
\begin{rmk}
\label{rmk:HHHhH}
The vector space $A\otimes_{\check{R}} B=A\otimes B$ carries two module structures: as the outer tensor product 
over $\mathbb K$
of $H$-modules $A$ and $B$, it is 
an
$H\otimes H$-module. As the inner tensor product of $A$ and $B$, it is an $H$-module. Of course the latter $H$-module structure is obtained from the former one via restriction along the coproduct map $H\to  H\otimes H$.  
\end{rmk}

\begin{prp}
\label{prp:H-modual}
$A\otimes_{\check{R}} B$ is an $H$-module algebra with the $H$-module structure of 
Remark~\ref{rmk:HHHhH}. 

\end{prp}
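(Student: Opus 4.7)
The plan is to observe that the product on $A\otimes_{\check R}B$ is, by definition, a composition of $H$-module homomorphisms, so that $H$-equivariance comes essentially for free. Recall from Definition~\ref{dfn:ARBB} that the product is $(m_A\otimes m_B)\circ(\mathrm{id}_A\otimes \check R_{B,A}\otimes \mathrm{id}_B)$, viewed as a linear map $(A\otimes B)^{\otimes 2}\to A\otimes B$. I would equip each intermediate tensor product with its inner-tensor-product $H$-module structure, obtained via the iterated coproduct of $H$, and show that each factor in the composition is $H$-equivariant with respect to these structures.

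Each of the building blocks is an $H$-module homomorphism for a distinct reason: $m_A$ and $m_B$ by the hypothesis that $A$ and $B$ are $H$-module algebras; $\check R_{B,A}$ because it is a morphism in $\mathcal C$, which by the definition of a braided triple is a full subcategory of the category of $H$-modules and thus has $H$-linear morphisms; and identity maps trivially. Since the tensor product of $H$-module homomorphisms between $H$-modules equipped with their inner-tensor-product actions is again an $H$-module homomorphism, both $\mathrm{id}_A\otimes \check R_{B,A}\otimes \mathrm{id}_B$ and $m_A\otimes m_B$ are $H$-equivariant, and hence so is their composition.

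The only bookkeeping to track, and the place where one must be careful, is the iterated coproduct structure on the four-fold tensor product $A\otimes B\otimes A\otimes B$: one must verify that the $H$-action relevant to viewing the space as $(A\otimes B)\otimes(A\otimes B)$ (needed for defining the product on $A\otimes_{\check R}B$) coincides with the $H$-action used when viewing it as $A\otimes(B\otimes A)\otimes B$ (the natural domain of $\mathrm{id}_A\otimes\check R_{B,A}\otimes\mathrm{id}_B$) and afterwards as $A\otimes A\otimes B\otimes B$ (the natural domain of $m_A\otimes m_B$). All of these coincide by coassociativity of $\Delta$, since $\Delta^3(h)$ is independent of parenthesization. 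Having identified the product with a composite of $H$-linear maps, we conclude that $A\otimes_{\check R}B$ is an $H$-module algebra. The argument is purely categorical and I do not anticipate any serious obstacle.
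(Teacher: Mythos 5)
Your proposal is correct. The paper proves the same statement by an element-wise Sweedler computation: it writes $\check{R}_{B,A}(b\otimes a')=\sum a''\otimes b''$, expands $x\cdot\big((a\otimes b)(a'\otimes b')\big)$ using the module-algebra property of $A$ and $B$ together with the factorization $(a\otimes 1)(a'\otimes b)(1\otimes b')=(aa'\otimes bb')$ from~\eqref{eq:lem241}, and then invokes the $H$-equivariance of $\check R_{B,A}$ to reassemble the result as $\sum (x_1\cdot(a\otimes b))(x_2\cdot(a'\otimes b'))$. Your argument packages exactly the same three ingredients --- equivariance of $m_A$ and $m_B$, equivariance of $\check R_{B,A}$ as a morphism of $H$-modules, and coassociativity of $\Delta$ to reconcile the various diagonal actions on the four-fold tensor product --- at the level of maps rather than elements, so the conclusion follows from the closure of $H$-module homomorphisms under tensor product and composition. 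What your route buys is brevity and the recognition that this is the general fact that algebra objects in a braided monoidal category of $H$-modules have a braided tensor product which is again an algebra object; what the paper's computation buys is an explicit display of the identity $x\cdot\big((a\otimes b)(a'\otimes b')\big)=\sum (x_1\cdot(a\otimes b))(x_2\cdot(a'\otimes b'))$ in a form that is convenient for the later, more delicate arguments (for example the proof of Proposition~\ref{prp:AotBUrUR}, which reruns essentially this calculation with extra bookkeeping). The one point you rightly flag --- that the diagonal actions attached to the different parenthesizations of $A\otimes B\otimes A\otimes B$ agree --- is indeed settled by coassociativity, so there is no gap.
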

\begin{proof}
 Given
$a,a'\in A$ and  $b,b'\in B$, if we write $\check{R}_{B,A}(b\otimes a')=\sum a''\otimes b''$ then for $x\in H$ we have
\begin{align*}
x\cdot\big( (a\otimes b)(a'\otimes b')\big)&=x\cdot \sum aa''\otimes b''b=
\sum (x_1\cdot (aa''))\otimes (x_2\cdot (b''b)).
\end{align*}
Since $A$ and $B$ are $H$-module algebras, from the latter equalities and~\eqref{eq:lem241}  it follows that 
\begin{align*}
x\cdot\big( (a\otimes b)(a'\otimes b')\big)
&=\sum (x_1\cdot a)(x_2\cdot a'')\otimes (x_3\cdot b'')(x_4\cdot b)\\
&=\sum
((x_1\cdot a)\otimes 1)
((x_2\cdot a'')\otimes \big(x_3\cdot  b'')\big)(1\otimes (x_4\cdot b))
.\end{align*}
Since
$\check R_{A,B}$  is an $H$-module isomorphism, from the latter equalities and~\eqref{eq:lem241}
 we have
\begin{align*}
x\cdot\big( (a\otimes b)(a'\otimes b')\big)
&
=\sum ((x_1\cdot a)\otimes 1)
\big(\check R_{B,A}\big((x_2\cdot b)\otimes (x_3\cdot a')\big)\big)(1\otimes (x_4\cdot b'))\\
&=\sum ((x_1\cdot a)\otimes (x_2\cdot b))((x_3\cdot a')\otimes (x_4\cdot b'))=\sum (x_1\cdot (a\otimes b))(x_2\cdot (a'\otimes b')).
\end{align*} 
Thus $A\otimes_{\check R} B$ is an $H$-module algebra. 
 \end{proof}

Let $E_A\sseq A$ and $E_B\sseq B$ be subspaces that generate $A$ and $B$, respectively.  Thus 
$A\cong T(E_A)/I_A$ and $B\cong T(E_B)/I_B$, where $T(X)$ denotes the tensor algebra on $X$, and  
$I_A$ and $I_B$ denote the corresponding ideals of relations.
We assume that $E_A$ and $E_B$ are $H$-stable. 
\begin{rmk}
\label{rmk:IAIB-Hstable}
The $H$-module structures
on $E_A$ and $E_B$ equip $T(E_A)$ and $T(E_B)$ with canonical $H$-module algebra structures. It is straightforward to verify that the maps $T(E_A)\to A$ and $T(E_B)\to B$ are $H$-module homomorphisms. In particular, $I_A$ and $I_B$ are $H$-stable subspaces of $T(E_A)$ and $T(E_B)$, respectively.
\end{rmk}
Consider the linear map \[
\gamma_{A,B}:E_A\otimes E_B\to T(E_A\oplus E_B)\ ,\ 
\gamma_{A,B}(a\otimes b):=ab.
\] 
Note that we can express $\gamma_{A,B}$ as the composition
\begin{equation}
\label{eq:gammaaa}
E_A\otimes E_B\xrightarrow{\sfi_A\otimes\sfi_B}T(E_A\oplus E_B)\otimes T(E_A\oplus E_B)\xrightarrow{\ a\otimes b\mapsto ab\ } T(E_A\oplus E_B),
\end{equation}
where $\sfi_A:E_A\to  T(E_A\oplus E_B)$ and $\sfi_B:E_B\to  T(E_A\oplus E_B)$ are canonical embeddings.
For the next lemma, recall that the $H$-module structure on $E_A\oplus E_B$ induces a canonical $H$-module algebra structure on $T(E_A\oplus E_B)$.
\begin{lem}
\label{gammaH}
$\gamma_{A,B}$ is an $H$-module homomorphism.
\end{lem}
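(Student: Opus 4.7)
The plan is to exhibit $\gamma_{A,B}$ as a composition of $H$-module homomorphisms, using the factorization~\eqref{eq:gammaaa}. Since the composition of $H$-module homomorphisms is again an $H$-module homomorphism, it suffices to check the two factors separately.

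First I would argue that $\sfi_A \otimes \sfi_B : E_A \otimes E_B \to T(E_A \oplus E_B) \otimes T(E_A \oplus E_B)$ is an $H$-module homomorphism. The inclusion $E_A \oplus E_B \hookrightarrow T(E_A \oplus E_B)$ is an $H$-module map by the very construction of the tensor algebra as an $H$-module algebra generated by the $H$-module $E_A \oplus E_B$ (compare Remark~\ref{rmk:IAIB-Hstable}). Restricting to the summand $E_A$ (respectively $E_B$) shows that $\sfi_A$ and $\sfi_B$ are $H$-module homomorphisms. Taking their outer tensor product yields an $H \otimes H$-module homomorphism, which becomes an $H$-module homomorphism upon restriction along the coproduct $\Delta : H \to H \otimes H$ (that is, with respect to the inner tensor product $H$-module structure on both source and target).

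Next I would note that the multiplication $m : T(E_A \oplus E_B) \otimes T(E_A \oplus E_B) \to T(E_A \oplus E_B)$ is an $H$-module homomorphism, again because $T(E_A \oplus E_B)$ is an $H$-module algebra (this is exactly the condition that its multiplication intertwines the $H$-action). Composing with the previous step, $\gamma_{A,B} = m \circ (\sfi_A \otimes \sfi_B)$ is an $H$-module homomorphism.

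There is no real obstacle here; the only subtle point is to keep straight which $H$-module structures are being used (inner versus outer tensor products), which is handled by restriction along $\Delta$ as in Remark~\ref{rmk:HHHhH}. Note that this lemma does not use the braiding $\check R$ or the $H$-module algebra structures of $A$ and $B$ themselves (only of $T(E_A\oplus E_B)$), which is why the argument is purely formal.
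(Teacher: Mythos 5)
Your proof is correct and follows exactly the paper's argument: factor $\gamma_{A,B}$ as in~\eqref{eq:gammaaa} and observe that both $\sfi_A\otimes\sfi_B$ (after restriction along the coproduct) and the multiplication of the $H$-module algebra $T(E_A\oplus E_B)$ are $H$-module homomorphisms. The extra care you take with inner versus outer tensor product structures is a fine elaboration of what the paper leaves implicit.
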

\begin{proof}
Since $\sfi_A$, $\sfi_B$ and the product of $T(E_A\oplus E_B)$  are $H$-module homomorphisms, the assertion follows from the description of  $\gamma_{A,B}$ in~\eqref{eq:gammaaa}. 
\end{proof}
By the universal property of tensor algebras the map $E_A\oplus E_B\to A\otimes_{\check{R}} B$ given by the assignment $a\oplus b\mapsto a\otimes 1+1\otimes b$
induces a homomorphism of algebras 
\[
\pi:T(E_A\oplus E_B)\to A\otimes_{\check{R}} B.
\]
\begin{prp}
\label{prp:ARBAB}
Let $A$, $B$, $E_A$ and $E_B$ be as above.
Then $\pi$ induces an isomorphism of algebras
 \[
T(E_A\oplus E_B)/I_{A,B}
\cong
A\otimes_{\check{R}} B,
\]
where $I_{A,B}$ denotes the two-sided ideal of $T(E_A\oplus E_B)$ generated by $I_A$, $I_B$ and the relations 
\begin{equation}
\label{eq:ba-gammaAB}
ba-\gamma_{A,B}\circ  \check{R}_{B,A}(b\otimes a)\quad
\text{for }a\in E_A,\ b\in E_B.
\end{equation}
\end{prp}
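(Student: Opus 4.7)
The plan is to show that $\pi$ factors through a surjective algebra map $\bar\pi$ from the quotient, and then construct a linear two-sided inverse $\rho$. Surjectivity of $\pi$ follows from~\eqref{eq:lem241}, since every elementary tensor $a\otimes b=(a\otimes 1)(1\otimes b)$ lies in the image once $E_A,E_B$ are used to generate $A,B$. The inclusions $I_A,I_B\sseq\ker(\pi)$ follow from the obvious factorizations $T(E_A)\twoheadrightarrow A\into A\otimes_{\check R}B$ and $T(E_B)\twoheadrightarrow B\into A\otimes_{\check R}B$. For the commutation relations~\eqref{eq:ba-gammaAB}, a direct computation from Definition~\ref{dfn:ARBB} together with Lemma~\ref{lem:RcheckBAprop}(i) gives $(1\otimes b)(a\otimes 1)=\sum a''\otimes b''$ whenever $\check{R}_{B,A}(b\otimes a)=\sum a''\otimes b''$; on the other hand, $\pi(\gamma_{A,B}\circ\check{R}_{B,A}(b\otimes a))=\sum(a''\otimes 1)(1\otimes b'')=\sum a''\otimes b''$ by~\eqref{eq:lem241}. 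Hence $\pi$ descends to a surjective $\bar\pi:T(E_A\oplus E_B)/I_{A,B}\twoheadrightarrow A\otimes_{\check R}B$.

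Next, I define a linear map $\rho:A\otimes B\to T(E_A\oplus E_B)/I_{A,B}$ by $\rho(a\otimes b):=\tilde a\tilde b+I_{A,B}$ for arbitrary lifts $\tilde a\in T(E_A),\tilde b\in T(E_B)$; well-definedness is immediate from $I_A,I_B\sseq I_{A,B}$. A direct check using~\eqref{eq:lem241} shows $\bar\pi\circ\rho=\mathrm{id}_{A\otimes B}$: the composition sends $a\otimes b\mapsto\tilde a\tilde b\mapsto\pi(\tilde a)\pi(\tilde b)=(a\otimes 1)(1\otimes b)=a\otimes b$. It therefore suffices to prove that $\rho$ is surjective, which will force $\bar\pi$ to be a bijection.

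This last step is the only substantive task — a normal-form argument. I claim every pure monomial $x=x_1\cdots x_n\in T(E_A\oplus E_B)$ with $x_i\in E_A\cup E_B$ lies in the image of $\rho$ modulo $I_{A,B}$. The proof is by induction on the inversion statistic $\mathrm{inv}(x):=\#\{(i,j):i<j,\,x_i\in E_B,\,x_j\in E_A\}$. The base case $\mathrm{inv}(x)=0$ is tautological, since $x$ already has all $E_A$-letters preceding all $E_B$-letters. When $\mathrm{inv}(x)>0$, choose an adjacent pair $x_i\in E_B$, $x_{i+1}\in E_A$ and use~\eqref{eq:ba-gammaAB} to rewrite $x_ix_{i+1}\equiv\sum a''b''\pmod{I_{A,B}}$ with $a''\in E_A$ and $b''\in E_B$ (take any lift of $\check{R}_{B,A}(x_i\otimes x_{i+1})\in A\otimes B$ to $T(E_A)\otimes T(E_B)$; distinct lifts differ by an element of $I_{A,B}$). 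A short combinatorial check shows that in each resulting monomial the inversion at $(i,i+1)$ is removed, while any newly created inversions at positions $(j,i)$ with $j<i,\,x_j\in E_B$ (resp.\ $(i+1,k)$ with $k>i+1,\,x_k\in E_A$) are matched bijectively by removed inversions at $(j,i+1)$ (resp.\ $(i,k)$), giving a net decrease of exactly one. The main obstacle is precisely this inversion bookkeeping at each swap; with it in hand, the induction closes, $\rho$ is surjective, and the proposition follows.
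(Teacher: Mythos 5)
Your proof is correct and follows essentially the same route as the paper's: both arguments come down to the straightening identity $T(E_A\oplus E_B)=T(E_A)T(E_B)+I_{A,B}$ together with the observation that $\pi(\tilde a\tilde b)=(a\otimes 1)(1\otimes b)=a\otimes b$. The only difference is packaging — the paper deduces injectivity by showing $\pi$ maps a spanning set $\{a_\alpha b_\beta+I_{A,B}\}$ bijectively onto a basis of $A\otimes B$, whereas you build an explicit linear section $\rho$ with $\bar\pi\circ\rho=\mathrm{id}$; your inversion-statistic induction also supplies the details of the straightening step that the paper merely asserts (note that no lifting is actually needed there, since $\check R_{B,A}(E_B\otimes E_A)\sseq E_A\otimes E_B$ by naturality, so each rewrite is a genuine letter swap).
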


\begin{proof}
We have  $\pi(E_A\oplus E_B)=E_{A,B}$ where $E_{A,B}:=(E_A\otimes 1)\oplus (1\otimes E_B)$. Furthermore, $A\otimes_{\check R} B$ is generated as an algebra by $E_{A,B}$. Thus $\pi$ is a surjection. 
Next we prove that $\pi$ is an injection.
From the definition of the product of $A\otimes_{\check R} B$ it follows that $I_{A,B}\sseq \ker \pi$. Thus, to complete the proof it suffices to verify the reverse inclusion.

By the relations~\eqref{eq:ba-gammaAB}, every element of
$T(E_A\oplus E_B)/I_{A,B}$
 can be expressed as a linear combination of  products of elements of $E_A$ and $E_B$ in which elements of $E_A$ occur before elements of $E_B$. Thus,  
\begin{equation}
\label{eq:T(EA)_A}
T(E_A\oplus E_B)=T(E_A)T(E_B)+I_{A,B}.
\end{equation}
Now choose $\{a_\alpha\}_{\alpha\in \mathcal I_A}\sseq T(E_A)$ 
and $\{b_\beta\}_{\beta\in \mathcal I_B}\sseq T(E_B)$
such that $\{I_A+a_\alpha\}_{\alpha\in \cI_A}$ is a basis of $T(E_A)/I_A\cong A$ and 
$\{I_B+b_\beta\}_{\beta\in \cI_B}$ is a basis of $T(E_B)/I_B\cong B$.
From~\eqref{eq:T(EA)_A} it follows that the elements $I_{A,B}+a_\alpha b_\beta$ for $\alpha\in I_A$ and $\beta\in I_B$ constitute a spanning set of $T(E_A\oplus E_B)/I_{A,B}$.

From~\eqref{eq:lem241} it follows that $\pi(a_\alpha)=
\oline a_\alpha\otimes 1$ for some $\oline a_\alpha\in A$ and $\pi(b_\beta)=1\otimes \oline b_\beta$ for some $\oline b_\beta\in B$. 
Also, $\pi(I_{A,B}+a_\alpha b_\beta)=\oline a_\alpha\otimes\oline b_\beta$.
The sets $\{\oline a_\alpha\}_{\alpha\in \cI_A}$ and 
$\{\oline b_\beta\}_{\beta\in \cI_B}$ are bases of $A$ and $B$, respectively. 
 Thus $\pi$ maps a spanning set  of $T(E_A\oplus E_B)/I_{A,B}$ bijectively onto the  basis 
$\{a_\alpha \otimes b_\beta\,:\,\alpha\in\cI_A,\beta\in\cI_B\}$ 
of $A\otimes_{\check{R}} B$. It follows that $\ker\pi\sseq I_{A,B}$. 
\end{proof}
Let $\psi:E_B\times E_A\to \mathbb K$ be an $H$-invariant bilinear form, that is
\begin{equation}
\label{eq:Psix1x2}
\sum \psi(x_1\cdot b,x_2\cdot a)=\epsilon(x)\psi(b,a)\quad
\text{for }a\in E_A,\ b\in E_B,\ x\in H,
\end{equation}
where as before $\epsilon:H\to \mathbb K$ denotes the counit of $H$. Let $I_{A,B,\psi}$ denote 
the two-sided ideal of $T(E_A\oplus E_B)$ that is generated by $I_A$, $I_B$, and relations of the form 
\begin{equation}
\label{eq:ba-gammaABpsi}
ba-\gamma_{A,B}\circ  \check{R}_{B,A}(b\otimes a)-\psi(b,a)\quad
\text{for }a\in E_A,\ b\in E_B.
\end{equation} 
\begin{dfn}
\label{dfn:ARpsiBBB}
We call the algebra 
$
A\otimes_{\check{R},\psi} B:=T(E_A\oplus E_B)/I_{A,B,\psi}
$ the
\emph{$\psi$-deformed $\check R$-twisted tensor product} of $A$ and $B$ relative to $E_A$ and $E_B$.
\end{dfn}
The $H$-module structure of $E_A\oplus E_B$ equips $T(E_A\oplus E_B)$ with a canonical $H$-module algebra structure. We have the following statement.
\begin{prp}
\label{prp:Hmd-desc}
The canonical $H$-module algebra structure on $T(E_A\oplus E_B)$ descends to an $H$-module algebra structure on 
$A\otimes_{\check R,\psi} B$. 
\end{prp}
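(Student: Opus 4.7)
The plan is to show that the two-sided ideal $I_{A,B,\psi}\subseteq T(E_A\oplus E_B)$ is $H$-stable; once this is established, the canonical $H$-module algebra structure on $T(E_A\oplus E_B)$ descends to the quotient $A\otimes_{\check R,\psi}B$, yielding the desired $H$-module algebra structure. The key general fact I will invoke is that inside any $H$-module algebra, the two-sided ideal generated by an $H$-stable subset is itself $H$-stable; this follows from the Sweedler computation $x\cdot(yzw)=\sum(x_1\cdot y)(x_2\cdot z)(x_3\cdot w)$, which shows that the $H$-translates of an ideal generator remain in the ideal provided the set of generators is $H$-stable.

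Thus it suffices to verify $H$-stability of each of the three generating sets of $I_{A,B,\psi}$. For $I_A\subseteq T(E_A)$ and $I_B\subseteq T(E_B)$ this is exactly the content of Remark~\ref{rmk:IAIB-Hstable}. The remaining task is to show that the set
\[
\mathcal{R}:=\{r(b,a):=ba-\gamma_{A,B}(\check{R}_{B,A}(b\otimes a))-\psi(b,a)\,:\,a\in E_A,\ b\in E_B\}
\]
is $H$-stable. I would unpack $x\cdot r(b,a)$ for $x\in H$ using three ingredients: first, that $T(E_A\oplus E_B)$ is an $H$-module algebra, so $x\cdot(ba)=\sum(x_1\cdot b)(x_2\cdot a)$ with $x_1\cdot b\in E_B$ and $x_2\cdot a\in E_A$ by $H$-stability of the generating subspaces; second, that $\check{R}_{B,A}$ is an $H$-module isomorphism (as part of the braided structure on $\cC$) and $\gamma_{A,B}$ is an $H$-module homomorphism by Lemma~\ref{gammaH}, so $x\cdot\gamma_{A,B}(\check{R}_{B,A}(b\otimes a))=\sum\gamma_{A,B}(\check{R}_{B,A}((x_1\cdot b)\otimes(x_2\cdot a)))$; third, that $x\cdot\psi(b,a)=\epsilon(x)\psi(b,a)=\sum\psi(x_1\cdot b,x_2\cdot a)$ by the $H$-invariance hypothesis~\eqref{eq:Psix1x2} on $\psi$, since scalars in $T^0(E_A\oplus E_B)=\mathbb{K}$ transform via $\epsilon$.

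Assembling these three identities, I obtain
\[
x\cdot r(b,a)=\sum r(x_1\cdot b,\,x_2\cdot a),
\]
which expresses the $H$-translate of a generator of $\mathcal{R}$ as a finite linear combination of generators of $\mathcal{R}$. This proves $\mathcal{R}$ is $H$-stable, hence $I_{A,B,\psi}$ is $H$-stable, and the proposition follows. The only subtle step is handling the scalar term $\psi(b,a)$: naively the relation $r(b,a)$ is not homogeneous (it mixes degrees $2$ and $0$ in the tensor algebra), so one might worry that the $H$-action breaks the matching between the quadratic and constant pieces. The $H$-invariance axiom~\eqref{eq:Psix1x2} is precisely the compatibility needed to make these two pieces transform in the same way, and it is exactly at this point that the hypothesis on $\psi$ is used.
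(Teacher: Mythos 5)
Your proof is correct and follows essentially the same route as the paper: reduce to $H$-stability of the generating relations of $I_{A,B,\psi}$, handle $I_A$ and $I_B$ via Remark~\ref{rmk:IAIB-Hstable}, and show $x\cdot r(b,a)=\sum r(x_1\cdot b,\,x_2\cdot a)$ using the module-algebra structure, the $H$-equivariance of $\gamma_{A,B}\circ\check R_{B,A}$, and the invariance axiom~\eqref{eq:Psix1x2} for the scalar term. The only (harmless) addition is that you spell out explicitly the general fact that the two-sided ideal generated by an $H$-stable set is $H$-stable, which the paper uses implicitly.
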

\begin{proof}
It suffices to verify that $I_{A,B,\psi}$ is an $H$-stable subspace. By Remark~\ref{rmk:IAIB-Hstable}
the subspaces $I_A$ and $I_B$ of $T(E_A\oplus E_B)$ are $H$-stable. 
Thus it remains to show that  the relations \eqref{eq:ba-gammaABpsi} span an $H$-submodule of $T(E_A\oplus E_B)$.     

Any  $x\in H$ acts on $T^0(E_A\oplus E_B)\cong \mathbb K$ by $\epsilon(x)$.
Next let $a\in E_A$, $b\in E_B$ and $x\in H$.
By~\eqref{eq:Psix1x2} and Lemma~\ref{gammaH} we have
\begin{align*}
x\cdot \big(ba&-\gamma_{A,B}\circ \check R_{B,A}(b\otimes a)-\psi(b,a)\big)\\
&=
\sum
\big(
 (x_1\cdot b)(x_2\cdot a)-\gamma_{A,B}\circ\check R_{B,A}(x_1\cdot b\otimes x_2\cdot a)
-\psi(x_1\cdot b,x_2\cdot a)\big)
,\end{align*}
which is a sum of relations of the form~\eqref{eq:ba-gammaABpsi}.
\end{proof}
By the universal property of tensor algebras the maps $\sfi_A$ and $\sfi_B$ induce embeddings of associative algebras
$\sfi_A:T(E_A)\to T(E_A\oplus E_B)$ and $\sfi_B:T(E_B)\to T(E_A\oplus E_B)$. The latter maps induce
$H$-equivariant homomorphisms of associative algebras
\[
A\cong T(E_A)/I_A\xrightarrow{\ \ \bar\sfi_A\ \ } A\otimes_{\check R,\psi}B\quad\text{ and } \quad
B\cong T(E_B)/I_B\xrightarrow{\ \ \bar\sfi_B\ \ } A\otimes_{\check R,\psi}B.
\]
By tensoring the latter maps and then composing with the products of the algebras $T(E_A\oplus E_B)$ and  $A\otimes_{\check R,\psi} B$ we obtain the following
commutative diagram:
\begin{align}
\label{eq:commDOId}
\xymatrix@C+2pc{
T(E_A)\otimes T(E_B)
\ar[r]^{\sfi_A\otimes \sfi_B\quad\quad\quad} \ar[d]
&
T(E_A\oplus E_B)
\otimes 
T(E_A\oplus E_B) 
\ar[r]^{\quad \quad \quad a\otimes b\mapsto ab} \ar[d]& 
T(E_A\oplus E_B) \ar[d]\\
A\otimes B
\ar[r]^{\bar\sfi_A\otimes \bar \sfi_B\quad\quad\quad} 
&
(A\otimes_{\check R,\psi} B)\otimes (A\otimes_{\check R,\psi} B)\ar[r]^{\quad\quad\quad a\otimes b\mapsto ab}&  A\otimes_{\check R,\psi} B
}
\end{align}
In the above diagram the vertical maps are the canonical quotients. 
\begin{rmk}
\label{rmk:AoB->ARB}
From Remark~\ref{rmk:IAIB-Hstable}
it follows that 
$\sfi_A\otimes \sfi_B$
and
$\bar\sfi_A\otimes \bar\sfi_B$ are $H\otimes H$-module homomorphisms. Also,  the products of 
$T(E_A\oplus E_B)$ and $A\otimes_{\check R,\psi} B$
 are  $H$-module homomorphisms
(see
Proposition
~\ref{prp:Hmd-desc}). 
Thus the composition of the bottom horizontal maps in~\eqref{eq:commDOId}, which is given by \begin{equation}
\label{eq:AJlfjf}
A\otimes B\to A\otimes_{\check R,\psi}B\ ,\ 
a\otimes b\mapsto I_{A,B,\psi}+ab,
\end{equation}
is an $H$-module homomorphism (recall that an $H\otimes H$-module homomorphism is also an $H$-module homomorphism by restriction along the coproduct map
$H\to H\otimes H$). 
\end{rmk}

\subsection{Locally finite braided triples and their products}
\label{subsec:prdcts}
Given a braided triple
$(H,\cC,\check R)$ we set 
\begin{equation}
\label{eq:RvscheckR}
R_{V,W}:=\sigma_{W,V}^{}\circ \check{R}_{V,W}^{},
\end{equation}
 where \[
 \sigma_{W,V}^{}:W\otimes V\to V\otimes W\quad,\quad
w\otimes v\mapsto v\otimes w\] 
  is the standard flip map.


\begin{dfn}
\label{dfn:braidedfinite}
We say a braided triple $(H,\cC,\check R)$  is   \emph{locally finite} if it satisfies the following conditions.
\begin{itemize}
\item[(i)] Every $V\in\mathrm{Obj}(\cC)$ is a sum of its finite dimensional submodules that belong to $\mathrm{Obj}(\cC)$. 

\item[(ii)] For finite dimensional modules $V,W\in\mathrm{Obj}(\cC)$ there exist $\omega_{V,W},\oline \omega_{V,W}\in H\otimes H$ such that \[
R_{V,W}^{}(v\otimes w)=\omega_{V,W}^{}\cdot (v\otimes w)
\quad\text{and}\quad
R_{V,W}^{-1}(v\otimes w)=
\oline \omega_{V,W}^{}\cdot (v\otimes w)
\quad\text{ for }
v\otimes w\in V\otimes W.
\]
\end{itemize} 
\end{dfn}
Let us briefly explain the idea behind Definition~\ref{dfn:braidedfinite}. The braiding of the category of modules of the  Hopf algebra $U_q(\gl_n)$ 
is given by a formal series 
that does not belong to $U_q(\gl_n)\otimes U_q(\gl_n)$
(see Subsection~\ref{subsec:R-matUq}). Thus, in order to give a rigorous proof of Proposition~\ref{prp:AotBUrUR} below, we need 
to be able to replace this formal series locally by a finite 2-tensor.

\begin{lem}
\label{lem:HCRW1Wr}
Let $(H,\cC,\check R)$ be a locally finite braided triple and let  $V_1,\ldots, V_r,W_1,\ldots,W_r\in\mathrm{Obj}(\cC)$ be finite dimensional modules. 
Let $\omega_{V,W},\oline \omega_{V,W}\in H\otimes H$ be chosen as in Definition~\ref{dfn:braidedfinite}(ii), 
where $
V:=\bigoplus_{i=1}^r V_i
$
and 
$
W:=\bigoplus_{i=1}^r W_i
$. 
Then  \[
R_{V_i,W_j}(v_i\otimes w_j)=\omega_{V,W}\cdot (v_i\otimes w_j)
\quad\text{and} 
\quad
R^{-1}_{V_i,W_j}(v_i\otimes w_j)=\oline\omega_{V,W}\cdot (v_i\otimes w_j),
\]for all $v_i\in V_i$ and $w_j\in W_j$ where
$1\leq i,j\leq r$.
\end{lem}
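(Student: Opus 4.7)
The plan is to derive the lemma from the naturality of the braiding $\check R$ applied to the canonical inclusions $\iota_i : V_i \hookrightarrow V$ and $\kappa_j : W_j \hookrightarrow W$, both of which are morphisms in $\cC$ because $V_i$ and $W_j$ are $H$-submodules of $V = \bigoplus V_i$ and $W = \bigoplus W_j$ respectively. Since $R_{V,W} = \sigma_{W,V} \circ \check R_{V,W}$ and the flip $\sigma$ is natural in its arguments, naturality of $\check R$ passes to naturality of $R$; because $R_{V,W}$ is an isomorphism, its inverse is also a natural transformation. Applied to $\iota_i \otimes \kappa_j$, naturality yields
\[
R^{\pm 1}_{V,W} \circ (\iota_i \otimes \kappa_j) = (\iota_i \otimes \kappa_j) \circ R^{\pm 1}_{V_i, W_j}.
\]

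The second ingredient I would use is that the subspace $V_i \otimes W_j \subseteq V \otimes W$ is preserved by the outer tensor product action of $H \otimes H$, since $V_i$ and $W_j$ are $H$-submodules. In particular, given $v_i \in V_i$ and $w_j \in W_j$, the hypothesis of Definition~\ref{dfn:braidedfinite}(ii) guarantees that
\[
R_{V,W}^{}(v_i \otimes w_j) = \omega_{V,W}^{} \cdot (v_i \otimes w_j) \in V_i \otimes W_j,
\]
and analogously $R_{V,W}^{-1}(v_i \otimes w_j) = \oline\omega_{V,W}^{} \cdot (v_i \otimes w_j) \in V_i \otimes W_j$. Combining this with the naturality identity above and identifying $V_i \otimes W_j$ with its image under $\iota_i \otimes \kappa_j$ yields the two asserted equalities.

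The argument is purely formal, so I do not expect any substantive obstacle; the only point worth emphasizing is the conceptual one that the \emph{same} pair of elements $\omega_{V,W}, \oline\omega_{V,W} \in H \otimes H$, chosen once and for all for the direct sums $V$ and $W$, simultaneously implement $R^{\pm 1}$ on every summand $V_i \otimes W_j$. This is exactly the content of the naturality square for the inclusions $\iota_i$ and $\kappa_j$, which is why working with the total direct sums is the correct setup. This lemma is the elementary fact that lets subsequent arguments (such as Proposition~\ref{prp:AotBUrUR}) localize the formal $R$-matrix to a genuine element of $H \otimes H$.
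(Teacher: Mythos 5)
Your argument is correct and follows essentially the same route as the paper, which likewise invokes naturality of $\check R$ with respect to the canonical inclusions $V_i\otimes W_j\hookrightarrow V\otimes W$. Your additional remarks (passage from $\check R$ to $R$ via naturality of the flip, invariance of $V_i\otimes W_j$ under the $H\otimes H$-action, and the point that a single $\omega_{V,W}$ works simultaneously on all summands) are all accurate elaborations of the same one-line idea.
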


\begin{proof}
This
follows from naturality of $\check R$ with respect to the canonical maps  $V_i\otimes W_j\into V\otimes W$. 
\end{proof}
We need to work with the braided triples associated to tensor products of Hopf algebras $U_q(\gl_n)$. To this end, we need the following definition. 
\begin{dfn}
\label{dfn:product-loc-fin}
Let $(H,\cC,\check R)$ and 
$(H',\cC',\check R')$ be locally finite braided triples.  
Set $H'':=H\otimes H'$ and equip $H''$ with the canonical tensor product Hopf algebra structure.  
Furthermore, let $\cC''$ be the full subcategory of the category of $H''$-modules whose objects are direct sums of modules of the form $V\otimes V'$ where $V\in \mathrm{Obj}(\cC)$ and $V'\in\mathrm{Obj}(\cC')$. We define a braiding on $\cC''$ as follows. First, for  $V,W\in\mathrm{Obj}(\cC)$ and
$V',W'\in \mathrm{Obj}(\cC')$ 
we set
\begin{equation}
\label{eq:dfnofRcheck}
\check R''_{V\otimes V',W\otimes W'}:=
\left(\check R_{V,W}^{}\right)_{13}
\circ \left(\check R'_{V',W'}\right)_{24}.
\end{equation}
Here 
$\left(\check R'_{V',W'}\right)_{24}$ means that $\check R'_{V',W'}$ acts on the $2^\mathrm{nd}$ and the $4^\mathrm{th}$ components of $(V\otimes V')\otimes (W\otimes W')$, resulting in a map
\[
(V\otimes V')\otimes (W\otimes W')\to (V\otimes W')\otimes (W\otimes V'),
\]
and $\big(\check R_{V,W}\big)_{13}$ is defined analogously. 
Next for $\widetilde{V}:=\bigoplus_i V_i^{}\otimes V'_i$
and 
$\widetilde{W}:=\bigoplus_j W_j^{}\otimes W'_j$
we define \[
\check R''_{\widetilde{V},\widetilde{W}}:=\bigoplus_{i,j}\check R''_{V^{}_i\otimes V'_i,W^{}_j\otimes W_j'}.
\]
\end{dfn}

 \begin{prp}
 \label{prp:H''C''R''}
Let 
$H''$, $\cC''$ and $\check R''$ be as in
Definition~\ref{dfn:product-loc-fin}. Then 
$(H'',\cC'',\check R'')$ is a locally finite braided triple.
\end{prp}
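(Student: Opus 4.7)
The plan is to verify in turn that $\cC''$ is a braided monoidal subcategory of $H''$-modules, that $\check R''$ satisfies the hexagon axioms, and that the two local finiteness conditions of Definition~\ref{dfn:braidedfinite} hold. The closures of $\cC''$ under direct sums and tensor products, and the fact that the trivial $H''$-module lies in $\mathrm{Obj}(\cC'')$, are immediate from the corresponding properties of $\cC$ and $\cC'$ (using $(V\otimes V')\otimes (W\otimes W')\cong (V\otimes W)\otimes (V'\otimes W')$ as $H''$-modules). The $H''$-linearity and naturality of $\check R''_{V\otimes V',\,W\otimes W'}$ reduce to the corresponding properties for $\check R$ in tensor positions $1,3$ and $\check R'$ in positions $2,4$, once one observes that the coproduct of $H''=H\otimes H'$ splits as the coproducts of $H$ and $H'$ in the corresponding positions.

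For the hexagon axioms, I would write out $\check R''_{\widetilde V,\,\widetilde W_1\otimes \widetilde W_2}$ using \eqref{eq:dfnofRcheck} after identifying $\widetilde W_1\otimes \widetilde W_2\cong (W_1\otimes W_2)\otimes (W'_1\otimes W'_2)$ as an $H''$-module, apply the hexagon axiom of $(H,\cC,\check R)$ to the factor $(\check R_{V,W_1\otimes W_2})_{13}$ and that of $(H',\cC',\check R')$ to the factor $(\check R'_{V',W'_1\otimes W'_2})_{24}$, and then use that operations in disjoint tensor positions commute to recombine the outcome into the desired right-hand side of the hexagon axiom for $\check R''$. The second hexagon is handled analogously.

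For local finiteness, condition (i) is immediate: if $\widetilde V=\bigoplus_i V_i\otimes V'_i\in\mathrm{Obj}(\cC'')$, then by the local finiteness of the factor triples each $V_i$ is a sum of finite-dimensional submodules in $\mathrm{Obj}(\cC)$ and each $V'_i$ a sum of finite-dimensional submodules in $\mathrm{Obj}(\cC')$, so $\widetilde V$ is a sum of the corresponding pairwise tensor products, each finite-dimensional and in $\mathrm{Obj}(\cC'')$. For (ii), given finite-dimensional $\widetilde V=\bigoplus_{i=1}^r V_i\otimes V'_i$ and $\widetilde W=\bigoplus_{j=1}^s W_j\otimes W'_j$, I would set $V:=\bigoplus_i V_i$, $V':=\bigoplus_i V'_i$, $W:=\bigoplus_j W_j$ and $W':=\bigoplus_j W'_j$, invoke the local finiteness of $(H,\cC,\check R)$ and $(H',\cC',\check R')$ to choose $\omega_{V,W},\oline\omega_{V,W}\in H\otimes H$ and $\omega'_{V',W'},\oline\omega'_{V',W'}\in H'\otimes H'$ implementing $R_{V,W}^{\pm 1}$ and ${R'}_{V',W'}^{\pm 1}$, and then define
\[
\omega''_{\widetilde V,\widetilde W}:=(\omega_{V,W})_{13}\cdot (\omega'_{V',W'})_{24},\quad
\oline\omega''_{\widetilde V,\widetilde W}:=(\oline\omega_{V,W})_{13}\cdot (\oline\omega'_{V',W'})_{24}
\]
in $H''\otimes H''=H\otimes H'\otimes H\otimes H'$. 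Lemma~\ref{lem:HCRW1Wr} applied to each factor triple then shows that these elements implement $R''_{\widetilde V,\widetilde W}$ and its inverse on each summand $(V_i\otimes V'_i)\otimes (W_j\otimes W'_j)$, and hence on all of $\widetilde V\otimes \widetilde W$. The main obstacle is the hexagon bookkeeping, where one has to track four tensor positions and confirm that the decorations $(\cdot)_{13}$ and $(\cdot)_{24}$ commute with each other and with the rearrangement isomorphism used to identify $\widetilde W_1\otimes \widetilde W_2$ with $(W_1\otimes W_2)\otimes (W'_1\otimes W'_2)$; once the conventions are fixed, however, this reduces cleanly to the hexagon axioms of the two factor triples.
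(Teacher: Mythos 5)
Your treatment of the closure properties, the hexagon axioms, and both local finiteness conditions matches the paper's proof; in particular your 2-tensor $(\omega_{V,W})_{13}(\omega'_{V',W'})_{24}$ is exactly the one the paper uses for Definition~\ref{dfn:braidedfinite}(ii). However, there is a genuine gap in your naturality step. You assert that naturality of $\check R''$ ``reduces to the corresponding properties for $\check R$ in positions $1,3$ and $\check R'$ in positions $2,4$,'' but this reduction is not available: a morphism $f_V\in\mathrm{Mor}_{\cC''}(V\otimes V',\underline V\otimes\underline V')$ is merely an $H''$-module map and need \emph{not} decompose as a tensor product of a morphism in $\cC$ with a morphism in $\cC'$, so you cannot invoke the naturality of $\check R$ and $\check R'$ separately on the two groups of tensor positions. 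This is precisely the subtlety the paper flags in its proof.

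The fix uses the same tool you already deploy for condition (ii): reduce to finite-dimensional objects via condition (i), and then use Lemma~\ref{lem:HCRW1Wr} to choose a \emph{single} 2-tensor in $H''\otimes H''$ (built from $\omega$'s for the direct sums $V\oplus\underline V$, $W\oplus\underline W$, etc.) that implements both $R''_{V\otimes V',W\otimes W'}$ and $R''_{\underline V\otimes\underline V',\underline W\otimes\underline W'}$. Since $f_V\otimes f_W$ is an $H''\otimes H''$-module map, it automatically commutes with left multiplication by that common 2-tensor, and the naturality square commutes. Your argument as written skips this and would not survive scrutiny for a general morphism of $\cC''$; once patched in this way, it coincides with the paper's proof.
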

\begin{proof} 
It is trivial to check that $(H'',\cC'',\check R'')$ meets the condition of Definition~\ref{dfn:braidedfinite}(i). 
Next we show that $(H'',\cC'',\check R'')$ is a braided triple. It is straightforward to check that $\cC''$ is closed with respect to tensor products and arbitrary direct sums. The hexagon axioms for $\check R''$ follow from those for $\check R$ and $\check R'$. 
Next we verify naturality of $\check R''$. Since $\check R''$  is defined by expansion on direct sums of modules, 
it suffices to prove commutativity of the 
diagram
\[
\xymatrix@C+3pc{(V\otimes V')\otimes (W\otimes W')\ar[r]^{\check R_{V\otimes V',W\otimes W'}} \ar[d]_{f_{V}\otimes f_{W}}& (W\otimes W')\otimes (V\otimes V')\ar[d]^{f_W\otimes f_V}\\
(\underline V\otimes \underline V')\otimes (\underline W\otimes \underline W')
\ar[r]_{\check R_{\underline V\otimes \underline V',\underline W\otimes \underline W'}} & 
(\underline W\otimes \underline W')\otimes (\underline V\otimes \underline V')
}
\]
for all choices of  $V,W,\underline V,\underline W\in\mathrm{Obj}(\cC)$, $V',W',\underline V',\underline W'
\in\mathrm{Obj}(\cC')$, $f_V\in\mathrm{Mor}_{\cC''}(V\otimes V',\underline V\otimes \underline V')$ and $f_W\in \mathrm{Mor}_{\cC''}(W\otimes W',\underline W\otimes \underline W')$. The subtlety here is that we cannot assume that $f_V$ and $f_W$ can be decomposed into tensor products of maps on the tensor components. 
Since $\cC''$ satisfies the condition of Definition~\ref{dfn:braidedfinite}(i), using naturality of $\check R$ and $\check R'$ we can assume that all of the modules in the commutative diagram are finite dimensional.
Using Lemma~\ref{lem:HCRW1Wr} for 
the triples $(H,\cC,\check R)$ and $(H',\cC',\check R')$ it follows that 
$R_{V\otimes V',W\otimes W'}$
and $R_{\underline V\otimes \underline V',\underline W\otimes \underline W'} $ are given by
the left action of the same 2-tensor
in $H''\otimes H''$. Commutativity of the diagram follows immediately.

Finally,  we show that 
the condition of Definition~\ref{dfn:braidedfinite}(ii) holds. 
If $R_{V,W}$ and $R'_{V',W'}$ are equal to the  actions of $\omega_{V,W}\in H\otimes H$ and $\omega'_{V',W'}\in H'\otimes H'$ respectively, then $ R''_{V\otimes V',W\otimes W'}$ is equal to the  action of  \[
(\omega_{V,W})_{13}^{}(\omega'_{V',W'})_{24}^{}\in H''\otimes H''.
\]
This verifies
Definition~\ref{dfn:braidedfinite}(ii) 
 for pairs of $H''$-modules in $\cC''$ of the form $V\otimes V'$ and $W\otimes W'$. Using Lemma~\ref{lem:HCRW1Wr} for 
the triples $(H,\cC,\check R)$ and $(H',\cC',\check R')$, the claim follows 
for general $H''$-modules in $\cC''$. A similar argument can be given for 
$ \big(R''_{V\otimes V',W\otimes W'}\big)^{-1}$. 
\end{proof}
\subsection{A result on $H''$-stable subalgebras of $A\otimes_{\check R''}B$}

We continue with the notation of
Definition~\ref{dfn:product-loc-fin}.
Let $A$ and $B$ be two $H''$-module algebras such that $A,B\in\mathrm{Obj}(\cC'')$. Recall from Remark~\ref{rmk:HHHhH} and Proposition~\ref{prp:H-modual} that $A\otimes_{\check{R}''}B$ is an $H''$-module algebra and also an $H''\otimes H''$-module (but not necessarily an $H''\otimes H''$-module algebra). For $\omega\in H\otimes H$ let \[
T_\omega:A\otimes_{\check R''} B\to A\otimes_{\check R''} B
\] denote the linear endomorphism obtained by the action of $\omega_{13}$. Here by definition $(x\otimes y)_{13}:=x\otimes 1\otimes y\otimes 1$ for $x,y\in H$. 

\begin{lem}
\label{lem:Uxyreplacement} 
Let 
$(H,\cC,\check R)$ and
$(H',\cC',\check R')$ be locally finite braided triples and let
$(H'',\cC'',\check R'')$ be defined as in
Definition~\ref{dfn:product-loc-fin}. Let $A,B\in\mathrm{Obj}(\cC'')$ be $H''$-module algebras. Let $V''_A\sseq A$ and $V''_B\sseq B$ be finite dimensional $H''$-submodules such that 
$V''_A,V''_B\in \mathrm{Obj}(\cC'')$.
We decompose $V_A''$ and $V_B''$ 
as 
\[
V_A''=\bigoplus_{i\in \Omega_A} V_{i,A}^{}\otimes V_{i,A}'\quad\text{and}\quad
V_B''=\bigoplus_{j\in\Omega_B} V_{j,B}^{}\otimes V_{j,B}',
\]
where  $V_{i,A},V_{j,B}\in\mathrm{Obj}(\cC)$ and 
$V'_{i,A},V'_{j,B}\in\mathrm{Obj}(\cC')$ for  $i\in\Omega_A$ and $j\in\Omega_B$ (here $\Omega_A$ and $\Omega_B$ are index sets).
Let $\omega_{V,W},\oline\omega_{V,W}\in H\otimes H$ be chosen as in Definition~\ref{dfn:braidedfinite}(ii)
where
\[
V:=\bigoplus_{i\in\Omega_A} V_{i,A}\quad\text{and}\quad
W:=\bigoplus_{j\in\Omega_B} V_{j,B}.
\]
For $x,y\in H$ let $\omega_{x,y}\in H\otimes H$ be defined by $\omega_{x,y}:=\oline \omega_{V,W}(y\otimes x)\omega_{V,W}$. 
Then
\[
\left(
(\check R''_{B,A})^{-1}\circ
T_{x\otimes y}
\circ\check R''_{B,A}
\right)
\Big|_{V''_B\otimes V''_A}
=T_{\omega_{x,y}}
\quad\text{for }1\leq i\leq r.
\]

\end{lem}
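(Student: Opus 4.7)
The plan is to decompose $V''_B\otimes V''_A$ using the given decompositions of $V''_A$ and $V''_B$ and to verify the identity separately on each summand $V_{j,B}\otimes V'_{j,B}\otimes V_{i,A}\otimes V'_{i,A}$. By Definition~\ref{dfn:product-loc-fin}, on this summand $\check R''_{B,A}$ factors as $(\check R_{V_{j,B},V_{i,A}})_{13}\circ(\check R'_{V'_{j,B},V'_{i,A}})_{24}$; the two factors act on disjoint tensor slots, so they commute and so do their inverses.

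The key observation is that $T_{x\otimes y}$ corresponds to $x\otimes 1\otimes y\otimes 1$: it acts by $x$ on position $1$, by $y$ on position $3$, and as the identity on positions $2$ and $4$. Since $(\check R'_{V'_{j,B},V'_{i,A}})_{24}$ only touches positions $2$ and $4$, it commutes with $T_{x\otimes y}$, and therefore cancels with its inverse in the conjugation. What remains on the summand is
\[
\bigl((\check R_{V_{j,B},V_{i,A}})_{13}\bigr)^{-1}\circ T_{x\otimes y}\circ(\check R_{V_{j,B},V_{i,A}})_{13},
\]
which I need to identify with $T_{\omega_{x,y}}$.

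To complete the proof I would restrict to the two-slot subspace $V_{j,B}\otimes V_{i,A}$ (positions $1$ and $3$), use $\check R_{V_{j,B},V_{i,A}}=\sigma\circ R_{V_{j,B},V_{i,A}}$, and invoke Lemma~\ref{lem:HCRW1Wr} to replace $R_{V_{j,B},V_{i,A}}$ and its inverse by the actions of $\omega_{V,W}$ and $\bar\omega_{V,W}$. Writing $\omega_{V,W}=\sum_k\alpha_k\otimes\beta_k$ and $\bar\omega_{V,W}=\sum_l\gamma_l\otimes\delta_l$ and tracing through the composition yields, for $v\otimes a\in V_{j,B}\otimes V_{i,A}$,
\[
\check R_{V_{j,B},V_{i,A}}^{-1}\bigl((x\otimes y)\cdot\check R_{V_{j,B},V_{i,A}}(v\otimes a)\bigr)=\sum_{k,l}(\gamma_ly\alpha_k\cdot v)\otimes(\delta_lx\beta_k\cdot a),
\]
and the right-hand side is precisely $\bigl(\bar\omega_{V,W}(y\otimes x)\omega_{V,W}\bigr)\cdot(v\otimes a)=\omega_{x,y}\cdot(v\otimes a)$, as required.

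The main subtlety is in the last step: Lemma~\ref{lem:HCRW1Wr} as literally stated provides the action of $\omega_{V,W}$ on the summand $V_{i,A}\otimes V_{j,B}$ of $V\otimes W$, whereas the computation requires its action on the opposite ordering $V_{j,B}\otimes V_{i,A}$. This is resolved by enlarging the truncations $\omega_{V,W}$ and $\bar\omega_{V,W}$ so that they implement $R$ and $R^{-1}$ in both orderings simultaneously, which is always possible by local finiteness and is automatic in the intended $U_q(\gl_n)$-setting where a single universal $R$-matrix governs all braidings at once.
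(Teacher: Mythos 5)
Your proof is correct and takes essentially the same route as the paper's: restrict to the summands $V_{j,B}\otimes V'_{j,B}\otimes V_{i,A}\otimes V'_{i,A}$ by naturality, observe that the factor $(\check R'_{V'_{j,B},V'_{i,A}})_{24}$ commutes with $T_{x\otimes y}$ and cancels, and then identify the remaining two-slot conjugation $(\check R_{V_{j,B},V_{i,A}})^{-1}_{13}\circ T_{x\otimes y}\circ(\check R_{V_{j,B},V_{i,A}})_{13}$ via~\eqref{eq:RvscheckR} and Lemma~\ref{lem:HCRW1Wr}; your explicit $\sum_{k,l}(\gamma_l y\alpha_k)\otimes(\delta_l x\beta_k)$ computation is exactly the step the paper leaves implicit. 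The ordering subtlety you flag is genuine — the computation needs the $2$-tensor implementing $R$ on $V_{j,B}\otimes V_{i,A}$, i.e.\ on (summand of $W$)$\,\otimes\,$(summand of $V$) — and your resolution by enlarging the truncation is fine; the shortest fix is simply to invoke Definition~\ref{dfn:braidedfinite}(ii) and Lemma~\ref{lem:HCRW1Wr} for the ordered pair $(W,V)$, which is what the paper implicitly does (and in the intended $U_q(\gl_n)$ application the same truncated universal $R$-matrix serves both orderings, as you note).
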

\begin{proof}
By naturality of $\check R''$ it follows that 
\[
\left(\check R''_{B,A}\right)\big|_{V''_B\otimes V''_A}
=\check R''_{V''_B,V''_A}\quad\text{and}\quad
\left(\check R''_{B,A}\right)^{-1}\big|_{V''_A\otimes V''_B}=
\left(\check R''_{V''_B,V''_A}\right)^{-1}
.
\]
Thus  
from~\eqref{eq:dfnofRcheck} it follows that 
\[
\left(
(\check R''_{B,A})^{-1}\circ
T_{x\otimes y}
\circ\check R''_{B,A}
\right)
\big|_{V_{j,B}\otimes V'_{j,B}\otimes V_{i,A}\otimes V'_{i,A}}
=
\left(\check R_{V_{j,B}, V_{i,A}}\right)^{-1}_{13}\circ T_{x\otimes y}\circ
\left(
\check R_{V_{j,B}, V_{i,A}}\right)_{13}.
\] 
The relation~\eqref{eq:RvscheckR}
 and Lemma~\ref{lem:HCRW1Wr} for the triple $(H,\cC,\check R)$ imply
the assertion of the lemma. 
\end{proof}

\begin{prp}
\label{prp:AotBUrUR}
Let 
$(H,\cC,\check R)$ and
$(H',\cC',\check R')$ be locally finite braided triples and let
$(H'',\cC'',\check R'')$ be defined as in
Definition~\ref{dfn:product-loc-fin}.
Let $A,B\in\mathrm{Obj}(\cC'')$ be $H''$-module algebras. Let 
$\mathcal E$ be a subspace of $A\otimes_{\check R''} B$ and let $\cA$ denote the subalgebra of 
$A\otimes_{\check R''}B$ that is generated by $\mathcal E$.
Finally, let 
$\oline H$ be a  sub-bialgebra of $H$. 
\begin{itemize}
\item[\rm (i)] Assume that for every pair of finite dimensional modules $V,W\in \mathrm{Obj}(\cC)$, we can choose  
$\omega_{V,W},\oline\omega_{V,W}\in\oline H\otimes H$ that satisfy Definition~\ref{dfn:braidedfinite}(ii). If  
$\mathcal E$ is stable under the action of the subalgebra $H\otimes 1\otimes \oline H\otimes 1$ of $H''\otimes H''$, then so is $\cA$. 

\item[\rm (ii)] Assume that for every pair of finite dimensional modules $V,W\in \mathrm{Obj}(\cC)$, we can choose  
$\omega_{V,W},\oline\omega_{V,W}\in H\otimes \oline H$ that satisfy Definition~\ref{dfn:braidedfinite}(ii). If  
$\mathcal E$ is stable under the action of the subalgebra $\oline H\otimes 1\otimes  H\otimes 1$ of $H''\otimes H''$, then so is $\cA$.

 \end{itemize}
\end{prp}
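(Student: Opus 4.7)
The plan is to prove (i) and to deduce (ii) by the same argument with the roles of the two tensor factors of $H''$ swapped. I proceed by induction on the length $k$ of a monomial in $\mathcal E$: the aim is to show that for all $\alpha_1,\ldots,\alpha_k\in\mathcal E$, all $x\in H$ and all $y\in\oline H$, one has $(x\otimes 1\otimes y\otimes 1)\cdot(\alpha_1\cdots\alpha_k)\in\cA$. Since $\cA$ is spanned by such monomials together with the unit (on which $H\otimes 1\otimes\oline H\otimes 1$ acts via $\epsilon\otimes\epsilon$), this suffices. The base case $k=1$ is exactly the stability hypothesis on $\mathcal E$.

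For the inductive step I set $\gamma:=\alpha_2\cdots\alpha_k$ and expand $(x\otimes 1\otimes y\otimes 1)\cdot(\alpha_1\gamma)$ using the defining formula~\eqref{eq:mAmBcirc} of the $\check R''$-twisted product. After distributing the actions of $x\otimes 1$ and $y\otimes 1$ via the Sweedler coproducts (using that $A$ and $B$ are $H''$-module algebras), the whole computation hinges on rewriting the middle factor $\sum_s (x_2\otimes 1)p_s\otimes(y_1\otimes 1)q_s$, where $\sum_s p_s\otimes q_s=\check R''_{B,A}(b\otimes c)$ for $b$ a $B$-component of $\alpha_1$ and $c$ an $A$-component of $\gamma$. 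This is precisely the setup for Lemma~\ref{lem:Uxyreplacement}: after choosing finite-dimensional $H''$-submodules $V''_A\sseq A$ containing the finitely many $A$-components of $\gamma$ and $V''_B\sseq B$ containing those of $\alpha_1$ (possible by Definition~\ref{dfn:braidedfinite}(i) for $(H'',\cC'',\check R'')$, cf. Proposition~\ref{prp:H''C''R''}), the lemma replaces the middle factor by $\check R''_{B,A}\circ T_{\omega_{x_2,y_1}}$ applied to $b\otimes c$.

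At this point the hypothesis of (i) enters decisively: because $\omega_{V,W}$ and $\oline\omega_{V,W}$ are chosen in $\oline H\otimes H$, the product $\omega_{x_2,y_1}=\oline\omega_{V,W}(y_1\otimes x_2)\omega_{V,W}$ also lies in $\oline H\otimes H$. Writing $\omega_{x_2,y_1}=\sum_k u_k\otimes v_k$ with $u_k\in\oline H$ and $v_k\in H$, and reassembling the pieces via~\eqref{eq:lem241}, the identity I end up with is
\[
(x\otimes 1\otimes y\otimes 1)\cdot(\alpha_1\gamma)=\sum_{(x),(y),k}\bigl[(x_1\otimes 1\otimes u_k\otimes 1)\cdot\alpha_1\bigr]\bigl[(v_k\otimes 1\otimes y_2\otimes 1)\cdot\gamma\bigr].
\]
In the first bracketed factor, $x_1\in H$ and $u_k\in\oline H$, so it lies in $\mathcal E$ by the stability hypothesis; in the second, $v_k\in H$ and $y_2\in\oline H$, so it lies in $\cA$ by the inductive hypothesis. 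The product therefore lies in $\mathcal E\cdot\cA\sseq\cA$, which closes the induction.

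The main technical point I anticipate is the bookkeeping around finite-dimensional subspaces: one must enlarge the modules $V$ and $W$ appearing in Lemma~\ref{lem:Uxyreplacement} to cover all summands that arise when $\alpha_1$ and $\gamma$ are expanded, so that a single $\omega_{x_2,y_1}$ works uniformly across the finite sum; this is legitimate by Lemma~\ref{lem:HCRW1Wr} together with the local finiteness of $(H'',\cC'',\check R'')$. Part (ii) follows by the same inductive argument, the only change being that the hypothesis now yields $\omega_{x_2,y_1}\in H\otimes\oline H$, hence $u_k\in H$ and $v_k\in\oline H$, so the two bracketed factors are acted upon by elements of $\oline H\otimes 1\otimes H\otimes 1$ (with $x\in\oline H$ and $y\in H$), matching the stability algebra of (ii).
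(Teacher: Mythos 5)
Your proposal is correct and follows essentially the same route as the paper's proof: the same induction on monomial length, the same splitting of a length-$k$ monomial as $\alpha_1\cdot(\alpha_2\cdots\alpha_k)$, the same invocation of Lemma~\ref{lem:Uxyreplacement} to replace the conjugated action by $T_{\omega_{x_2,y_1}}$ with $\omega_{x_2,y_1}=\oline\omega_{V,W}(y_1\otimes x_2)\omega_{V,W}$, and the same final identity distributing the factors onto $\mathcal E$ and $\cA_{k-1}$. The membership analysis of $u_k$ and $v_k$ in parts (i) and (ii) also matches the paper exactly.
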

\begin{proof}
For $r\geq 0 $ set $\cA_r:=\spn\left\{w^{(1)}\cdots w^{(r)}\,:\,w^{(i)}\in\mathcal E\text{ for }1\leq i\leq r\right\}$, so that $\cA:=\sum_{r\geq 0}\cA_r$. We prove by induction on $r$ that 
under the assumptions of (i) (respectively, of (ii)), the subspace 
$\cA_r$ is $(H\otimes 1\otimes \oline H\otimes 1)$-stable (respectively, 
$(\oline H\otimes 1\otimes  H\otimes 1)$-stable). 
For $r=0$ this is trivial and for $r=1$ this follows from the assumption on $\mathcal E$. Next assume $r>1$.


Choose any $\alpha,\beta\in A\otimes_{\check R'' }B$. We can express $\alpha$ and $\beta $ as finite sums $\alpha=\sum a\otimes b$
and $\beta=\sum a'\otimes b'$ where $a,a'\in A$ and $b,b'\in B$. For each pair $(b,a')$ that occurs in these summations 
we also express $\check R''_{B,A}(b\otimes a')$ as a summation,  that is,
\begin{equation}
\label{eq:RBA''}
\check R''_{B,A}(b\otimes a')=\sum a''\otimes b'',
\end{equation} where of course the $a''\in A$ and the $b''\in B$ depend on $b$ and $a'$. 
For $h,h'\in H$ we have
\begin{align*}
(h\otimes 1\otimes h'\otimes 1)\cdot \alpha\beta&=
(h\otimes 1\otimes h'\otimes  1)\cdot\left(
\sum (a\otimes b)(a'\otimes b')
\right)\\
&=
(h\otimes 1\otimes h'\otimes  1)\cdot 
\left(
\sum aa''\otimes b''b'\right)
=
\sum \big((h\otimes 1)\cdot (aa'')\big)\otimes \big((h'\otimes 1)\cdot (b''b')\big).
\end{align*}
Since both $A$ and $B$ are $H''$-module algebras, from the above calculation and~\eqref{eq:lem241} we obtain 
\begin{align}
\label{eq:firstdread}
\notag
(h\otimes 1\otimes h'\otimes 1)\cdot \alpha\beta
&=
\sum ((h_1\otimes 1) \cdot a)
((h_2\otimes 1)\cdot  a'')
\otimes 
((h'_1\otimes 1)\cdot b'')
((h'_2\otimes 1)\cdot b')\\
&=
\sum
\big(((h_1\otimes 1)\cdot a)\otimes 1\big)
\big(
(h_2\otimes 1\otimes h'_1\otimes 1)\cdot (a''\otimes b'')
\big)\big(1\otimes ((h'_2\otimes 1)\cdot b')\big),
\end{align}
where $\Delta(h)=\sum h_1\otimes h_2$ and $\Delta(h')=\sum h_1'\otimes h_2'$. From Lemma~\ref{lem:Uxyreplacement} it follows that for each pair $(h_2,h_1')$ that occurs on the right hand side 
of~\eqref{eq:firstdread} 
there exists a 2-tensor $\omega_{h_2,h_1'}=\sum u_{h_2,h_1'}\otimes u'_{h_2,h_1'}$ 
in $H\otimes H$ such that 
\begin{align}
\label{eq:seconddread}
\notag
(h_2\otimes 1\otimes h'_1\otimes 1)\cdot\sum  (a''\otimes b'')&=
\check R''_{B,A}\left(\big(\omega_{h_2,h'_1}\big)_{13}\cdot (b\otimes a')\right)\\
&=
\check R''_{B,A} 
\left(
\big((u^{}_{h_2,h'_1}\otimes 1)\cdot b\big)
\otimes 
\big((u'_{h_2,h'_1}\otimes 1)\cdot a'\big)
\right).
\end{align}
Note that by Lemma~\ref{lem:Uxyreplacement}
the 2-tensors 
$\omega_{h_2,h_1'}$ are of the form
\begin{equation}
\label{eq:barrww}
\omega_{h_2,h_1'}=\oline \omega_{V,W}(h_1'\otimes h_2)\omega_{V,W},
\end{equation}
where $\omega_{V,W},\oline\omega_{V,W}\in H\otimes H$ satisfy the constraint of 
Definition~\ref{dfn:braidedfinite}(ii) for suitable $H$-modules $V,W$. 
By comparing~\eqref{eq:firstdread} and~\eqref{eq:seconddread} and then using~\eqref{eq:lem241} we obtain\begin{align*}
(h\otimes 1\otimes h'\otimes 1)\cdot \alpha\beta
&=
\sum
\left(\big((h_1\otimes 1)\cdot a\big)\otimes 
\big(( u_{h_2,h'_1}\otimes 1)\cdot b\big)
\right)
\left(
\big(( u'_{h_2,h'_1}\otimes 1)\cdot a'\big)\otimes 
\big((h_2'\otimes 1)\cdot b'\big)\right)\\
&=\sum \big((h_1\otimes 1\otimes  u_{h_2,h'_1}\otimes 1)\cdot \alpha\big)
\big((u'_{h_2,h'_1}\otimes 1\otimes h'_2\otimes 1)\cdot \beta\big).
\end{align*}
In what follows we complete the proofs of part (i) and part (ii).

(i) 
It suffices to prove that for $h\in H$ and $h'\in \oline H$ and $w^{(1)},\ldots,w^{(r)}\in \mathcal E$ we have  
\begin{equation}
\label{eq:assert}
(h\otimes 1\otimes h'\otimes  1)\cdot \left(w^{(1)}\cdots w^{(r)}\right)\in \cA_r.
\end{equation} 
We set
 \[
\alpha:=w^{(1)}\quad\text{and}\quad
\beta:=w^{(2)}\cdots w^{(r)}.
\] 
Since $\Delta(\oline H)\sseq \oline H\otimes \oline H$, we can assume that $h'_1,h'_2\in\oline H$, hence by considering the first component on both sides of~\eqref{eq:barrww} we obtain $u_{h_2,h_1'}\in \oline H$.
Thus,
by the induction hypothesis we have
\begin{equation}
\label{eq:indicls}
\big((h_1\otimes 1\otimes u_{h_2,h_1'}\otimes 1)\cdot \alpha\big)\in \mathcal E\sseq \cA_1.
\quad\text{and}\quad
\big(( u'_{h_2,h'_1}\otimes 1\otimes h'_2\otimes 1)\cdot \beta\big)\in\cA_{r-1}.
\end{equation}
The inclusion~\eqref{eq:assert} follows from $\cA_1\cA_{r-1}=\cA_r$. 

(ii) It suffices to prove~\eqref{eq:assert} for 
$h\in\oline H$ and $h'\in H$. We define $\alpha$ and $\beta$ as in (i). As $h\in \oline H$, we can assume that $h_1,h_2\in \oline H$,  hence $u'_{h_2,h_1'}\in\oline H$. Again the induction hypothesis implies~\eqref{eq:indicls} and 
the inclusion~\eqref{eq:assert} follows from $\cA_1\cA_{r-1}=\cA_r$. 
\end{proof}

\subsection{Braidings and matrix coefficients}
\label{subsec::quas}
Let $(H,\cC,\check{R})$ be a locally finite braided triple.
As in~\eqref{eq:RvscheckR} we set 
$R_{V,W}:=\sigma_{W,V}\circ\check R_{V,W}$ for $V,W\in\mathrm{Obj}(\cC)$.
Let $H^{\circ}_\cC\sseq H^\circ$ denote the 
$\Z$-span of matrix coefficients of the $H$-modules that belong to $\mathrm{Obj}(\cC)$.
Since $\cC$ is closed under direct sums and tensor products, indeed $H^{\circ}_\cC$ is a sub-bialgebra of $H^\circ$. 
 
Let $f\in H^\circ_\cC$ be a sum 
of matrix coefficients of $V_1,\ldots,V_{N}\in\mathrm{Obj}(\cC)$,
that is \[
f=\sum_{i=1}^{N}\sfm_{v_i^*,v_i^{}},
\]
where $v_i^{}\in V_i$ and $v_i^*\in V_i^*$. Similarly, let $g\in H^\circ_\cC$ be a sum of matrix coefficients of $W_1,\ldots, W_{N'}\in\mathrm{Obj}(\cC)$, that is $
g=\sum_{i=1}^{N'}\sfm_{w_i^*,w_i^{}}$.
Choose $\omega_{V,W},\oline\omega_{V,W}\in H\otimes H$
that satisfy the condition of Definition~\ref{dfn:braidedfinite}(ii) for 
$V:=\bigoplus_{i=1}^{N} V_i$ and $W:=\bigoplus_{i=1}^{N'} W_i$.
We define 
\begin{equation}
\label{eq:Rfg}
R(f\otimes g):=\omega_{V,W}\cdot (f\otimes g)\quad\text{and}\quad
R^{-1}(f\otimes g):=\oline \omega_{V,W}\cdot (f\otimes g),
\end{equation}
where the actions of $\omega_{V,W}$ and $\oline \omega_{V,W}$ are by right translation on tensor components. This means that for example if $\omega_{V,W}=\sum r\otimes r'\in H\otimes H$, then
\[
\big(\omega_{V,W}\cdot (f\otimes g)\big)(h\otimes h'):=
\sum f(hr)g(h'r').
\]
%
%
\begin{rmk}
(i)
For $x,y\in H$ we have
\begin{align}
\label{RJljdj}
\left(\omega_{V,W}\cdot \left(\sfm_{v_i^*,v_i^{}}\otimes \sfm_{w_j^*,w_j^{}}\right)\right)
(x\otimes y)=
\lag v_i^*\otimes w_j^*,(x\otimes y)\cdot R_{V,W}(v_i\otimes w_j)\rag,
\end{align}
hence the left hand side of~\eqref{RJljdj} is independent of the choice of $\omega_{V,W}$. Summing over  $i$ and $j$ it follows that  $R(f\otimes g)$ is independent of the choice of $\omega_{V,W}$ as well.
The latter observation and Lemma~\ref{lem:HCRW1Wr} imply that  
$R(f\otimes g)$ does not depend on how $f$ and $g$ are expressed as sums of matrix coefficients. An analogous statement holds for $R^{-1}(f\otimes g)$. 

(ii)  From Lemma~\ref{lem:HCRW1Wr} it also follows that the formulas~\eqref{eq:Rfg} extend  to linear maps
\[
R,R^{-1}:H^\circ_\cC\otimes H^\circ_\cC\to H^\circ_\cC\otimes H^\circ_\cC. 
\]
Indeed the latter maps $R$ and $R^{-1}$ are mutual inverses. 
\end{rmk}
We define \begin{equation}
\label{eq:<fog,R>}
\lag f\otimes g,R\rag:=\left(R(f\otimes g)\right)(1\otimes 1).
\end{equation} 
%
%
In the rest of this subsection $\Delta^\circ(f)=\sum f_1\otimes f_2$ and $\Delta^\circ(g)=\sum g_1\otimes g_2$. 
\begin{lem}
\label{lem:fog,Rexpand}
Let $f,g\in H^\circ_\cC$. Then the following relations hold.
\begin{itemize}
\item[\rm(i)]
$
R(f\otimes g)=\sum f_1\otimes g_1\lag f_2\otimes g_2,R\rag$.

\item[\rm(ii)]
$f\otimes g=\sum \lag R^{-1}(f_1\otimes g_1),R\rag f_2\otimes g_2$.
\end{itemize}
\end{lem}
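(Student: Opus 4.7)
The plan is to prove (i) by directly unpacking both sides using the right-translation action and the Sweedler decomposition, and then to deduce (ii) from the fact that $R$ and $R^{-1}$ are mutually inverse linear endomorphisms of $H^\circ_\cC\otimes H^\circ_\cC$ together with the counit identity for $H^\circ$.

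For (i), by linearity and Lemma~\ref{lem:HCRW1Wr} I may assume that $f$ and $g$ are matrix coefficients of single finite dimensional modules $V,W\in\mathrm{Obj}(\cC)$. Choose $\omega_{V,W}=\sum r\otimes r'\in H\otimes H$ realizing $R_{V,W}$ as in Definition~\ref{dfn:braidedfinite}(ii). The right-translation formula gives, for all $x,y\in H$,
\[
R(f\otimes g)(x\otimes y)=\sum f(xr)\,g(yr').
\]
Applying~\eqref{eq:laxy)copro} to expand $f(xr)=\sum f_1(x)f_2(r)$ and $g(yr')=\sum g_1(y)g_2(r')$ and separating the variables $(x,y)$ from $(r,r')$, the $(x,y)$-part yields $\sum f_1(x)g_1(y)$, while the $(r,r')$-part assembles into $\sum f_2(r)g_2(r')=\bigl(R(f_2\otimes g_2)\bigr)(1\otimes 1)=\lag f_2\otimes g_2,R\rag$ by~\eqref{eq:<fog,R>}. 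This is precisely the right-hand side of (i) evaluated at $(x,y)$.

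For (ii), the key observation is that (as recorded after~\eqref{eq:Rfg}) $R$ and $R^{-1}$ are mutually inverse on $H^\circ_\cC\otimes H^\circ_\cC$, so
\[
\lag R^{-1}(f_1\otimes g_1),R\rag=R\bigl(R^{-1}(f_1\otimes g_1)\bigr)(1\otimes 1)=(f_1\otimes g_1)(1\otimes 1)=f_1(1)\,g_1(1).
\]
Substituting this scalar into the right-hand side of (ii) and invoking the counit identities $\sum f_1(1)\,f_2=f$ and $\sum g_1(1)\,g_2=g$ (obtained by setting $x=1$ in~\eqref{eq:laxy)copro}) produces $f\otimes g$.

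The whole argument is a routine unwinding of the definitions. The only point requiring care is that the choices $\omega_{V,W}$ and $\oline\omega_{V,W}$ can be made uniformly across $f$, $g$, and their Sweedler components; this is automatic because $f_1,f_2$ (respectively $g_1,g_2$) are matrix coefficients of the same module $V$ (respectively $W$), as recorded in the computation of $\Delta^\circ(\sfm_{v^*,v})$ just after~\eqref{eq:laxy)copro}, so Lemma~\ref{lem:HCRW1Wr} allows the same $\omega_{V,W}$ to act correctly on every term appearing in the Sweedler expansion.
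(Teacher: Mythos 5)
Your proof is correct and takes a genuinely different route from the paper's. For part (i), the paper passes to matrix coefficients $f=\sfm_{v^*,v}$, $g=\sfm_{w^*,w}$, writes $R_{V,W}(v\otimes w)=\sum\tilde v\otimes \tilde w$, and verifies the identity in terms of explicit dual bases of $V,W$ using~\eqref{RJljdj}. You instead stay at the level of the right-translation action: evaluate $R(f\otimes g)$ on $x\otimes y$, expand $f(xr)$ and $g(yr')$ via~\eqref{eq:laxy)copro}, and separate variables. The one point that requires care — that the same $\omega_{V,W}$ realizes $R$ on all Sweedler components $f_2\otimes g_2$ appearing in the expansion — you correctly isolate and justify by observing that these are again matrix coefficients of the same modules $V,W$. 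For part (ii), your argument is slicker than the paper's: you use that $R$ and $R^{-1}$ are mutual inverses so that $\lag R^{-1}(f_1\otimes g_1),R\rag$ collapses to the scalar $f_1(1)\,g_1(1)$, and then conclude from the counit law; the paper instead expands everything in dual bases and applies $R_{V,W}\circ R_{V,W}^{-1}=\mathrm{id}$ at the level of vectors. Both arguments are sound; yours avoids the basis bookkeeping but uses the remark about $R$, $R^{-1}$ being mutual inverses more essentially, whereas the paper's is more concretely computational.
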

\begin{proof}
It suffices to verify the assertion 
when
$f$ and $g$ are  matrix coefficients of finite dimensional $H$-modules $V,W\in\mathrm{Obj}(\cC)$.
Suppose that  $f:=\sfm_{v^*,v}$ and $g:=\sfm_{w^*,w}$. If $R_{V,W}(v\otimes w)=\sum \tilde v\otimes \tilde w$ then by~\eqref{RJljdj} we have 
$R(f\otimes g)=\sum \sfm_{v^*,\tilde v}\otimes \sfm_{w^*,\tilde w}$.
Choose dual bases $\{v_i\}$ and $\{v_i^*\}$ for $V$ and $V^*$ and dual bases $\{w_j\}$ and $\{w_j^*\}$ for $W$ and $W^*$. 
Using the coproduct and counit identities of $H^\circ$ we have
\begin{align*}
\sum f_1\otimes g_1\lag f_2\otimes g_2,R\rag
&=\sum \sfm_{v^*,v_i}\otimes \sfm_{w^*,w_j}
\left(
\lag v_i^*,\tilde v\rag\lag w_j^*,\tilde w\rag 
\right)\\
&=\sum \lag v_i^*,\tilde v\rag\sfm_{v^*,v_i}\otimes 
\lag w_j^*,\tilde w\rag
\sfm_{w^*,w_j}
=\sum\sfm_{v^*,\tilde v}\otimes \sfm_{w^*,\tilde w}
=R(f\otimes g).
\end{align*}
This proves (i). For (ii) note that if $R^{-1}_{V,W}(v\otimes w)=\sum \tilde v\otimes \tilde w$ then 
$R^{-1}(f\otimes g)=\sum \sfm_{v^*,\tilde v}\otimes \sfm_{w^*,\tilde w}$.
Next we write $R^{-1}_{V,W}(v_i\otimes w_j)=\sum \tilde v^{i,j}\otimes \tilde w^{i,j}$ for each pair of indices $i,j$. Then
\begin{align*}
\sum
\lag R^{-1}(f_1\otimes g_1),R\rag
f_2\otimes g_2&= \sum
\lag \sfm_{v^*,\tilde v^{i,j}}\otimes 
\sfm_{w^*,\tilde w^{i,j}},R\rag \sfm_{v_i^*,v}\otimes \sfm_{w_j^*,w}\\
&=\sum 
\lag v^*,v_i\rag
\sfm_{v_i^*,v}
\otimes 
\lag w^*,w_j\rag
\sfm_{w_j^*,w}=\sfm_{v^*,v}\otimes \sfm_{w^*,w}=f\otimes g.\qedhere
\end{align*}
\end{proof}

Fix finite dimensional $H$-modules $V,W\in\mathrm{Obj}(\cC)$. Let $\{v_i\}_{i=1}^d$ and $\{w_i\}_{i=1}^{d'}$ be bases of $V$ and $W$. Also,  let
$\{v_i^*\}_{i=1}^d$ and $\{w_i^*\}_{i=1}^{d'}$ be the dual bases of $V^*$ and $W^*$. 
We denote the matrix entries of  $R_{V,W}$ in the basis $v_i\otimes w_j$ by $R_{ij}^{kl}$, so that
\[
R_{V,W}(v_i\otimes w_j)=\sum_{k,l}R_{ij}^{kl}v_k\otimes w_l.
\]
Set
$
\mathsf t^V_{a,b}:=\sfm_{v_a^*,v_b}
$ and 
$\mathsf t^W_{a,b}:=
\sfm_{w_a^*,w_b}
$.
Then 
$R_{ij}^{kl}=\lag \mathsf{t}^V_{k,i}\otimes \mathsf{t}^W_{l,j},R\rag$, 
so that as in~\cite[Lem. 7.12]{Ja96}
we have the  well known relations 
\begin{equation}
\label{eq:tVtWRR}
\sum_{k,l}
\lag \mathsf{t}^V_{k,i}\otimes \mathsf{t}^W_{l,j},R\rag 
\mathsf{t}_{a,l}^W\mathsf t_{b,k}^V
=\sum_{k,l}
\lag \mathsf{t}^V_{b,k}\otimes \mathsf{t}^W_{a,l},R\rag
\mathsf t_{k,i}^V\mathsf t_{l,j}^W
\quad
\text{ for all }i,j,a,b.
\end{equation}
From~\eqref{eq:tVtWRR} it follows that 
\begin{equation}
\label{eq:g1f1R}
\sum g_1f_1\lag f_2\otimes g_2,R\rag=\sum f_2g_2\lag f_1\otimes g_1,R\rag\quad
\text{for }f,g\in H^{\circ}_\cC.
\end{equation}


\section{The $q$-Weyl algebra $\sPD$}
\label{sec-PDmn}
In this section we construct $\sPD$ as a deformed twisted tensor product of the algebras $\sP$ and $\sD$ with respect to the univeral $R$-matrix of $U_{LR}$. Recall that $\Bbbk:=\C(q)$.

\subsection{The algebra $U_q(\gl_n)$}
\label{subsec:Uqglndf}
For $n\in\N$, the quantized enveloping algebra   $U_q(\gl_n)$ is the $\Bbbk$-algebra generated by $E_i,F_i$ for $1\leq i\leq n-1$ and $K_{\eps_i}^{\pm 1}$ for $1\leq i\leq n$, that satisfy the  relations 
$K_{\eps_i}^{}K_{\eps_i}^{-1}=K_{\eps_i}^{-1}K_{\eps_i}^{}=1$, 
$K_{\eps_i}K_{\eps_j}=K_{\eps_j}K_{\eps_i}$,
\[
K_{\eps_i}E_jK_{\eps_i}^{-1}=q^{{\llbracket i,j\rrbracket}-\llbracket i,j+1\rrbracket}E_j\ ,\
K_{\eps_i}F_jK_{\eps_i}^{-1}=q^{-{\llbracket i,j\rrbracket }
+{\llbracket i,j+1\rrbracket}}F_j\ ,\
E_iF_j-F_jE_i=
\llbracket i,j\rrbracket
\frac{K_i-K_i^{-1}}{q-q^{-1}},
\] where $K_i:=K_{\eps_i}K_{\eps_{i+1}}^{-1}$
and 
\begin{equation}
\label{eq:[[a,b]]}
\llbracket a,b\rrbracket:=\begin{cases}
1 & \text{if }a=b\\
0&\text{if }a\neq b,
\end{cases}
\end{equation}
as well as the quantum Serre relations. 
For $\la:=\sum_{i=1}^n m_i\eps_i\in
\Z\eps_1+\cdots+\Z\eps_n$ we set \begin{equation}
\label{eq:Klaa;f}
K_\la:=\prod_{i=1}^nK_{\eps_i}^{m_i}.
\end{equation}
The \emph{Cartan subalgebra} of $U_q(\gl_n)$ is the subalgebra spanned by the $K_\la$ for $\la\in\Z\eps_1+\cdots+\Z\eps_n$. We denote the Cartan subalgebras of $U_L\cong U_q(\gl_m)$ and $U_R\cong U_q(\gl_n)$ by $U_{\g h,L}$ and $U_{\g h,R}$, respectively. 
Following~\cite{KS97} for the choice of 
 the coproduct $\Delta$ on $U_q(\gl_n)$,  we set 
\[
\Delta(E_i):=E_i\otimes K_i+1\otimes E_i\quad,\quad
\Delta(F_i):=F_i\otimes 1+K_i^{-1}\otimes F_i\quad,\quad
\Delta(K_{\eps_i}):=K_{\eps_i}\otimes K_{\eps_i}
.\]
The counit and antipode of $U_q(\gl_n)$ are given by
\[\epsilon(E_i)=\epsilon(F_i)=0\ \ ,\ \ \epsilon(K_{\eps_i}^{\pm 1})=1\ \ ,\ \ 
S(E_i)=-E_iK_i^{-1}\ \ ,\ \ S(F_i)=-K_iF_i\ \ ,\ \  S(K_{\eps_i})=K_{\eps_i}^{-1}.
\] 

\subsection{The universal $R$-matrix of $U_q(\gl_n)$}
\label{subsec:R-matUq}
Recall that a Hopf algebra $H$ is called
quasitriangular if it has a \emph{universal $R$-matrix}, i.e., if there exists an invertible 2-tensor $\EuScript R\in H\otimes H$ satisfying
\begin{equation}
\label{eq:DEltacop}
\Delta^{\mathrm{cop}}=\EuScript R\Delta \EuScript R^{-1}
\quad,\quad
(\Delta\otimes \mathrm{id})(\EuScript R)=\EuScript R_{13}\EuScript R_{23}
\quad,\quad
(\mathrm{id}\otimes \Delta)(\EuScript R)=\EuScript R_{13}\EuScript R_{12}.
\end{equation}
Strictly speaking, $U_q(\gl_n)$ is not quasitriangular because 
the formal series that is usually called the universal $R$-matrix of $U_q(\gl_n)$ indeed belongs to a topological tensor product 
$U_h(\gl_n)\widehat{\otimes} U_h(\gl_n)$ where $U_h(\gl_n)$ denotes the  $h$-adic Drinfeld--Jimbo quantum group. 
However, it turns out that 
the setting of braided triples is a rigorous way to work with this universal $R$-matrix.

Let $\cC^{(n)}$ denote the full subcategory of the category of $U_q(\gl_n)$-modules whose objects are direct sums of irreducible finite dimensional $U_q(\gl_n)$-modules with highest weight of the form $q^{\sum_{i=1}^n\la_i\eps_i}$, where $\la_1\geq\cdots\geq \la_n$ are integers. Such modules are sometimes called modules of type $(1,\ldots,1)$. We define a braiding on $\cC^{(n)}$ as follows. First we fix a formal series description of the universal $R$-matrix for $U_q(\gl_n)$.
For more details see \cite[Thm 3.108]{VY20} or \cite[Sec. 8.3.2]{KS97}.

\begin{dfn}
\label{dfn:univRmatgln}
Given $n\in\N$, the standard root vectors of 
$U_q(\gl_n)$ are
\[
E_{\eps_i-\eps_j}:=(-1)^{j-i-1}[E_i,[\ldots,E_{j-1}]_{q^{-1}}]_{q^{-1}}\quad\text{and}\quad
F_{\eps_i-\eps_j}:=(-1)^{j-i-1}[F_{j-1},[\ldots,F_i]_{q}]_{q},
\]
where 
$1\leq i<j\leq n$ and 
$[x,y]_{q^{\pm 1}}:=xy-q^{\pm 1}yx$.
We set
\begin{equation}
\label{eq:formulaRnn}
\EuScript R^{(n)}:=\left(
e^{h\sum_{i=1}^n H_i\otimes H_i}
\right)
\prod_{i=1}^{n\choose 2}
\mathrm{Exp}_q
\left(
(q-q^{-1})E_{\beta_i}\otimes F_{\beta_i}
\right),
\end{equation}
with the conventions  $e^{hH_i}:=K_{\eps_i}$, $e^h:=q$, 
$\mathrm{Exp}_q(x):=\sum_{r\geq 0}q^{r\choose 2}\frac{x^r}{[r]_q!}$ and $
\beta_{i+\frac{j(j-1)}{2}}:=\eps_i-\eps_{j+1}
$ for 
$1\leq i\leq j\leq n-1$. Also, set 
\[\underline{\EuScript{R}}^{(n)}:=
\left(\EuScript R^{(n)}\right)^{-1}_{21}.
\]
\end{dfn}

In what follows, we need $\EuScript R^{(n)}$ to define  $\sP$ and $\sD$, and we need $\underline{\EuScript{R}}^{(n)}$ to define  $\sPD^\mathrm{gr}$ and $\sPD$.
The formal series $\EuScript R^{(n)}$ and $\underline{\EuScript{R}}^{(n)}$ equip the category $\cC^{(n)}$ with two braidings which we describe below. 
Given $V,W\in\mathrm{Obj}(\cC^{(n)})$,  the formal series~\eqref{eq:formulaRnn} defines a linear map
\[
\EuScript R^{(n)}_{V,W}:V\otimes W\to V\otimes W.
\] 
To give sense to the action of 
$\EuScript R^{(n)}$ on $V\otimes W$ we make the following two observations. First, for any $v\otimes w\in V\otimes W$ all but finitely many terms of $
\mathrm{Exp}_q
\left(
(q-q^{-1})E_{\beta_i}\otimes F_{\beta_i}
\right)
$ vanish on $v\otimes w$. Second, if
$v\in V$ and $w\in W$ are weight vectors of 
weights $q^{\mu}$ and $q^\nu$ with 
$\mu:=\sum_{i=1}^n\mu_i\eps_i$
and $\nu:=\sum_{i=1}^n\nu_i\eps_i$ respectively, then 
the action of $
e^{h\sum_{i=1}^n H_i\otimes H_i}
$ on $v\otimes w$ is by multiplication by the scalar $q^{\lag \mu,\nu\rag}$
where \begin{equation}
\label{eq:pairingg}
\lag \mu,\nu\rag:=\sum_{i=1}^n \mu_i\nu_i.
\end{equation} By a similar reasoning, the action of $\underline{\EuScript R}^{(n)}$ yields  linear maps \[
\underline{\EuScript R}^{(n)}_{V,W}:V\otimes W\to V\otimes W.
\] 
It is well known that by setting  \
\begin{equation}
\label{Bradings-eq}
\check{\EuScript R}^{(n)}_{V,W}:=\sigma_{V,W}\circ 
{\EuScript R}^{(n)}_{V,W}\quad\text{and}\quad
\check{\underline{\EuScript R}}^{(n)}_{V,W}:=\sigma_{V,W}\circ 
\underline{\EuScript R}^{(n)}_{V,W}
\end{equation}
we obtain braidings 
	on $\cC^{(n)}$, which we will denote by $\check{\EuScript R}^{(n)}$ and 
$\check{\underline{\EuScript R}}^{(n)}$
\begin{prp}
\label{prp:Uqglnlocfin}
Set $H:=U_q(\gl_n)$, $\cC:=\cC^{(n)}$, and $\check R:=\check{{\EuScript R}}^{(n)}_{}$ or $\check{\underline{\EuScript R}}^{(n)}$. Then $(H,\cC,\check R)$  is a locally finite braided triple.
\end{prp}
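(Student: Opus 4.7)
The plan is to verify conditions (i) and (ii) of Definition~\ref{dfn:braidedfinite} for the triple $(U_q(\gl_n), \cC^{(n)}, \check R)$ in both cases $\check R = \check{\EuScript R}^{(n)}$ and $\check R = \check{\underline{\EuScript R}}^{(n)}$. Condition (i) will be immediate from the definition of $\cC^{(n)}$ as the category of direct sums of finite-dimensional irreducibles of type $(1,\dots,1)$. For condition (ii), I will first observe that combining~\eqref{eq:RvscheckR} with~\eqref{Bradings-eq} gives $R_{V,W} = \EuScript R^{(n)}_{V,W}$ in the first case and $R_{V,W} = \underline{\EuScript R}^{(n)}_{V,W}$ in the second; since $\underline{\EuScript R}^{(n)} = (\EuScript R^{(n)})^{-1}_{21}$, and since both $\omega_{V,W}$ and its inverse $\oline\omega_{V,W}$ must be produced, it suffices to exhibit, for each pair of finite-dimensional $V,W\in\mathrm{Obj}(\cC^{(n)})$, a single element of $U_q(\gl_n)\otimes U_q(\gl_n)$ whose action on $V\otimes W$ matches $\EuScript R^{(n)}_{V,W}$; the remaining cases follow by applying the same recipe to the inverse or flipped formal series.

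The construction will split into two independent steps, corresponding to the two factors of formula~\eqref{eq:formulaRnn}. First, because each standard root vector $E_{\beta_i}$ acts nilpotently on the finite-dimensional module $V$ and $F_{\beta_i}$ acts nilpotently on $W$, there exists $N\in\N$ such that every factor $\mathrm{Exp}_q\!\left((q-q^{-1})E_{\beta_i}\otimes F_{\beta_i}\right)$ acts on $V\otimes W$ as its truncation to degree less than $N$; these truncations are honest elements of $U_q(\gl_n)\otimes U_q(\gl_n)$, whose ordered product I will denote $\Theta\in U_q(\gl_n)\otimes U_q(\gl_n)$.

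Second, and this is the key step, I will realize the Cartan exponential $e^{h\sum_i H_i\otimes H_i}$. Let $\Lambda_V^{(i)}, \Lambda_W^{(i)}\subset\Z$ be the finite sets of $i$-th coordinates of integer weights occurring in $V$ and $W$. Using Lagrange interpolation in the variable $K_{\eps_i}$, I produce polynomials $p^V_{\mu_i}(X), p^W_{\nu_i}(X)\in\kk[X]$ whose values at $K_{\eps_i}$ act on $V$ and $W$ respectively as the projectors onto the $K_{\eps_i}$-eigenspaces with eigenvalues $q^{\mu_i}$ and $q^{\nu_i}$. Setting
\[
\omega^H := \prod_{i=1}^n \Bigl(\sum_{\mu_i\in\Lambda_V^{(i)},\,\nu_i\in\Lambda_W^{(i)}} q^{\mu_i\nu_i}\, p^V_{\mu_i}(K_{\eps_i})\otimes p^W_{\nu_i}(K_{\eps_i})\Bigr) \in U_q(\gl_n)\otimes U_q(\gl_n),
\]
a direct check shows that $\omega^H$ acts on a weight tensor $v\in V_\mu$, $w\in W_\nu$ by multiplication by $\prod_i q^{\mu_i\nu_i} = q^{\lag\mu,\nu\rag}$, precisely matching the scalar by which $e^{h\sum_i H_i\otimes H_i}$ acts. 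Taking $\omega_{V,W}:=\omega^H\Theta$ yields the desired element. The inverse $\oline\omega_{V,W}$ is built by the same recipe from the inverse Cartan factor (replacing $q^{\mu_i\nu_i}$ by $q^{-\mu_i\nu_i}$ in $\omega^H$) and from the truncated inverse series of each $\mathrm{Exp}_q$ factor, which is again polynomial by the same nilpotency argument.

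The main obstacle is the second step: the Cartan exponential $e^{h\sum_i H_i\otimes H_i}$ does not literally belong to $U_q(\gl_n)\otimes U_q(\gl_n)$ but only to the topological completion $U_h(\gl_n)\widehat\otimes U_h(\gl_n)$ mentioned in Subsection~\ref{subsec:R-matUq}. The Lagrange-interpolation device works precisely because the weight spectra of finite-dimensional modules of type $(1,\dots,1)$ are finite; this is the sole place where the local finiteness condition is used essentially, and it is the crux of the proposition. By contrast, the truncation of the $\mathrm{Exp}_q$ factors in the first step is routine nilpotency, and the passage from $\check{\EuScript R}^{(n)}$ to $\check{\underline{\EuScript R}}^{(n)}$ is purely formal manipulation via the identity $\underline{\EuScript R}^{(n)} = (\EuScript R^{(n)})^{-1}_{21}$.
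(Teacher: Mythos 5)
Your proof is correct and follows essentially the same route as the paper's: truncate each $\mathrm{Exp}_q$ factor by nilpotency, and replace the Cartan exponential by a finite $2$-tensor built from Lagrange-interpolation projectors that are polynomials in the group-like generators. The only (cosmetic) difference is that the paper forms projectors $T_i$ onto the full weight spaces of $W$ using a single $K_\nu$ whose eigenvalues separate those weights, pairing them with $K_{\mu^{(i)}}$ on the first factor, whereas you interpolate coordinate-by-coordinate in each $K_{\eps_i}$ on both tensor factors; both devices exploit exactly the finiteness of the weight spectrum of modules of type $(1,\ldots,1)$.
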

\begin{proof}
The only assertion that we need to prove is the property of Definition~\ref{dfn:braidedfinite}(ii). Fix  finite dimensional $V,W\in \mathrm{Obj}\left(\cC^{(n)}\right)$. We construct an element of $U_q(\gl_n)\otimes U_q(\gl_n)$ that acts on $V\otimes W$ as $\EuScript R^{(n)}_{V,W}$.  Since $E_{\beta_i}^{N_i}\otimes F_{\beta_i}^{N_i}$ vanishes on $V\otimes W$ 
when  $N_i$ is sufficiently large,  the exponential factor \[\mathrm{Exp}_q
\left(
(q-q^{-1})E_{\beta_i}\otimes F_{\beta_i}
\right)
\] can be replaced by a finite sum. Next we provide a finite 2-tensor 
that replaces $
e^{h\sum_{i=1}^n H_i\otimes H_i}
$. 
Let $\mu^{(1)},\ldots,\mu^{(N)}$ be the distinct weights of $W$.
Choose $\nu\in \Z\eps_1+\cdots+\Z\eps_n$ such that the values $\lag\nu,\mu^{(i)}\rag$ are mutually distinct numbers. For $1\leq i\leq N$ define $T_i\in U_q(\gl_n)$ by
\[
T_i:=\prod_{\tiny\begin{array}{c}1\leq j\leq N\\j\neq i\end{array}}
\left(
\frac{K_\nu-q^{\lag \nu,\mu^{(j)}\rag}}{q^{\lag\nu,\mu^{(i)}\rag}-q^{\lag \nu,\mu^{(j)}\rag}}
\right).
\]
Then $T_i$ acts by 0 or 1  on the $\mu^{(j)}$-weight space of $W$, depending on if  $j\neq i $ or $j=i$ respectively. It follows that the action of
$
\sum_{i=1}^N
K_{\mu^{(i)}}\otimes T_i
$ on $V\otimes W$ is identical to the action of $
e^{h\sum_{i=1}^n H_i\otimes H_i}
$. 
Thus  the action of $\EuScript R^{(n)}$ on $V\otimes W$ is identical to the action of a (finite) 2-tensor in $U_q(\gl_n)\otimes U_q(\gl_n)$. Analogous constructions can be given for 
$\left(\EuScript R^{(n)}_{V,W}\right)^{-1}$ 
and 
$\left(\underline{\EuScript R}^{(n)}_{V,W}\right)^{\pm 1}$.
\end{proof}


As in Subsection~\ref{subsec:matrx} let $U_q(\gl_n)^\circ$ be the finite dual of $U_q(\gl_n)$. 
\begin{dfn}
\label{dfn:Ubul}
Let $U_q(\gl_n)^\bullet\sseq U_q(\gl_n)^\circ$ denote the sub-bialgebra that is spanned by  matrix coefficients of objects of $\cC^{(n)}$. 
\end{dfn}
For $f,g\in U_q(\gl_n)^\bullet$  we define $\big\lag f\otimes g,\EuScript R^{(n)}\big\rag$ and 
$\big\lag f\otimes g,\underline{\EuScript R}^{(n)}\big\rag$
as in 
\eqref{eq:<fog,R>}.
For finite dimensional $V\in \mathrm{Obj}\left(\cC^{(n)}\right)$ the right dual $V^*$ also belongs to 
$\mathrm{Obj}\left(\cC^{(n)}\right)$. From this and the fact that $S$ and $S^{-1}$ are conjugate by an element of the Cartan subalgebra it follows that 
if $f\in U_q(\gl_n)^\bullet$ then $f\circ S^{\pm 1}\in U_q(\gl_n)^\bullet$. 
It is well known 
(see for example~\cite[Sec. 8.1.1]{KS97}) that
\begin{equation}
\label{dfn:R21R}
\left(\EuScript R^{(n)}\right)_{21}=
\left(\underline{\EuScript R}^{(n)}\right)^{-1}=
(1\otimes S^{-1})\left(\underline{\EuScript R}^{(n)}\right)\quad
\text{and}\quad
(S\otimes 1)
\left({\EuScript R}^{(n)}\right)=\left(
{\EuScript R}^{(n)}\right)^{-1}
\end{equation}
Consequently, for $f,g\in U_q(\gl_n)^\bullet$ we have  
\begin{equation}\label{lem:fxgoS}
\big\lag f\otimes (g\circ S^{-1}),\underline{\EuScript R}^{(n)}\big\rag
=\big\lag g\otimes f,{\EuScript R}^{(n)}\big\rag\quad
\text{and}
\quad
\big\lag (f\circ S)\otimes g,{\EuScript R}^{(n)}\big\rag=
\big\lag g\otimes f,\underline{\EuScript R}^{(n)}
\big\rag.
\end{equation}

\subsection{The involution $x\mapsto x^\natural$}
\label{subsec:naturalUqcirc}
It is well known (for example see~\cite[Sec. 1.4]{No96}) that there exists a unique
$\Bbbk$-linear  isomorphism of Hopf algebras
\begin{equation}
\label{eq:invnatr}
U_q(\gl_n)\to U_q(\gl_n)^\mathrm{op}\ ,\ 
x\mapsto x^\natural,
\end{equation}
such that
\begin{equation}
\label{eq:x->x*}
E_i^\natural:=qK_iF_i\quad,\quad
F_i^\natural :=q^{-1}E_iK_i^{-1}
\quad,\quad
K_{\eps_i}^\natural :=K_{\eps_i}.
\end{equation}
\begin{lem}
\label{lem:rel-S-natural}
$S(x^\natural)=S^{-1}(x)^\natural$ for $x\in U_q(\gl_n)$.
\end{lem}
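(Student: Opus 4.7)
The plan is to deduce the identity formally from the Hopf-algebraic nature of $\natural$. By~\eqref{eq:invnatr} the map $x\mapsto x^\natural$ is a Hopf algebra isomorphism $U_q(\gl_n)\to U_q(\gl_n)^{\mathrm{op}}$; in accordance with the conventions of Section~\ref{sec:HopfTwis}, the superscript $\mathrm{op}$ refers to the opposite multiplication (with unchanged coproduct and counit). A standard fact is that if $(H,m,\Delta,\epsilon,S)$ is a Hopf algebra, then $(H,m^{\mathrm{op}},\Delta,\epsilon,S^{-1})$ is again a Hopf algebra: rewriting the antipode identity $\sum S^{-1}(x_2)x_1=\epsilon(x)1$ of $H$ as a product in $H^{\mathrm{op}}$ shows that $S^{-1}$ satisfies the antipode axiom for $H^{\mathrm{op}}$.

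Since antipodes are uniquely determined, any morphism of Hopf algebras automatically intertwines them. Applied to $\natural$, this observation yields
\[
\natural\circ S \;=\; S_{U_q(\gl_n)^{\mathrm{op}}}\circ \natural \;=\; S^{-1}\circ \natural,
\]
i.e.\ $(S(y))^\natural = S^{-1}(y^\natural)$ for every $y\in U_q(\gl_n)$. Replacing $y$ by $S^{-1}(x)$ gives $x^\natural = S^{-1}\!\left(S^{-1}(x)^\natural\right)$, and applying $S$ to both sides rearranges this into the desired equality $S(x^\natural)=S^{-1}(x)^\natural$.

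For a reader who prefers a direct verification, one may check the assertion on the generators $E_i,F_i,K_{\eps_i}$, since both maps $x\mapsto S(x^\natural)$ and $x\mapsto S^{-1}(x)^\natural$ are algebra homomorphisms $U_q(\gl_n)\to U_q(\gl_n)$ (each being a composition of two anti-homomorphisms), and hence it suffices to compare them on generators. The formula $S^2(x)=K_{2\rho}xK_{2\rho}^{-1}$ makes it easy to compute $S^{-1}$ on the generators; for example one finds $S^{-1}(E_i)=-q^{-2}E_iK_i^{-1}$, after which a short computation gives $S(E_i^\natural)=-q^{-1}F_i=S^{-1}(E_i)^\natural$, and similarly for $F_i$ and $K_{\eps_i}$. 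I do not anticipate any real obstacle here; the lemma is essentially a formal consequence of the Hopf-algebraic structure of $\natural$, and the only place where a concrete computation could enter is in the optional sanity check on generators.
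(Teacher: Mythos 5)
Your proposal is correct, and your primary argument is genuinely different from the paper's. The paper proves the lemma exactly as in your fallback paragraph: it observes that both $x\mapsto S(x^\natural)$ and $x\mapsto S^{-1}(x)^\natural$ are algebra automorphisms of $U_q(\gl_n)$ (each a composite of two anti-automorphisms) and then checks agreement on the generators $E_i$, $F_i$, $K_{\eps_i}$ by direct calculation; your sample computation $S(E_i^\natural)=-q^{-1}F_i=S^{-1}(E_i)^\natural$ is the kind of verification the paper leaves to the reader, and it checks out. Your first argument instead derives the identity formally: since $\natural:U_q(\gl_n)\to U_q(\gl_n)^{\mathrm{op}}$ is a morphism of Hopf algebras and the antipode of $H^{\mathrm{op}}$ is $S^{-1}$ (both sides being convolution inverses of the identity, so that any bialgebra morphism between Hopf algebras automatically intertwines antipodes), one gets $(S(y))^\natural=S^{-1}(y^\natural)$, and the substitution $y=S^{-1}(x)$ followed by applying $S$ gives the claim. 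This route avoids all computation and makes clear that nothing about $\gl_n$ is being used, but it leans on the full strength of the statement that $\natural$ is a coalgebra map as well as an algebra anti-map — a fact the paper quotes from the literature in~\eqref{eq:invnatr} — whereas the paper's generator check needs only that $\natural$ and $S^{\pm1}$ are algebra anti-automorphisms together with the explicit formulas~\eqref{eq:x->x*}. Both are complete proofs.
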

\begin{proof}
Both sides are automorphisms of the algebra $U_q(\gl_n)$. Therefore it suffices to verify that they agree on the $E_i$, the $F_i$, and the $K_{\eps_i}$. This is a straightforward calculation. 
\end{proof}

By the canonical duality between $U_q(\gl_n)$ and $U_q(\gl_n)^\circ$, the map~\eqref{eq:invnatr} induces an isomorphism of Hopf algebras $U_q(\gl_n)^\circ\to \left(
U_q(\gl_n)^\circ\right)^\mathrm{cop}$. 
We denote the latter map by $u\mapsto u^\natural$ as well, so that 
\begin{equation}
\label{eq:naturalduali}
\lag u^\natural,x\rag=\lag u,x^\natural\rag
\quad\text{for }u\in U_q(\gl_n)^\circ\text{ and }x\in U_q(\gl_n). 
\end{equation}

\subsection{The algebras $\sP_{n\times n}$ and $\sD_{n\times n}$}
\label{subsec:PnnnDnan}
From now on we
denote the standard positive system of the root system of $\gl_n$
by
  \[
\Delta^+_n:=\{\eps_i-\eps_j\,:\,1\leq i<j\leq n\}\] 
Let $V^{(n)}$ denote the irreducible $U_q(\gl_n)$-module of highest weight $q^{-\eps_n}$ (all highest weights are considered with respect to $\Delta_n^+$).
Thus $V^{(n)}\cong \Bbbk^n$ as a vector space and the homomorphism of algebras $U_q(\gl_n)\to \End_{\Bbbk}\left(V^{(n)}\right)$ is uniquely determined by the assignments
\[
K_{\eps_i}\mapsto\mathsf 1+(q^{-1}-1)\mathsf E_{i,i}\quad,\quad
E_i\mapsto\mathsf E_{i+1,i}\quad,\quad
F_i\mapsto \mathsf E_{i,i+1},
\]
where the $\mathsf E_{i,j}$ are the elementary matrix units associated to the standard basis $\{e_i\}_{i=1}^n$ of $V^{(n)}$ and $\mathsf 1:=\sum_{i=1}^n\mathsf E_{i,i}$.
Using~\eqref{eq:formulaRnn} the $R$-matrix of $V^{(n)}\otimes V^{(n)}$ can be computed directly, and we obtain 
\[
\EuScript R^{(n)}_{V^{(n)}, V^{(n)}}=\sum_{1\leq i\leq n} q\mathsf E_{i,i}\otimes \mathsf E_{i,i}
+\sum_{1\leq i\neq j\leq n}\mathsf E_{i,i}\otimes \mathsf E_{j,j}+(q-q^{-1})\sum_{1\leq j<i\leq n}
\mathsf E_{i,j}\otimes\mathsf  E_{j,i}.
\]
For $1\leq i,j\leq n$ let $t_{i,j}$ denote the matrix coefficient $\sfm_{e_i^*,e_j}$ of $V^{(n)}$. By~\eqref{eq:g1f1R}  the  $t_{i,j}$ satisfy the following relations:
\begin{itemize}
\item[(R1)]$t_{k,i}t_{k,j} = qt_{k,j}t_{k,i}$, $t_{i,k}t_{j,k} = qt_{j,k}t_{i,k}$\ \  for $i<j$.
\item[(R2)] $t_{i,l}t_{k,j}  = t_{k,j}t_{i,l}, \ t_{i,j}t_{k,l} -t_{k,l}t_{i,j} = (q-q^{-1})t_{i,l}t_{k,j}$\ \  for $i<k$ and $ j<l$.
\end{itemize}
Similarly, let $\breve V^{(n)}$ denote the irreducible $U_q(\gl_n)$-module with highest weight $q^{\eps_1}$. Again $\breve V^{(n)}\cong \Bbbk^n$ as vector spaces, but  the map $U_q(\gl_n)\to\End_\Bbbk\big(\breve V^{(n)}\big)$ is uniquely determined by the assignments
\[
K_{\eps_i}\mapsto \mathsf 1+(q-1)\mathsf E_{i,i}\quad,\quad
E_i\mapsto \mathsf  E_{i,i+1}\quad,\quad
F_i\mapsto \mathsf  E_{i+1,i}.
\]
Indeed $\breve V^{(n)}\cong \left(V^{(n)}\right)^*$.
The $R$-matrix of $\breve V^{(n)}\otimes \breve V^{(n)}$ 
is
\[
{\EuScript
R}_{\breve V^{(n)},\breve V^{(n)}}^{(n)}=
\sum_{1\leq i\leq n} q\mathsf E_{i,i}\otimes \mathsf E_{i,i}
+\sum_{1\leq i\neq j\leq n}\mathsf E_{i,i}\otimes \mathsf E_{j,j}+(q-q^{-1})\sum_{1\leq i<j\leq n}
\mathsf E_{i,j}\otimes \mathsf E_{j,i}.
\]
If $\del_{i,j}$ for $1\leq i,j\leq n$ denotes the matrix coefficient $\sfm_{e_i^*,e_j}$ of $\breve V^{(n)}$, then again from~\eqref{eq:g1f1R} it follows that  the $\del_{i,j}$ satisfy relations similar to those between the $t_{i,j}$, with $q$ replaced by $q^{-1}$. Equivalently,
\begin{itemize}
\item[(R1$'$)]$\partial_{k,j}\partial_{k,i} = q\partial_{k,i}\partial_{k,j}$, $\partial_{j,k}\partial_{i,k} = q\partial_{i,k}\partial_{j,k}$\ \ for $i<j$.
\item[(R2$'$)] $\partial_{k,j}\partial_{i,l}  = \partial_{i,l}\partial_{k,j}, \partial_{k,l}\partial_{i,j} -\partial_{i,j}\partial_{k,l} = (q-q^{-1})\partial_{k,j}\partial_{i,l}$\ \ for $i<k$ and $j<l$.
\end{itemize} 
\begin{dfn}
\label{dfn-f--P-D}
Let $\sP_{n\times n}$ denote
 the subalgebra of 
$U_q(\gl_n)^\circ$
 generated by the $t_{i,j}$, for $1\leq i,j\leq n$. Similarly, let $\sD_{n\times n}$ denote the subalgebra of $U_q(\gl_n)^\circ$ generated by the $\del_{i,j}$, for $1\leq i,j\leq n$. 
 \end{dfn}
It is well known 
(for example see~\cite{Ta92})
that the relations (R1)--(R2) yield a presentation of $\sP_{n\times n}$ by generators and relations. 
Since $\sD_{n\times n}\cong \sP_{n\times n}^\mathrm{op}$, a similar  statement holds for $\sD_{n\times n}$.
%
%
  From Section~\ref{subsec:matrx} it follows that both  $\sP_{n\times n}$ and $\sD_{n\times n}$ are bialgebras with the coproducts satisfying 
\[
t_{i,j}\mapsto\sum_k t_{i,k}\otimes t_{k,j}\quad\text{and}\quad
\del_{i,j}\mapsto\sum_k \del_{i,k}\otimes \del_{k,j},
\]
and the counits satisfying $t_{i,j},\del_{i,j}\mapsto \llbracket i,j\rrbracket$. Henceforth we denote the coproducts of $\sP_{n\times n}$ and $\sD_{n\times n}$ by $\Delta_\sP$ and $\Delta_\sD$, respectively. 
 In the proof of Lemma~\ref{lem:naturalonTD} we  use the relations 
\[
t_{i,j}(K_{\eps_k})=\llbracket i,j\rrbracket q^{-\llbracket i,k\rrbracket}
\quad,\quad
t_{i,j}(E_{k})=\llbracket i,k+1\rrbracket \llbracket j,k\rrbracket
\quad
,\quad
t_{i,j}(F_{k})=\llbracket i,k\rrbracket \llbracket j,k+1\rrbracket.
\]

\begin{lem}
\label{lem:naturalonTD} 
$t_{i,j}^\natural=t_{j,i}^{}$ and 
$\del_{i,j}^\natural=\del_{j,i}^{}$, where 
$t_{i,j}^\natural$ and $\del_{i,j}^\natural$ are defined by~\eqref{eq:naturalduali}.
\end{lem}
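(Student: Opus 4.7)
The plan is to recast the claimed equalities as identities of matrix-valued maps on $U_q(\gl_n)$, and then check them on the algebra generators $K_{\eps_k}, E_k, F_k$.

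Concretely, by the definition~\eqref{eq:naturalduali} of $\natural$ on $U_q(\gl_n)^\circ$, we have
\[
\lag t_{i,j}^\natural, x\rag = \lag t_{i,j}, x^\natural\rag = (T_{x^\natural})_{i,j},
\]
where $T_x\in\mathrm{Mat}_n(\Bbbk)$ denotes the matrix of the action of $x\in U_q(\gl_n)$ on $V^{(n)}$ in the standard basis (so that $(T_x)_{i,j}=t_{i,j}(x)$). The desired equality $t_{i,j}^\natural = t_{j,i}$ is thus equivalent to the matrix identity
\[
T_{x^\natural} = T_x^{\,T} \quad\text{for every } x\in U_q(\gl_n).
\]
The map $x\mapsto T_x^{\,T}$ is an algebra anti-homomorphism $U_q(\gl_n)\to\mathrm{Mat}_n(\Bbbk)$ because transposition reverses products. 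The map $x\mapsto T_{x^\natural}$ is also an algebra anti-homomorphism, because $\natural$ is an algebra anti-homomorphism by~\eqref{eq:invnatr} and $x\mapsto T_x$ is an ordinary homomorphism. Hence if the two anti-homomorphisms agree on a generating set of $U_q(\gl_n)$, they agree everywhere, and we are done.

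So it remains to verify $T_{x^\natural}=T_x^{\,T}$ for the generators $x\in\{K_{\eps_k},E_k,F_k\}$. Using the explicit formulas from Subsection~\ref{subsec:PnnnDnan}, $T_{K_{\eps_k}}$ is diagonal, so $T_{K_{\eps_k}}^{\,T}=T_{K_{\eps_k}}=T_{K_{\eps_k}^\natural}$. For $E_k$ with $E_k^\natural=qK_kF_k$, a direct computation shows $T_{qK_kF_k}=\mathsf E_{k,k+1}=T_{E_k}^{\,T}$, since $T_{K_k}$ is diagonal with entry $q^{-1}$ in position $(k,k)$ on $V^{(n)}$, and $T_{F_k}=\mathsf E_{k,k+1}$. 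The case $F_k^\natural=q^{-1}E_kK_k^{-1}$ is analogous: $T_{K_k^{-1}}$ has entry $q$ at $(k,k)$, $T_{E_k}=\mathsf E_{k+1,k}$, and the product gives $\mathsf E_{k+1,k}=T_{F_k}^{\,T}$. This completes the verification and proves $t_{i,j}^\natural=t_{j,i}$.

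For the second identity $\del_{i,j}^\natural=\del_{j,i}$, the same argument applies verbatim with $V^{(n)}$ replaced by $\breve V^{(n)}$ and $T_x$ replaced by the matrix $\breve T_x$ of the action of $x$ on $\breve V^{(n)}$ in the standard basis. Once again $\breve T_{K_{\eps_k}}$ is diagonal, and the computations on $E_k$ and $F_k$ are analogous, using that now $\breve T_{K_k}$ has entry $q$ at $(k,k)$ and $q^{-1}$ at $(k+1,k+1)$, while $\breve T_{E_k}=\mathsf E_{k,k+1}$ and $\breve T_{F_k}=\mathsf E_{k+1,k}$. There is no substantial obstacle here beyond the mechanical check on generators; the only conceptual step is recognising that both $x\mapsto T_{x^\natural}$ and $x\mapsto T_x^{\,T}$ are anti-homomorphisms, which is what reduces the verification to the three generator types.
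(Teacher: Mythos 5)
Your proof is correct and is essentially the paper's own argument in matrix notation: the identity $T_{(xy)^\natural}=T_y^{\,T}T_x^{\,T}=T_{xy}^{\,T}$ underlying your "two anti-homomorphisms agree on generators" step is exactly the paper's coproduct computation $\lag t_{i,j},(xy)^\natural\rag=\sum_a\lag t_{a,i},y\rag\lag t_{j,a},x\rag=\lag t_{j,i},xy\rag$, and the generator checks match. No gaps.
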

\begin{proof}
We only give the proof for the $t_{i,j}$, as the argument for the $\del_{i,j}$ is similar. The assertion follows if we
verify that 
\begin{equation}
\label{eq:<>=<>xx}
\lag t_{i,j},x^\natural\rag=\lag t_{j,i},x\rag\quad \text{for $x\in U_q(\gl_n)$.}
\end{equation} 
It suffices to check~\eqref{eq:<>=<>xx} when $x$ is a generator of $U_q(\gl_n)$, because if~\eqref{eq:<>=<>xx} holds for  $x,y\in U_q(\gl_n)$ then \[
\lag t_{i,j},(xy)^\natural\rag=\lag t_{i,j},y^\natural x^\natural\rag=\sum_a \lag t_{i,a},y^\natural\rag \lag t_{a,j},x^\natural\rag=\sum_a \lag t_{a,i},y\rag \lag t_{j,a},x\rag=
\lag t_{j,i},xy\rag,\]
hence~\eqref{eq:<>=<>xx} also holds for $xy$. When $x$ is one of the standard generators of $U_q(\gl_n)$, checking~\eqref{eq:<>=<>xx} is a direct calculation. For example for $x=E_k$ we have 
\[
\lag t_{i,j},E_k^\natural\rag=
q\sum_a \lag t_{i,a},K_k\rag\lag t_{a,j},F_k\rag.
\]
The right hand side vanishes unless
$j=k+1$ and $i=k$, in which case we have $\lag t_{i,j},E_k^\natural\rag=1$. 
It follows immediately that $\lag t_{i,j},E_k^\natural\rag=\lag t_{j,i},E_k\rag$ for all $i,j,k$. 
\end{proof}

According to Remark~\ref{rmk:LRactionsHcirc}, the canonical  
$U_q(\gl_n)\otimes U_q(\gl_n)$-module structure of $U_q(\gl_n)$ by left and right translation equips both $\sP_{n\times n}$ and $\sD_{n\times n}$ with $U_q(\gl_n)\otimes U_q(\gl_n)$-module algebra structures. 
Our next goal is to describe the latter actions  explicitly (all of the actions are from the left side).


Let $\cR_\sD$ be the action of $U_q(\gl_n)$ on $\sD_{n\times n}$ by right translation, as in Remark~\ref{rmk:LRactionsHcirc}.
We have
 \[
 \cR_\sD(x) u=\sum \lag u_2,x\rag u_1
 \quad\text{for }x\in U_q(\gl_n),\ u\in \sD_{n\times n},\]
 where as usual
$\Delta(u)=\sum u_1\otimes u_2$.
Similarly, let $\cL_\sD$ be the action of $U_q(\gl_n)$ on $\sD_{n\times n}$ by left translation. Thus
\[
\cL_\sD(x)u=\sum \lag u_1,x^\natural \rag u_2\quad
\text{for }
x\in U_q(\gl_n),\ u\in\sD_{n\times n}.
\]
By Remark~\ref{rmk:LRactionsHcirc} both $\cL_{\sD}$ and $\cR_\sD$ equip $\sD_{n\times n}$ with $U_q(\gl_n)$-module algebra structures. 

Next we define the left and right $U_q(\gl_n)$-actions on $\sP_{n\times n}$. For $c\in \Bbbk$ let $\xi_c$ denote the unique automorphism of $U_q(\gl_n)$ defined by \[
\xi_c(E_i):=cE_i\quad,\quad 
\xi_c(F_i):=c^{-1}F_i\quad,\quad\xi_c(K_{\eps_i})=K_{\eps_i}.
\]
\begin{lem}
\label{lem:howxc}
$t_{i,j}\circ \xi_c=c^{i-j}t_{i,j}$ and 
$\del_{i,j}\circ \xi_c=c^{j-i}\del_{i,j}$.
\end{lem}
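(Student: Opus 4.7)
The plan is to analyze both statements simultaneously via the standard root-space grading of $U_q(\gl_n)$. Recall that $U_q(\gl_n) = \bigoplus_{\beta \in Q} U_q(\gl_n)_\beta$ is $Q$-graded, where $Q := \bigoplus_{i=1}^{n-1}\Z\alpha_i$ with $\alpha_i := \eps_i - \eps_{i+1}$, and the generators $E_i$, $F_i$, $K_{\eps_i}^{\pm 1}$ have degrees $\alpha_i$, $-\alpha_i$, $0$ respectively. I would first note that $\xi_c$ is nothing but the grading automorphism for the coarser $\Z$-grading by total height: on $U_q(\gl_n)_\beta$ with $\beta = \sum n_i\alpha_i$, the automorphism $\xi_c$ acts as multiplication by $c^{\mathrm{ht}(\beta)}$, where $\mathrm{ht}(\beta) := \sum n_i$.

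Next I would read off the weights of the bases of $V^{(n)}$ and $\breve V^{(n)}$ from the formulas in Subsection~\ref{subsec:PnnnDnan}. From $K_{\eps_i}\mapsto \mathsf 1 + (q^{-1}-1)\mathsf E_{i,i}$ one sees that $e_j \in V^{(n)}$ has weight $q^{-\eps_j}$, while from $K_{\eps_i}\mapsto \mathsf 1 + (q-1)\mathsf E_{i,i}$ one sees that $e_j \in \breve V^{(n)}$ has weight $q^{\eps_j}$. Consequently, for a homogeneous element $x\in U_q(\gl_n)_\beta$, the matrix coefficient $t_{i,j}(x) = \lag e_i^*, x\cdot e_j\rag$ vanishes unless $\beta = \eps_j - \eps_i$, and $\del_{i,j}(x)$ vanishes unless $\beta = \eps_i - \eps_j$. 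A direct calculation shows that
\[
\mathrm{ht}(\eps_j - \eps_i) = i - j \quad\text{and}\quad \mathrm{ht}(\eps_i - \eps_j) = j - i,
\]
independently of whether $i<j$ or $i>j$ (in either case one of the two expressions $\pm(\alpha_{\min}+\cdots+\alpha_{\max-1})$ applies and gives the stated height).

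Combining these two observations finishes the argument: on the single graded piece that survives, $\xi_c$ acts by the scalar $c^{i-j}$ for $t_{i,j}$ and by $c^{j-i}$ for $\del_{i,j}$, and both $t_{i,j}$ and $\del_{i,j}$ annihilate every other graded component, so the same scaling holds globally. The only routine verification I would include is that the assignment defining $\xi_c$ does extend to an algebra automorphism; this reduces to checking homogeneity of the defining relations under the height grading (the quantum Serre relations are pure in $E$ or pure in $F$, hence clearly homogeneous, and in the $[E_i,F_j]$ relation the left side is of total degree $1-1 = 0$ while the right side has degree $0$). I do not anticipate any genuine obstacle; the main conceptual content is the interpretation of $\xi_c$ as a grading automorphism and the elementary but useful identity $\mathrm{ht}(\eps_j - \eps_i) = i - j$.
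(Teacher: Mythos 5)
Your proof is correct, and it takes a genuinely different route from the paper's. The paper verifies the identity $t_{i,j}(\xi_c(x))=c^{i-j}t_{i,j}(x)$ directly on the standard generators and then propagates it to arbitrary products by induction, using the coproduct formula $t_{i,j}(xy)=\sum_a t_{i,a}(x)t_{a,j}(y)$ together with the telescoping $c^{(i-a)+(a-j)}=c^{i-j}$. You instead observe that $\xi_c$ is the grading automorphism for the height $\Z$-grading of $U_q(\gl_n)$ and that $t_{i,j}$ (resp.\ $\del_{i,j}$) is supported on the single root-space $U_q(\gl_n)_{\eps_j-\eps_i}$ (resp.\ $U_q(\gl_n)_{\eps_i-\eps_j}$), whose height is $i-j$ (resp.\ $j-i$); your weight computations for $V^{(n)}$ and $\breve V^{(n)}$ and the height identities check out. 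The two arguments are close cousins — the paper's telescoping exponent is exactly the additivity of your height function — but yours replaces the induction on word length by a one-line support argument and makes the conceptual content (these are weight-shift matrix coefficients, and $\xi_c$ is a grading character) explicit, whereas the paper's version is more self-contained in that it uses nothing beyond the generator formulas already written down in Subsection~\ref{subsec:PnnnDnan}. Your remark on well-definedness of $\xi_c$ is not strictly needed, since the paper takes the existence of the automorphism $\xi_c$ as given, but it does no harm.
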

\begin{proof}
We only give the proof of the assertion for the $t_{i,j}$. In this case we need to verify the equality 
\begin{equation}
\label{tijoxi_c}
t_{i,j}(\xi_c(x))=c^{i-j}t_{i,j}(x)
\end{equation}
for $x\in U_q(\gl_n)$. This is a straightforward calculation in the special case where  $x$ is one of the standard generators of $U_q(\gl_n)$. To complete the proof
of~\eqref{tijoxi_c}  note that if~\eqref{tijoxi_c} holds for $x$ and $x'$, then it also holds for $xx'$ because
\[
t_{i,j}(\xi_c(xx'))=\sum_{a=1}^n t_{i,a}(\xi_c(x))t_{a,j}(\xi_c(x'))
=
\sum_{a=1}^n
c^{(i-a)+(a-j)}t_{i,a}(x)t_{a,j}(x')=c^{i-j}t_{i,j}(xx').
\qedhere
\]
\end{proof}
The map
\[
\Xi:U_q(\gl_n)\to U_q(\gl_n)^\mathrm{op,cop}\ ,\ 
x\mapsto \xi_{-1/q}(S(x))
\]
is an isomorphism of Hopf algebras. Thus, the  pullback of $\Xi$  induces an isomorphism of Hopf algebras
$
U_q(\gl_n)^\circ\to \left( U_q(\gl_n)^\circ\right)^\mathrm{op,cop}
$, given by  $u\mapsto u\circ\Xi$. 
Set
\begin{equation}
\label{eq:iota-uoXi}
\iota(u):=u\circ \Xi\quad\text{ for }u\in U_q(\gl_n)^\circ.
\end{equation}
\begin{lem}
\label{lem:howiota}
We have 
\begin{equation}
\label{eq:defn-iota-map}
\iota(t_{i,j})=\del_{j,i}\quad\text{for }1\leq i,j\leq n.
\end{equation}
In particular, the restriction of $\iota$ to $\sP_{n\times n}$ is an isomorphism of bialgebras $\iota:\sP_{n\times n}\to\sD_{n\times n}^\mathrm{op,cop}$.
\end{lem}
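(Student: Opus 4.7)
The plan is to prove the identity $\iota(t_{i,j})=\del_{j,i}$ first, since the bialgebra isomorphism statement then follows essentially formally. To establish the identity, one has to check the equality $t_{i,j}(\Xi(x))=\del_{j,i}(x)$ for all $x\in U_q(\gl_n)$. I would do this in two steps: first reduce to generators via a multiplicativity argument, then verify the identity by direct computation on $E_k$, $F_k$, and $K_{\eps_k}^{\pm 1}$.

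For the reduction step, observe that $\Xi=\xi_{-1/q}\circ S$ is an algebra anti-homomorphism, because $S$ is an anti-homomorphism while $\xi_{-1/q}$ is an ordinary homomorphism. Hence $\Xi(xy)=\Xi(y)\Xi(x)$. Assuming the identity holds for both $x$ and $y$, the coproduct formula for $t_{i,j}$ gives
\[
t_{i,j}(\Xi(xy))=t_{i,j}(\Xi(y)\Xi(x))=\sum_a t_{i,a}(\Xi(y))\,t_{a,j}(\Xi(x))=\sum_a \del_{a,i}(y)\,\del_{j,a}(x),
\]
which matches $\del_{j,i}(xy)=\sum_a\del_{j,a}(x)\del_{a,i}(y)$ coming from the coproduct of $\del_{j,i}$. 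The index bookkeeping here is exactly what the anti-homomorphism property of $\Xi$ is designed to produce, and is arguably the most delicate point of the proof.

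For the generator computations, one evaluates $\Xi$ on each generator using the explicit formulas for $S$ and $\xi_c$: $\Xi(K_{\eps_k})=K_{\eps_k}^{-1}$, $\Xi(E_k)=q^{-1}E_kK_k^{-1}$, and $\Xi(F_k)=qK_kF_k$ (in the $E_k$ and $F_k$ cases the factor of $-1/q$ in $\xi$ cancels the sign from $S$ precisely). Then one uses the coproduct of $t_{i,j}$ together with the known values $t_{i,j}(K_{\eps_k})$, $t_{i,j}(E_k)$, $t_{i,j}(F_k)$ listed in the excerpt, and the analogous values for $\del_{i,j}$ (which mirror those for $t_{i,j}$ via $V^{(n)}\leftrightarrow\breve V^{(n)}$) to check that both sides yield the same Kronecker delta expression. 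For $\del_{i,j}$ one needs $\del_{i,j}(K_{\eps_k})=\llbracket i,j\rrbracket q^{\llbracket i,k\rrbracket}$, $\del_{i,j}(E_k)=\llbracket i,k\rrbracket\llbracket j,k+1\rrbracket$, and $\del_{i,j}(F_k)=\llbracket i,k+1\rrbracket\llbracket j,k\rrbracket$, which follow from the description of $\breve V^{(n)}$; these can either be recorded as a short lemma or checked in line.

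Finally, for the bialgebra isomorphism assertion, note that $\iota: U_q(\gl_n)^\circ\to (U_q(\gl_n)^\circ)^{\mathrm{op,cop}}$ is already a Hopf algebra isomorphism (coming from pulling back the Hopf isomorphism $\Xi$), so its restriction to any sub-bialgebra is automatically injective and preserves both the (opposite) product and (opposite) coproduct. Since the $\del_{j,i}$ generate $\sD_{n\times n}$ as an algebra and coincide with $\iota(t_{i,j})$, the image of $\sP_{n\times n}$ under $\iota$ is exactly $\sD_{n\times n}$ as a set, giving the desired bialgebra isomorphism $\sP_{n\times n}\to\sD_{n\times n}^{\mathrm{op,cop}}$.
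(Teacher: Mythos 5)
Your proposal is correct and follows the same route as the paper, which simply reduces the identity $t_{i,j}(\Xi(x))=\del_{j,i}(x)$ to the standard generators and leaves the verification as a direct calculation; you have merely filled in the multiplicativity step (using that $\Xi$ is an algebra anti-homomorphism) and the generator-by-generator checks, all of which are accurate. The concluding bialgebra-isomorphism argument is also the intended one.
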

\begin{proof}
We need to verify $t_{i,j}(\Xi(x))=\del_{j,i}(x)$ for $x\in U_q(\gl_n)$. It suffices to check the latter relation for the standard generators of $U_q(\gl_n)$, and this special case follows from a direct calculation. 
\end{proof}
Next note that the map
\[
\underline\iota:\sP_{n\times n}\to \sD_{n\times n}^\mathrm{op}\ ,\ 
\underline\iota(u):=\iota(u)^\natural
\]
is an isomorphism of bialgebras (indeed $\underline\iota(t_{i,j})=\del_{i,j}$). Let 
$\cR_{\sD,\tau}$ and $\cL_{\sD,\tau}$ denote the $\tau$-twists of $\cR_\sD$ and $\cL_\sD$
(see Remark~\ref{rmk:LRactionsHcirc}), where we set
$\tau(x):=S^{-1}(x)^\natural$ for $x\in U_q(\gl_n)$. For $u\in\sP_{n\times n}$ and $x\in U_q(\gl_n)$ set
\[
\cR_\sP(x)u:=\underline\iota^{-1}\left(\cR_{\sD,\tau}(x)\underline\iota(u)\right)
\quad\text{ and }\quad
\cL_\sP(x)u:=\underline\iota^{-1}\left(\cL_{\sD,\tau}(x)\underline\iota(u)\right).
\]
By Remark~\ref{rmk:LRactionsHcirc}(ii), $\cR_{\sD,\tau}$ and $\cL_{\sD,\tau}$ equip $\sD_{n\times n}$ with $U_q(\gl_n)^\mathrm{cop}$-module algebra structures. It follows that $\cR_\sP$ and $\cL_\sP$ equip $\sP_{n\times n}$ with $U_q(\gl_n)$-module structures. 
%
%
%
%
By a direct calculation
\[
\cR_\sP(x)u=\sum \lag \iota(u_2),S^{-1}(x)\rag u_1
\quad\text{and}\quad
\cL_\sP(x)u=
\sum
\lag \iota(u_1),S^{-1}(x)^\natural \rag u_2
,\]
for $x\in U_q(\gl_n)$ and $u\in \sP_{n\times n}$.
Using~\eqref{eq:iota-uoXi}, Lemma~\ref{lem:rel-S-natural} and the relation $S^2=\xi_{q^2}$ we obtain
\[
\cR_\sP(x)u=\sum \lag u_2\circ\xi_{-1/q},x\rag u_1\quad
\text{and}
\quad
\cL_\sP(x)u=
\sum\lag (u_1\circ\xi_{-q})^\natural,x\rag u_2.
\]

\subsection{The algebras $\sP$ and $\sD$}
\label{subsec:Now3.5}
Our next goal is to extend the constructions of Subsection~\ref{subsec:PnnnDnan} to the $m\times n$ case. 
\begin{dfn}
\label{dfn-PmnPnn}
Let $m$ and $n$ be positive integers and set $N:=\max\{m,n\}$. We define
the algebra $
\sP:=\sP_{m\times n}
$
(respectively, $\sD:=\sD_{m\times n}$) to be the subalgebra of $
\sP_{N\times N}$ 
(respectively, $\sD_{N\times N}$)
that is generated by the $t_{i,j}$ (respectively, the $\del_{i,j}$) where $1\leq i\leq m$ and  $1\leq j\leq n$. 
\end{dfn}
Note that by restricting the $U_q(\gl_N)\otimes U_q(\gl_N)$-module algebra structures
on $\sP_{N\times N}$ and $\sD_{N\times N}$ we obtain $U_{LR}$-module algebra structures on $\sP$ and $\sD$. Let us describe these $U_{LR}$-module algebras more precisely. 
  For convenience we first assume that $m\leq n$.  Then the subalgebra of 
$U_q(\gl_n)$ 
generated by $E_i,F_i,K^{\pm1} _{\eps_j}$ for 
$1\leq i\leq m-1$ and $1\leq j\leq m$
is isomorphic to $U_q(\gl_m)\cong U_L$.  With this identification of $U_L$ with a subalgebra of $U_q(\gl_n)$ we have the following lemma.
\begin{lem}
\label{lem:3.4.2-}
Suppose that $m\leq n$ and we identify $U_L$ with a subalgebra of $U_q(\gl_n)$ as above. Then $\mathcal L_\sD(x)\sD\sseq\sD$ and $\mathcal L_\sP(x)\sP\sseq\sP$ for $x\in U_L$.
\end{lem}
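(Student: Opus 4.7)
The plan is to reduce the claim to a statement about the defining representations $V^{(n)}$ and $\breve V^{(n)}$, exploiting that both are generated as $U_L$-modules from a common stable subspace. First I would observe that under the given identification $U_L$ is a sub-Hopf algebra of $U_q(\gl_n)$: inspection shows $U_L$ is closed under $\Delta$ (since $K_i = K_{\eps_i}K_{\eps_{i+1}}^{-1}\in U_L$ for $i\leq m-1$), under $S$, and under the involution $x\mapsto x^\natural$ of~\eqref{eq:x->x*}. Since $\sP_{n\times n}$ and $\sD_{n\times n}$ are $U_q(\gl_n)$-module algebras under $\cL_\sP$ and $\cL_\sD$ respectively, and since $U_L\sseq U_q(\gl_n)$ is closed under $\Delta$, it suffices to verify that $\cL_\sD(x)\del_{i,j}\in\sD$ and $\cL_\sP(x)t_{i,j}\in\sP$ for all $x\in U_L$ and all $i\leq m$, $j\leq n$: stability under products then follows from the module algebra axiom and an easy induction on degree.

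For the $\sD$ case, the explicit formula for $\cL_\sD$ together with $\Delta_\sD(\del_{i,j})=\sum_{k=1}^{n}\del_{i,k}\otimes \del_{k,j}$ gives
\[
\cL_\sD(x)\del_{i,j}=\sum_{k=1}^{n}\lag \del_{i,k},x^\natural\rag\,\del_{k,j}.
\]
Since $x^\natural\in U_L$, the point is to show that $\lag \del_{i,k},y\rag=0$ whenever $i\leq m<k$ and $y\in U_L$. Because $\del_{i,k}=\sfm_{e_i^*,e_k}$ as a matrix coefficient of $\breve V^{(n)}$, this is the assertion $e_i^*\perp (U_L\cdot e_k)$. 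Examining the formulas that define the action on $\breve V^{(n)}$, each standard generator of $U_L$ either annihilates $e_k$ (for the $E_j,F_j$ with $j\leq m-1$, since $k\notin\{j,j+1\}$) or fixes it scalarwise (for $K_{\eps_j}^{\pm 1}$ with $j\leq m$, which act as the identity on $e_k$). Thus $U_L\cdot e_k\sseq \Bbbk e_k$, and the pairing with $e_i^*$ vanishes as $i\neq k$.

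For the $\sP$ case, using $\Delta_\sP(t_{i,j})=\sum_k t_{i,k}\otimes t_{k,j}$ together with Lemmas~\ref{lem:howxc} and~\ref{lem:naturalonTD}, the explicit formula for $\cL_\sP$ simplifies to
\[
\cL_\sP(x)t_{i,j}=\sum_{k=1}^{n}(-q)^{i-k}\lag t_{k,i},x\rag\,t_{k,j}.
\]
The required vanishing is that $\lag t_{k,i},x\rag=0$ for $i\leq m<k$ and $x\in U_L$. Since $t_{k,i}=\sfm_{e_k^*,e_i}$ on $V^{(n)}$, it suffices to prove that the subspace $W:=\spn\{e_1,\dots,e_m\}$ of $V^{(n)}$ is $U_L$-stable; then for $i\leq m$ we have $x\cdot e_i\in W$, which pairs to zero against $e_k^*$ for $k>m$. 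Stability of $W$ is immediate from the generator formulas for $V^{(n)}$: $E_j$ ($j\leq m-1$) sends $e_i$ to $\delta_{i,j}e_{j+1}\in W$, $F_j$ ($j\leq m-1$) sends $e_i$ to $\delta_{i,j+1}e_j\in W$, and $K_{\eps_j}^{\pm 1}$ ($j\leq m$) preserves each line $\Bbbk e_i\sseq W$.

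I do not anticipate a substantive obstacle; the only subtle point is keeping track of the twist by $\xi_{-q}$ and the involution $\natural$ in the formula for $\cL_\sP$, both of which act diagonally on the generators $t_{i,k}$ by Lemmas~\ref{lem:howxc} and~\ref{lem:naturalonTD} and therefore do not affect the vanishing that controls stability. Everything else is an explicit verification in the defining modules, made possible because $V^{(n)}$ and $\breve V^{(n)}$ contain the analogous $U_L$-modules $V^{(m)}$ and $\breve V^{(m)}$ as the span of the first $m$ basis vectors.
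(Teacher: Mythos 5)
Your proof is correct and follows essentially the same route as the paper: reduce to the generators $\del_{i,j}$ and $t_{i,j}$ via the module-algebra axiom and $\Delta(U_L)\sseq U_L\otimes U_L$, then kill the unwanted terms in $\cL_\sD(x)\del_{i,j}=\sum_a\lag\del_{i,a},x^\natural\rag\del_{a,j}$ (and its $\sP$ analogue) by a vanishing of matrix coefficients. The only difference is cosmetic: the paper checks the vanishing by pairing each standard generator explicitly, whereas you deduce it from the $U_L$-stability of $\Bbbk e_k$ in $\breve V^{(n)}$ and of $\spn\{e_1,\dots,e_m\}$ in $V^{(n)}$, which handles all of $U_L$ at once.
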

\begin{proof}
We only prove this for $\mathcal L_\sD$ (for $\mathcal L_\sP$ the proof is similar). Since $\sD_{n\times n}$ is a $U_L$-module algebra, it suffices to check that if $i\leq m$ 
and  $x$ is one of the standard generators of  $U_L$
then $\mathcal L_\sD(x)\del_{i,j}\in \sD$. If $x=E_k$ for $1\leq k\leq m-1$ then
\begin{align}
\label{eq:LDcalc}
\mathcal L_\sD(E_k)\del_{i,j}&=
\sum_{a=1}^n\lag\del_{i,a},E_k^\natural\rag \del_{a,j}
=
\sum_{a=1}^n\lag\del_{i,a},qK_{k}F_k\rag \del_{a,j}
=q\sum_{a=1}^n\sum_{b=1}^n
\lag \del_{i,b},K_k\rag\lag\del_{b,a},F_k\rag\del_{a,j}.
\end{align}
We have $
\lag \del_{b,a},F_k\rag=\lag e_b^*,\mathsf E_{k+1,k}e_a\rag=\llbracket a,k\rrbracket\llbracket b,k+1\rrbracket
$.
In particular, $\lag \del_{b,a},F_k\rag=0$ unless $a\leq m-1$. It follows that the right hand side of~\eqref{eq:LDcalc} is a linear combination of the $\del_{a,j}$ where $a\leq m-1$, hence it lies in $\sD$. The calculations for the cases  $x=F_k$ for $1\leq k\leq m-1$ and $x=K_{\eps_k}^{\pm 1}$ for $1\leq k\leq m$ are similar. 
\end{proof}

Lemma~\ref{lem:3.4.2-} implies that 
$\sP$ and $\sD$ are $U_{LR}$-stable subspaces of $\sP_{n\times n}$ and $\sD_{n\times n}$, where we consider 
\[
U_{LR}=U_q(\gl_m)\otimes U_q(\gl_n)
\cong U_L\otimes U_R
\]
as a subalgebra of $U_q(\gl_n)\otimes U_q(\gl_n)$ via the aforementioned embedding $U_L\into U_q(\gl_n)$. Thus, $\sP$ and $\sD$ inherit $U_{LR}$-module algebra structures from $\sP_{n\times n}$ and $\sD_{n\times n}$. 

Henceforth we mostly drop the symbols $\cL_\sP$, $\cL_\sD$, $\cR_\sP$ and $\cR_\sD$ from our notation. Instead, we use the notation 
\[
(x\otimes y)\cdot u
\]
to denote the action of $x\otimes y\in U_{LR}$ on $u\in \sP$ (or $u\in\sD$).  
The actions of  $x\otimes y\in U_L\otimes U_R$ on $u\in\sP$ and on $v\in \sD$ are given explicitly by the formulas
\begin{equation}
\label{eq:xoy=<><>u2}
(x\otimes y)\cdot u
=\sum\lag \iota(u_1),S^{-1}(x)^\natural\rag\lag \iota(u_3),S^{-1}(y)\rag u_2
=
\sum
\lag (u_1\circ \xi_{-q})^\natural,x\rag 
\lag u_3\circ\xi_{-1/q},y\rag u_2
\end{equation}
and 
\begin{equation}
\label{eq:xoy=<><>u3}
(x\otimes y)\cdot v=\sum 
\lag v_1,x^\natural\rag\lag v_3,y\rag v_2
=
\sum \lag v_1^\natural,x\rag\lag v_3,y\rag v_2,
\end{equation}
with $(\Delta_\sP\otimes 1)\circ \Delta_\sP(u)=\sum u_1\otimes u_2\otimes u_3$ and
$(\Delta_\sD\otimes 1)\circ \Delta_\sD(v)=\sum v_1\otimes v_2\otimes v_3$ in Sweedler notation, where $\Delta_\sP$ and $\Delta_\sD$ denote the coproducts of $\sP_{n\times n}$ and $\sD_{n\times n}$, respectively.  
\begin{rmk}
In the case  $m>n$ the construction of the $U_{LR}$-action is the same, except that we embed $U_q(\gl_m)\otimes U_q(\gl_n)$ in $U_q(\gl_m)\otimes U_q(\gl_m)$. However, 
formulas~\eqref{eq:xoy=<><>u2} and \eqref{eq:xoy=<><>u3} remain the same. 
\end{rmk}
\begin{dfn}
\label{dfn:themapPhiU}
The map $\phi_U:U_{LR}\to \End_\Bbbk(\sP)$ is the homomorphism of algebras induced by the action~\eqref{eq:xoy=<><>u2}. 
\end{dfn}


\subsection{$U_{LR}$-module decomposition of $\sP$ and $\sD$}
\label{subsec:ULR-mdec}
For any integer partition $\la$ satisfying $\ell(\la)\leq n$, where $\ell(\la)$ denotes the length of $\la$, let $V_\la$ denote the irreducible finite dimensional $U_R$-module of type 
$(1,\ldots,1)$
with highest weight $q^{\sum_{i}\la_i\eps_i}$ (with respect to $\Delta_n^+$). If $\la$ satisfies $\ell(\la)\leq m$ we use the same notation $V_\la$ to denote the analogously defined module of $U_L$.

The algebras $\sP$ and $\sD$ are naturally graded by degree of monomials. For $d\geq 0$ let $\sP^{(d)}$ (respectively, $\sD^{(d)}$) denote the graded component of degree $d$ of $\sP$ (respectively, $\sD$). Furthermore,  let $\Lambda_{d,r}$ be the set of integer partitions $\la$ such that $\ell(\la)\leq d$ and $|\la|=r$, where $|\la|$ denotes the size of $\la$. 
The following proposition is well known and its proof can be found for example 
in~\cite{NYM93,Ta92,Zh02}.  
\begin{prp}
\label{prp:glmglndecom}
Set $d:=\min\{m,n\}$. 
We have isomorphisms of $U_{LR}$-modules
\[
\sP^{(r)}\cong\bigoplus_{\la\in \Lambda_{d,r}}
V_\la^*\otimes V_\la^*\quad\text{and}\quad
\sD^{(r)}\cong\bigoplus_{\la\in \Lambda_{d,r}}
V_\la\otimes V_\la.
\]
\end{prp}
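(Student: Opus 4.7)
My proof plan is to reduce the statement to a known quantum Howe duality result by identifying $\sP$ and $\sD$ as suitable quantum symmetric algebras. The argument proceeds in three steps.

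First, I would compute the $U_{LR}$-module structure of the linear spans $\sP^{(1)} = \spn\{t_{i,j}\}$ and $\sD^{(1)} = \spn\{\del_{i,j}\}$. Using the explicit action formulas~\eqref{eq:xoy=<><>u2} and~\eqref{eq:xoy=<><>u3} together with the coproduct $\Delta_\sP(t_{i,j}) = \sum_k t_{i,k} \otimes t_{k,j}$ and the analogous formula for $\Delta_\sD$, a direct calculation on the standard generators $E_k, F_k, K_{\eps_k}$ shows that $\sP^{(1)} \cong V_{\eps_1}^{L,\ast} \otimes V_{\eps_1}^{R,\ast}$ and $\sD^{(1)} \cong V_{\eps_1}^{L} \otimes V_{\eps_1}^{R}$ as $U_{LR}$-modules, where $V_{\eps_1}^{L}$ and $V_{\eps_1}^{R}$ denote the natural $mn$-dimensional modules. (The bookkeeping of duals is forced by the $\natural$ and $\xi_{-1/q}, \xi_{-q}$ twists in the action formulas, together with Lemma~\ref{lem:howiota}.)

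Second, I would show that the relations (R1)--(R2) defining $\sP$ (respectively, (R1$'$)--(R2$'$) defining $\sD$) are precisely the relations cutting out the quantum symmetric algebra $S_q(\sP^{(1)})$ associated to the braiding $\check{\EuScript R}^{(n)}$ restricted to $\sP^{(1)} \otimes \sP^{(1)}$. Indeed, the relations (R1)--(R2) are exactly the matrix-coefficient form of the Faddeev--Reshetikhin--Takhtajan relations for the braiding of $V^{(n)} \otimes V^{(n)}$ (cf.~\eqref{eq:g1f1R}), which by a standard argument identify $\sP$ with the algebra generated by the $(-q^{-1})$-eigenspace (or equivalently its complement) of $\check{\EuScript R}^{(n)}_{V^{(n)},V^{(n)}}$. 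This puts both $\sP$ and $\sD$ into the framework of quantum matrix bialgebras.

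Third, once $\sP^{(r)}$ is realized as the degree-$r$ part of this quantum symmetric algebra on a tensor-product module, the decomposition is precisely the content of the quantum $(\gl_m,\gl_n)$-Howe duality of Noumi--Yamada--Mimachi~\cite{NYM93}, Takeuchi~\cite{Ta92}, and Zhang~\cite{Zh02}, which asserts that the $r$-th graded piece decomposes as $\bigoplus_{\la \in \Lambda_{d,r}} V_\la^\ast \otimes V_\la^\ast$ with the stated range of $\la$. The corresponding decomposition for $\sD$ follows either by a parallel argument, or by transporting the $\sP$-decomposition via the isomorphism $\underline\iota: \sP \to \sD^{\mathrm{op}}$ of Lemma~\ref{lem:howiota} (which is $U_{LR}$-equivariant up to the $\xi$-twists and hence sends $V_\la^\ast \otimes V_\la^\ast$ isotypic components on the $\sP$ side to $V_\la \otimes V_\la$ components on the $\sD$ side).

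The main obstacle is not conceptual but rather a careful reconciliation of conventions: highest weights, the choice of dual (right dual versus left dual), and the $\xi_c$-twists that appear in the passage from left translation to the action of~\eqref{eq:xoy=<><>u2}. Once these bookkeeping issues are settled and the degree-$1$ component is correctly identified as a tensor product of natural modules (with the right duals), the proposition reduces to citing the known quantum Howe duality; no further quantum-group-theoretic work is needed.
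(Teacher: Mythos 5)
Your proposal is correct and takes essentially the same route as the paper: the authors simply record that this decomposition is well known and cite \cite{NYM93,Ta92,Zh02}, which is exactly the quantum $(\gl_m,\gl_n)$-Howe duality your three-step reduction lands on, and your identification of the degree-one pieces agrees with what the paper itself computes later ($\sP^{(1)}\cong V^{(m)}\otimes V^{(n)}$ and $\sD^{(1)}\cong \breve V^{(m)}\otimes \breve V^{(n)}$, with $\breve V^{(n)}\cong (V^{(n)})^*$ the natural module). The only slip is calling $V_{\eps_1}^L$ and $V_{\eps_1}^R$ ``$mn$-dimensional''; they are $m$- and $n$-dimensional respectively, with the tensor product being $mn$-dimensional.
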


\begin{rmk}
\label{rmk:actionformulas}
The action of $U_L\otimes U_R$ on the generators of $\sP$ and $\sD$ can be computed explicitly.
For the subalgebra $U_R\cong 1\otimes U_R$ of $U_{LR}$, the action is given by
\[
\begin{array}{ccccc}
E_k\cdot \del_{i,j}=\llbracket k+1,j\rrbracket \del_{i,k}&,&
F_k\cdot \del_{i,j}=\llbracket k,j\rrbracket 
\del_{i,k+1}&,& 
K_{\eps_k}\cdot \del_{i,j}=q^{\llbracket k,j\rrbracket }\del_{i,j},\\
E_k\cdot t_{i,j}=-\llbracket k,j\rrbracket q^{-1} t_{i,k+1}
&,&
F_k \cdot t_{i,j}=
-\llbracket k+1,j\rrbracket qt_{i,k}&,&
K_{\eps_k}\cdot  t_{i,j}=q^{-\llbracket k,j\rrbracket} t_{i,j},
\end{array}
\]
where $1\leq k\leq n-1$, $1\leq i\leq m$ and $1\leq j\leq n$. 
For $U_L\cong U_L\otimes 1$ the formulas are similar but the action occurs in the first index (thus, they are obtained by replacing $\del_{i,j}$ by $\del_{j,i}$ and
$t_{i,j}$ by $t_{j,i}$).

\end{rmk}

\subsection{The algebras ${\sPD^\mathrm{gr}}$
 and $\sPD$} 
\label{subsec:sPDgr}

For $n\geq 1$ let $\EuScript R^{(n)}$ and $\underline{\EuScript R}^{(n)}$ be as in Definition~\ref{dfn:univRmatgln}.

\begin{dfn}
\label{dfn:RLRRuRLuRR}
We set $
\cC_L:=\cC^{(m)}$ and 
$\cC_R:=\cC^{(n)}$. Furthermore, we set 
\[
{\EuScript R}_L:={\EuScript R}^{(m)}\quad,\quad
{\underline{\EuScript R}}_L:={\underline{\EuScript R}}^{(m)}\quad,\quad {\EuScript R}_R:={\EuScript R}^{(n)}
\quad,\quad
{\underline{\EuScript R}}_R:={\underline{\EuScript R}}^{(n)}.
\] 
We define 
braidings $\check{\EuScript R}_L$ and 
$\check{\underline{\EuScript R}}_L$
on $\cC_L$, and  
$\check{\EuScript R}_R$ and 
$\check{\underline{\EuScript R}}_L$ on
$\cC_R$ as in~\eqref{Bradings-eq}.  
\end{dfn}

 From Proposition~\ref{prp:Uqglnlocfin} it follows that
 $(U_L,\cC_L,\check{\underline{\EuScript R}}_L)$ and $(U_R,\cC_R,\check{\underline{\EuScript R}}_R)$ are locally finite braided triples. Thus the product 
(in the sense of 
Definition~\ref{dfn:product-loc-fin})
 of 
 $(U_L,\cC_L,\check{\underline{\EuScript R}}_L)$ and $(U_R,\cC_R,\check{\underline{\EuScript R}}_R)$ 
 is also a locally finite braided triple of the form $(U_{LR},\cC_{LR},\check{\underline{\EuScript R}}_{LR})$, where 
$\check{\underline{\EuScript R}}_{LR}$ is defined as in~\eqref{eq:dfnofRcheck}.  
Furthermore, Proposition~\ref{prp:glmglndecom} implies that
$\sP,\sD\in\mathrm{Obj}(\cC_{LR})$. 


Recall that $\sD^{(1)}$ and $\sP^{(1)}$ are $U_{LR}$-modules. Let 
$\psi_{\circ}:\sD^{(1)}\times \sP^{(1)}\to\Bbbk$ be the $U_{LR}$-invariant $\Bbbk$-bilinear form, in the sense of \eqref{eq:Psix1x2}, that is defined by 
\[
\psi_{\circ}(\del_{i,j},t_{k,l}):=\llbracket i,k\rrbracket \llbracket j,l\rrbracket\quad
\text{for }
1\leq i,k\leq m,\text{ and }
1\leq j,l\leq n.
\]
\begin{dfn}
\label{dfn-PDPD1}
We define the algebras 
$\sPD^\mathrm{gr}$ and $\sPD$ by 
\begin{equation}
\sPD^\mathrm{gr}:=\sP\otimes_{\check{\EuScript R}}\sD\quad
\text{and}
\quad
\sPD:=\sP\otimes_{\check{\EuScript R},\psi_{\circ}}\sD,
\end{equation}
according to Definition~\ref{dfn:ARBB} and 
Definition~\ref{dfn:ARpsiBBB}, with 
$A:=\sP$, $E_A:=\sP^{(1)}$, $B:=\sD$, $E_B:=\sD^{(1)}$, $\psi:=\psi_\circ$, and  $\check{\EuScript R}:=\check{\underline{\EuScript R}}_{LR}$. 
\end{dfn}

It turns out that there is an equivalent description of  $\sPD$ and $\sPD^\mathrm{gr}$ by  generators and relations. Recall the notation $\llbracket a,b\rrbracket$ that was defined in~\eqref{eq:[[a,b]]}. We set
\[
\llbracket a,b\rrbracket_q:=\begin{cases}
q& \text{ if }a=b,\\
q-q^{-1}&\text{if }a\neq b.
\end{cases}
\]
\begin{dfn}
\label{dfn-PDPD2}
The algebra $\sPD$ is  generated by $2mn$ generators $t_{i,j}$ and $\del_{i,j}$ for $1\leq i\leq m$ and $1\leq j\leq n$, modulo the relations {\rm (R1)}, {\rm (R2)}, {\rm (R1$'$)}, {\rm (R2$'$)} of Subsection~\ref{subsec:PnnnDnan}
 and the relations
\begin{align}\label{eq:Sah}
\del_{\oline a_1,\oline a_2}t_{a_1,a_2}
=
\llbracket 
a_1,\oline a_1
\rrbracket
\llbracket
a_2,\oline a_2
\rrbracket
+
\sum_{b_1\geq a_1}\sum_{\oline b_1\geq \oline a_1} 
\sum_{b_2\geq a_2}\sum_{\oline b_2\geq \oline a_2} 
\left(
\diamondsuit_1+\square_1
\right)
\left(
\diamondsuit_2+\square_2
\right)
t_{b_1,b_2}\del_{\oline b_1,\oline b_2},
\end{align}
where 
\[
\diamondsuit_i:=\llbracket
a_i,\oline a_i
\rrbracket
\llbracket
b_i,\oline b_i
\rrbracket
\llbracket
a_i,b_i
\rrbracket_q
\quad\text{ and }\quad
\square_i:=
\left(1-
\llbracket
a_i,\oline a_i
\rrbracket
\right) \llbracket
a_i,b_i
\rrbracket
\llbracket
\oline a_i,\oline b_i
\rrbracket.
\]
The algebra
$\sPD^\mathrm{gr}$ is also generated by
$2mn$ generators $t_{i,j}$ and $\del_{i,j}$ for $1\leq i\leq m$ and $1\leq j\leq n$ modulo
 the same  relations, except that 
$\llbracket 
a_1,\oline a_1
\rrbracket
\llbracket
a_2,\oline a_2
\rrbracket
$ does not occur on  
 the 
right hand side of~\eqref{eq:Sah}.

\end{dfn}

\begin{rmk}
\label{rmk:R3R6}
The relation~\eqref{eq:Sah} of $\sPD$ can be written more explicitly as the  relations  
{\rm (R3)--(R6)} below:
\begin{itemize}
\item[\rm (R3)] $\partial_{c,b}t_{d,a} =t_{d,a} \partial_{c,b}$ if $b\neq a $ and $c\neq d$.\\
  
\item[\rm (R4)] $\displaystyle\partial_{c,b}t_{c,a} =qt_{c,a} \partial_{c,b}+ \sum_{c'>c} (q-q^{-1})t_{c',a} \partial_{c',b}$\ \  if $b\neq a $.
\item[\rm (R5)] $\displaystyle\partial_{c,a}t_{d,a} = qt_{d,a} \partial_{c,a}+\sum_{a'>a}(q-q^{-1})t_{d,a'} \partial_{c,a'}$\ \  if $c\neq d$.
\item[\rm (R6)] $\displaystyle
\partial_{c,d}t_{c,d} = 1+\sum_{c'\geq c}\,\sum_{d'\geq d}q^{\llbracket c',c\rrbracket +\llbracket d',d\rrbracket }(q-q^{-1})^{2-\llbracket c',c\rrbracket -\llbracket d',d\rrbracket }
t_{c',a'} \partial_{c',a'}$.
\end{itemize}
For $\sPD^\mathrm{gr}$ the
relation~\eqref{eq:Sah} has the same explicit form,  except that (R6) should be replaced by
\begin{itemize}
\item[\rm (R6$'$)] $\displaystyle
\partial_{c,d}t_{c,d} = \sum_{c'\geq c}\,\sum_{d'\geq d}q^{\llbracket c',c\rrbracket +\llbracket d',d\rrbracket }(q-q^{-1})^{2-\llbracket c',c\rrbracket -\llbracket d',d\rrbracket }
t_{c',d'} \partial_{c',d'}$.
\end{itemize}

\end{rmk}

\begin{prp}
Definition~\ref{dfn-PDPD1} and Definition~\ref{dfn-PDPD2} are equivalent. 
\end{prp}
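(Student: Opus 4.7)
The plan is to invoke Proposition~\ref{prp:ARBAB} for $\sPD^\mathrm{gr}$ and to read off from Definition~\ref{dfn:ARpsiBBB} an analogous presentation for $\sPD$, taking $A := \sP$, $B := \sD$, $E_A := \sP^{(1)}$, $E_B := \sD^{(1)}$, $\check R := \check{\underline{\EuScript R}}_{LR}$, and $\psi := \psi_\circ$ (for $\sPD$) or $\psi := 0$ (for $\sPD^\mathrm{gr}$). This exhibits both algebras as the tensor algebra $T(\sP^{(1)} \oplus \sD^{(1)})$ modulo three families of relations: the defining relations of $\sP$, the defining relations of $\sD$, and the cross-relations $ba - \gamma_{A,B}\circ \check{\underline{\EuScript R}}_{LR}(b \otimes a) - \psi(b,a)$ for $a \in \sP^{(1)}$ and $b \in \sD^{(1)}$. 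It then remains to identify these with the relations of Definition~\ref{dfn-PDPD2}. The subalgebra relations are immediate: by the FRT-type presentation of $\sP_{n \times n}$ and $\sD_{n \times n}$ recalled in Subsection~\ref{subsec:PnnnDnan}, $I_A$ is generated by (R1)--(R2) and $I_B$ by (R1$'$)--(R2$'$); by Definition~\ref{dfn-PmnPnn}, generators with indices outside the ranges $1 \leq i \leq m$, $1 \leq j \leq n$ are absent, so the same relations (restricted to the allowed index ranges) present $\sP$ and $\sD$.

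The main step is to evaluate $\check{\underline{\EuScript R}}_{LR}(\del_{\bar a_1, \bar a_2} \otimes t_{a_1, a_2})$ and to match its image under $\gamma_{A,B}$ against the quadratic part of~\eqref{eq:Sah}. Using the factorization $\check{\underline{\EuScript R}}_{LR} = (\check{\underline{\EuScript R}}_L)_{13} \circ (\check{\underline{\EuScript R}}_R)_{24}$ from Definition~\ref{dfn:product-loc-fin}, together with the outer tensor product decompositions of $\sD^{(1)}$ and $\sP^{(1)}$ as $U_L \otimes U_R$-modules (afforded by Proposition~\ref{prp:glmglndecom}), this calculation splits into two independent R-matrix computations on a pair of standard modules: one for $U_L$ (acting on the first and third tensor factors) and one for $U_R$ (acting on the second and fourth). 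Each is a direct application of the explicit matrix formulas in Subsection~\ref{subsec:PnnnDnan}, combined with~\eqref{dfn:R21R} to pass from $\EuScript R$ to $\underline{\EuScript R}$ and to handle dualization. Sorting the resulting coefficients according to whether $a_i = \bar a_i$ (producing the diagonal $\diamondsuit_i$ contribution, encoding a $q$-entry of the R-matrix) or $a_i \neq \bar a_i$ (producing the off-diagonal $\square_i$ swap weighted by $q - q^{-1}$), and multiplying across $i = 1, 2$, yields exactly the double sum of~\eqref{eq:Sah}. The deformation term $\psi_\circ(\del_{\bar a_1, \bar a_2}, t_{a_1, a_2}) = \llbracket a_1, \bar a_1 \rrbracket \llbracket a_2, \bar a_2 \rrbracket$ supplies the leading constant in (R6); setting $\psi = 0$ removes it and yields (R6$'$) for $\sPD^\mathrm{gr}$.

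The principal obstacle is the R-matrix bookkeeping. The explicit formulas in Subsection~\ref{subsec:PnnnDnan} are given for $V \otimes V$ and $\breve V \otimes \breve V$, so one must carefully derive from them the mixed R-matrix governing $\sD^{(1)} \otimes \sP^{(1)}$, which pairs a standard module with a dual; this requires invoking~\eqref{dfn:R21R} to relate $\EuScript R$ to $\underline{\EuScript R}$ and to handle the duality. One must also keep straight that Definition~\ref{dfn-PDPD1} uses the barred braiding $\check{\underline{\EuScript R}}$, while the matrix coefficients defining $\sP$ and $\sD$ arise from the unbarred $\EuScript R$. Encoding the four combinations of diagonal versus off-diagonal behavior across the two tensor factors into the factorized form $(\diamondsuit_1 + \square_1)(\diamondsuit_2 + \square_2)$ of~\eqref{eq:Sah} is the combinatorial core of the verification, but involves no conceptual difficulty once the R-matrix action has been pinned down.
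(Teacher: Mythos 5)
Your proposal follows essentially the same strategy as the paper: exhibit both algebras via Proposition~\ref{prp:ARBAB} and Definition~\ref{dfn:ARpsiBBB}, identify the ideals $I_A$, $I_B$ with (R1)--(R2) and (R1$'$)--(R2$'$), and then compute the cross-relations by evaluating $\check{\underline{\EuScript R}}_{LR} = (\check{\underline{\EuScript R}}_L)_{13}\circ(\check{\underline{\EuScript R}}_R)_{24}$ on $\sD^{(1)}\otimes\sP^{(1)}$ and matching the outcome against~\eqref{eq:Sah}. That is precisely what the paper does.

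One small remark on implementation: the paper does not derive the mixed operator $\left(\underline{\EuScript R}_L\right)_{\breve V^{(m)},V^{(m)}}$ from the homogeneous formulas for $\EuScript R_{V^{(n)},V^{(n)}}$ and $\EuScript R_{\breve V^{(n)},\breve V^{(n)}}$ via~\eqref{dfn:R21R}; instead it computes $\left(\underline{\EuScript R}_L\right)_{\breve V^{(m)},V^{(m)}}$ directly by evaluating the formal series of Definition~\ref{dfn:univRmatgln} against the explicit module maps $U_q(\gl_m)\to\End(V^{(m)})$ and $U_q(\gl_m)\to\End(\breve V^{(m)})$ recorded in Subsection~\ref{subsec:PnnnDnan}. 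Your proposed detour through~\eqref{dfn:R21R} is workable but not self-contained as stated: that identity relates $\EuScript R$ and $\underline{\EuScript R}$ as universal tensors, while passing from the known matrices on $V\otimes V$ and $\breve V\otimes\breve V$ to the needed matrix on $\breve V\otimes V$ also requires feeding in the explicit $\breve V$-action (equivalently, transposing one tensor slot). Once you invoke the explicit actions anyway, the direct evaluation of the universal series is shorter and less error-prone, and it produces the mixed matrix in the form $\mathsf E_{j,i}\otimes\mathsf E_{j,i}$ off the diagonal, which is the shape you then need to split into the $\diamondsuit_i$ and $\square_i$ pieces. Apart from this, your matching of the two index-sorted cases to the factorized double sum in~\eqref{eq:Sah}, and the observation that $\psi_\circ$ supplies the constant term of (R6) while $\psi=0$ yields (R6$'$), agree with the paper's argument.
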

\begin{proof}
We just need to explain how to compute the mixed relations~\eqref{eq:ba-gammaAB} and~\eqref{eq:ba-gammaABpsi}.
 As a $U_{LR}$-module, 
\[
\sD^{(1)}\cong \breve V^{(m)}\otimes \breve V^{(n)}\quad\text{and}\quad
\sP^{(1)}\cong V^{(m)}\otimes V^{(n)},
\]
where the isomorphisms are $\partial_{i,j}\mapsto e_i\otimes e_j$ and $t_{i,j}\mapsto e_i\otimes e_j$. 
By a direct calculation using Definition~\ref{dfn:univRmatgln}
we obtain
\[
\left(\underline{\EuScript R}_{L}\right)_{\breve V^{(m)},V^{(m)}}=q\sum_{1\leq i\leq m}
\mathsf E_{i,i}\otimes \mathsf E_{i,i}+\sum_{1\leq i\neq j\leq m}
\mathsf E_{i,i}\otimes \mathsf E_{j,j}+(q-q^{-1})\sum_{1\leq i<j\leq m}
\mathsf E_{j,i}\otimes \mathsf E_{j,i}.
\]
The formula for $\left(\underline{\EuScript R}_{R}\right)_{\breve V^{(n)},V^{(n)}}$ is similar, with $m$ replaced by $n$. 
The mixed relations~\eqref{eq:ba-gammaABpsi} of $\sPD$ and~\eqref{eq:ba-gammaAB}
of
 $\sPD^{\mathrm{gr}}$ can now be computed
explicitly based on Definition~\ref{dfn:ARBB} and Definition~\ref{dfn:ARpsiBBB}.
\end{proof}

\subsection{Bases of monomials for $\sPD$ and $\sPD^\mathrm{gr}$}
Consider the monomials
\begin{equation}
\label{eq:bassis}
t_{1,1}^{a_{1,1}}\cdots t_{1,n}^{a_{1,n}}\cdots
t_{m,1}^{a_{m,1}}\cdots t_{m,n}^{a_{m,n}}
\del_{m,n}^{b_{m,n}}\cdots \del_{m,1}^{b_{m,1}}\cdots
\del_{1,n}^{b_{1,n}}\cdots \del_{1,1}^{b_{1,1}}
,\quad a_{i,j},b_{i,j}\in\Z^{\geq 0}.
\end{equation}
The expression~\eqref{eq:bassis} makes sense both as an element of $\sPD$ and an element of $\sPD^\mathrm{gr}$. 
\begin{prp}
\label{diamond-app}
The monomials~\eqref{eq:bassis} form a $\Bbbk$-basis of $\sPD$.
\end{prp}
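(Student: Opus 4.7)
The plan is to combine a spanning argument for the monomials \eqref{eq:bassis} via the defining relations with a linear independence argument that passes through the associated graded algebra and exploits the faithfulness of $\phi_{PD}$.

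First I will show that the monomials in \eqref{eq:bassis} span $\sPD$. For a word $w$ in the generators $t_{i,j},\partial_{i,j}$, define the \emph{inversion number} $\mathrm{inv}(w)$ to be the number of position pairs $(r,s)$ with $r<s$ such that position $r$ carries a $\partial$-generator and position $s$ carries a $t$-generator. Each application of (R3)--(R6) to a subword $\partial_{c,b}t_{d,a}$ replaces it with a $\Bbbk$-linear combination of terms in which either the total degree strictly decreases (only via the constant $1$ in (R6)) or the local $\partial\cdot t$ is swapped to $t\cdot\partial$. Hence each such step strictly decreases the lexicographic pair $(\deg w,\mathrm{inv}(w))$, a pair of non-negative integers, so the reduction terminates. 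Once the inversion number has been driven to zero, the homogeneous relations (R1), (R2), (R1$'$), (R2$'$) are used to reorder the $t$-generators and the $\partial$-generators within their respective blocks into the prescribed form.

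Second, I will pass to the associated graded algebra. Equip $\sPD$ with the filtration $F^d\sPD$ assigning degree one to each generator $t_{i,j}$ and $\partial_{i,j}$. All defining relations are compatible with this filtration, and their top-degree parts are exactly the defining relations of $\sPD^\mathrm{gr}$, namely (R1)--(R5) and (R6$'$); this yields a surjective graded algebra homomorphism $\pi\colon\sPD^\mathrm{gr}\twoheadrightarrow\mathrm{gr}(\sPD)$. By Proposition \ref{prp:ARBAB} applied with $A=\sP$, $B=\sD$, $E_A=\sP^{(1)}$, $E_B=\sD^{(1)}$, one has $\sPD^\mathrm{gr}\cong\sP\otimes_{\check{\underline{\EuScript R}}_{LR}}\sD$, and the proof of that proposition (combined with the FRT-type monomial bases of $\sP$ and $\sD$) realizes the monomials \eqref{eq:bassis} as a $\Bbbk$-basis of $\sPD^\mathrm{gr}$.

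To finish, let $N_d$ denote the number of monomials \eqref{eq:bassis} of total degree at most $d$. The spanning argument yields $\dim_\Bbbk F^d\sPD\leq N_d$, while the basis statement for $\sPD^\mathrm{gr}$ gives $\dim_\Bbbk F^d\sPD^\mathrm{gr}=N_d$. To promote the surjection $\pi$ to an isomorphism it suffices to establish the reverse inequality $\dim_\Bbbk F^d\sPD\geq N_d$; this follows from the faithfulness of $\phi_{PD}$ (Proposition \ref{prp:faithful-action}) together with a direct verification that the operators $\phi_{PD}(t^a\partial^b)$ acting on $\sP$ are linearly independent, obtained by evaluating them on a monomial $t_{1,1}^{N_{1,1}}\cdots t_{m,n}^{N_{m,n}}\in\sP$ with the exponents $N_{i,j}$ chosen sufficiently large relative to the $b_{i,j}$, producing a distinct nonzero target monomial for each pair $(a,b)$. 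Once $\pi$ is bijective, any nontrivial dependence among the monomials \eqref{eq:bassis} in $\sPD$ would, upon passage to top-degree parts, give a nonzero relation in $\sPD^\mathrm{gr}$, contradicting the basis property. The main obstacle is this faithful-action computation, where the quantized Leibniz rule encoded by (R3)--(R6) must be tracked carefully; it is a $q$-analogue of the classical PBW argument for the Weyl algebra.
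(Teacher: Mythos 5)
Your spanning argument is sound and matches the paper's first half (the paper likewise invokes a standard straightening argument using (R1)--(R6)); the lexicographic pair $(\deg w,\mathrm{inv}(w))$ is a clean way to make termination explicit, and your identification of the monomials~\eqref{eq:bassis} as a basis of $\sPD^{\mathrm{gr}}$ via Proposition~\ref{prp:ARBAB} and the known FRT bases of $\sP$ and $\sD$ is legitimate and non-circular. The problem is in the linear-independence half. To bound $\dim_\Bbbk F^d\sPD$ from below you appeal to the $\sPD$-module structure on $\sP$ and to Proposition~\ref{prp:faithful-action}, but in this paper both are downstream of Proposition~\ref{diamond-app}: the map $\phi_{PD}$ is only defined via the decomposition $\sPD\cong\mathscr I\oplus\sP$ of Subsection~\ref{subsec:sPDgr6}, which is deduced from the monomial basis, and the proof of faithfulness uses Remark~\ref{rmk:precorderbasis}, which is again a consequence of Proposition~\ref{diamond-app}. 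As written, your argument is circular. (Note also that faithfulness of $\phi_{PD}$ is not what you actually need; linear independence of the operators $\phi_{PD}(t^a\del^b)$ in $\End_\Bbbk(\sP)$ would already force linear independence of the monomials in $\sPD$. What you genuinely need, and cannot borrow from later in the paper, is the \emph{existence} of a well-defined algebra homomorphism $\sPD\to\End_\Bbbk(\sP)$.)

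The gap can be closed, but it requires work your proposal elides: you must construct the representation from scratch, i.e.\ define operators of left multiplication and $q$-difference operators on the FRT algebra $\sP$ --- whose basis is known independently, e.g.\ from~\cite{NYM93,Ta92} --- and verify that they satisfy all of (R1)--(R6) before you are entitled to evaluate monomials $t^a\del^b$ on test polynomials. That verification is essentially the content of~\cite[Thm 2.6]{SSV04} and is comparable in length to the route the paper actually takes, namely checking resolvability of the overlap ambiguities in Bergman's Diamond Lemma, which requires no representation at all. Either fix is acceptable; citing Proposition~\ref{prp:faithful-action} is not.
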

\begin{proof}
By a standard straightening argument we can show that by using the relations (R1), (R1$'$), 
(R2), (R2$'$) and (R3)--(R6) any product of 
the $t_{i,j}$ and the $\del_{i,j}$ can be expressed as a linear combination of 
the monomials~\eqref{eq:bassis}. 
The fact that the latter monomials are indeed linearly independent follows from Bergman's Diamond Lemma and some straightforward (although tedious) computations. 
This was also pointed out  in~\cite[Sec. 10]{SSV04}.
In~\cite{LSS22a} we give a more conceptual proof of this assertion using the theory of PBW deformations of quadratic algebras.
 \end{proof}

\begin{prp}
\label{prp:P-D-bases}
The algebra $\sP$ has a basis consisting of monomials~\eqref{eq:bassis}
where $b_{i,j}=0$ for all $i,j$. The algebra $\sD$ has a basis consisting of monomials~\eqref{eq:bassis}
where $a_{i,j}=0$ for all $i,j$.  

\end{prp}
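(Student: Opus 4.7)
The plan is to deduce this proposition as an immediate corollary of Proposition~\ref{diamond-app}. The first step is a standard straightening argument: using only the relations (R1) and (R2) of Subsection~\ref{subsec:PnnnDnan}, every element of $\sP$ can be rewritten as a $\Bbbk$-linear combination of the ordered monomials~\eqref{eq:bassis} with $b_{i,j}=0$ for all $i,j$. The same procedure applied with (R1$'$) and (R2$'$) gives the analogous spanning statement for $\sD$.

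The second step is to produce an algebra homomorphism $\jmath_\sP\colon\sP\to\sPD$ sending each $t_{i,j}$ to the generator with the same name, and similarly $\jmath_\sD\colon\sD\to\sPD$. The relations (R1) and (R2) constitute a complete presentation of $\sP$, as recalled just after Definition~\ref{dfn-f--P-D}; since the generators of $\sPD$ satisfy these same relations by Definition~\ref{dfn-PDPD2}, the homomorphism $\jmath_\sP$ is well defined. Equivalently, $\jmath_\sP$ coincides with the canonical map $\bar\sfi_A$ appearing in diagram~\eqref{eq:commDOId} associated to the deformed twisted tensor product of Definition~\ref{dfn-PDPD1}, and similarly for $\jmath_\sD$.

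By Proposition~\ref{diamond-app}, the monomials~\eqref{eq:bassis} form a $\Bbbk$-basis of $\sPD$; in particular, the pure-$t$ monomials (those with $b_{i,j}=0$ for all $i,j$) are $\Bbbk$-linearly independent inside $\sPD$. Since $\jmath_\sP$ carries the spanning set of $\sP$ produced in the first step bijectively onto this linearly independent subset, the spanning set is itself linearly independent, hence a basis of $\sP$, and $\jmath_\sP$ is in addition an embedding. The parallel argument via $\jmath_\sD$ establishes the claim for $\sD$. The only nontrivial input is already contained in Proposition~\ref{diamond-app}, so no real obstacle remains; the argument is essentially a bookkeeping exercise tying the presentation of $\sP$ (resp.\ $\sD$) to the PBW-type basis of $\sPD$.
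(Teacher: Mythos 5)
Your argument is correct and is exactly the route the paper takes: its proof of this proposition is the one-line remark that it "follows from Proposition~\ref{diamond-app}" (plus a citation to \cite[Thm 1.4]{NYM93}), and your proposal simply spells out the straightening/spanning step and the transport of linear independence along the canonical map $\bar\sfi_A:\sP\to\sPD$ (resp.\ $\bar\sfi_B:\sD\to\sPD$). No issues.
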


\begin{proof}
This follows from Proposition~\ref{diamond-app}. 
It is also proved for example 
in~\cite[Thm 1.4]{NYM93}.
\end{proof}
By Proposition~\ref{prp:ARBAB}
the algebra $\sPD^\mathrm{gr}$ is a quotient of the free algebra on $2mn$ generators $t_{i,j}$ and $\del_{i,j}$. Note that by a slight abuse of notation we use the same notation for generators of $\sPD$ and $\sPD^\mathrm{gr}$. In the next proposition we describe a basis for this quotient. 
\begin{prp}
\label{diamond-app-gr}
The monomials~\eqref{eq:bassis}
form a basis of $\sPD^\mathrm{gr}$. 
\end{prp}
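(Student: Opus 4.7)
The strategy is to invoke Proposition~\ref{prp:ARBAB} in combination with Proposition~\ref{prp:P-D-bases}, so the heavy lifting has already been done elsewhere and the result becomes essentially formal. Recall that by Definition~\ref{dfn-PDPD1}, $\sPD^\mathrm{gr}=\sP\otimes_{\check{\underline{\EuScript R}}_{LR}}\sD$ with $E_A:=\sP^{(1)}$ and $E_B:=\sD^{(1)}$, and these subspaces generate $\sP$ and $\sD$ respectively.

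First, I would apply Proposition~\ref{prp:ARBAB} to the data $(A,B,E_A,E_B)=(\sP,\sD,\sP^{(1)},\sD^{(1)})$. The proof of that proposition shows more than just an isomorphism of algebras: it establishes that the canonical surjection $\pi:T(E_A\oplus E_B)\to A\otimes_{\check R}B$ restricts to a bijection from the set $\{I_{A,B}+a_\alpha b_\beta\,:\,\alpha\in\cI_A,\beta\in\cI_B\}$ onto a basis $\{a_\alpha\otimes b_\beta\}$ of $A\otimes_{\check R}B$, whenever $\{a_\alpha\}_{\alpha\in\cI_A}$ and $\{b_\beta\}_{\beta\in\cI_B}$ are bases of $A$ and $B$. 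In particular, products $a_\alpha\cdot b_\beta$ computed inside $\sPD^\mathrm{gr}$ form a $\Bbbk$-basis whenever the $a_\alpha$ and $b_\beta$ do.

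Next I would appeal to Proposition~\ref{prp:P-D-bases}: a basis for $\sP$ is provided by the ordered monomials $t_{1,1}^{a_{1,1}}\cdots t_{m,n}^{a_{m,n}}$ with $a_{i,j}\in\Z^{\geq 0}$, and a basis for $\sD$ is provided by $\del_{m,n}^{b_{m,n}}\cdots \del_{1,1}^{b_{1,1}}$ with $b_{i,j}\in\Z^{\geq 0}$. Combining this with the previous step, the products of pairs of these bases — computed inside $\sPD^\mathrm{gr}$ — yield a basis. But by identity~\eqref{eq:lem241} applied inside $\sPD^\mathrm{gr}$, such a product is precisely one of the monomials listed in~\eqref{eq:bassis}.

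The only subtlety to double check is that~\eqref{eq:lem241} is available here. This identity is a consequence of Lemma~\ref{lem:RcheckBAprop}(i), which holds in any braided triple and so applies to $(U_{LR},\cC_{LR},\check{\underline{\EuScript R}}_{LR})$. So there is no genuine obstacle: the statement follows by directly stringing together Propositions~\ref{prp:ARBAB} and~\ref{prp:P-D-bases} and noting that under the identification of $\sPD^\mathrm{gr}$ with the quotient of the tensor algebra, the class of the ordered product $t_\bullet^{a_\bullet}\,\del_\bullet^{b_\bullet}$ agrees with the image under $\pi$ of the corresponding element of $T(E_A)\cdot T(E_B)$.
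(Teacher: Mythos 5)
Your proposal is correct and follows essentially the same route as the paper: the paper's proof is the one-line observation that the claim follows from the vector space decomposition $\sPD^\mathrm{gr}\cong\sP\otimes\sD$ (which is exactly the content of Proposition~\ref{prp:ARBAB}) together with Proposition~\ref{prp:P-D-bases}, and your write-up simply unpacks this, correctly identifying~\eqref{eq:lem241} as the step matching the products $a_\alpha b_\beta$ with the monomials~\eqref{eq:bassis}.
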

\begin{proof}
This follows from the vector space decomposition $\sPD^\mathrm{gr}=\sP\otimes \sD$ and
Proposition~\ref{prp:P-D-bases}.  
\end{proof}

\begin{rmk}
\label{rmk:presentationofPDgr}
From the results of this subsection it follows that $\sPD^\mathrm{gr}$ 
has two realizations:
\begin{itemize}
\item[(i)] According to Definition~\ref{dfn-PDPD1} we have $\sPD^\mathrm{gr}\cong \sP\otimes \sD$ as a $\Bbbk$-vector space. Thus, $\sPD^\mathrm{gr}$  is generated by $2mn$ generators $t_{i,j}\otimes 1$ and $1\otimes \del_{i,j}$. 

\item[(ii)] By Proposition~\ref{prp:ARBAB}
we can realize 
$\sPD^\mathrm{gr}$
as a quotient of the free $\Bbbk$-algebra generated by $2mn$ generators: the  $t_{i,j}$ and the $\del_{i,j}$.
\end{itemize}
\end{rmk}

\begin{dfn}
\label{dfn:precc}
Define a  total order $\prec$ on the set of pairs $(i,j)$ with $1\leq i\leq m$ and $1\leq j\leq n$ as follows: we set 
$(i,j)\prec (i',j')$ if either 
$i+j<i'+j'$, or $i+j=i'+j'$ and $i<i'$.
\end{dfn}

\begin{rmk}
\label{rmk:precorderbasis}
The algebra $\sPD$ has another basis consisting of monomials of the form
\begin{equation}\label{eq:monomialprec}
\left(\prod_{i,j}t_{i,j}^{a_{i,j}}\right)\left(\prod_{i,j} \del_{i,j}^{b_{i,j}}\right),
\end{equation}
where the $\del_{i,j}$ (respectively, the $t_{i,j}$) occur in ascending (respectively, descending) order relative to the total order $\prec$. 
This can be deduced from Proposition~\ref{diamond-app}. Indeed by an elementary argument one can show that any monomial of the $t_{i,j}$ of total degree $d$ 
that is sorted in the order 
given in Proposition~\ref{diamond-app} can be expressed as a linear combination of monomials of the $t_{i,j}$ of total degree $d$ that are sorted in the order given in~\eqref{eq:monomialprec}. 
A similar assertion holds for monomials in the $\del_{i,j}$. 
Thus the monomials of the form~\eqref{eq:monomialprec} span $\sPD$. A dimension counting argument implies that the latter monomials also form a basis. 

By an analogous reasoning  we can also show that $\sPD$ has a basis that consists of the monomials 
\begin{equation}
\label{eq:basisII}
t_{m,n}^{a_{m,n}}\cdots t_{m,1}^{a_{m,1}}\cdots
t_{1,n}^{a_{1,1}}\cdots t_{1,1}^{a_{1,1}}
\del_{1,1}^{b_{1,1}}\cdots \del_{1,n}^{b_{1,n}}\cdots
\del_{m,1}^{b_{m,1}}\cdots \del_{m,n}^{b_{m,n}}
,\quad a_{i,j},b_{i,j}\in\Z^{\geq 0}.
\end{equation}
Here the $\del_{i,j}$ (respectively, the $t_{i,j}$) are sorted according to the lexicographic order (respectively, the reverse lexicographic order) on indices. 
\end{rmk}

\subsection{The algebras $\mathscr A_{k,l,n}$ and $\mathscr A_{k,l,n}^\mathrm{gr}$}

\label{subsec:Now3.9}
In this subsection we consider two families of algebras,
the $\mathscr A_{k,l,n}$ and the $\mathscr A_{k,l,n}^\mathrm{gr}$, 
 that
slightly  generalize $\sPD$ and $\sPD^\mathrm{gr}$.
\begin{dfn}
\label{dfn:Akln}
Fix integers $k,l,n\geq 1$ and set $m:=\max\{k,l\}$. 
Let
$\tilde{t}_{i,j}$ and 
$\tilde{\del}_{i,j}$ be as in~\eqref{eq:tildetd} where $a=m$ and $b=n$. 
We define $\mathscr A_{k,l,n}$
(respectively, $\mathscr A_{k,l,n}^\mathrm{gr}$)
 to be the subalgebra of $\sPD=\sPD_{m\times n}$ 
(respectively, 
$\sPD^\mathrm{gr}\cong\sP\otimes \sD$) 
 that is generated by the $\tilde t_{i,j}$ and the $\tilde\del_{i',j}$ 
(respectively, 
the $\tilde t_{i,j}\otimes 1$ and the $1\otimes \tilde\del_{i',j}$) 
 where
\begin{equation}
\label{eq:ii'jrange}
1\leq i\leq k\quad,\quad
1\leq i'\leq l\quad\text{and}\quad 1\leq j\leq n.
\end{equation} 
\end{dfn}
\begin{prp}
\label{prp:explicit-prs-Akln}
The algebras 
$\mathscr A_{k,l,n}$ and $\mathscr A_{k,l,n}^\mathrm{gr}$ have the following presentations:
\begin{itemize}
\item[\rm(i)]
$\mathscr A_{k,l,n}$ is isomorphic to the quotient of the free $\Bbbk$-algebra generated by the symbols $t_{i,j}$ and $\del_{i',j}$ with $i$, $i'$, $j$ satisfying~\eqref{eq:ii'jrange}, 
 modulo the relations 
{\rm (R1)}, {\rm (R2)}, {\rm (R1$'$)}, {\rm (R2$'$)} of Subsection~\ref{subsec:PnnnDnan}
 and the relations 
{\rm (R3)--(R6)} of Remark~\ref{rmk:R3R6}.

\item[\rm(ii)]
$\mathscr A^\mathrm{gr}_{k,l,n}$ is isomorphic to the quotient of the free $\Bbbk$-algebra generated by the symbols $t_{i,j}$ and $\del_{i',j}$ with $i$, $i'$, $j$ satisfying~\eqref{eq:ii'jrange}, 
 modulo the relations 
{\rm (R1)}, {\rm (R2)}, {\rm (R1$'$)}, {\rm (R2$'$)} of Subsection~\ref{subsec:PnnnDnan}
 and the relations 
{\rm (R3)--(R5)} and {\rm (R6$'$)} of Remark~\ref{rmk:R3R6}.

 \end{itemize}
\end{prp}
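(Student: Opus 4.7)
My plan is to treat (i) and (ii) in parallel by the standard two-step argument, extending the proof of the case $k=l=m$ (which is Definition~\ref{dfn-PDPD2}, giving the presentation of $\sPD_{m\times n}=\mathscr A_{m,m,n}$) to general $k,l$: first construct a surjective algebra homomorphism from the presented algebra onto $\mathscr A_{k,l,n}$ (respectively $\mathscr A^{\mathrm{gr}}_{k,l,n}$), and then establish injectivity via a PBW basis argument modeled on Proposition~\ref{diamond-app} (respectively Proposition~\ref{diamond-app-gr}).

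For the surjection, let $B_{k,l,n}$ denote the abstract $\Bbbk$-algebra with generators and relations as in the statement of the proposition. I would define a homomorphism $\psi\colon B_{k,l,n}\to\sPD_{m\times n}$ mapping the abstract generators to the elements $\tilde t_{i,j}$ and $\tilde\del_{i',j}$ of $\mathscr A_{k,l,n}$, so that by Definition~\ref{dfn:Akln} the image of $\psi$ is exactly $\mathscr A_{k,l,n}$. What remains is to verify that the defining relations of $B_{k,l,n}$ lie in $\ker\psi$. The pure relations (R1), (R2), (R1$'$), (R2$'$) are inherited from the presentations of $\sP_{m\times n}$ and $\sD_{m\times n}$ recalled in Subsection~\ref{subsec:PnnnDnan}, restricted to the subalgebras generated by the relevant tilde generators. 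For the mixed relations (R3)--(R6) (respectively (R3)--(R5) together with (R6$'$)), the key observation is that each such relation holds in the ambient $\sPD_{m\times n}$ (respectively $\sPD^{\mathrm{gr}}$) among \emph{all} the $t_{i,j}$ and $\del_{i,j}$, and that the summations on the right-hand sides of (R4), (R5), (R6), (R6$'$) never leave $\mathscr A_{k,l,n}$. More precisely, using the description after~\eqref{eq:embeD} that the tilde generators of $\mathscr A_{k,l,n}$ occupy the lowest rows and rightmost columns of the $m\times n$ generator matrices, the shared row index $c$ in (R4), (R6) forces $c\leq\min(k,l)$ for both sides to involve only generators of $\mathscr A_{k,l,n}$, and then the auxiliary row indices $c'$ produced by the sums are confined to the same bottom block; the column indices appearing in (R5), (R6) range over all of $\{1,\ldots,n\}$, which matches the column range of the generators.

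For injectivity, I would apply the diamond lemma to $B_{k,l,n}$ exactly as in the proof of Proposition~\ref{diamond-app} (respectively Proposition~\ref{diamond-app-gr}), but restricted to the smaller alphabet. The same confluent rewriting rules produce a spanning set of $B_{k,l,n}$ consisting of ordered monomials of the form~\eqref{eq:bassis} whose $t$-factors have row index at most $k$ and whose $\del$-factors have row index at most $l$. Under $\psi$ this spanning set maps bijectively to a subset of the PBW basis of $\sPD_{m\times n}$ (respectively $\sPD^{\mathrm{gr}}$), hence is linearly independent in $\mathscr A_{k,l,n}$. This forces $\psi$ to be injective, and combined with surjectivity yields the desired isomorphism. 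The main technical obstacle is the careful index bookkeeping required to verify that the mixed relations close up within the restricted generating set of $\mathscr A_{k,l,n}$, particularly the inhomogeneous (R6) with its constant term and double Kronecker-weighted sum; part (ii) is slightly easier because (R6$'$) is homogeneous and no constant-term contribution has to be tracked.
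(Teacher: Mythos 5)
The overall two–step strategy — construct a surjection from the presented algebra onto $\mathscr A_{k,l,n}$, then prove injectivity via a PBW/diamond-lemma basis comparison — does match the paper's proof. However, the map you propose does not actually give a homomorphism, and this is a genuine gap.

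You define $\psi$ by sending the abstract generator $t_{i,j}$ to $\tilde t_{i,j}=t_{m+1-i,\,n+1-j}$ and $\del_{i',j}$ to $\tilde\del_{i',j}=\del_{m+1-i',\,n+1-j}$. This reverses both row and column indices, and the relations (R1), (R2), (R1$'$), (R2$'$) are order-sensitive: reversing the column order converts (R1) for the $t$'s into (R1) with $q$ replaced by $q^{-1}$. Concretely, for $c<c'$ one has $\tilde t_{r,c}\tilde t_{r,c'}=t_{m+1-r,\,n+1-c}t_{m+1-r,\,n+1-c'}$ with $n+1-c>n+1-c'$, so the ambient (R1) gives $\tilde t_{r,c}\tilde t_{r,c'}=q^{-1}\tilde t_{r,c'}\tilde t_{r,c}$ rather than $q\,\tilde t_{r,c'}\tilde t_{r,c}$. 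Thus the abstract relation (R1) does not lie in $\ker\psi$ and $\psi$ is not well-defined. (Symmetrically, the $\tilde\del$'s satisfy (R1) rather than (R1$'$), so those relations also fail.) The paper instead uses the \emph{order-preserving} shift $t_{i,j}\mapsto t_{m-k+i,\,j}$ and $\del_{i',j}\mapsto\del_{m-l+i',\,j}$, which sends the abstract generators onto the same set $\{\tilde t_{i,j},\tilde\del_{i',j}\}$ but keeps the row order and the column index unchanged, so the pure relations (R1), (R2), (R1$'$), (R2$'$) transport verbatim. Your remark about (R4), (R5), (R6) closing up within the bottom rows is sound in spirit, but it is being applied to a map under which the pure relations already fail, so the argument does not get off the ground. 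A secondary consequence is that your PBW spanning set, after applying the tilde map, lands in the reversed-order basis~\eqref{eq:basisII} rather than~\eqref{eq:bassis}, so the injectivity step would also need to be rephrased even after fixing the map.

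One further point worth flagging, since it affects both your write-up and the paper's phrasing: with the correct shift map, the shared-row condition in (R3)--(R6) becomes $m-k+d=m-l+c$, i.e.\ $d-c=k-l$, not the literal $c=d$ on the abstract labels. So for $k\ne l$ the mixed relations must be read with this shifted identification of $t$-rows and $\del$-rows in mind; this is implicit in the phrase "from the explicit description of the relations of $\sPD$" in the paper's proof, and any careful proof (yours included) should make it explicit.
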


\begin{proof}
(i)  Denote the  quotient of the free algebra
by $\mathscr F_{k,l,n}$. Since $\mathscr A_{k,l,n}$ is a subalgebra of $\sPD=\sPD_{m\times n}$ for $m:=\max\{k,l\}$, from the explicit description of the relations of $\sPD$
it follows that there exists a natural epimorphism $\mathsf f_{k,l,n}:{\mathscr F}_{k,l,n}\to \mathscr A_{k,l,n}$ that is uniquely defined by the assignments $t_{i,j}\mapsto t_{m-k+i,j}$ and $\del_{i',j}\mapsto \del_{m-l+i',j}$. 
A standard straightening argument proves that every element of $\mathscr F_{k,l,n}$ is a linear combination of monomials of the form~\eqref{eq:bassis}.  Proposition~\ref{diamond-app}
implies that 
 $\mathsf f_{k,l,n}$ maps the latter monomials to 
a linearly independent set of elements of $\mathscr A_{k,l,n}$. Thus $\mathsf f_{k,l,n}$ is an isomorphism. 

(ii) Similar to the proof of (i), with Proposition~\ref{diamond-app} replaced by 
Proposition~\ref{diamond-app-gr}.
\end{proof}

\begin{dfn}
For any $1\leq r\leq n$ we can identify $U_q(\gl_r)$ with a Hopf subalgebra of $U_q(\gl_n)$ via the monomorphism of associative algebras
\begin{equation}
\label{eq:mapKappa}
\boldsymbol\kappa_{r,n}:U_q(\gl_r)\to U_q(\gl_n),
\end{equation}
defined by $\boldsymbol\kappa_{r,n}(E_i):=E_{i+n-r}$, $\boldsymbol\kappa_{r,n}(F_i):=F_{i+n-r}$, and $\boldsymbol\kappa_{r,n}(K_{\eps_i}^{\pm 1}):=K_{\eps_{i+n-r}}^{\pm 1}$.
\end{dfn}
In the next proposition we establish the existence of the map~\eqref{eq:embeD}.  
Recall 
from 
Remark~\ref{rmk:presentationofPDgr}(ii)
that we consider both $\sPD$ and $\sPD^\mathrm{gr}$ as algebras generated by $2mn$ generators $t_{i,j}$ and $\del_{i,j}$.

\begin{prp}
\label{prp:existence(6)}
Fix $1\leq m'\leq m$ and $1\leq n'\leq n$. Let
$\tilde t_{i,j},\tilde \del_{i,j}\in\sPD_{m'\times n'}$ 
(respectively, 
$\tilde t_{i,j},\tilde \del_{i,j}\in\sPD^\mathrm{gr}_{m'\times n'}$)
for $1\leq i\leq m'$ and $1\leq j\leq n'$ be as in~\eqref{eq:tildetd} for $a:=m'$ and $b:=n'$,
that is 
\[
\tilde t_{i,j}:=t_{m'+1-i,n'+1-j}\quad
\text{and}\quad
\tilde \del_{i,j}:=\del_{m'+1-i,n'+1-j}.
\] 
Also, let  $\tilde t_{i,j},\tilde\del_{i,j}\in\sPD_{m\times n}$
(respectively, 
$\tilde t_{i,j},\tilde\del_{i,j}\in\sPD^\mathrm{gr}_{m\times n}$)
for $1\leq i\leq m$ and $1\leq j\leq n$
be as in~\eqref{eq:tildetd} for $a:=m$ and $b:=n$, that is  
\[
\tilde t_{i,j}:=t_{m+1-i,n+1-j}\quad
\text{and}\quad
\tilde \del_{i,j}:=\del_{m+1-i,n+1-j}.
\] 
Then the following assertions hold. 
\begin{itemize}
\item[\rm (i)]
The assignments
$\tilde t_{i,j}\mapsto\tilde t_{i,j}$ and 
$\tilde \del_{i,j}\mapsto \tilde \del_{i,j}$ for $1\leq i\leq m'$ and $1\leq j\leq n'$ define
unique embeddings 
of algebras \[
\boldsymbol e:=\boldsymbol e_{m'\times n'}^{m\times n}:\sPD_{m'\times n'}\to\sPD_{m\times n}\quad
\text{and}\quad
\Emb^\mathrm{gr}:=(\Emb^\mathrm{gr})_{m'\times n'}^{m\times n}:\sPD_{m'\times n'}^\mathrm{gr}\to\sPD_{m\times n}^\mathrm{gr}.
\]
\item[\rm (ii)] If we identify $U_q(\gl_{m'})\otimes U_q(\gl_{n'})$ with a subalgebra of $U_q(\gl_m)\otimes U_q(\gl_n)$ via  $\boldsymbol \kappa_{m',m}\otimes \boldsymbol\kappa_{n',n}$ then 
the maps $\Emb$ 
and $\Emb^\mathrm{gr}$ are 
 $U_q(\gl_{m'})\otimes U_q(\gl_{n'})$-equivariant. 


\end{itemize}
\end{prp}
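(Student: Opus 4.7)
The plan is to treat (i) and (ii) for $\sPD$ and $\sPD^\mathrm{gr}$ in parallel, using the explicit presentations by generators and relations established in Definition~\ref{dfn-PDPD2} and Proposition~\ref{prp:explicit-prs-Akln}. Uniqueness of the homomorphisms in (i) is automatic since the $\tilde t_{i,j}$ and $\tilde\del_{i,j}$ generate the source algebras; for existence, I will verify that the designated elements of $\sPD_{m\times n}$ (respectively $\sPD^\mathrm{gr}_{m\times n}$) in the index range $1\le i\le m'$, $1\le j\le n'$ satisfy all the defining relations of $\sPD_{m'\times n'}$ (respectively $\sPD^\mathrm{gr}_{m'\times n'}$). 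The relations (R1), (R1$'$), (R2), (R2$'$), (R3) involve only two pairs of indices and transfer without change. The substantive check is for (R4), (R5), (R6) and (R6$'$): each has a right-hand side involving a summation over indices $c'>c$ (or $c'\ge c$) and $a'>a$ (or $d'\ge d$). Under the substitution $c=m+1-\tilde c$, the summation variable $c'$ ranges over $\{c+1,\ldots,m\}$, which corresponds bijectively to $\{1,\ldots,\tilde c-1\}$ in tilde coordinates; in particular, no summation index escapes the lower-right block of size $m'\times n'$. Hence the right-hand side of the $\sPD_{m\times n}$ relation, when expressed in tilde coordinates, agrees exactly with the right-hand side of the corresponding $\sPD_{m'\times n'}$ relation. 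This is the conceptual reason for choosing the lower-right block in~\eqref{eq:tildetd}: the opposite convention would introduce spurious terms corresponding to indices outside the block.

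For injectivity, I will invoke the monomial basis of $\sPD_{m\times n}$ from~\eqref{eq:basisII} described in Remark~\ref{rmk:precorderbasis}. Rewritten in tilde variables, this basis consists of monomials
\[
\tilde t_{1,1}^{\tilde a_{1,1}}\tilde t_{1,2}^{\tilde a_{1,2}}\cdots \tilde t_{m,n}^{\tilde a_{m,n}}\,
\tilde\del_{m,n}^{\tilde b_{m,n}}\tilde\del_{m,n-1}^{\tilde b_{m,n-1}}\cdots \tilde\del_{1,1}^{\tilde b_{1,1}},
\]
with the $\tilde t$ factors in lexicographic order on $(\tilde i,\tilde j)$ and the $\tilde\del$ factors in reverse lexicographic order. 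The analogous basis of $\sPD_{m'\times n'}$ maps under $\Emb$ onto the subset of the above basis obtained by restricting all index pairs to $\{1,\ldots,m'\}\times \{1,\ldots,n'\}$, which is a linearly independent subset of a basis of $\sPD_{m\times n}$. The same argument establishes injectivity of $\Emb^\mathrm{gr}$ using Proposition~\ref{diamond-app-gr} (equivalently, via the vector-space decomposition $\sPD^\mathrm{gr}\cong \sP\otimes \sD$).

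For part (ii), equivariance reduces to compatibility on generators, since $\boldsymbol\kappa_{m',m}\otimes \boldsymbol\kappa_{n',n}$ is a Hopf algebra embedding and both sides of the desired identity are module-algebra actions. Using the explicit formulas of Remark~\ref{rmk:actionformulas}, a direct calculation shows that the shift $E_k\mapsto E_{k+n-n'}$ encoded in $\boldsymbol\kappa_{n',n}$ is exactly offset by the shift $j\mapsto n+1-j$ in the tilde relabeling; for instance,
\[
\boldsymbol\kappa_{n',n}(E_k)\cdot \Emb(\tilde\del_{i,j}) = E_{k+n-n'}\cdot \del_{m+1-i,n+1-j} = \llbracket k+1,n'+1-j\rrbracket\,\tilde\del_{i,n'+1-k} = \Emb(E_k\cdot \tilde\del_{i,j}),
\]
and entirely analogous identities hold for the actions of $F_k$ and $K_{\eps_k}^{\pm 1}$, for the generators $\tilde t_{i,j}$, and for the $U_L$-action via $\boldsymbol\kappa_{m',m}$.

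The main obstacle is the bookkeeping in the verification of the summation relations (R4)--(R6) and (R6$'$) in part (i): one must confirm carefully that under the substitution $c=m+1-\tilde c$ the summation indices $c'>c$, $a'>a$, etc., stay within the lower-right $m'\times n'$ block, so that no spurious or missing terms appear. The tilde relabeling in~\eqref{eq:tildetd} is precisely what makes this argument work.
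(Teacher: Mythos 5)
Your proposal is correct and follows essentially the same route as the paper's proof: existence of the homomorphisms by checking that the relabeled generators satisfy identical relations in the explicit presentations, injectivity via the monomial bases of Propositions~\ref{diamond-app} and~\ref{diamond-app-gr}, and equivariance by reduction to standard generators using Remark~\ref{rmk:actionformulas}. The paper states the relation-matching step without detail, whereas you correctly spell out the key point that the order-reversing relabeling of~\eqref{eq:tildetd} keeps the summation indices of (R4)--(R6) inside the lower-right block.
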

\begin{proof}
We only give the details of the proofs of these assertions for $\sPD$. The arguments for $\sPD^\mathrm{gr}$ are analogous. 

(i) From Definition~\ref{dfn-PDPD2} and Remark~\ref{rmk:R3R6} it follows that 
the generators $\tilde t_{i,j}$ and $\tilde \del_{i,j}$ of $\sPD_{m'\times n'}$ and $\sPD_{m\times n}$ satisfy identical relations. It follows that there exists a homomorphism of algebras $\sPD_{m'\times n'}\to \sPD_{m\times n}$.
By Proposition~\ref{diamond-app} the latter map takes a basis of $\sPD_{m'\times n'}$ to a basis of $\sPD_{m\times n}$, hence it is an injection. The uniqueness assertion is trivial.

(ii) We give the proof for $\Emb$ only, since the proof for $\Emb^\mathrm{gr}$ is similar. 
Since $\sPD_{m'\times n'}$ and $\sPD_{m\times n}$ are module algebras it suffices to verify equivariance for standard generators of $U_q(\gl_{m'})$ and $U_q(\gl_{n'})$ on the $\tilde t_{i,j}$ and the $\tilde{\del}_{i,j}$. This can be done using the explicit formulas of Remark~\ref{rmk:actionformulas}.
%
\end{proof}

%
%

\subsection{The action of $\sPD$ on $\sP$ and the map $\phi_{PD}$}
\label{subsec:sPDgr6}
Recall that by Proposition~\ref{prp:Hmd-desc}, $\sPD$ is a $U_{LR}$-module algebra.
We denote the action of $x\in U_{LR}$ on $D\in \sPD$ by $x\cdot D$.

Let $\mathscr I$ denote the left ideal of $\sPD$ that is generated by $\sD^{(1)}$.
By Proposition~\ref{diamond-app} we have a  $U_{LR}$-invariant decomposition
\[
\sPD\cong \mathscr I\oplus \sP
.
\] This decomposition equips $\sP\cong \sPD/\mathscr I$ with
a $\sPD$-module structure given by 
\begin{equation}
\label{eq:sPDxP-action}
\sPD\otimes \sP\to \sP\ ,\ 
D\otimes (f+\mathscr I)\mapsto 
D\cdot f\quad
\text{for }D\in\sPD\text{ and }f\in\sP,
\end{equation}
where $D\cdot f:=
(Df)+\mathscr I$.
\begin{dfn}
\label{dfn:themapPhiPD}
The map $\phi_{PD}:\sPD\to\End_\Bbbk(\sP)
$ is the homomorphism of algebras induced by~\eqref{eq:sPDxP-action}.
\end{dfn}
To simplify our notation, henceforth for $X\in \sPD$ and $f\in \sP$ we write $X\cdot f$ instead of $\phi_{PD}(X)f$.
\begin{lem}
\label{lem:thmapequvv}
The map~\eqref{eq:sPDxP-action} is a $U_{LR}$-module homomorphism. 
\end{lem}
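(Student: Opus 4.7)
The plan is to factor the action~\eqref{eq:sPDxP-action} as a composition of $U_{LR}$-module homomorphisms. By Proposition~\ref{prp:Hmd-desc}, $\sPD$ is a $U_{LR}$-module algebra, so the multiplication $m:\sPD\otimes \sPD\to \sPD$ is $U_{LR}$-equivariant. Hence it suffices to verify that the decomposition $\sPD=\mathscr I\oplus \sP$ is $U_{LR}$-invariant, as this implies that both the inclusion $\iota:\sP\hookrightarrow\sPD$ (from the direct sum) and the projection $\pi:\sPD\to\sP$ along $\mathscr I$ are $U_{LR}$-equivariant. The action of $\sPD$ on $\sP$ then factors as
\[
\sPD\otimes \sP\xrightarrow{\ \mathrm{id}\otimes \iota\ } \sPD\otimes \sPD\xrightarrow{\ m\ } \sPD\xrightarrow{\ \pi\ } \sP,
\]
and the conclusion follows immediately.

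To check that $\sP$ is a $U_{LR}$-submodule of $\sPD$, I would observe that the inclusion $\iota$ agrees with the canonical map $\sP\to \sPD=\sP\otimes_{\check{\underline{\EuScript R}}_{LR},\psi_\circ}\sD$ of the deformed twisted tensor product (both are given by $f\mapsto f$ in the normal-form basis of Proposition~\ref{diamond-app}), so its $U_{LR}$-equivariance follows from Remark~\ref{rmk:AoB->ARB} applied to $A:=\sP$ and $B:=\sD$. To check that $\mathscr I$ is $U_{LR}$-stable, I would combine two facts: first, $\sD^{(1)}$ is a $U_{LR}$-submodule of $\sD$, which is immediate from Proposition~\ref{prp:glmglndecom} since the degree decomposition of $\sD$ is $U_{LR}$-invariant; second, since $\sPD$ is a $U_{LR}$-module algebra, for any $x\in U_{LR}$, $D\in \sPD$, and $d\in \sD^{(1)}$ we have
\[
x\cdot(Dd)=\sum (x_1\cdot D)(x_2\cdot d)\in \sPD\cdot \sD^{(1)}=\mathscr I,
\]
where we used $x_2\cdot d\in \sD^{(1)}$. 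Since $\mathscr I$ is spanned by such products $Dd$, this yields $x\cdot \mathscr I\subseteq \mathscr I$.

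No step in this argument presents a serious obstacle; everything is essentially formal once one records the $U_{LR}$-invariance of $\sD^{(1)}\subseteq \sD$ and invokes the module algebra structure on $\sPD$. The only point meriting a brief verification is that the subspace copy of $\sP$ in the direct sum decomposition really does coincide with the image of the canonical inclusion from the twisted tensor product, which I would note explicitly in order to apply Remark~\ref{rmk:AoB->ARB}.
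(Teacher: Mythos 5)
Your proof is correct and follows essentially the same route as the paper: the paper's one-line argument invokes the general fact that for an $H$-module algebra $A$ with $H$-stable left ideal $I$ the map $A\otimes A/I\to A/I$ is an $H$-module homomorphism, which is exactly what your factorization $\pi\circ m\circ(\mathrm{id}\otimes\iota)$ unwinds. You additionally supply the verification (left implicit in the paper) that $\mathscr I$ is $U_{LR}$-stable and that the complement $\sP$ is a submodule, which is a welcome level of detail but not a different method.
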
 
\begin{proof}
This is a consequence of the following general fact: let $H$ be a Hopf algebra, $A$ be an $H$-module algebra, and $I\sseq A$ be an $H$-stable left ideal of $A$. Then the canonical $A$-module structure map $A\otimes A/I\to A/I$ is an $H$-module homomorphism.  
\end{proof}

\subsection{$\sP$ is a faithful $\sPD$-module}
\label{subsec:now3.11}
The goal of this subsection is to provide a purely algebraic proof of  Proposition~\ref{prp:faithful-action}.
This proposition is also proved in~\cite[Thm 2.6]{SSV04} using analytic tools.

\begin{lem}
\label{lem:delUsseq}
Let $\sP^{(\leq k)}:=\bigoplus_{i=0}^k\sP^{(i)}$
for $k\geq 0$.
Then $\del_{i,j}\cdot \sP^{(\leq k)}\sseq
\sP^{(\leq k-1)}$.
\end{lem}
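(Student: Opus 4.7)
The plan is to prove the stronger statement that inside $\sPD$ one has
\begin{equation*}
\del_{i,j}\,\sP^{(\leq k)} \sseq \sP^{(\leq k-1)} + \mathscr I;
\end{equation*}
the lemma then follows by projecting along the $U_{LR}$-invariant splitting $\sPD = \mathscr I \oplus \sP$ from Subsection~\ref{subsec:sPDgr6}, under which $\phi_{PD}(\del_{i,j})f$ is exactly the $\sP$-component of $\del_{i,j}f$. I will argue by induction on $k$, with the convention $\sP^{(\leq -1)}:=0$. The base case $k=0$ is immediate: $\sP^{(\leq 0)}=\Bbbk$ and $\del_{i,j}\in \sD^{(1)}\sseq \mathscr I$, so $\del_{i,j}\cdot 1=0$.

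For the inductive step it suffices to treat a monomial of the form $m=t_{a,b}\,g$ with $g\in\sP^{(\leq k-1)}$; indeed any element of $\sP^{(\leq k)}$ is a sum of such monomials together with elements of $\sP^{(\leq k-1)}$, and the latter are absorbed by the inductive hypothesis via $\sP^{(\leq k-2)} + \mathscr I \sseq \sP^{(\leq k-1)} + \mathscr I$. The key algebraic input is that the four relations (R3)--(R6) of Remark~\ref{rmk:R3R6} can be summarized uniformly as
\begin{equation*}
\del_{i,j}\,t_{a,b} \;=\; \llbracket i,a\rrbracket\,\llbracket j,b\rrbracket \;+\; \Sigma_{i,j,a,b},
\end{equation*}
where $\Sigma_{i,j,a,b}$ is a $\Bbbk$-linear combination of terms of the form $t_{p,q}\del_{r,s}$, i.e.\ products with exactly one $t$-factor preceding exactly one $\del$-factor. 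Multiplying on the right by $g$, the scalar term contributes $\llbracket i,a\rrbracket\llbracket j,b\rrbracket\,g\in \sP^{(\leq k-1)}$. For each summand $t_{p,q}\del_{r,s}g$ of $\Sigma_{i,j,a,b}\,g$ the inductive hypothesis yields $\del_{r,s}\,g = h_{r,s}+x_{r,s}$ with $h_{r,s}\in \sP^{(\leq k-2)}$ and $x_{r,s}\in \mathscr I$; then $t_{p,q}\,h_{r,s}\in \sP^{(\leq k-1)}$ and $t_{p,q}\,x_{r,s}\in \mathscr I$ because $\mathscr I$ is a left ideal of $\sPD$. Summing finishes the inductive step.

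There is no real obstacle here. The whole argument hinges on the structural ``degree-balanced'' form of the mixed relations: in each of (R3)--(R6), every non-scalar summand on the right-hand side is a product containing exactly one $t$-factor to the left of exactly one $\del$-factor. This balance is precisely what allows the single $t_{a,b}$ appearing at the head of the monomial $m$ to absorb the passage of $\del_{i,j}$, paying either with a constant (cancelling one $t$ and producing a pure element of $\sP^{(\leq k-1)}$) or with a deeper $\del$ whose effect on $g$ is controlled by the inductive hypothesis.
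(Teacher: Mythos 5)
Your proof is correct and is precisely the argument the paper intends: its own proof reads only ``Follows by induction on $k$ and the mixed relations (R3)--(R6),'' and your write-up is a careful expansion of exactly that induction, using the uniform form $\del_{i,j}t_{a,b}=\llbracket i,a\rrbracket\llbracket j,b\rrbracket+\sum t_{p,q}\del_{r,s}$ together with the fact that $\mathscr I$ is a left ideal.
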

\begin{proof}
Follows by 
induction on $k$ and the mixed relations (R3)--(R6) in Subsection~\ref{subsec:sPDgr}.
\end{proof}

Recall the total order $\prec$ on the set of pairs 
$(i,j)$ with $1\leq i\leq m$ and $1\leq j\leq n$ from Definition~\ref{dfn:precc}.

\begin{lem}
\label{lem:i+j>i1+j1}
Assume that $(i_r,j_r)\prec (i,j)$ for $1\leq r\leq k$. Then 
$\del_{i,j}\cdot (t_{i_1,j_1}\cdots t_{i_k,j_k})=0$.
\end{lem}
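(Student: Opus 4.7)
The plan is to argue by induction on $k$. The base case $k=0$ is immediate: $\del_{i,j}\cdot 1=\del_{i,j}+\mathscr I$ vanishes in $\sP$ because $\del_{i,j}\in\sD^{(1)}\sseq\mathscr I$.

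For the inductive step, my first move will be to commute $\del_{i,j}$ past $t_{i_1,j_1}$ inside $\sPD$ using the mixed relations (R3)--(R6) of Remark~\ref{rmk:R3R6}, obtaining an expression of the form
\[
\del_{i,j}\, t_{i_1,j_1}\;=\;\epsilon\;+\;\sum_\alpha c_\alpha\, t_{a_\alpha,b_\alpha}\,\del_{c_\alpha,d_\alpha}
\]
for certain scalars $c_\alpha\in\Bbbk$. The constant term $\epsilon=\llbracket i,i_1\rrbracket\llbracket j,j_1\rrbracket$ arises solely from (R6) and vanishes in our setting, because $(i_1,j_1)\prec(i,j)$ in particular forces $(i_1,j_1)\neq(i,j)$.

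The crux of the argument will be a short monotonicity observation, verified by direct case-by-case inspection of (R3)--(R6): every index $(c_\alpha,d_\alpha)$ appearing on the right-hand side satisfies $(i,j)\preceq(c_\alpha,d_\alpha)$. Indeed, (R3) produces exactly $(i,j)$; any ``extra'' term produced by (R4) or (R5) has strictly larger index sum than $i+j$; and the summation in (R6) runs over $c'\geq i$ and $d'\geq j$, so $(c_\alpha,d_\alpha)\succeq (i,j)$ term by term.

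With this in hand, the induction closes formally. For each $r\geq 2$, transitivity yields $(i_r,j_r)\prec(i,j)\preceq(c_\alpha,d_\alpha)$, so the inductive hypothesis applied to the $(k-1)$-fold product $t_{i_2,j_2}\cdots t_{i_k,j_k}$ gives $\del_{c_\alpha,d_\alpha}\,t_{i_2,j_2}\cdots t_{i_k,j_k}\in\mathscr I$. Since $\mathscr I$ is a left ideal, left multiplication by $t_{a_\alpha,b_\alpha}$ preserves it, and summing over $\alpha$ produces $\del_{i,j}\,t_{i_1,j_1}\cdots t_{i_k,j_k}\in\mathscr I$, which is the desired vanishing in $\sP$. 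The only genuine (if mild) obstacle is the case-by-case monotonicity verification $(i,j)\preceq(c_\alpha,d_\alpha)$; everything else is mechanical.
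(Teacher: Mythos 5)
Your proposal is correct and follows essentially the same route as the paper: induction on $k$, commuting $\del_{i,j}$ past $t_{i_1,j_1}$ via the mixed relations, observing that every $\del$-index produced is $\succeq(i,j)$ in the total order $\prec$, and closing the induction by transitivity. The paper merely organizes the case analysis by which index of $(i_1,j_1)$ is strictly smaller (so only (R3)/(R4)/(R5) are ever invoked), whereas you phrase it as a uniform monotonicity check over (R3)--(R6) with the (R6) constant term excluded because $(i_1,j_1)\neq(i,j)$; the two presentations are equivalent.
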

\begin{proof}
We use induction on $k$. From $(i_1,j_1)\prec (i,j)$ it follows that either  $i>i_1$ or $j>j_1$. If $i>i_1$ then 
by the mixed relations (R3) or (R5)  we have 
\[
\del_{i,j}t_{i_1,j_1}\cdots t_{i_k,j_k}=c_1t_{i_1,j_1}\del_{i,j}t_{i_2,j_2}\cdots t_{i_k,j_k}+
\delta_{j,j_1}c_2
\sum_{j'>j}t_{i_1,j'}\del_{i,j'}
t_{i_2,j_2}\cdots t_{i_k,j_k},
\]
for some $c_1,c_2\in\Bbbk$. 
The claim now follows from the induction hypothesis, because $i+j'>i+j$ and therefore $(i,j)\prec (i,j')$. When $j>j_1$ the argument is similar.    
\end{proof}
For $a\in\Z$ we set
\begin{equation}
\label{eq:c9a}
\mathbf c(a):=\begin{cases}
\sum_{i=0}^a q^{2i} & \text{ if }a\geq 0,\\
0 &\text{otherwise.}  
\end{cases}
\end{equation}

\begin{lem}
\label{lem:i+jttoa}
Assume that $(i_r,j_r)\prec (i,j)$ for $1\leq r\leq k$.  Then 
\[
\del_{i,j}\cdot(t^a_{i,j}t_{i_1,j_1}\cdots t_{i_k,j_k})=\mathbf c(a-1)t_{i,j}^{a-1}t_{i_1,j_1}\cdots t_{i_k,j_k}
\quad \text{for $a\geq 1$}.
\] 
\end{lem}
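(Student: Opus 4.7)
The plan is to prove the identity by induction on $a \geq 1$, using relation (R6) from Remark~\ref{rmk:R3R6} together with Lemma~\ref{lem:i+j>i1+j1} to kill most of the terms that appear.

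First I would expand $t_{i,j}^a = t_{i,j} \cdot t_{i,j}^{a-1}$ and apply (R6) with $(c,d) = (i,j)$ to rewrite the product $\del_{i,j} t_{i,j}$ inside $\sPD$ as
\[
\del_{i,j} t_{i,j} = 1 + q^2 t_{i,j}\del_{i,j} + \sum_{\substack{i' \geq i,\, j' \geq j \\ (i',j') \neq (i,j)}} \alpha_{i',j'} \, t_{i',j'}\del_{i',j'},
\]
where the isolated coefficient $q^2$ arises from the case $(i',j')=(i,j)$ in (R6) (with $\llbracket i',i\rrbracket = \llbracket j',j\rrbracket = 1$), and each $\alpha_{i',j'} \in \Bbbk$. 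Substituting this into $\del_{i,j}\cdot(t_{i,j}^a t_{i_1,j_1}\cdots t_{i_k,j_k})$ yields three kinds of summands acting on $t_{i,j}^{a-1}t_{i_1,j_1}\cdots t_{i_k,j_k}$.

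The key observation is that for every index $(i',j')$ in the residual sum one has $i' \geq i$, $j' \geq j$, and $(i',j') \neq (i,j)$, so $i'+j' > i+j$ and hence $(i,j) \prec (i',j')$. Combined with the standing assumption $(i_r,j_r) \prec (i,j)$, transitivity of $\prec$ gives $(i_r,j_r) \prec (i',j')$ for all $1 \leq r \leq k$; but we also have $(i,j) \prec (i',j')$, so the hypotheses of Lemma~\ref{lem:i+j>i1+j1} are satisfied for the word $t_{i,j}^{a-1} t_{i_1,j_1}\cdots t_{i_k,j_k}$. Therefore all those residual terms act as zero on $\sP$.

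For the base case $a=1$ the surviving terms are only the constant $1$ and the $q^2 t_{i,j}\del_{i,j}$ term, but the latter again vanishes on $t_{i_1,j_1}\cdots t_{i_k,j_k}$ by Lemma~\ref{lem:i+j>i1+j1}, producing $\del_{i,j}\cdot(t_{i,j}t_{i_1,j_1}\cdots t_{i_k,j_k}) = t_{i_1,j_1}\cdots t_{i_k,j_k} = \mathbf{c}(0)\,t_{i_1,j_1}\cdots t_{i_k,j_k}$. For the inductive step $a \geq 2$, applying the induction hypothesis to the $q^2 t_{i,j}\del_{i,j}$ contribution gives
\[
\del_{i,j}\cdot(t_{i,j}^a t_{i_1,j_1}\cdots t_{i_k,j_k}) = \bigl(1 + q^2\,\mathbf{c}(a-2)\bigr)\, t_{i,j}^{a-1}t_{i_1,j_1}\cdots t_{i_k,j_k},
\]
and the recursion $\mathbf{c}(a-1) = 1 + q^2\,\mathbf{c}(a-2)$ (valid uniformly for $a \geq 1$ with the convention $\mathbf{c}(-1)=0$) finishes the proof. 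There is no real obstacle here: the only thing to verify carefully is the correct bookkeeping of which indices in the (R6) sum get killed by Lemma~\ref{lem:i+j>i1+j1}, and that the diagonal $(i',j')=(i,j)$ term in (R6) contributes precisely $q^2 t_{i,j}\del_{i,j}$ so that the recursion matches the definition~\eqref{eq:c9a}.
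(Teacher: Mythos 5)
Your proof is correct and follows the same route as the paper: expand $\del_{i,j}t_{i,j}^a$ using (R6), observe that every residual index pair $(i',j')$ with $(i',j')\neq(i,j)$ has $i'+j'>i+j$ so that Lemma~\ref{lem:i+j>i1+j1} kills the corresponding terms, and induct on $a$ via the recursion $\mathbf c(a-1)=1+q^2\mathbf c(a-2)$. The only cosmetic difference is that the paper writes the (R6) expansion as three explicit sums rather than your single sum over $(i',j')\neq(i,j)$; the bookkeeping and the conclusion are identical.
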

\begin{proof}
The mixed relation (R6)  implies \begin{align*}
\del_{i,j}t_{i,j}^at_{i_1,j_1}&\cdots t_{i_k,j_k}
=t_{i,j}^{a-1}t_{i_1,j_1}\cdots t_{i_k,j_k}+q^2t_{i,j}\del_{i,j}
t_{i,j}^{a-1}t_{i_1,j_1}\cdots t_{i_k,j_k}
\\
&+
(q^2-1)
\sum_{i'>i}
t_{i',j}\del_{i',j}
t_{i,j}^{a-1}t_{i_1,j_1}\cdots t_{i_k,j_k}
+
(q^2-1)
\sum_{j'>j}
t_{i,j'}\del_{i,j'}
t_{i,j}^{a-1}t_{i_1,j_1}\cdots t_{i_k,j_k}\\
&+(q-q^{-1})^2
\sum_{i'>i,j'>j}
t_{i',j'}\del_{i',j'}
t_{i,j}^{a-1}t_{i_1,j_1}\cdots t_{i_k,j_k}
.
\end{align*}
Since $\min\{i'+j,i+j',i'+j'\}>i+j$,
by Lemma~\ref{lem:i+j>i1+j1}
the sums on the second and the third line lie in the ideal $\mathscr I$.  The assertion follows by induction on $a$.
\end{proof}

\begin{prp}
\label{prp:faithful-action}
$\sP$ is a faithful $\sPD$-module. 
\end{prp}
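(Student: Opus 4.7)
I propose to show that $D\in\sPD$ with $D\cdot\sP=0$ must vanish by an induction on degree combined with a specialization argument at $q=1$. Concretely, expand $D=\sum_B P_B\del^B$ in the PBW basis of Proposition~\ref{diamond-app} with $P_B\in\sP$. We prove $P_B=0$ for all $B$ by induction on $d:=|B|$. By Lemma~\ref{lem:delUsseq}, $\del^B\cdot\sP^{(\le d)}\sseq\sP^{(\le d-|B|)}$, so for $f\in\sP^{(d)}$ only terms with $|B|\le d$ contribute to $D\cdot f=\sum_B P_B(\del^B\cdot f)$. The base $d=0$ is immediate from $D\cdot 1=P_0$. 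For the inductive step, assuming $P_B=0$ for all $|B|<d$, the identity $D\cdot f=0$ for $f\in\sP^{(d)}$ collapses to $\sum_{|B|=d}P_B(\del^B\cdot f)=0$.

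For each multi-index $B_0$ with $|B_0|=d$, let $f_{B_0}:=\prod_{(i,j)}t_{i,j}^{b^0_{i,j}}\in\sP^{(d)}$ in an arbitrary but fixed product order. Since $|B|=|B_0|=d$, we have $\del^B\cdot f_{B_0}\in\sP^{(0)}=\Bbbk$ for every such $B$, and so $D\cdot f_{B_0}=0$ becomes the linear system
\[
\sum_{|B|=d}P_B\,T_{B,B_0}=0,\qquad T_{B,B_0}\;:=\;\del^B\cdot f_{B_0}\;\in\;\Bbbk,
\]
for the square matrix $T=(T_{B,B_0})_{|B|=|B_0|=d}$. If $T$ is invertible over $\Bbbk$ then $P_B=0$ for all $|B|=d$, closing the induction.

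To verify invertibility of $T$ I specialize at $q=1$. All relations (R1)--(R6) of $\sPD$ have coefficients in $\mathcal{A}:=\Z[q,q^{-1}]$, so $\sPD$ and $\sP$ admit $\mathcal{A}$-integral forms and each entry of $T$ lies in $\mathcal{A}$. Upon setting $q=1$, relations (R1)--(R5) become the commutative relations, and in (R6) every summand with $(c',d')\neq(c,d)$ carries a positive power of $q-q^{-1}$ (namely $2-\llbracket c',c\rrbracket-\llbracket d',d\rrbracket\ge 1$) and hence vanishes, leaving only $\del_{c,d}t_{c,d}=1+t_{c,d}\del_{c,d}$. Therefore $\sPD|_{q=1}$ is precisely the classical Weyl algebra $\mathcal{PD}(\mathrm{Mat}_{m\times n})$ acting on the polynomial ring $\sP|_{q=1}=\C[x_{i,j}]$, and the standard calculation
\[
\partial^{B}\cdot x^{B_0}=\Bigl(\prod_{(i,j)}(b^0_{i,j})!\Bigr)\delta_{B,B_0}\quad\text{for }|B|=|B_0|
\]
shows that $T|_{q=1}$ is diagonal with nonzero diagonal entries. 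Consequently $\det T\in\mathcal{A}$ is nonzero at $q=1$, so it is a nonzero element of $\Bbbk$, and $T$ is invertible.

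The only delicate point will be confirming the $\mathcal{A}$-integral form—that straightening $\del^{B}f_{B_0}$ modulo the left ideal $\mathscr I$ of Subsection~\ref{subsec:sPDgr6} preserves $\mathcal{A}$-coefficients and that the resulting $q=1$ specialization really coincides with the classical Weyl-algebra action on $\C[x_{i,j}]$. Once that is in hand, the whole proof reduces to Lemma~\ref{lem:delUsseq} together with the one-line $q=1$ determinant comparison above.
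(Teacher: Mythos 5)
Your proposal is correct, but it takes a genuinely different route from the paper's proof. The paper argues directly at generic $q$: it isolates the minimal-degree part $D_{d_\circ}$ of $D$ in the $\del$-variables, orders the exponent tuples by reverse lexicographic order relative to $\prec$, and uses the triangularity lemmas (Lemma~\ref{lem:i+j>i1+j1} and Lemma~\ref{lem:i+jttoa}) to show that applying $D_{d_\circ}$ to the explicit monomial $\prod_{i,j}t_{i,j}^{\tilde b_{i,j}}$ attached to the minimal tuple produces a nonzero element. Your argument replaces this leading-term analysis with an induction on $\del$-degree together with invertibility of the pairing matrix $T_{B,B_0}=\del^B\cdot f_{B_0}$, verified by specializing the $\Z[q,q^{-1}]$-integral form at $q=1$, where $\sPD$ degenerates to the classical Weyl algebra and $T$ becomes diagonal. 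Both are sound: the induction step is valid (terms with $|B|>d$ die by Lemma~\ref{lem:delUsseq}, terms with $|B|<d$ die by hypothesis, and invertibility of the scalar matrix $T$ over $\Bbbk$ forces the $\sP$-valued coefficients $P_B$ to vanish), and the ``delicate point'' you flag is routine given Proposition~\ref{diamond-app} --- the relations {\rm (R1)--(R6)} have coefficients in $\Z[q,q^{-1}]$ and the straightening onto the monomial basis preserves them, which is exactly the integral form the paper itself introduces in Subsection~\ref{subsec:rela}. What the paper's approach buys is explicitness (it exhibits a concrete $f$ with $D\cdot f\neq 0$) and reuse of the combinatorial lemmas needed elsewhere (e.g., in Section~\ref{sec:genofLhbRhb}); what yours buys is conceptual economy, reducing nondegeneracy of the quantum pairing to the classical fact $\partial^B\cdot x^{B_0}=\bigl(\prod_{i,j}(b^0_{i,j})!\bigr)\delta_{B,B_0}$.
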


\begin{proof}
Let $D\in\sPD$ and assume that $D\neq 0$. Then $D=\sum_{d\geq 0}D_d$ where each $D_d$ is a linear combination of monomials of the form~\eqref{eq:monomialprec} with $\sum_{i,j}b_{i,j}=d$. 
Set $d_\circ:=\min\{d:D_d\neq 0\}$.  By Lemma \ref{lem:delUsseq}, for $f\in \sP^{(d_\circ)}$ we have $D\cdot f=D_{d_\circ}\cdot f$. 
Let $\mathcal T$ denote the set of all the  $mn$-tuples $\mathsf b:=(b_{i,j})$ for which a monomial of the form~\eqref{eq:monomialprec} occurs in  $D_{d_\circ}$ with a nonzero coefficient. We sort the components of the $\mathsf b:=(b_{i,j})$ according to $\prec$ on the pairs $(i,j)$. In other words, we assume that
$
\mathsf b:=(b_{1,1},b_{1,2},b_{2,1},\ldots, b_{m-1,n},b_{m,n-1},b_{m,n})
$.
Let $\tilde{\mathsf b}:=(\tilde{b}_{i,j})$ be the  minimum of $\mathcal T$ in the reverse lexicographic order. 
Thus, we have \[
\tilde b_{m,n}=\min\{b_{m,n}\,:\,(b_{i,j})\in\mathcal T\},
\] 
then also 
$\tilde b_{m-1,n}=\min\{b_{m-1,n}\,:\,(b_{i,j})\in\mathcal T\text{ and }b_{m,n}=\tilde b_{m,n}\}$, and so on. 
 From 
Lemma~\ref{lem:i+j>i1+j1}
and Lemma~\ref{lem:i+jttoa} it follows that 
$D_{d_\circ}\cdot \prod_{i,j}t^{\tilde{b}_{i,j}}\neq 0$.
\end{proof}
\subsection{Two $U_{LR}$-actions on $\sPD$ are identical}
By Proposition~\ref{prp:faithful-action} the map $\phi_{PD}$ is an injection and consequently we can consider $\sPD$ as a subalgebra of $\End_\Bbbk(\sP)$. Thus according to Lemma~\ref{rmk:Hopf-eps} there exists another action of $U_{LR}$ on elements of $\sPD$. We temporarily denote this action by $x\bullet D$ for $x\in U_{LR}$ and $D\in \sPD$.
In the following proposition, we show that the latter action is identical to the action  that is defined in the beginning of Subsection~\ref{subsec:sPDgr6}. 

\begin{prp}
\label{prp:xBDvsxD}
$x\bullet D=x\cdot D$
for $x\in U_{LR}$ and $D\in\sPD$. 
\end{prp}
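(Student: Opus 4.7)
The plan is to invoke the equivariance of the $\sPD$-module structure on $\sP$ (Lemma~\ref{lem:thmapequvv}) together with the faithfulness of $\sP$ as a $\sPD$-module (Proposition~\ref{prp:faithful-action}), reducing the claim to a standard Hopf-algebraic manipulation.

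By Lemma~\ref{rmk:Hopf-eps} applied with $H=U_{LR}$ and the representation $\phi_U$, the action $x \bullet T$ of $x\in U_{LR}$ on $T\in\End_\Bbbk(\sP)$ is $x\bullet T = \sum \phi_U(x_1)\,T\,\phi_U(S(x_2))$, with $\Delta(x)=\sum x_1\otimes x_2$. Since $\phi_{PD}$ is injective by Proposition~\ref{prp:faithful-action}, it suffices to prove that $\phi_{PD}(x\cdot D)$ and $x\bullet \phi_{PD}(D)$ agree as elements of $\End_\Bbbk(\sP)$. Unpacking the latter via the definition of $\phi_{PD}$, this is equivalent to the identity
\[
(x\cdot D)\cdot f \;=\; \sum x_1 \cdot \bigl(D \cdot (S(x_2)\cdot f)\bigr) \qquad \text{for all } f\in\sP.
\]

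The next step is to compute the right-hand side using Lemma~\ref{lem:thmapequvv}: the structure map $\sPD\otimes\sP\to\sP$ is a $U_{LR}$-module homomorphism, so applying this to $D\otimes (S(x_2)\cdot f)$ and then using that $\sP$ is a $U_{LR}$-module, we obtain, with the extended Sweedler notation $\sum x_1\otimes x_2\otimes x_3$ for $(\Delta\otimes\id)\Delta(x)$,
\[
\sum x_1 \cdot \bigl(D \cdot (S(x_2)\cdot f)\bigr) \;=\; \sum (x_1\cdot D) \cdot \bigl((x_2\,S(x_3))\cdot f\bigr) \;=\; \sum (x_1\cdot D)\cdot \bigl(\epsilon(x_2)\,f\bigr) \;=\; (x\cdot D)\cdot f,
\]
where the second equality uses the antipode axiom in the coassociative form $\sum x_2 S(x_3)=\epsilon(x_2)\,1$ and the third uses the counit identity $\sum \epsilon(x_2)\,x_1 = x$. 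Faithfulness of $\sP$ as a $\sPD$-module then promotes this equality of endomorphisms to the asserted identity $x\bullet D = x\cdot D$ in $\sPD$.

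The proof is essentially a formal consequence of the $U_{LR}$-module algebra structure on $\sPD$, the equivariance of its action on $\sP$, and the injectivity of $\phi_{PD}$, so there is no deep obstacle. The only bookkeeping subtlety worth highlighting is that $x\bullet D$ a priori lives only in $\End_\Bbbk(\sP)$; it is precisely the faithfulness of $\sP$ that guarantees that this element in fact lies in $\phi_{PD}(\sPD)$ and coincides with $\phi_{PD}(x\cdot D)$, thereby justifying the transfer of the identity back to $\sPD$.
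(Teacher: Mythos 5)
Your proof is correct and follows essentially the same route as the paper: expand $(x\bullet D)\cdot f$ via the definition of the conjugation action, apply the $U_{LR}$-equivariance of the structure map $\sPD\otimes\sP\to\sP$ (Lemma~\ref{lem:thmapequvv}), simplify with the antipode and counit axioms, and conclude by faithfulness of $\sP$ (Proposition~\ref{prp:faithful-action}). No discrepancies to report.
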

\begin{proof}
By Lemma~\ref{lem:thmapequvv}, for $f\in \sP$ we have
\begin{align*}
(x\bullet D)\cdot f=
\sum x_1\cdot (D\cdot& (S(x_2)\cdot f))=
\sum (x_1\cdot D)\cdot (x_2\cdot (S(x_3)\cdot f))\\
&=\sum(x_1\cdot D)\cdot (\epsilon(x_2)f)=
\sum (x_1\epsilon(x_2)\cdot D)\cdot f=(x\cdot D)\cdot f.
\end{align*}
Since $\sP$ is a faithful module over $\End_\Bbbk(\sP)$, it follows that $x\bullet D=x\cdot D$.
\end{proof}
Henceforth we only use the notation $x\cdot D$ to denote the $U_{LR}$-action on $\sPD$.

\subsection{The maps $\mathsf P_{k,l,n}$}
\label{subsec:Pkln}
Recall that $\sPD^{\mathrm{gr}}\cong\sP\otimes\sD$ as a vector space. 
Let
\begin{equation}
\label{eq:sfPmn}
\mathsf P:\sPD^{\mathrm{gr}}\to \sPD
\end{equation}
be the linear map uniquely defined by
$\mathsf P(a\otimes b):=ab$ for $a\in\sP$ and $b\in \sD$. 
\begin{prp}
\label{prp:PisequivULR}
The map $\mathsf P$ is an isomorphism of $U_{LR}$-modules. 
\end{prp}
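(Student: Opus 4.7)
The plan is to verify two independent facts: first, that $\mathsf P$ is a linear bijection; and second, that $\mathsf P$ intertwines the $U_{LR}$-actions on $\sPD^{\mathrm{gr}}$ and on $\sPD$.

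For bijectivity, I would argue directly on PBW-type bases. By Proposition~\ref{prp:P-D-bases}, the ordered $t$-monomials form a basis of $\sP$ and the ordered $\del$-monomials form a basis of $\sD$, so their tensor products form a basis of $\sP\otimes\sD=\sPD^{\mathrm{gr}}$ as a vector space. By the definition of $\mathsf P$ and~\eqref{eq:lem241}, $\mathsf P$ sends such a pure tensor $a\otimes b$ to the literal product $ab$ in $\sPD$; these products are precisely the monomials listed in~\eqref{eq:bassis}, which form a basis of $\sPD$ by Proposition~\ref{diamond-app}. Hence $\mathsf P$ carries a basis of $\sPD^{\mathrm{gr}}$ bijectively onto a basis of $\sPD$, so it is an isomorphism of vector spaces.

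For $U_{LR}$-equivariance, the key inputs are that both $\sPD^{\mathrm{gr}}$ and $\sPD$ are $U_{LR}$-module algebras (by Proposition~\ref{prp:H-modual} and Proposition~\ref{prp:Hmd-desc}), and that the canonical embeddings of $\sP$ and $\sD$ into either algebra are $U_{LR}$-equivariant (this is exactly the content of Remark~\ref{rmk:AoB->ARB}, which moreover states that the multiplication $A\otimes B\to A\otimes_{\check R,\psi}B$ given by $a\otimes b\mapsto ab$ is itself an $H$-module map). With $\Delta(x)=\sum x_1\otimes x_2$, the inner tensor product action on $\sPD^{\mathrm{gr}}$ satisfies $x\cdot(a\otimes b)=\sum(x_1\cdot a)\otimes(x_2\cdot b)$, and therefore
\[
\mathsf P\bigl(x\cdot(a\otimes b)\bigr)=\sum(x_1\cdot a)(x_2\cdot b)=x\cdot(ab)=x\cdot \mathsf P(a\otimes b),
\]
where the middle equality uses that $\sPD$ is a $U_{LR}$-module algebra. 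Extending by linearity over $\sPD^{\mathrm{gr}}$ yields equivariance on the entire domain.

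There is no serious obstacle: the proposition essentially compares two realizations of the same underlying vector space, one with the plain $\check R$-twisted product and one with its $\psi_\circ$-deformation, and in both realizations the structure needed for the argument is already encoded in the machinery of Section~\ref{subsec:twisted}. The one point that merits explicit verification is that the $U_{LR}$-action on the images of $\sP$ and $\sD$ inside either $\sPD^{\mathrm{gr}}$ or $\sPD$ coincides with the original $U_{LR}$-actions on $\sP$ and $\sD$, but this is immediate from the $H$-equivariance of $\bar\sfi_A$ and $\bar\sfi_B$ noted in Remark~\ref{rmk:AoB->ARB}.
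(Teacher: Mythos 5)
Your proposal is correct and follows essentially the same route as the paper: equivariance comes from identifying $\mathsf P$ with the map~\eqref{eq:AJlfjf} and invoking Remark~\ref{rmk:AoB->ARB} (your displayed computation simply unfolds that remark), and bijectivity comes from matching the monomial bases of Propositions~\ref{diamond-app} and~\ref{diamond-app-gr}. No gaps.
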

\begin{proof}
The map $\mathsf P$ is identical to the map~\eqref{eq:AJlfjf} 
when
$A:=\sP$, $B:=\sD$ and $H:=U_{LR}$. Hence by Remark~\ref{rmk:AoB->ARB} it  is  
a homomorphism of $U_{LR}$-modules.
From Proposition~\ref{diamond-app} 
and Proposition~\ref{diamond-app-gr}
it follows that $\mathsf P$ maps  a basis of $\sPD^{\mathrm{gr}}$ to a basis of $\sPD$, hence it is indeed an isomorphism of $U_{LR}$-modules. 
\end{proof}
\begin{dfn}
Given $k,l,n\geq 1$, we set $m:=\max\{k,l\}$ and define the map 
\begin{equation}
\label{eq:Pkln}
\mathsf P_{k,l,n}:\mathscr A_{k,l,n}^\mathrm{gr}\to\mathscr A_{k,l,n}\ ,\ 
D\mapsto \mathsf P(D),
\end{equation}
where $\mathsf P:\sPD^\mathrm{gr}\to\sPD$ is as in~\eqref{eq:sfPmn}.  
\end{dfn}


For $r,s\in\Z^{\geq 0}$ we set \begin{equation}
\label{eq:Aklnrs}
\sPD^{\mathrm{gr},(r,s)}:=
\sP^{(r)}\otimes \sD^{(s)}
\quad\text{ and }\quad
\mathscr A_{k,l,n}^{\mathrm{gr},(r,s)}:=
\mathscr A_{k,l,n}^\mathrm{gr}\cap
\sPD^{\mathrm{gr},(r,s)},
\end{equation}
so that 
\[
\sPD^\mathrm{gr}=\bigoplus_{r,s\geq 0}
\sPD^{\mathrm{gr},(r,s)}\quad\text{and} 
\quad
\mathscr A_{k,l,n}^\mathrm{gr}:=
\bigoplus_{r,s\geq 0}
\mathscr A_{k,l,n}^{\mathrm{gr},(r,s)}.
\]
 By
Proposition~\ref{prp:glmglndecom} 
 we obtain an isomorphism of 
$U_q(\gl_k)\otimes U_q(\gl_l)\otimes U_q(\gl_n)\otimes U_q(\gl_n)$-modules  
\begin{equation}
 \label{eq:PD(r,s)d}
 \mathscr A_{k,l,n}^{\mathrm{gr},(r,s)}\cong
\bigoplus_{\footnotesize\begin{array}{c}\la\in\Lambda_{\underline k,r}\\[-1mm]\mu\in\Lambda_{\underline l,s}\end{array}}
(V_\la^*\otimes  V_\mu)\otimes
(V_\la^*\otimes V_\mu),
\end{equation}
where $\underline k:=\min\{k,n\}$ and $\underline l:=\min\{l,n\}$.
Here we consider the left copy of $V_\la^*\otimes V_\mu$ as a module  for $U_q(\gl_k)\otimes U_q(\gl_l)$ and the right copy of $V_\la^*\otimes V_\mu$ as a module for $U_R\otimes U_R\cong U_q(\gl_n)\otimes U_q(\gl_n)$. Of course by restriction along the coproduct map $U_R\to U_R\otimes U_R$ we can also consider the right copy  as a $U_R$-module. 

\begin{prp}
\label{rmk:PtildeRvsPD}
For $a\otimes a'\in\sP^{(r)}\otimes_{\tilde{\EuScript R}} \sD^{(r')}$ and $b\otimes b'\in\sP^{(s)}\otimes_{\tilde{\EuScript R}} \sD^{(s')}$ we have 
\begin{equation}
\label{eq::Paa'bb'}
\mathsf P(a\otimes a')
\mathsf P(b\otimes b')
-\mathsf P
\big((a\otimes a')(b\otimes b')\big)\in
\bigoplus_{i=1}^{\min\{u,u'\}}\sPD^{(u-i,u'-i)},
\end{equation}
where $u:=r+s$ and $u':=r'+s'$.
\end{prp}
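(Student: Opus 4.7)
The plan is to exploit the fact that the defining relations of $\sPD$ differ from those of $\sPD^\mathrm{gr}$ only through the constant term $1=\psi_\circ(\del_{i,j},t_{i,j})$ appearing in (R6) as compared to (R6'), which sits in bidegree $(0,0)$ rather than the bidegree $(1,1)$ of the quadratic terms. Using the basis of Proposition~\ref{diamond-app}, I would decompose $\sPD$ as a vector space as $\sPD=\bigoplus_{r,r'\geq 0}\sPD^{(r,r')}$, where $\sPD^{(r,r')}$ is the span of the basis monomials in~\eqref{eq:bassis} with $\sum a_{i,j}=r$ and $\sum b_{i,j}=r'$. By inspection, $\mathsf P$ sends any element of $\sP^{(r)}\otimes\sD^{(r')}=\sPD^{\mathrm{gr},(r,r')}$ written in this basis to the corresponding element of $\sPD^{(r,r')}$, and hence restricts to a vector space isomorphism $\sPD^{\mathrm{gr},(r,r')}\xrightarrow{\sim}\sPD^{(r,r')}$.

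The main step is the following key lemma, proved by induction on $s+r'$: for $\alpha\in\sD^{(r')}$ and $\beta\in\sP^{(s)}$, writing $\check R_{\sD,\sP}$ for the braiding of the pair $(\sD,\sP)$ in the triple $(U_{LR},\cC_{LR},\check{\underline{\EuScript R}}_{LR})$, the product $\alpha\beta$ taken in $\sPD$ satisfies
\[
\alpha\beta-\mathsf P\bigl(\check R_{\sD,\sP}(\alpha\otimes\beta)\bigr)\in\bigoplus_{i=1}^{\min\{s,r'\}}\sPD^{(s-i,\,r'-i)}.
\]
The base case $s=r'=1$ is precisely the comparison of (R6) with (R6'), the deviation being $\psi_\circ(\del_{i,j},t_{k,l})\in\sPD^{(0,0)}$. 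For the inductive step I would factor $\beta=\beta_1\beta_2$ with $\beta_j\in\sP^{(s_j)}$ of strictly smaller positive degree (or split $\alpha$ analogously if $s=1$), apply the induction hypothesis first to $\alpha\beta_1$ and then to the $d\beta_2$-type factors arising in the expansion, and invoke the hexagon identity Lemma~\ref{lem:RcheckBAprop}(ii) to identify the leading-bidegree part with $\mathsf P(\check R_{\sD,\sP}(\alpha\otimes\beta))$. A careful accounting of the error terms produced by each invocation of the inductive hypothesis shows that they combine to lie in $\bigoplus_{k=1}^{\min\{s,r'\}}\sPD^{(s-k,r'-k)}$.

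Granted this key lemma, the proposition follows directly: using~\eqref{eq:lem241} we have $\mathsf P(a\otimes a')\mathsf P(b\otimes b')=aa'bb'=a\cdot(a'b)\cdot b'$, and substituting the lemma for $a'b$ yields $\mathsf P(a\otimes a')\mathsf P(b\otimes b')=a\cdot\mathsf P(\check R_{\sD,\sP}(a'\otimes b))\cdot b'+a\mathcal E b'$ with $\mathcal E\in\bigoplus_{i=1}^{\min\{s,r'\}}\sPD^{(s-i,r'-i)}$. The identity $a\cdot\mathsf P(\check R_{\sD,\sP}(a'\otimes b))\cdot b'=\mathsf P\bigl((a\otimes a')(b\otimes b')\bigr)$ is then immediate from the formula~\eqref{eq:mAmBcirc} for the product in $\sPD^\mathrm{gr}$ together with~\eqref{eq:lem241}, while $a\mathcal E b'$ sits in $\bigoplus_{i\geq 1}\sPD^{(u-i,u'-i)}$ because left-multiplication by $a\in\sP^{(r)}$ and right-multiplication by $b'\in\sD^{(s')}$ shift the first and second bidegree components up by $r$ and $s'$ respectively without triggering any mixed commutations. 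The main technical obstacle is the bookkeeping in the inductive step of the key lemma, namely verifying that iterated applications of the base case produce errors confined to the claimed bidegree filtration, which reduces to tracking how the constant term of (R6) cascades through the repeated straightenings.
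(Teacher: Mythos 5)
Your proposal is correct and follows essentially the same route as the paper: both arguments reduce the claim to straightening $a'b$ in $\sPD$ versus $\sPD^{\mathrm{gr}}$ and observing that the only discrepancy comes from the inhomogeneous constant term distinguishing (R6) from (R6$'$), which drops the bidegree by $(1,1)$ at each application. Your explicit induction and the bidegree bookkeeping for $a\mathcal{E}b'$ merely formalize what the paper's proof states more tersely.
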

\begin{proof}
We have  $\mathsf P(a\otimes a')\mathsf P(b\otimes b')=aa'bb'$.
Using the explicit relations of $\sPD$ (see Remark~\ref{rmk:R3R6}) we can move the $\del_{i,j}$ past the $t_{i,j}$ to express
$a'b$ as $a'b=\sum a''b''$ where $a''\in \sP$ and $b''\in\sD$. Thus
\[\mathsf P(a\otimes a')\mathsf P(b\otimes b')
=\sum aa''b''b'.
\]
Similarly, using the relations of $\sPD^\mathrm{gr}$ we can move the $1\otimes \del_{i,j}$ past the $t_{i,j}\otimes 1$ and as a result we obtain $(1\otimes a')(b\otimes 1)=\sum \underline a''\otimes \underline b''$ where $\underline a''\in \sP$ and $\underline b''\in\sD$. It follows that 
\[
\mathsf P\big((a\otimes a')(b\otimes b')\big)=\sum a\underline a''\underline b''b'.
\]
The only difference between the relations of $\sPD$ and $\sPD^\mathrm{gr}$ is (R6) vs. (R6$'$). Since (R6$'$) is the homogenized form of (R6$'$),
it follows that \[
\sum a''b''-\sum \underline a''\underline b''\in\bigoplus_{i=1}^{\min\{r',s\}}\sPD^{(s-i,r'-i)}.
\] From the latter inclusion~\eqref{eq::Paa'bb'} follows immediately.   
 \end{proof}

Recall that  $\mathrm{gr}(\mathscr A_{k,l,n})$  denotes the associated graded algebra corresponding to the degree filtration on $\mathscr A_{k,l,n}$, i.e., the filtration obtained by setting $\deg(t_{i,j})=\deg(\del_{i,j})=1$. Note that we have a canonical isomorphism $\mathscr A_{k,l,n}^\mathrm{gr}\cong \mathrm{gr}(\mathscr A_{k,l,n}^\mathrm{gr})$ since $\mathscr A_{k,l,n}^\mathrm{gr}$ is graded. 

The maps $\mathsf P_{k,l,n}:\mathscr A_{k,l,n}^\mathrm{gr}\to \mathscr A_{k,l,n}$ do \emph{not} induce  isomorphisms of associative algebras. However,  the following statement holds.
\begin{cor}
\label{cor:Agrkln-vs-gr(A)}
The associated graded map $\mathrm{gr}(\mathsf P_{k,l,n})$ induces a $U_R$-equivariant isomorphism between 
$\mathscr A_{k,l,n}^\mathrm{gr}\cong \mathrm{gr}(\mathscr A_{k,l,n}^\mathrm{gr})$ and $\mathrm{gr}(\mathscr A_{k,l,n})$. 
When $k=l=m$, the latter map is $U_{LR}$-equivariant. \end{cor}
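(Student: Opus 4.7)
My plan is to deduce the corollary from Proposition~\ref{rmk:PtildeRvsPD} (which says that $\mathsf{P}$ is multiplicative modulo strictly lower filtration) together with the explicit PBW-type bases of Propositions~\ref{diamond-app} and~\ref{diamond-app-gr}, and the equivariance property in Proposition~\ref{prp:PisequivULR}.

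First I would check that the degree filtration $F^{\bullet}$ on $\mathscr A_{k,l,n}$ (with $\deg t_{i,j}=\deg\partial_{i,j}=1$) is compatible with the presentation from Proposition~\ref{prp:explicit-prs-Akln}(i): the relations (R1)--(R5), (R1$'$), (R2$'$) are already homogeneous of degree two, while the inhomogeneous relation (R6) has top-degree part precisely equal to (R6$'$). Hence the assignment on generators extends to a surjective algebra homomorphism $\mathsf{Q}:\mathscr A_{k,l,n}^{\mathrm{gr}}\to \mathrm{gr}(\mathscr A_{k,l,n})$. Next I observe that $\mathsf{P}_{k,l,n}$ sends $\mathscr A_{k,l,n}^{\mathrm{gr},(r,s)}$ into $F^{r+s}\mathscr A_{k,l,n}$, since an element of $\sP^{(r)}\otimes\sD^{(s)}$ becomes, under $\mathsf{P}$, a linear combination of products of $r+s$ generators. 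By Proposition~\ref{rmk:PtildeRvsPD} the failure of multiplicativity of $\mathsf{P}_{k,l,n}$ lies in $\bigoplus_{i\geq 1}\sPD^{(u-i,u'-i)}$, whose total degrees are at most $u+u'-2$; hence the induced map $\mathrm{gr}(\mathsf{P}_{k,l,n}):\mathscr A_{k,l,n}^{\mathrm{gr}}\to\mathrm{gr}(\mathscr A_{k,l,n})$ is an algebra homomorphism, and coincides with $\mathsf{Q}$ on generators and therefore on the whole algebra.

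To see bijectivity, I would compare bases. Proposition~\ref{diamond-app-gr} gives a monomial basis of $\sPD^{\mathrm{gr}}$ of the form~\eqref{eq:bassis}; restricting the index ranges as in Definition~\ref{dfn:Akln} yields a basis of $\mathscr A_{k,l,n}^{\mathrm{gr}}$. The parallel statement for $\mathscr A_{k,l,n}$ follows from Proposition~\ref{diamond-app} by the same restriction argument used in the proof of Proposition~\ref{prp:explicit-prs-Akln}(i). Since $\mathsf{P}$ sends $t_{i,j}\otimes 1$ and $1\otimes\partial_{i,j}$ to $t_{i,j}$ and $\partial_{i,j}$ respectively, and since in each fixed filtered degree the two bases are indexed by the same set of exponent vectors, the induced map $\mathrm{gr}(\mathsf{P}_{k,l,n})$ sends the monomial basis of $\mathscr A_{k,l,n}^{\mathrm{gr}}$ bijectively onto the associated-graded classes of the corresponding monomials in $\mathscr A_{k,l,n}$, and is therefore a vector space isomorphism.

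For equivariance, Proposition~\ref{prp:PisequivULR} gives that $\mathsf{P}$ is $U_{LR}$-equivariant; the subalgebras $\mathscr A_{k,l,n}$ and $\mathscr A_{k,l,n}^{\mathrm{gr}}$ and the filtration $F^{\bullet}$ are $U_R$-stable (as noted just before Theorem~\ref{thm:C}), so $\mathrm{gr}(\mathsf{P}_{k,l,n})$ is $U_R$-equivariant. When $k=l=m$ one has $\mathscr A_{m,m,n}=\sPD_{m\times n}$ and $\mathscr A_{m,m,n}^{\mathrm{gr}}=\sPD_{m\times n}^{\mathrm{gr}}$, both $U_{LR}$-stable, and the same argument upgrades $U_R$-equivariance to $U_{LR}$-equivariance. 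The main subtle point is verifying that the inhomogeneous relation (R6) does not introduce spurious additional relations upon passing to the associated graded, i.e.\ that $\mathsf Q$ is injective; I would handle this cleanly by the basis comparison in the previous paragraph rather than attempting an independent Diamond Lemma argument for $\mathrm{gr}(\mathscr A_{k,l,n})$.
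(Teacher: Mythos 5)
Your proposal is correct and follows essentially the same route as the paper: Proposition~\ref{rmk:PtildeRvsPD} for the multiplicativity of $\mathrm{gr}(\mathsf P_{k,l,n})$, the monomial bases of Propositions~\ref{diamond-app} and~\ref{diamond-app-gr} for bijectivity, and Proposition~\ref{prp:PisequivULR} for equivariance. The only difference is that you spell out explicitly (via the map $\mathsf Q$ and the basis comparison) the bijectivity step that the paper leaves implicit, which is a reasonable elaboration rather than a new argument.
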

\begin{proof}
From Proposition~\ref{rmk:PtildeRvsPD} it follows that the associated graded map $\mathrm{gr}(\mathsf  P_{k,l,n})$ is an isomorphism of associative algebras from $\mathrm{gr}(\mathscr A_{k,l,n}^\mathrm{gr})\cong \mathscr A_{k,l,n}^\mathrm{gr}$ onto 
$\mathrm{gr}(\mathscr A_{k,l,n})$.
 The equivariance statements follow from  Proposition~\ref{prp:PisequivULR}. 
\end{proof}

\subsection{The algebras $\ULw$, $\URw$ and $\ULRw$}
\label{subsec:Pi}
We set
\[
\ULRw:=\phi_U^{-1}(\sPD):=\{x\in U_{LR}\,:\,\phi_U(x)\in\sPD\}.
\]
Also, we set
\[
\ULw:=\{x\in U_L\,:\,x\otimes 1\in\ULRw\}\quad
\text{and}\quad 
\URw:=\{x\in U_R\,:\,1\otimes x\in\ULRw\}.
\]
Recall that the adjoint action of $U_{LR}$ is $\ad_y(x):=\sum y_1xS(y_2)$ for $x,y\in U_{LR}$. 
%
Recall that we equip $\End_\Bbbk(\sP)$ with the 
$U_{LR}$-module structure of 
Lemma~\ref{rmk:Hopf-eps} and we  denote the latter action by 
$x\cdot T$ for 
 $x\in U_{LR}$ and $T\in \End_\Bbbk(\sP)$. 
Then it is straightforward from the definition of $\phi_U$ that
\begin{equation}
\label{eq:phiUadYY}
\phi_U({\ad_y(x)})=y\cdot \phi_U(x)
\quad\text{
for $x,y\in U_{LR}$. 
 }
 \end{equation}
\begin{lem}\label{lem:ax->adyx}
$\ad_x(\ULRw)\sseq \ULRw$ for $x\in U_{LR}$.
\end{lem}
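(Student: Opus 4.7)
The plan is to assemble the lemma from two ingredients that have already been set up immediately above its statement. The first ingredient is the identity~\eqref{eq:phiUadYY}, which converts the adjoint action on $U_{LR}$ into the $U_{LR}$-action on $\End_\Bbbk(\sP)$ of Lemma~\ref{rmk:Hopf-eps}. The second ingredient is that $\sPD$, viewed as a subalgebra of $\End_\Bbbk(\sP)$ via $\phi_{PD}$, is $U_{LR}$-stable for this action. Once both are in place the proof is a one-line deduction.

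Concretely, I would argue as follows. Fix $x\in U_{LR}$ and $z\in\ULRw$. By definition of $\ULRw$ we have $\phi_U(z)\in\sPD$. Applying~\eqref{eq:phiUadYY} gives
\[
\phi_U(\ad_x(z))=x\cdot\phi_U(z),
\]
where the right-hand side refers to the action of $U_{LR}$ on $\End_\Bbbk(\sP)$ from Lemma~\ref{rmk:Hopf-eps}. To conclude that this element lies in $\sPD$, I would invoke Proposition~\ref{prp:Hmd-desc}, which guarantees that $\sPD$ carries a $U_{LR}$-module algebra structure (call the corresponding action $x\bullet D$). Combined with Proposition~\ref{prp:xBDvsxD}, which asserts that this intrinsic action on $\sPD$ coincides with the restriction of the $U_{LR}$-action on $\End_\Bbbk(\sP)$, we obtain
\[
x\cdot\phi_U(z)=x\bullet\phi_U(z)\in\sPD.
\]
Therefore $\phi_U(\ad_x(z))\in\sPD$, so $\ad_x(z)\in\ULRw$.

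There is no real obstacle here; the substantive work has already been done in establishing~\eqref{eq:phiUadYY} and in Propositions~\ref{prp:Hmd-desc} and~\ref{prp:xBDvsxD}. The only point worth emphasizing in the write-up is the careful distinction between the two \emph{a priori} different $U_{LR}$-actions on $\sPD$ (the module-algebra action inherited from the twisted tensor product construction and the conjugation action inherited from the embedding $\sPD\subseteq\End_\Bbbk(\sP)$), and the role of Proposition~\ref{prp:xBDvsxD} in identifying them.
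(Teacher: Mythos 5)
Your proof is correct and is exactly the paper's argument: the paper's own proof reads ``Follows immediately from~\eqref{eq:phiUadYY} and Proposition~\ref{prp:xBDvsxD},'' and you have simply spelled out that one-line deduction, correctly identifying the role of each ingredient. (Only a cosmetic remark: you have swapped the paper's $\bullet$/$\cdot$ conventions from Proposition~\ref{prp:xBDvsxD}, where $x\bullet D$ denotes the conjugation action and $x\cdot D$ the module-algebra action, but this does not affect the argument.)
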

\begin{proof}
Follows immediately from~\eqref{eq:phiUadYY} and Proposition~\ref{prp:xBDvsxD}.
\end{proof}
\begin{rmk}
When $m\leq n$,  Proposition~\ref{prp:glmglndecom} implies that the map $\cL_\sP:U_L\to \End_\Bbbk(\sP)$ is an injection. The argument is similar to the proof of \cite[Thm 7.1.5.13]{KS97}.\end{rmk} 
Let $\mathscr K_n$ denote the kernel of  
 $\cR_\sP:U_R\to \End_\Bbbk(\sP)$. 
For the next proposition recall the notation $\mathscr F(H,I)$  defined 
in~\eqref{eq:FHI}.  
 
\begin{prp}
\label{prp:ULWFULUR/}
Assume that $m\leq n$. Then $\ULw\sseq \mathscr F(U_L)$ and
$\URw\sseq
\mathscr F(U_R,\mathscr K_n)$. 
\end{prp}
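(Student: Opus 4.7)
The plan is to show that $\sPD$, viewed as a $U_L$-module via $x\mapsto x\otimes 1$ or as a $U_R$-module via $x\mapsto 1\otimes x$, decomposes into finite-dimensional submodules. This local finiteness then transfers, through $\phi_U$, to control the adjoint orbits of elements of $\ULw$ and $\URw$.

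Combining Proposition~\ref{prp:PisequivULR} with Proposition~\ref{prp:glmglndecom}, we obtain a $U_{LR}$-module decomposition
\[
\sPD\;\cong\;\sP\otimes\sD\;\cong\;\bigoplus_{\la,\mu}(V_\la^*\otimes V_\la^*)\otimes (V_\mu\otimes V_\mu),
\]
with $\la,\mu$ running over partitions of length at most $\min\{m,n\}=m$. Each summand is finite-dimensional as a $U_{LR}$-module and hence also as a $U_L$- or $U_R$-module (via the embeddings $x\mapsto x\otimes 1$ and $x\mapsto 1\otimes x$). Consequently, every element of $\sPD$ lies in a finite-dimensional $U_L$-invariant (respectively, $U_R$-invariant) subspace of $\sPD$.

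Now fix $x\in\ULw$ and let $V\sseq\sPD$ be a finite-dimensional $U_L$-stable subspace containing $\phi_U(x\otimes 1)$. For $y\in U_L$, Lemma~\ref{lem:ax->adyx} gives $\ad_y(x)\otimes 1\in \ULRw$, and combining~\eqref{eq:phiUadYY} with Proposition~\ref{prp:xBDvsxD} yields
\[
\phi_U(\ad_y(x)\otimes 1)=(y\otimes 1)\cdot \phi_U(x\otimes 1)\;\in\; V.
\]
Since $m\leq n$, the restriction $\phi_U|_{U_L\otimes 1}$ is injective by the remark preceding the proposition, so the induced linear map $\{\ad_y(x):y\in U_L\}\to V$ is injective. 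Hence $\dim_\Bbbk\{\ad_y(x):y\in U_L\}\leq \dim_\Bbbk V<\infty$, proving $x\in\mathscr F(U_L)$. For $x\in\URw$ the argument is analogous: $\phi_U(1\otimes x)$ lies in a finite-dimensional $U_R$-stable subspace $V'\sseq\sPD$, and $\phi_U(1\otimes\ad_y(x))=(1\otimes y)\cdot \phi_U(1\otimes x)\in V'$ for all $y\in U_R$. By the definition of $\mathscr K_n$, the map $\phi_U|_{1\otimes U_R}$ descends to an injection $U_R/\mathscr K_n\into \End_\Bbbk(\sP)$, so $E(x,\mathscr K_n)=\{\ad_y(x)+\mathscr K_n:y\in U_R\}$ embeds into $V'$ and is therefore finite-dimensional, giving $x\in\mathscr F(U_R,\mathscr K_n)$.

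The principal point that needs care is the identification of the $U_L$- and $U_R$-module structures on $\sPD$ inherited from $\End_\Bbbk(\sP)$ (in the sense of Lemma~\ref{rmk:Hopf-eps}) with those transported from $\sPD^\mathrm{gr}$ through the $U_{LR}$-equivariant isomorphism $\mathsf P$: this is exactly the content of Proposition~\ref{prp:xBDvsxD}. Once these actions are known to coincide, the rest of the argument is a direct application of the machinery already developed; no further computation is required.
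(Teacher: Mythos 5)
Your argument is correct and follows the same route as the paper's own proof: establish that $\sPD$ is a locally finite $U_{LR}$-module (the paper gets this directly from the fact that the actions on $\sP$ and $\sD$ are degree preserving, you via the decomposition into finite-dimensional summands from Propositions~\ref{prp:PisequivULR} and~\ref{prp:glmglndecom} — an inessential variation), and then transfer local finiteness back through the injective, equivariant maps $\ULw\to\sPD$ and $\URw/(\URw\cap\mathscr K_n)\to\sPD$. The extra care you take with Proposition~\ref{prp:xBDvsxD} and with the well-definedness of the induced map on $U_R/\mathscr K_n$ is exactly what the paper's terse proof leaves implicit.
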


\begin{proof}
Since the actions of $U_{LR}$ on $\sP$ and $\sD$ are degree preserving, it follows that 
$\sPD$ is a locally finite $U_{LR}$-module. The assertions of the proposition follow from the fact that the maps 
$\ULw\xrightarrow{\phi_U}\sPD$ and $\URw/(
\URw\cap \mathscr K_n)\xrightarrow{\phi_U}\sPD$ are injective and $U_{LR}$-equivariant (this is equivalent to
~\eqref{eq:phiUadYY}).
\end{proof}

The following proposition is a ``no-go theorem'' that provides evidence that the commutative diagram~\eqref{eq:commdia} cannot be fully quantized. 
\begin{prp}
\label{rmk-nogo}
There does not exist a $\Bbbk$-algebra $\widetilde{\sPD}$ with the following properties:
\begin{itemize}
\item[\rm (i)] $\widetilde{\sPD}$ is a locally finite $U_{LR}$-module.
\item[\rm (ii)] $\sP$ is a $\widetilde{\sPD}$-module and the action map
$\widetilde{\sPD}\otimes \sP\to \sP$ is a homomorphism of $U_{LR}$-module.
\item[\rm (iii)] There exists a homomorphism of algebras $\widetilde\phi:U_{LR}\to \widetilde{\sPD}$ such that the diagram
\begin{equation*}\vcenter{
\xymatrix{
U_{LR}\otimes  \sP
 \ar[rd]_{x\otimes f\mapsto \widetilde{\phi}(x)\otimes f\hspace{5mm}}\ar[rr]^{\hspace{5mm}x\otimes f\mapsto x\cdot f}& & \sP\\
& \widetilde{\sPD}\otimes\sP \ar[ru]_{D\otimes f\mapsto D\cdot f}& }
}
\end{equation*}
is commutative. 
\end{itemize}
\end{prp}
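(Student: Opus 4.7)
The plan is a proof by contradiction: assume such $\widetilde{\sPD}$ exists, and derive a contradiction from the fact that $\phi_U(U_{LR})$ would then have to be locally finite as a $U_{LR}$-submodule of $\End_\Bbbk(\sP)$ under the conjugation action. First I would define $\psi \colon \widetilde{\sPD}\to \End_\Bbbk(\sP)$ by $\psi(D)f := D\cdot f$. Since $\sP$ is a $\widetilde{\sPD}$-module, $\psi$ is an algebra homomorphism, and condition (iii) gives $\psi\circ\widetilde\phi = \phi_U$, hence $\phi_U(U_{LR})\sseq \psi(\widetilde{\sPD})$. A short Sweedler calculation using condition (ii) together with $\sum x_1 S(x_2) = \epsilon(x)1$ then shows that $\psi$ is $U_{LR}$-equivariant when $\End_\Bbbk(\sP)$ carries the conjugation action $x\cdot T := \sum \phi_U(x_1)T\phi_U(S(x_2))$ of Lemma~\ref{rmk:Hopf-eps}.

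Next, by (i) the module $\widetilde{\sPD}$ is locally finite, and so is its $U_{LR}$-equivariant image $\psi(\widetilde{\sPD})$. By~\eqref{eq:phiUadYY} the subspace $\phi_U(U_{LR})$ is itself a $U_{LR}$-submodule of $\End_\Bbbk(\sP)$, and hence inherits local finiteness; equivalently, for every $x\in U_{LR}$ the subspace $\phi_U(\ad_{U_{LR}}(x))$ would be forced to be finite-dimensional.

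The contradiction comes from producing an $x$ that violates this condition. Assume without loss of generality $m\leq n$ and $m\geq 2$ (the case $m=1<n$ is handled symmetrically, with the roles of $U_L$ and $U_R$ interchanged). By Proposition~\ref{prp:glmglndecom} the map $\cL_\sP\colon U_L\to \End_\Bbbk(\sP)$ is injective. Take $x := K_{\eps_1}\otimes 1 \in U_{LR}$; the tensor-product Hopf structure yields $\ad_{y_L\otimes y_R}(K_{\eps_1}\otimes 1) = \epsilon(y_R)\,\ad_{y_L}(K_{\eps_1})\otimes 1$ (using $\ad_{y_R}(1)=\epsilon(y_R)1$), so by~\eqref{eq:phiUadYY} and injectivity of $\cL_\sP$ it suffices to show that the set $\{\ad_y(K_{\eps_1}) : y\in U_L\}$ spans an infinite-dimensional subspace of $U_L$. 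A brief induction, using $\Delta(F_1) = F_1\otimes 1 + K_1^{-1}\otimes F_1$ together with $K_1^{-1}F_1 K_1 = q^2 F_1$ and $K_{\eps_1}F_1 K_{\eps_1}^{-1} = q^{-1}F_1$, yields
\[
\ad_{F_1}^r(K_{\eps_1}) \;=\; \Bigl(\prod_{i=0}^{r-1}(q - q^{2i})\Bigr)\,K_{\eps_1}F_1^r,\qquad r\geq 0,
\]
and these elements are nonzero and $\Bbbk$-linearly independent in $U_L$ by the PBW theorem.

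The main obstacle is this final step: exhibiting an explicit Cartan-type element of $U_L$ outside the locally finite part $\mathscr F(U_L)$ and verifying that this infiniteness is detected through $\cL_\sP$. The explicit $\ad_{F_1}^r$-computation above suffices for the argument, but it ultimately reflects the structural fact (essentially due to Joseph--Letzter) that $\mathscr F(U_q(\gl_m))$ is a proper subalgebra of $U_q(\gl_m)$ whenever $m\geq 2$, which is precisely the obstruction behind Proposition~\ref{prp:ULWFULUR/}.
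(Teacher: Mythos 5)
Your argument is correct and is essentially the paper's proof: both reduce the no-go statement to the fact that local finiteness of $\widetilde{\sPD}$ would force $\ad_{U_L}(y)$ to be finite-dimensional for every $y\in U_L$ (the paper works in $\widetilde{\sPD}/\mathscr K$, you push forward to $\End_\Bbbk(\sP)$ via $\psi$, which amounts to the same thing). The one genuine added value of your write-up is that you make the contradiction self-contained: the paper simply asserts that $U_L=\mathscr F(U_L)$ is false, whereas your computation $\ad_{F_1}^r(K_{\eps_1})=\bigl(\prod_{i=0}^{r-1}(q-q^{2i})\bigr)K_{\eps_1}F_1^r$ (which checks out, with all factors nonzero in $\Bbbk=\C(q)$) exhibits an explicit witness. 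One caveat: your parenthetical claim that the case $m=1<n$ is ``handled symmetrically'' does not go through as stated, because after swapping roles you would need $\cR_\sP\colon U_R\to\End_\Bbbk(\sP)$ to be injective, and for $m<n$ it has the nontrivial kernel $\mathscr K_n$; note, however, that the paper's own proof has exactly the same blind spot (its concluding contradiction $U_L=\mathscr F(U_L)$ is not a contradiction when $m=1$), so your argument is at parity with the published one and is complete whenever $\min\{m,n\}\geq 2$.
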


\begin{proof}
Let us denote the $U_{LR}$-action of (i) by $x\cdot D$ for $x\in U_{LR}$ and $D\in \widetilde{\sPD}$. 
Let $\mathscr K\sseq \widetilde{\sPD}$ denote the kernel of the map $\widetilde{\sPD}\to \End_\Bbbk(\sP)$ that is induced by the $\widetilde{\sPD}$-module structure on $\sP$.
We define a new $U_{LR}$-action on $\widetilde{\sPD}$ by setting
\[
x\bullet D:=\sum \widetilde \phi(x_1)D\widetilde\phi(S(x_2))\quad\text{for }x\in U_{LR}\text{ and }D\in\widetilde{\sPD}. 
\] 
By the  proof of Proposition~\ref{prp:xBDvsxD} we obtain $(x\bullet D)-(x\cdot D)\in \mathscr K$ for $x\in U_{LR}$ and $D\in\widetilde{\sPD}$.
In particular, for $x,y\in U_{LR}$ if we set $D:=\widetilde\phi(y)$ then we have
\begin{equation}
\label{eq:jfdls+}
\widetilde\phi(\ad_x(y))+\mathscr K=x\bullet D+\mathscr K=x\cdot D+\mathscr K.
\end{equation}
Now assume that $m\leq n$, so that the restriction of $\widetilde\phi$ to a map $U_L\otimes 1\to \End_\Bbbk(\sP)$ is faithful. 
By~\eqref{eq:jfdls+} it follows that  if $y\in U_L\otimes 1$ then the image of $\widetilde\phi(\ad_{U_L\otimes 1}(y))$ in $\widetilde{\sPD}/\mathscr K$ is finite dimensional. But $\widetilde\phi(\ad_{U_L\otimes 1}(y))\cap\mathscr K=\{0\}$, hence 
$\ad_{U_L\otimes 1}(y)$ is also finite dimensional.
Consequently, we have shown that $U_L=\mathscr F(U_L)$, which is a contradiction. 
\end{proof}

\subsection{Relation between 
Theorem~\ref{thm-Main-A}(i) and 
Theorem~\ref{thm-Main-A}(ii)} 
\label{subsec:relthms}
Our goal in this subsection is to prove
Lemma~\ref{lem:etam,n} below, which implies that
Theorem~\ref{thm-Main-A}(ii) follows by symmetry from 
Theorem~\ref{thm-Main-A}(i).
%

 From the symmetry of the defining relations of $\sPD_{m\times n}$ with respect to the two indices of the generators $t_{i,j}$ and  $\del_{i,j}$ it follows that there exists an isomorphism of algebras \[
\eta_{m,n}:\sPD_{m\times n}\to \sPD_{n\times m},
\] such that  $\eta_{m,n}(t_{i,j})=t_{j,i}$ and $\eta_{m,n}(\del_{i,j})=\del_{j,i}$ for $1\leq i\leq m$ and $1\leq j\leq n$. Note that $\eta_{m,n}$ restricts to an isomorphism $\sP_{m\times n}\cong \sP_{n\times m}$. This  naturally results in an isomorphism of algebras $\End_\Bbbk(\sP_{m\times n})\cong \End_\Bbbk(\sP_{n\times m})$. 
\begin{lem}
\label{lem:etam,n}
The following assertions hold.
\begin{itemize}
\item[\rm(i)]
For $x\otimes y\in  U_q(\gl_m)\otimes U_q(\gl_n)$ and $D\in \sPD_{m\times n}$
we have
\[\eta_{m,n}((x\otimes y)\cdot D)=(y\otimes x)\cdot \eta_{m,n}(D).
\]
\item[\rm (ii)] The induced isomorphism 
$\End_\Bbbk(\sP_{m\times n})\cong \End_\Bbbk(\sP_{n\times m})$ maps the images of $U_q(\gl_m)$ and  $U_q(\gl_n)$ in $\End_\Bbbk(\sP_{m\times n})$ onto the images of $U_q(\gl_m)$ and $U_q(\gl_n)$ in 
$\End_\Bbbk(\sP_{n\times m})$.
\end{itemize}
\end{lem}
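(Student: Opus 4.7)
The plan is to prove (i) by first reducing to the generators of $\sPD_{m\times n}$ and then checking the identity directly using the action formulas of Remark~\ref{rmk:actionformulas}; part (ii) will then follow as a formal consequence.

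Let $\sigma: U_q(\gl_m)\otimes U_q(\gl_n) \to U_q(\gl_n)\otimes U_q(\gl_m)$ denote the obvious Hopf algebra isomorphism $x\otimes y\mapsto y\otimes x$. Pulling back the natural $U_q(\gl_n)\otimes U_q(\gl_m)$-action on $\sPD_{n\times m}$ along $\sigma$ yields a $U_q(\gl_m)\otimes U_q(\gl_n)$-module algebra structure on $\sPD_{n\times m}$. With this convention, part (i) is equivalent to the assertion that $\eta_{m,n}$ is a $U_q(\gl_m)\otimes U_q(\gl_n)$-module algebra homomorphism. Because both source and target are module algebras and $\eta_{m,n}$ is an algebra homomorphism, equivariance on a generating set propagates to all of $\sPD_{m\times n}$ by the standard coproduct/multiplication compatibility (together with the fact that $\sigma$ is itself a coalgebra map). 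Hence it suffices to verify (i) when $D\in\{t_{i,j},\del_{i,j}\}$ and $x\otimes y$ is one of the pairs $(E_k,1),(F_k,1),(K_{\eps_k}^{\pm 1},1),(1,E_k),(1,F_k),(1,K_{\eps_k}^{\pm 1})$.

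The remaining verification is an elementary index calculation using Remark~\ref{rmk:actionformulas}. For example, $(1\otimes E_k)\cdot \del_{i,j}=\llbracket k+1,j\rrbracket\del_{i,k}$ in $\sPD_{m\times n}$, so $\eta_{m,n}((1\otimes E_k)\cdot\del_{i,j})=\llbracket k+1,j\rrbracket \del_{k,i}$; on the $\sPD_{n\times m}$ side, the action of $E_k\otimes 1$ (in which $E_k\in U_q(\gl_n)$ now appears in the first tensor factor, which by Remark~\ref{rmk:actionformulas} acts on the first index of the generators) gives $(E_k\otimes 1)\cdot \del_{j,i}=\llbracket k+1,j\rrbracket\del_{k,i}$, matching. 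All other cases for the $\del_{i,j}$ and the $t_{i,j}$ (where the formulas differ only by explicit powers of $q^{\pm 1}$) are verified in exactly the same way.

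Part (ii) follows immediately. The induced isomorphism $\End_\Bbbk(\sP_{m\times n})\to\End_\Bbbk(\sP_{n\times m})$ is the conjugation $T\mapsto \eta_{m,n}\circ T\circ \eta_{m,n}^{-1}$, where $\eta_{m,n}$ is restricted to the subalgebra $\sP_{m\times n}$. Applying (i) to elements $D\in\sP_{m\times n}$ shows that this conjugation sends $\phi_U(U_q(\gl_m)\otimes 1)$ onto the image of $1\otimes U_q(\gl_m)$ in $\End_\Bbbk(\sP_{n\times m})$, and $\phi_U(1\otimes U_q(\gl_n))$ onto the image of $U_q(\gl_n)\otimes 1$; together these constitute precisely the images of $U_q(\gl_m)$ and $U_q(\gl_n)$ in $\End_\Bbbk(\sP_{n\times m})$. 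The main obstacle is essentially bookkeeping: tracking carefully which copy of $U_q(\gl_a)$ acts on which of the two indices before and after applying $\eta_{m,n}$.
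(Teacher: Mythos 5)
Your proof is correct and follows essentially the same route as the paper: reduce to the standard generators of $U_q(\gl_m)\otimes U_q(\gl_n)$ acting on the generators $t_{i,j}$, $\del_{i,j}$, verify the index symmetry via Remark~\ref{rmk:actionformulas}, extend by the module algebra property, and deduce (ii) formally from (i).
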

\begin{proof}
(i) It suffices to prove the assertion when $x$ and $y$ are selected from the standard generators of $U_q(\gl_m)$ and $U_q(\gl_n)$, respectively. If $D=t_{i,j}$ or $D=\del_{i,j}$, then the assertion follows from symmetry of the effect of the generators on the indices of the $t_{i,j}$ and the $\del_{i,j}$ (see
Remark~\ref{rmk:actionformulas}). For general $D\in\sPD_{m\times n}$ the assertion follows from the fact that  $\sPD_{m\times n}$ and $\sPD_{n\times m}$ are module algebras over $U_q(\gl_m)\otimes U_q(\gl_n)$ and $U_q(\gl_n)\otimes U_q(\gl_m)$, respectively.  

(ii) Follows immediately from (i).
\end{proof}

\subsection{Relation to $\C[\mathrm{Mat}_{m,n}]_q$ and $\mathrm{Pol}(\mathrm{Mat}_{m,n})_q$}
\label{subsec:rela}
We can now relate our algebras $\sP$ and  $\sPD$ to the $\C$-algebras 
$\C[\mathrm{Mat}_{m,n}]_q$ and 
$\mathrm{Pol}(\mathrm{Mat}_{m,n})_q$ (where $0<q<1$) that are introduced in~\cite{SSV04,BKV06}. 
We remark that in~\cite{BKV06} only the special case  $m=n$ is considered, and  the latter algebras
 are denoted by 
$\C[\Mat_{n}]_{q}$ and 
$\mathrm{Pol}(\mathrm{Mat}_{n})_{q}$, 
respectively.
The algebra $\mathrm{Pol}(\mathrm{Mat}_{m,n})_{q}$ is defined in~\cite[Sec. 2]{SSV04} in terms of the generators $z^i_j$ and $(z^i_j)^*$ where $1\leq i\leq m$ and $1\leq j\leq n$. These generators satisfy
the relations (2.1)--(2.7) of~\cite{SSV04}.
For the reader's convenience we describe the relations of $\mathrm{Pol}(\mathrm{Mat}_{m,n})_{q}$.  The relations among the $z^i_j$ (which are (2.1)--(2.3) of~\cite{SSV04}) are identical to the relations among the $t_{i,j}$, the only difference being that $q$ becomes a complex-valued parameter. In a similar way, the relations among the $(z^i_j)^*$ 
(which are (2.4)--(2.6) of~\cite{SSV04})
are identical to the relations among the $\partial_{i,j}$. 
The mixed relations (which correspond to (2.7) of~\cite{SSV04}) are 
\[
(z^i_j)^*z^k_l=q^2\sum_{1\leq a,b\leq n}
\sum_{1\leq c,d\leq m}
(r^{b,a}_{j,l})
(r^{d,c}_{i,k})z^c_a(z^d_b)^*
+\llbracket j,l\rrbracket\llbracket i,k\rrbracket (1-q^2), 
\] 
where $r^{i,i}_{i,i}=1$, $r^{i,j}_{i,j}=q^{-1}$ for $i\neq j$,  $r^{j,j}_{i,i}=1-q^{-2}$ for $j>i$, and $r^{i,j}_{k,l}=0$ otherwise. 
The algebra $\C[\mathrm{Mat}_{m,n}]_q$ is the subalgebra 
of
$\mathrm{Pol}(\mathrm{Mat}_{m,n})_{q}$ that is generated by the $z^i_j$.

Throughout this subsection we set $A:=\Z[q,q^{-1}]$. The relations of $\sPD$ in Definition~\ref{dfn-PDPD2} are defined over  $A$. Thus we obtain an integral form $\sPD^{A}$ of $\sPD$ by considering the free $A$-submodule of $\sPD$ that is generated by the monomials~\eqref{eq:bassis}. 
Evaluation at $q_\circ$ for $0<q_\circ<1$ results in a ring homomorphism $A\to \C$.
Set $\sPD_{\lag q_\circ\rag}^{}:=\sPD^A\otimes_A\C$.  
\begin{cor}
\label{cor:POP}
The algebras $\sPD_{\lag q_\circ\rag}^{}$ and 
$\mathrm{Pol}(\mathrm{Mat}_{m,n})_{q_\circ}$
are isomorphic
by the assignments $t_{i,j}\mapsto \frac{z^i_j}{\sqrt{1-q_\circ^2}}$ and 
$\del_{i,j}\mapsto \frac{(z^i_j)^*}{\sqrt{1-q_\circ^2}}$.
\end{cor}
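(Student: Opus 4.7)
The plan is to verify that the given assignments extend to an isomorphism of $\C$-algebras by checking compatibility of the defining relations, and then to conclude bijectivity via a PBW-type basis comparison. Since $0<q_\circ<1$, the constant $\sqrt{1-q_\circ^2}$ is a positive real number, so the scalings in the statement are well defined.

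First, I would define a candidate map by the assignment rule on generators and verify that it respects the full set of relations of $\sPD_{\lag q_\circ\rag}$. The homogeneous quadratic relations split into three groups. The relations {\rm(R1)}, {\rm(R2)} among the $t_{i,j}$ are, by design (see Subsection~\ref{subsec:rela}), identical to relations (2.1)--(2.3) of~\cite{SSV04} among the $z^i_j$; since both sides of each such relation are homogeneous of degree $2$ in the $t_{i,j}$, multiplying through by the overall scalar $(1-q_\circ^2)^{-1}$ preserves the relation. The same argument handles {\rm(R1$'$)}, {\rm(R2$'$)} versus the relations among the $(z^i_j)^*$. For the homogeneous mixed relations {\rm(R3)}--{\rm(R5)}, I would specialize the mixed relation (2.7) of~\cite{SSV04} to the index patterns where $\llbracket j,l\rrbracket\llbracket i,k\rrbracket=0$, so the inhomogeneous term vanishes, and unpack the matrix $r^{\bullet,\bullet}_{\bullet,\bullet}$ (which is nonzero only in the three cases listed after equation (2.7) of~\cite{SSV04}); after the degree-$2$ scaling $(1-q_\circ^2)^{-1}$ clears, each case reproduces exactly one of {\rm(R3)}, {\rm(R4)}, {\rm(R5)} of Remark~\ref{rmk:R3R6}.

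The key and most delicate step is the inhomogeneous relation {\rm(R6)}. Here I would take the specialization $i=k=c$, $j=l=d$ in (2.7) and explicitly enumerate the $(a,b,c',d')$ that give nonzero contributions using the rules for $r^{\bullet,\bullet}_{\bullet,\bullet}$; only diagonal configurations survive, with scalars $1$, $1-q_\circ^{-2}$, or $(1-q_\circ^{-2})^2$, each multiplied by the prefactor $q_\circ^2$. Collecting these, and using the identities
\[
q_\circ^2(1-q_\circ^{-2}) = q_\circ^2-1 \qquad\text{and}\qquad
q_\circ^2(1-q_\circ^{-2})^2=(q_\circ-q_\circ^{-1})^2,
\]
produces exactly the four coefficients appearing in the sum in {\rm(R6)} once it is multiplied by $(1-q_\circ^2)$; the constant $\llbracket j,l\rrbracket\llbracket i,k\rrbracket(1-q_\circ^2)=1-q_\circ^2$ on the right of (2.7) matches the $(1-q_\circ^2)\cdot 1$ produced by scaling the inhomogeneous ``$1$'' in {\rm(R6)}. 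This is the only place where the scaling by $\sqrt{1-q_\circ^2}$ really does work, and it is the step I expect to be the main obstacle, since the correspondence between the R-matrix data encoded in {\rm(R6)} and the matrix $r^{\bullet,\bullet}_{\bullet,\bullet}$ of~\cite{SSV04} must be checked index by index.

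Having verified that the relations are respected, I obtain a homomorphism of $\C$-algebras $\Phi:\sPD_{\lag q_\circ\rag}\to \mathrm{Pol}(\mathrm{Mat}_{m,n})_{q_\circ}$. Surjectivity is immediate since the images of the generators are nonzero scalar multiples of the generators of the target. For injectivity, I would use that the monomials~\eqref{eq:bassis} form a $\C$-basis of $\sPD_{\lag q_\circ\rag}$ by definition of the integral form $\sPD^{A}$ and Proposition~\ref{diamond-app}, while an analogous ordered-monomial PBW basis holds on the target side (this can either be derived by the same Diamond-Lemma argument from relations (2.1)--(2.7) of~\cite{SSV04}, or extracted from~\cite{SSV04}); since $\Phi$ sends the basis~\eqref{eq:bassis} to the corresponding ordered monomials in the $z^i_j$ and $(z^i_j)^*$ up to a nonzero scalar, $\Phi$ is a bijection, completing the proof.
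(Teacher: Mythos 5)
Your proposal is correct and follows essentially the same route as the paper, whose entire proof is the one-line assertion that the assignments intertwine (R1)--(R6) with relations (2.1)--(2.7) of \cite{SSV04}; your case-by-case verification (including the coefficient identities $q_\circ^2(1-q_\circ^{-2})=q_\circ^2-1$ and $q_\circ^2(1-q_\circ^{-2})^2=(q_\circ-q_\circ^{-1})^2$ for the inhomogeneous relation) is exactly the computation the paper declares straightforward. The only addition is your explicit bijectivity argument via the monomial bases, which the paper leaves implicit; that step is fine, though one could also just observe that the inverse assignment respects the relations for the same reason.
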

\begin{proof}
It is straightforward to check that these assignments  intertwine the relations (R1)--(R6)  with the relations (2.1)--(2.7) of~\cite{SSV04}.
\end{proof}
In the rest of this subsection we relate the actions of quantized enveloping algebras on $\sP$ and  
on  $\C[\mathrm{Mat}_{m,n}]_q$.
Set $
\sP^{A}:=\sP\cap \sPD^{A}$. Then 
$\sP^{A}$
is a free $A$-module, with an $A$-basis that consists of the monomials \[
t_{1,1}^{a_{1,1}}\cdots t_{1,n}^{a_{1,n}}\cdots t_{m,1}^{a_{m,1}}\cdots t_{m,n}^{a_{m,n}}.
\] 
Let  $U_q^{A}(\gl_n)$ denote the restricted integral $A$-form of $U_q(\gl_n)$. For integral forms of quantized enveloping algebras see for example~\cite[Sec. 9.3]{CP94}. 
The explicit description of  $U_q^A(\gl_n)$ 
 is given 
for example in~\cite{RT10}. We denote the analogous integral form of the algebra $U_{LR}\cong U_q(\gl_m)\otimes U_q(\gl_n)$  by $U_{LR}^A$. Thus \[U_{LR}^A\cong U_q^A(\gl_m)\otimes_A^{} U_q^A(\gl_n).\]  
For $0<q_\circ<1$ 
set $U_{q_\circ}(\gl_n):=U_q^{ A}(\gl_n)\otimes_{ A}^{}\C$. 
By Remark~\ref{rmk:actionformulas} 
the map $U_{LR}\otimes \sP\to\sP$ that describes the $U_{LR}$-module structure on $\sP$ restricts to a map
$U_{LR}^{A}\otimes \sP^{ A}\to 
\sP^{ A}$. After the scalar extension $(\--)\otimes_A\C$  and using the isomorphism $\sP^{A}\otimes_{A}\C\cong
\C[\mathrm{Mat}_{m,n}]_{q_\circ}$ we obtain a structure of a 
$U_{q_\circ}(\gl_m)\otimes
U_{q_\circ}(\gl_n)$-module  
on $\C[\mathrm{Mat}_{m,n}]_{q_\circ}$ that corresponds to a map
\begin{equation}
\label{eq:Uqmmmnn}
\big(U_{q_\circ}(\gl_m)\otimes
U_{q_\circ}(\gl_n) 
\big)
\otimes 
\C[\mathrm{Mat}_{m,n}]_{q_\circ}
\to
\C[\mathrm{Mat}_{m,n}]_{q_\circ},
\end{equation}
or equivalently a homomorphism of algebras
\begin{equation}
\label{eq:phiUq0}
\phi_{U,q_\circ}:U_{q_\circ}(\gl_m)\otimes U_{q_\circ}(\gl_n)\to\End_\C\left(\C[\mathrm{Mat}_{m,n}]_{q_\circ}\right).
\end{equation}
The next proposition relates the  latter module structure 
to the one given in~\cite[Sec. 9--10]{SSV04} and~\cite[Sec. 3]{BKV06}. We remark that in~\cite{SSV04,BKV06}, the  module algebra structure on
$\C[\mathrm{Mat}_{m,n}]_{q_\circ}$
is with respect to 
$U_{q_\circ}(\gl_m)^\mathrm{cop}\otimes
U_{q_\circ}(\gl_n)^\mathrm{cop}$. We denote the latter module structure by the map
\[
\phi_\mathrm{SSV}:U_{q_\circ}(\gl_m)^\mathrm{cop}\otimes
U_{q_\circ}(\gl_n)^\mathrm{cop}\to\End_\C\left(\C[\mathrm{Mat}_{m,n}]_{q_\circ}\right).\] 
Let $x\mapsto x^\natural$ for $n\geq 1$ be the $\C$-linear 
isomorphism of Hopf algebras $
U_{q_\circ}(\gl_n)\to U_{q_\circ}(\gl_n)^\mathrm{op}$
that is given by the same relations as
\eqref{eq:x->x*} but for $q:=q_\circ$.

%
%
%

\begin{prp}
\label{prp:SSVvsUS}
With $\phi_{U,\lag q_\circ\rag}$ and $\phi_\mathrm{SSV}$ as above, we have 
\[
\phi_{U,q_\circ}(x\otimes y)=
\phi_\mathrm{SSV}\left(\vartheta_L(S(x^\natural))\otimes\vartheta_R(S(y^\natural))\right)\quad\text{for }x\otimes y\in U_{q_\circ}(\gl_m)\otimes U_{q_\circ}(\gl_n),
\] 
where
$\vartheta_L$ and $\vartheta_R$ are the automorphisms of the Hopf algebra $U_{q_\circ}(\gl_n)$ that are uniquely defined by setting 
\[
\vartheta_L(E_i):=q_\circ^{-\frac12}F_{m-i}\quad,\quad
\vartheta_L(F_i):=q_\circ^{\frac12}E_{m-i}\quad,\quad
\vartheta_L(K_{\eps_i}):=K_{\eps_{m+1-i}}^{-1},
\]
and 
\[
\vartheta_R(E_i):=q_\circ^{-\frac12}F_{i}\quad,\quad
\vartheta_R(F_i):=q_\circ^{\frac12}E_{i}\quad,\quad
\vartheta_L(K_{\eps_i}):=K_{\eps_i}.
\]
\end{prp}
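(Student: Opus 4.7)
The plan is to reduce the identity to a finite check on algebra generators. Both sides of the asserted equation are algebra homomorphisms from $U_{q_\circ}(\gl_m)\otimes U_{q_\circ}(\gl_n)$ to $\End_\C(\C[\mathrm{Mat}_{m,n}]_{q_\circ})$, and both endow $\C[\mathrm{Mat}_{m,n}]_{q_\circ}$ with a module algebra structure over the standard Hopf algebra $U_{q_\circ}(\gl_m)\otimes U_{q_\circ}(\gl_n)$: on the left hand side this is built into the construction of $\phi_{U,q_\circ}$, while on the right hand side it is a consequence of the fact that $\vartheta_L\circ S\circ(\cdot)^\natural$ and $\vartheta_R\circ S\circ(\cdot)^\natural$ are homomorphisms of Hopf algebras from $U_{q_\circ}$ into its co-opposite (so that precomposing the $\mathrm{cop}$-module algebra $\phi_\mathrm{SSV}$ yields a module algebra for the standard coproduct). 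It therefore suffices to verify the identity for $x, y$ running over the standard generators $E_i, F_i, K_{\eps_i}^{\pm 1}$ acting on the algebra generators $z^i_j$ of $\C[\mathrm{Mat}_{m,n}]_{q_\circ}$.

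First I would check the Hopf algebra statement just used, by noting that $(\cdot)^\natural$ is a Hopf isomorphism onto the opposite algebra and, by Lemma~\ref{lem:rel-S-natural}, interacts with $S$ in a controlled way; that $S$ is an anti-isomorphism of Hopf algebras; and that $\vartheta_L, \vartheta_R$ are Hopf algebra automorphisms. A short calculation on generators then yields explicit formulas of the shape
\[
\vartheta_R(S(E_i^\natural)) = -q_\circ^{-1/2}E_i, \qquad \vartheta_R(S(F_i^\natural)) = -q_\circ^{1/2}F_i, \qquad \vartheta_R(S(K_{\eps_i}^\natural)) = K_{\eps_i}^{-1},
\]
together with the analogous formulas for $\vartheta_L\circ S\circ(\cdot)^\natural$, in which the Dynkin indexing is additionally reversed via $i\mapsto m-i$.

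The final step is to compare the two actions on the $z^i_j$. For the left hand side, Remark~\ref{rmk:actionformulas} combined with the rescaling $t_{i,j}\mapsto z^i_j/\sqrt{1-q_\circ^2}$ of Corollary~\ref{cor:POP} gives immediate explicit formulas for $\phi_{U,q_\circ}$ applied to each standard generator, acting on each $z^i_j$. For the right hand side, one inserts the explicit images from the previous step into the description of $\phi_\mathrm{SSV}$ on generators given in~\cite[Sec.~9--10]{SSV04} (and~\cite[Sec.~3]{BKV06}) and matches coefficients. The main obstacle will be not the verification itself but the careful bookkeeping of conventions: the $q_\circ^{\pm 1/2}$ rescaling introduced by $\vartheta_L,\vartheta_R$, the $q_\circ^{\pm 1}$ factors arising from $(\cdot)^\natural$, the sign from $S$, the op-versus-cop distinction appearing at two different levels, and the reversal $i\mapsto m-i$ on the $\gl_m$-side which reconciles the convention of~\cite{SSV04} (where the row index is acted upon in the reverse order) with the one adopted here.
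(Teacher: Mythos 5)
Your proposal is correct and follows essentially the same route as the paper: the paper's proof is precisely a comparison of the explicit generator formulas of Remark~\ref{rmk:actionformulas} with those of \cite{SSV04} and \cite{BKV06}, with the co-opposite coproduct discrepancy absorbed by the composition with $x\mapsto S(x^\natural)$, exactly as you describe. Your additional justification that the identity need only be checked on Hopf-algebra generators acting on the $z^i_j$ (because both sides define module algebra structures for the same coproduct) is left implicit in the paper but is the correct underpinning of that reduction.
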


\begin{proof}
Follows from comparing Remark~\ref{rmk:actionformulas} with
\cite[Sec. 8]{SSV04} or~\cite[Eqs (14), (15)]{BKV06}. Note that the coproduct of $U_{q_\circ}(\gl_n)$ in~\cite{BKV06} is co-opposite to the coproduct considered in the present paper, but this is corrected by composing with $x\mapsto S(x^\natural)$. 
\end{proof}

\subsection{Some technical statements about the action of  $\sPD$  on $\sP$}
\label{subsec:sometech}
In this subsection we prove several technical statements about the interaction between the $\del_{i,j}$ on $\sP$. We will need these statements in the upcoming sections of this paper. In order to make our exposition more organized  we have collected all of them in one subsection.  The reader may find it easier to skip this subsection and return to it whenever there is a reference.

Recall that the action of $D\in\sPD$ on $f\in\sP$ is denoted by $D\cdot f$ (see Subsection~\ref{subsec:sPDgr6}).
\begin{lem}
\label{lem:delijonarbr}
Assume that  either $i\not\in\{a_1,\ldots,a_r\}$ or $j\not\in \{b_1,\ldots,b_r\}$, then
$\del_{i,j}t_{a_1,b_1}\cdots t_{a_r,b_r}$ belongs to the left ideal of $\sPD$ that is generated by the $\del_{i',j'}$ satisfying $i'\geq i$ and $j'\geq j$. In particular, 
$
\del_{i,j}\cdot (t_{a_1,b_1}\cdots t_{a_r,b_r})=0$. 
\end{lem}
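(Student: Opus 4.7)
The plan is to argue by induction on $r$, using the explicit commutation relations {\rm (R3)--(R6)} of Remark~\ref{rmk:R3R6}. The key observation is the following: inspection of {\rm (R3)--(R6)} shows that whenever $\del_{c,b}$ is moved past $t_{d,a}$, every term $\del_{c',b'}$ that appears on the right-hand side satisfies $c'\geq c$ and $b'\geq b$, and moreover the scalar summand $1$ provided by~{\rm (R6)} is produced \emph{only} when $(c,b)=(d,a)$, i.e., only when both indices of the differential operator match those of the coordinate. In particular, if either $c\neq d$ or $b\neq a$, then $\del_{c,b}t_{d,a}$ already lies in the left ideal of $\sPD$ generated by the $\del_{c',b'}$ with $c'\geq c$ and $b'\geq b$.

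The case $r=0$ of the first assertion is trivial. Assuming $r\geq 1$ and the claim for all shorter products, I first treat the subcase $i\notin\{a_1,\ldots,a_r\}$. Since $i\neq a_1$, neither {\rm (R4)} nor {\rm (R6)} can be applied to $\del_{i,j}t_{a_1,b_1}$, so {\rm (R3)} or {\rm (R5)} yields an expansion
\[
\del_{i,j}t_{a_1,b_1}=\sum_{k}c_k\,t_{a_1,b_1^{(k)}}\del_{i,j^{(k)}}
\qquad\text{with }c_k\in\Bbbk\text{ and }j^{(k)}\geq j.
\]
Multiplying on the right by $t_{a_2,b_2}\cdots t_{a_r,b_r}$ and applying the induction hypothesis to each $\del_{i,j^{(k)}}t_{a_2,b_2}\cdots t_{a_r,b_r}$, which still satisfies $i\notin\{a_2,\ldots,a_r\}$, every resulting summand lies in the left ideal generated by the $\del_{i',j'}$ with $i'\geq i$ and $j'\geq j^{(k)}\geq j$; left-multiplication by $t_{a_1,b_1^{(k)}}$ preserves this left ideal, completing the inductive step. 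The subcase $j\notin\{b_1,\ldots,b_r\}$ is entirely symmetric, using {\rm (R3)} or {\rm (R4)} instead at the first step.

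For the ``In particular'' clause, observe that the left ideal of $\sPD$ generated by the $\del_{i',j'}$ is contained in the left ideal $\mathscr I$ generated by $\sD^{(1)}$, and the action of $\mathscr I$ on $\sP\cong\sPD/\mathscr I$ is zero by the definition of the $\sPD$-module structure on $\sP$.

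The main obstacle to anticipate is the need to verify carefully the starred observation about {\rm (R6)}, and to confirm that the ``index mismatch'' hypothesis is preserved through the iterated straightening so that the scalar $1$ is never generated at any intermediate stage; the induction is set up precisely so that either the first index of the $\del$-factor remains fixed at $i$ (and lies outside $\{a_1,\ldots,a_r\}$) or its second index remains fixed at $j$ (and lies outside $\{b_1,\ldots,b_r\}$) throughout the straightening process.
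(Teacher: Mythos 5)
Your argument is correct and is essentially the paper's own proof: induction on $r$, straightening the leftmost factor with (R3)--(R5) while observing that the index-mismatch hypothesis rules out (R6), and using the fact that the indices of the resulting $\del_{i',j'}$ only increase so that both the ideal membership and the inductive hypothesis propagate. One caveat on your final sentence: it is \emph{not} true that every element of $\mathscr I$ acts as zero on $\sP$ --- for instance $t_{1,1}\del_{1,1}\in\mathscr I$, yet $(t_{1,1}\del_{1,1})\cdot t_{1,1}=t_{1,1}\neq 0$, since $\mathscr I$ is only a left ideal. What you actually need, and what does hold, is that $\del_{i,j}\cdot(t_{a_1,b_1}\cdots t_{a_r,b_r})$ is by definition the class of the single element $\del_{i,j}t_{a_1,b_1}\cdots t_{a_r,b_r}$ in $\sPD/\mathscr I\cong\sP$, and that class vanishes because you have just shown this element lies in $\mathscr I$.
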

\begin{proof}
We use induction on $r$. For $r=1$ the assertion follows from relations (R3)--(R6). Next suppose $r>1$. If $i\neq a_1$ and $j\neq b_1$ then $\del_{i,j}t_{a_1,b_1}=t_{a_1,b_1}\del_{i,j}$ and we can use the induction hypothesis. If $i=a_1$ then $j\not\in\{b_1,\ldots,b_r\}$ and we can write 
\[
\del_{i,j}t_{a_1,b_1}\cdots t_{a_r,b_r}
=q
t_{i,b_1}\del_{i,j}t_{a_2,b_2}\cdots t_{a_r,b_r}+(q-q^{-1})\sum_{i'>i}
t_{i',b_1}\del_{i',j}t_{a_2,b_2}\cdots t_{a_r,b_r},
\] 
and again the induction hypothesis is applicable to each summand on the right hand side. The argument for the case $j=b_1$ is similar. 
\end{proof}

\begin{lem}
\label{lem:5.1.2}
Assume that either $\{i_1,\ldots,i_s\}\nsubseteq\{a_1,\ldots,a_r\}$
and 
$\{j_1,\ldots,j_s\}\nsubseteq\{b_1,\ldots,b_r\}$. Then 
\[
\del_{i_1,j_1}\cdots \del_{i_s,j_s}\cdot\left(t_{a_1,b_1}\cdots t_{a_r,b_r}\right)
=0.\]
\end{lem}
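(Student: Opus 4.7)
The plan is to deduce the lemma from a Cartan weight argument based on the $U_L\otimes U_R$-equivariance of the action $\sPD\otimes\sP\to\sP$. By Remark~\ref{rmk:actionformulas} (and its analogue for the $U_L$-action, which is stated there to operate on the first index), each generator $t_{a,b}$ is a weight vector of $U_{\g h,L}\otimes U_{\g h,R}$-weight $(-\eps_a,-\eps_b)$ and each $\del_{i,j}$ is a weight vector of weight $(\eps_i,\eps_j)$. Since $\sP$ and $\sPD$ are $U_{LR}$-module algebras and the Cartan generators $K_\la$ are grouplike, multiplication preserves weights, so the monomials
\[
f:=t_{a_1,b_1}\cdots t_{a_r,b_r}\in\sP\quad\text{and}\quad D:=\del_{i_1,j_1}\cdots \del_{i_s,j_s}\in\sPD
\]
are weight vectors of weights $\left(-\sum_i\eps_{a_i},-\sum_i\eps_{b_i}\right)$ and $\left(\sum_l\eps_{i_l},\sum_l\eps_{j_l}\right)$, respectively.

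Next I would invoke Lemma~\ref{lem:thmapequvv}, which states that the action map $\sPD\otimes\sP\to\sP$ is $U_{LR}$-equivariant. Combined with $\Delta(K_\la)=K_\la\otimes K_\la$ this yields $K_\la\cdot(D\cdot f)=(K_\la\cdot D)\cdot(K_\la\cdot f)$, so $D\cdot f\in\sP$ is a weight vector whose $U_{\g h,L}\otimes U_{\g h,R}$-weight equals the sum of the weights of $D$ and $f$, namely
\[
\Big(\textstyle\sum_l\eps_{i_l}-\sum_i\eps_{a_i},\ \sum_l\eps_{j_l}-\sum_i\eps_{b_i}\Big).
\]

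Finally, every monomial in $\sP$ has the form $t_{c_1,d_1}\cdots t_{c_u,d_u}$ with weight $\left(-\sum_k\eps_{c_k},-\sum_k\eps_{d_k}\right)$, so every weight of $\sP$ has non-positive coefficient of each $\eps_\mu$ in both components. If $\{i_1,\ldots,i_s\}\nsubseteq\{a_1,\ldots,a_r\}$, choosing $\mu=i_l$ with $i_l\notin\{a_1,\ldots,a_r\}$ produces a strictly positive coefficient of $\eps_\mu$ in the $U_L$-weight of $D\cdot f$, forcing $D\cdot f=0$; the case $\{j_1,\ldots,j_s\}\nsubseteq\{b_1,\ldots,b_r\}$ is identical on the $U_R$-side. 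There is no real obstacle to this approach; the only point that warrants care is the weight-additivity step, which is immediate from equivariance together with the grouplikeness of $K_\la$.
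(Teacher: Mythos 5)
Your proof is correct, and it takes a genuinely different route from the one in the paper. The paper's proof is a direct manipulation: under the hypothesis one may find some $i_l\notin\{a_1,\ldots,a_r\}$, and the relations (R$1'$), (R$2'$) are used to rewrite $\del_{i_1,j_1}\cdots\del_{i_s,j_s}$ as a linear combination of monomials ending in some $\del_{i_l,j}$; Lemma~\ref{lem:delijonarbr} then kills each such monomial when applied to $t_{a_1,b_1}\cdots t_{a_r,b_r}$. Your argument instead uses the $U_{LR}$-module structures already established: $\del_{i,j}$ and $t_{a,b}$ are weight vectors of $U_{\g h,L}\otimes U_{\g h,R}$-weights $(\eps_i,\eps_j)$ and $(-\eps_a,-\eps_b)$, the module-algebra property makes $D$ and $f$ weight vectors with additive weights, Lemma~\ref{lem:thmapequvv} together with the grouplikeness of $K_\la$ makes the action weight-additive, and since $\sP$ has only non-positive weight coefficients, a strictly positive coefficient of $\eps_\mu$ in the weight of $D\cdot f$ forces $D\cdot f=0$. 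Both are valid; the weight argument is shorter and more conceptual but leans on the equivariance machinery (Lemma~\ref{lem:thmapequvv}, the module-algebra structure, Remark~\ref{rmk:actionformulas}), whereas the paper's argument is elementary and entirely self-contained within the defining relations of $\sPD$ and Lemma~\ref{lem:delijonarbr}. Your method also proves the more general fact that any weight vector in $\sPD$ whose weight is not $\leq 0$ in both slots must act as zero on any weight vector in $\sP$ with complementary weight, which may be worth noting. One cosmetic remark: the hypothesis as printed reads ``either \ldots and \ldots''; you correctly read it as ``either \ldots or \ldots'', consistent with the paper's own proof.
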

\begin{proof}
Without loss of generality assume that $i_1\not\in\{a_1,\ldots,a_r\}$. 
Relations 
(R$1'$) and (R$2'$) imply that we  can replace 
$
\del_{i_1,a}\del_{b,c}
$ by
either 
$\del_{b,c}\del_{i_1,a}$ or 
$q^{\pm 1}\del_{b,c}\del_{i_1,a}$
or $
\del_{b,c}\del_{i_1,a}\pm (q-q^{-1})\del_{b,a}\del_{i_1,c} 
$. Using the latter replacements  we can express $\del_{i_1,j_1}
\cdots
\del_{i_s,j_s}$ as a linear combination of monomials that belong to  the left ideal $\breve{\mathscr I}:=\sum_{j=1}^n\sD\del_{i_1,j}$ of $\sD$. From 
Lemma~\ref{lem:delijonarbr} it follows that elements of $\breve{\mathscr I}$ annihilate 
$t_{a_1,b_1}\cdots t_{a_r,b_r}$.
\end{proof}

For the following corollary recall that $\oline M^\mathbf i_\mathbf j$ is the quantum minor defined 
in~\eqref{eq:lem:qdet2exp}.

\begin{cor}
\label{lem:sigmastaus-}
Let $\mathbf i:=(i_1,\ldots,i_r)$ and $\mathbf j:=(j_1,\ldots,j_r)$ be $r$-tuples of integers that satisfy \[
1\leq i_1<\cdots <i_r\leq m\quad\text{and}\quad
1\leq j_1<\cdots<j_r\leq n.
\] 
 Then $\oline M^\mathbf i_\mathbf j\cdot \left(t_{a_1,b_1}\cdots t_{a_s,b_s}\right)=0$ when $a_i\geq i_1+1$ for all $1\leq i\leq s$.
\end{cor}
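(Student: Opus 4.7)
The plan is to derive the corollary directly from Lemma~\ref{lem:5.1.2} by expanding $\oline M^\mathbf i_\mathbf j$ as a signed sum of products of $\del$'s. The key observation is that every monomial appearing in the definition
\[
\oline M^\mathbf i_\mathbf j=\sum_{\sigma}(-q^{-1})^{\ell(\sigma)}\del_{i_{\sigma(1)},j_1}\cdots \del_{i_{\sigma(r)},j_r}
\]
has the same underlying \emph{set} of row indices, namely $\{i_1,\ldots,i_r\}$, since $\sigma$ merely permutes them. So I would reduce to showing that each such monomial annihilates $t_{a_1,b_1}\cdots t_{a_s,b_s}$.

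Under the hypothesis $a_i\geq i_1+1$ for every $1\leq i\leq s$, the index $i_1$ does not occur in $\{a_1,\ldots,a_s\}$, yet $i_1\in \{i_1,\ldots,i_r\}$. Hence
\[
\{i_1,\ldots,i_r\}\nsubseteq \{a_1,\ldots,a_s\},
\]
which is precisely the hypothesis of Lemma~\ref{lem:5.1.2} (applied with the roles of row-index sets as stated there). That lemma therefore forces
\[
\del_{i_{\sigma(1)},j_1}\cdots \del_{i_{\sigma(r)},j_r}\cdot (t_{a_1,b_1}\cdots t_{a_s,b_s})=0
\]
for each permutation $\sigma$. Summing over $\sigma$ with the signed $q$-coefficients yields $\oline M^\mathbf i_\mathbf j\cdot (t_{a_1,b_1}\cdots t_{a_s,b_s})=0$, completing the proof.

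There is no real obstacle here; the argument is a one-line consequence of Lemma~\ref{lem:5.1.2}, once one notes the permutation invariance of the row-index set of $\oline M^\mathbf i_\mathbf j$. The only point requiring any care is to observe that the hypothesis ``$\{i_1,\ldots,i_r\}\nsubseteq\{a_1,\ldots,a_s\}$'' alone suffices to apply the lemma (inspection of the proof of Lemma~\ref{lem:5.1.2} confirms that failure of containment on either the row side or the column side is sufficient, each condition being handled symmetrically).
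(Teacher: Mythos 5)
Your proof is correct and is essentially the paper's own argument: expand $\oline M^{\mathbf i}_{\mathbf j}$ into the monomials $\del_{i_{\sigma(1)},j_1}\cdots\del_{i_{\sigma(r)},j_r}$, note that each has row-index set $\{i_1,\ldots,i_r\}\not\sseq\{a_1,\ldots,a_s\}$ since $i_1<i_1+1\leq a_i$ for all $i$, and apply Lemma~\ref{lem:5.1.2}. Your closing remark is also well taken — the ``either \dots and'' in the statement of Lemma~\ref{lem:5.1.2} should read ``either \dots or'', and its proof indeed only uses failure of containment on one side.
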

\begin{proof}
This follows from Lemma~\ref{lem:5.1.2} since $\oline M^\mathbf i_\mathbf j$ is a linear combination of monomials of the form 
$
\del_{i_{\sigma(1)},j_1}
\cdots
\del_{i_{\sigma(r)},j_r}
$.  
\end{proof}

\begin{lem}
\label{lem:316}
Suppose that $f,g\in\sP$ satisfy 
$\del_{i_1,j_1}\cdots \del_{i_r,j_r}\cdot f=g$ for some
 $1\leq i_1,\ldots,i_r\leq m$ and $1\leq j_1,\ldots,j_r\leq n$.
Then for any  $1\leq i'_1\leq \ldots \leq i'_s\leq m$ and
$1\leq j'_1\leq \ldots \leq j'_s\leq n$ that satisfy 
either 
$
\min\{j_u\}_{u=1}^r>\max\{j'_u\}_{u=1}^s
$ 
or
$
\min\{i_u\}_{u=1}^r>\max\{i'_u\}_{u=1}^s
$
we have  
\[
\del_{i_1,j_1}\cdots \del_{i_r,j_r}\cdot (ft_{i'_1,j'_1}\cdots t_{i'_s,j'_s})=
gt_{i'_1,j'_1}\cdots t_{i'_s,j'_s}.
\]
\end{lem}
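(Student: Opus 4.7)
The plan is to lift the desired identity from $\sP$ to $\sPD$: it suffices to prove that
\[
D(fT) - gT \;\in\; \mathscr I,
\]
where $D := \del_{i_1,j_1}\cdots\del_{i_r,j_r}$, $T := t_{i'_1,j'_1}\cdots t_{i'_s,j'_s}$, and $g := D\cdot f \in \sP$. First I rewrite $Df$ inside $\sPD$ by repeated application of the mixed relations {\rm (R3)--(R6)} of Remark~\ref{rmk:R3R6}, pushing each $\del_{i_k,j_k}$ past the $t_{i,j}$'s of $f$. By Proposition~\ref{diamond-app} the process terminates with a finite expansion
\[
Df \;=\; \sum_\alpha c_\alpha\, T_\alpha D_\alpha \;=\; g + R,
\]
in which each $T_\alpha$ is a monomial in the $t_{i,j}$, each $D_\alpha$ is a (possibly trivial) monomial in the $\del_{i,j}$, $g$ collects the summands with $D_\alpha = 1$, and $R$ collects the rest; then $R \in \mathscr I$ and the projection $\sPD \to \sP \cong \sPD/\mathscr I$ identifies $g$ with $D \cdot f$.

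The crucial point is an \emph{index-monotonicity} observation, extracted by inspecting {\rm (R3)--(R6)}: whenever $\del_{c,d}$ is commuted past a single $t_{a,b}$, the result is a linear combination in which every occurring $\del_{c',d'}$ satisfies $c'\geq c$ and $d'\geq d$, with the only other possibility being --- via the constant $1$ summand in (R6) --- that $\del_{c,d}$ is simply removed from the product. Consequently each $\del_{c,d}$ appearing in a nontrivial $D_\alpha$ admits a progenitor $\del_{i_u,j_u}$ with $c\geq i_u$ and $d\geq j_u$; in particular $d\geq \min_u j_u$ and $c\geq \min_u i_u$.

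Now suppose $\min_u j_u > \max_u j'_u$. Then every $\del_{c,d}$ occurring in a nontrivial $D_\alpha$ satisfies $d\notin\{j'_1,\ldots,j'_s\}$, so Lemma~\ref{lem:delijonarbr} yields $\del_{c,d}\, T \in \mathscr I$. Peeling off the $\del$'s of $D_\alpha$ one at a time and using that $\mathscr I$ is a left ideal of $\sPD$, we obtain $D_\alpha T \in \mathscr I$ for every $D_\alpha \neq 1$, whence $RT\in\mathscr I$ and $D(fT) \equiv gT \pmod{\mathscr I}$, which yields the desired equality in $\sP$. The case $\min_u i_u > \max_u i'_u$ is entirely symmetric, using the row analogue of the monotonicity observation and the other alternative in Lemma~\ref{lem:delijonarbr}. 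The only step requiring genuine care is the verification of the monotonicity assertion, especially reading (R6) closely enough to confirm that it introduces no $\del$'s with strictly smaller row or column index; the rest of the argument is purely formal.
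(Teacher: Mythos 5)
Your argument is correct and rests on exactly the same two ingredients as the paper's proof: the index monotonicity built into the mixed relations (R3)--(R6), and Lemma~\ref{lem:delijonarbr} applied to kill the terms $\del_{c,d}T$ with $d>\max_u j'_u$ (or $c>\max_u i'_u$), using that $\mathscr I$ is a left ideal. The only difference is organizational --- the paper runs an induction on $r$, commuting $\del_{i_r,j_r}$ past $f$ and discarding the remainder against $T$ at each step, whereas you expand the full product $Df$ in a single pass and argue about all nontrivial $D_\alpha$ at once.
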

\begin{proof}
We assume $
\min\{j_u\}_{u=1}^r>\max\{j'_u\}_{u=1}^s
$ (the other case follows by symmetry). 
Recall that $\mathscr I$ is the left ideal of $\sPD$ generated by $\sD^{(1)}$ (see Subection~\ref{subsec:sPDgr6}).
Set
$
f':=\del_{i_r,j_r}\cdot f
$. Then
$
\del_{i_r,j_r}f=f'+
\sum_{(i',j')}b_{i',j'}\del_{i',j'}
$, where the $b_{i',j'}\in\sPD$ and the sum is over all
pairs $(i',j')$ that satisfy $i_r\leq i'\leq m$ and  $j_r\leq j'\leq n$. In particular 
$j'\not\in\{j'_1,\ldots,j'_s\}$, hence by 
Lemma~\ref{lem:delijonarbr} we obtain
\[
\del_{i_r,j_r}ft_{i'_1,j'_1}\cdots t_{i'_s,j'_s}=
f't_{i'_1,j'_1}\cdots t_{i'_s,j'_s}+
\sum_{(i',j')}b_{i',j'}\del_{i',j'}t_{i'_1,j'_1}\cdots t_{i'_s,j'_s}\in f't_{i'_1,j'_1}\cdots t_{i'_s,j'_s}+\mathscr I.
\]
 This means $\del_{i_r,j_r}\cdot (ft_{i'_1,j'_1}\cdots t_{i'_s,j'_s})=f't_{i'_1,j'_1}\cdots t_{i'_s,j'_s}$. The proof is completed by induction on $r$.  
\end{proof}

Recall the operators $\mathbf D'_{k,r}$ from Section~\ref{sec:Introduction}. We have
\[
\mathbf D'_{1,0}+(q^2-1)\mathbf D'_{1,1}=
1+(q^2-1)\sum_{i=1}^nt_{m,i}\del_{m,i}.
\]
The next lemma is a consequence  of~\cite[Thm 1]{BKV06}
but we give an elementary, independent proof. This also makes the proofs of Theorem~\ref{thm-Main-A} and Theorem~\ref{thm:C} independent of~\cite[Thm 1]{BKV06}.
\begin{lem}
\label{lem:D'10}
Set 
$D:=
\mathbf D'_{1,0}+(q^2-1)\mathbf D'_{1,1}
$.
Then 
$D\cdot t_{a_1,b_1}\cdots t_{a_r,b_r}=q^{2\sum_{i=1}^r\llbracket m,a_i\rrbracket}t_{a_1,b_1}\cdots t_{a_r,b_r}$.
\end{lem}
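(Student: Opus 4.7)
The plan is to reduce the lemma to an algebraic identity in $\sPD$, namely the commutation relation
\[
D\,t_{a,b} \;=\; q^{\,2\llbracket m,a\rrbracket}\,t_{a,b}\,D\qquad\text{for all }1\le a\le m,\ 1\le b\le n,
\]
after which the lemma follows by iterating this identity and using $D\cdot 1 = 1$ (which holds because every summand of $(q^2-1)\mathbf{D}'_{1,1}=(q^2-1)\sum_i t_{m,i}\del_{m,i}$ has a $\del$ on the right).

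To prove the commutation relation, I would write $D = 1 + (q^2-1)A$ where $A := \sum_{i=1}^n t_{m,i}\del_{m,i}$, and compute $A\,t_{a,b}$ directly using the mixed relations (R3)--(R6), splitting into two cases. When $a<m$, I use (R3) to commute $\del_{m,i}$ past $t_{a,b}$ when $i\ne b$, and (R5) when $i=b$; then I use the appropriate cases of (R2) to commute the resulting $t_{m,i}$ past $t_{a,b}$. The key observation is that the two distinct $(q-q^{-1})$-correction terms that appear — one from the (R5) expansion of $\del_{m,b}t_{a,b}$ and one from the (R2) expansion of $t_{m,i}t_{a,b}$ when $i>b$ — are equal up to a sign and cancel exactly, leaving $A\,t_{a,b} = t_{a,b}A$. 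When $a=m$, I use (R4) (with its sum over $c'>m$ vanishing, since $m$ is maximal) to commute $\del_{m,i}t_{m,b}$ for $i\ne b$, and (R6) for $i=b$; combined with (R1) to reorder pairs $t_{m,i}t_{m,b}$, the coefficients telescope cleanly to give $A\,t_{m,b} = t_{m,b} + q^2\,t_{m,b}A$. Translating back to $D$ yields $D\,t_{a,b} = t_{a,b}D$ for $a<m$ and $D\,t_{m,b} = q^2\,t_{m,b}D$.

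With the commutation relation in hand, the lemma follows by an immediate induction on $r$:
\[
D\cdot(t_{a_1,b_1}\cdots t_{a_r,b_r}) \;=\; q^{\,2\llbracket m,a_1\rrbracket}\,t_{a_1,b_1}\bigl(D\cdot(t_{a_2,b_2}\cdots t_{a_r,b_r})\bigr),
\]
which yields $q^{2\sum_i\llbracket m,a_i\rrbracket}\,t_{a_1,b_1}\cdots t_{a_r,b_r}$ after unwinding to $D\cdot 1 = 1$.

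The main obstacle will be the bookkeeping in the $a<m$ case: tracking all four types of correction terms produced by (R2) and (R5), showing that the two pairs of error sums cancel against each other (after relabeling the summation variable $i\leftrightarrow b'$), and verifying the single $i=b$ contribution $q^{-1}t_{a,b}t_{m,b}\cdot q\del_{m,b} = t_{a,b}t_{m,b}\del_{m,b}$ slots in correctly so that the remainder reassembles as $t_{a,b}A$. Once this cancellation is carried out carefully, the rest is straightforward.
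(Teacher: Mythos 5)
Your proposal is correct, and it takes a genuinely different route from the paper. The paper's proof works at the level of the module $\sP$: it first reduces to monomials sorted as in Remark~\ref{rmk:precorderbasis}, strips off all factors $t_{i,j}$ with $i<m$ using Lemma~\ref{lem:316}, computes the action of each summand $(q^2-1)t_{m,i}\del_{m,i}$ on a row-$m$ monomial via Lemma~\ref{lem:i+jttoa} (obtaining the factor $q^{2\sum_{j>i}a_{m,j}}(q^{2a_{m,i}}-1)$), and finally verifies the telescoping identity $1+\sum_{i=1}^n q^{2\sum_{j>i}a_{m,j}}(q^{2a_{m,i}}-1)=q^{2\sum_i a_{m,i}}$. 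You instead prove the stronger statement that $Dt_{a,b}=q^{2\llbracket m,a\rrbracket}t_{a,b}D$ holds as an identity in the algebra $\sPD$ itself, and then the lemma follows by applying both sides to $1\in\sP\cong\sPD/\mathscr I$ and iterating (using that $\mathscr I$ is a left ideal and $D\cdot 1=1$). I checked your two cases: for $a<m$ the $(q-q^{-1})$-correction from (R5) at $i=b$, after commuting $t_{m,b}$ freely past $t_{a,b'}$ via the first relation of (R2), cancels exactly against the $(q-q^{-1})$-corrections produced by the second relation of (R2) for $i>b$, giving $At_{a,b}=t_{a,b}A$; for $a=m$ the sum over $c'>m$ in (R4) is empty and the (R1) reorderings combine with the $(q^2-1)$ term from (R6) to give $At_{m,b}=t_{m,b}+q^2t_{m,b}A$, hence $Dt_{m,b}=q^2t_{m,b}D$. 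Your approach buys a cleaner induction that needs no monomial ordering, no reduction to a normal form, and no separate telescoping identity, and it yields the normalization property of $D$ (essentially the algebra-level version of~\eqref{eq:Jahfl}) as a by-product; the cost is the case-by-case bookkeeping with (R1)--(R6), which you have anticipated correctly.
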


\begin{proof}
By Remark~\ref{rmk:precorderbasis} the monomials \[
f=f(a_{1,1},\ldots,a_{m,n}):=t_{m,n}^{a_{m,n}}\cdots t_{m,1}^{a_{m,1}}\cdots t_{1,n}^{a_{1,1}}\cdots t_{1,1}^{a_{1,1}}
\] form a basis of $\sP$. From relations (R1) and (R2) it follows that it suffices to prove the assertion for such monomials.  By Lemma~\ref{lem:316} the assertion is reduced to the case where $a_{i,j}=0$ for $i<m$. For $j>i$ we have 
$t_{m,i}\del_{m,i}t_{m,j}=q^2t_{m,j}t_{m,i}\del_{m,i}$. By successive application of the latter relation, followed by Lemma~\ref
{lem:i+jttoa} and 
Lemma~\ref{lem:316}, we obtain
\[
(q^2-1)t_{m,i}\del_{m,i}
\cdot f=
q^{2\sum_{j=i+1}^n a_{m,j}}
(q^{2a_{m,i}}-1)f.\]
After summing up over $1\leq i\leq n$, the assertion of the lemma is reduced to the algebraic identity
\[
1+\sum_{i=1}^n
q^{2\sum_{j=i+1}^n a_{m,j}}
(q^{2a_{m,i}}-1)=q^{2\sum_{i=1}^n a_{m,i}}
,\]
which can be verified by a  straightforward computation.
\end{proof}

\begin{dfn}
Given any two ordered pairs of integers $(i,j)$ and $(i',j')$, we write $(i,j)\lhd(i',j')$ if $i\leq i'$ and $j\leq j'$  and at least one of the latter inequalities is strict.\end{dfn}
 Let $\mathscr I_{a,b}$ denote the left ideal of $\sPD$ that is generated by the $\del_{i,j}$ where $i\geq a$ and $j\geq b$.

\begin{lem}
\label{lem:del1kt1k-a}
Let $a\geq 0$ and let $1\leq k\leq n$. Then $\del_{1,k}t_{1,k}^{a+1}=\mathbf c(a)t_{1,k}^a+D$ where 
$\mathbf c(a)$ is as in~\eqref{eq:c9a} and
 $D\in\mathscr I_{1,k}$.
\end{lem}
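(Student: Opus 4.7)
The plan is to apply relation (R6) with $c=1$, $d=k$ once and then proceed by induction on $a$. Concretely, (R6) gives
\[
\del_{1,k}t_{1,k} = 1 + q^2 t_{1,k}\del_{1,k} + D_1,
\]
where $D_1$ collects the three remaining sums on the right-hand side of (R6). Crucially, every $\del$-factor appearing in $D_1$ is a $\del_{c',d'}$ with $c'\geq 1$, $d'\geq k$ and $(c',d')\neq(1,k)$. Let $\mathscr{J}\sseq\mathscr{I}_{1,k}$ denote the sub-left-ideal generated by exactly these ``non-$(1,k)$'' generators; then $D_1\in\mathscr{J}$, while $q^2 t_{1,k}\del_{1,k}\in\mathscr{I}_{1,k}$. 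This already handles the base case $a=0$, since $\mathbf{c}(0)=1$.

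Next I would establish the auxiliary stability claim $\mathscr{J}\cdot t_{1,k}\sseq\mathscr{J}$. As $\mathscr{J}$ is a left ideal, it suffices to check that $\del_{c,d}t_{1,k}\in\mathscr{J}$ for every generator $\del_{c,d}$ of $\mathscr{J}$. A direct case analysis using (R3) when $c>1$ and $d>k$, (R4) when $c=1$ and $d>k$, and (R5) when $c>1$ and $d=k$ shows that $\del_{c,d}t_{1,k}$ expands as a left combination of $\del_{c'',d''}$'s with $c''\geq 1$, $d''\geq k$, and $(c'',d'')\neq(1,k)$, as required. Iterating gives $\mathscr{J}\cdot t_{1,k}^a\sseq\mathscr{J}\sseq\mathscr{I}_{1,k}$ for every $a\geq 0$; in particular $D_1 t_{1,k}^a\in\mathscr{I}_{1,k}$.

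With these pieces in place, the inductive step follows by multiplying the displayed identity on the right by $t_{1,k}^a$:
\[
\del_{1,k}t_{1,k}^{a+1} = t_{1,k}^a + q^2 t_{1,k}\,(\del_{1,k}t_{1,k}^a) + D_1 t_{1,k}^a.
\]
Substituting the inductive hypothesis $\del_{1,k}t_{1,k}^a=\mathbf{c}(a-1)t_{1,k}^{a-1}+D_{a-1}$ with $D_{a-1}\in\mathscr{I}_{1,k}$ and invoking the telescoping identity $1+q^2\mathbf{c}(a-1)=\mathbf{c}(a)$ yields the claimed leading term. The remainder $q^2 t_{1,k}D_{a-1}+D_1 t_{1,k}^a$ lies in $\mathscr{I}_{1,k}$: the first summand because $\mathscr{I}_{1,k}$ is a left ideal, and the second by the stability claim above. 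The only mildly delicate point will be the three-case verification needed for $\mathscr{J}\cdot t_{1,k}\sseq\mathscr{J}$; beyond that, no serious obstacle is anticipated.
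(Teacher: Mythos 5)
Your proof is correct and follows essentially the same route as the paper: apply (R6) once to get $\del_{1,k}t_{1,k}=1+q^2t_{1,k}\del_{1,k}+D_1$ and induct on $a$ using $\mathbf c(a)=1+q^2\mathbf c(a-1)$. The only difference is that the paper disposes of the containment $D_1t_{1,k}^a\in\mathscr I_{1,k}$ by citing Lemma~\ref{lem:delijonarbr} (which already packages the (R3)--(R5) case analysis), whereas you re-derive it via the stability of your sub-ideal $\mathscr J$; both verifications are sound.
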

\begin{proof}
Follows by induction on $a$. For $a=0$ the assertion follows from the relation
\begin{equation}
\label{eq:del1kt1k}
\del_{1,k}t_{1,k}=1+q^2t_{1,k}\del_{1,k}+D_1\quad\text{where }
D_1\in
\sum_{(1,k)\lhd (i,\ell)}\sPD\del_{i,\ell}.
\end{equation}
Suppose that for a given $a\geq 0$ we have $\del_{1,k}t_{1,k}^{a}=\mathbf c(a-1)t_{1,k}^{a-1}+D_2$ with $D_2\in\mathscr I_{1,k}$. Using~\eqref{eq:del1kt1k} we obtain 
\[
\del_{1,k}t_{1,k}^{a+1}=
\left(1+q^2t_{1,k}\del_{1,k}+D_1\right)t_{1,k}^a
=(1+q^2\mathbf c(a-1))t_{1,k}^a+q^2t_{1,k}D_2+D_1t_{1,k}^a.
\]
From Lemma~\ref{lem:delijonarbr}
it follows that $D_1t_{1,k}^a\in
\mathscr I_{1,k}$. Finally note that
$\mathbf c(a)=1+q^2\mathbf c(a-1)$.  
\end{proof}

\begin{lem}
\label{lem:del1k-bt1k-a}
Let $a,b\geq 0$ and let $1\leq k\leq n$. 
\begin{itemize}
\item[\rm (i)] If $b>a$ then $\del_{1,k}^{b+1}t_{1,k}^{a+1}\in\mathscr I_{1,k}$.
\item[\rm (ii)] If $b\leq a$ then 
 $\del_{1,k}^{b+1}t_{1,k}^{a+1}=\mathbf c(a,b)t_{1,k}^{a-b}+D$ for some
$\mathbf c(a,b)\in\Bbbk$, 
 where 
$D\in\mathscr I_{1,k}$. Furthermore $\mathbf c(a,0)=\mathbf c(a)$ and $\mathbf c(a,b+1)=\mathbf c(a,b)\mathbf c(a-b-1)$ for $b<a$.
\end{itemize}
\end{lem}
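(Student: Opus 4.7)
The natural approach is induction on $b$, treating $a \geq 0$ and $1 \leq k \leq n$ as fixed. The base case $b = 0$ is immediate: since $0 \leq a$ for all $a \geq 0$, the claim (ii) with $\mathbf c(a,0) := \mathbf c(a)$ is precisely the statement of Lemma~\ref{lem:del1kt1k-a}. Case (i) is vacuous when $b = 0$.

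For the inductive step, assume the lemma holds for some $b \geq 0$ and rewrite
\[
\del_{1,k}^{b+2}t_{1,k}^{a+1} = \del_{1,k}\cdot\left(\del_{1,k}^{b+1}t_{1,k}^{a+1}\right).
\]
I would split into three subcases according to the size of $b$ relative to $a$. If $b > a$, the inductive hypothesis (i) yields $\del_{1,k}^{b+1}t_{1,k}^{a+1} \in \mathscr I_{1,k}$, and since $\mathscr I_{1,k}$ is a left ideal of $\sPD$, left-multiplication by $\del_{1,k}$ keeps the expression in $\mathscr I_{1,k}$; this proves (i) for $b+1$. If $b = a$, the inductive hypothesis (ii) gives $\del_{1,k}^{a+1}t_{1,k}^{a+1} = \mathbf c(a,a) + D$ with $D \in \mathscr I_{1,k}$; applying $\del_{1,k}$ and noting that $\del_{1,k}$ itself lies in $\mathscr I_{1,k}$ (the generator with $i = 1,\, j = k$) together with the left-ideal property again yields $\del_{1,k}^{a+2}t_{1,k}^{a+1} \in \mathscr I_{1,k}$, completing (i) for $b+1 = a+1$.

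The substantive subcase is $b < a$, where one needs to produce the recursion $\mathbf c(a,b+1) = \mathbf c(a,b)\,\mathbf c(a-b-1)$. Here the inductive hypothesis (ii) gives $\del_{1,k}^{b+1}t_{1,k}^{a+1} = \mathbf c(a,b)\,t_{1,k}^{a-b} + D$ with $D \in \mathscr I_{1,k}$, and since $a - b \geq 1$ I can apply Lemma~\ref{lem:del1kt1k-a} (with $a$ replaced by $a-b-1$) to obtain $\del_{1,k}\,t_{1,k}^{a-b} = \mathbf c(a-b-1)\,t_{1,k}^{a-b-1} + D'$ with $D' \in \mathscr I_{1,k}$. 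Combining these and using the left-ideal property of $\mathscr I_{1,k}$ to absorb both $\mathbf c(a,b)D'$ and $\del_{1,k}D$, I arrive at
\[
\del_{1,k}^{b+2}t_{1,k}^{a+1} = \mathbf c(a,b)\mathbf c(a-b-1)\,t_{1,k}^{a-b-1} + D''
\]
for some $D'' \in \mathscr I_{1,k}$, and defining $\mathbf c(a,b+1) := \mathbf c(a,b)\mathbf c(a-b-1)$ completes the induction.

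Since the preceding lemma already does the nontrivial work of producing the leading coefficient $\mathbf c(a)$ and the ideal remainder, there is no real obstacle here — the only thing to be careful about is tracking the three subcases at the boundary $b \approx a$ and using the left-ideal property $\sPD \cdot \mathscr I_{1,k} \subseteq \mathscr I_{1,k}$ (together with the fact that $\del_{1,k} \in \mathscr I_{1,k}$) consistently when combining terms.
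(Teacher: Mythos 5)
Your proof is correct and follows essentially the same route as the paper: induction on $b$, with Lemma~\ref{lem:del1kt1k-a} supplying the factor $\mathbf c(a-b-1)$ at each step and the left-ideal property of $\mathscr I_{1,k}$ absorbing the remainder terms. The only cosmetic difference is that the paper disposes of part (i) directly via the factorization $\del_{1,k}^{b+1}t_{1,k}^{a+1}=\del_{1,k}^{b-a}\del_{1,k}^{a+1}t_{1,k}^{a+1}$ rather than folding it into the induction, but the content is identical.
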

\begin{proof}
(i) Follows from the equality 
$\del_{1,k}^{b+1}t_{1,k}^{a+1}=\del_{1,k}^{b-a}\del_{1,k}^{a+1}t_{1,k}^{a+1}$ and Lemma~\ref{lem:del1kt1k-a}.

(ii)
We use induction on $b$. For $b=0$ this is Lemma~\ref{lem:del1kt1k-a}. If  $b+1\leq a$ then
\begin{align*}
\del_{1,k}^{b+2}t_{1,k}^{a+1}
&=
\del_{1,k}
\del_{1,k}^{b+1}t_{1,k}^{a+1}
=\mathbf c(a,b)\del_{1,k}t_{1,k}^{a-b}
+\del_{1,k}D=
\mathbf c(a,b)\mathbf c(a-b-1)t_{1,k}^{a-b-1}+\mathbf c(a,b)D_1+\del_{1,k}D,
\end{align*}
where $D_1,D\in
\mathscr I_{1,k}$. 
Part (ii) follows immediately. 
\end{proof}
\begin{lem}
\label{lem:a,bk<=nfin}
Let $a,b\geq 0$ and let $1\leq k\leq n$. Assume that $f\in\sP$ is a product of the $t_{1,j}$ for $j\leq k-1$. 
Then the following hold:
\begin{itemize}
\item[\rm (i)] 
If $b>a$ then $\del_{1,k}^bt_{1,k}^af\in \mathscr I_{1,k}$. 
\item[\rm (ii)] If $b\leq a$ then 
$\del_{1,k}^{b+1}t_{1,k}^{a+1}f=\mathbf c(a,b)t_{1,k}^{a-b}f+D$ where 
$D\in\mathscr I_{1,k}$
and $\mathbf c(a,b)$ is as in Lemma~\ref{lem:del1k-bt1k-a}.
\end{itemize}\end{lem}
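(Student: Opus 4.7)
The plan is to reduce the lemma to Lemma~\ref{lem:del1k-bt1k-a} (the case $f=1$) by showing that the left ideal $\mathscr I_{1,k}$ is stable under right multiplication by any monomial $f$ in the generators $t_{1,j'}$ with $j'\leq k-1$. Granted this stability, part (ii) follows by right-multiplying the identity $\del_{1,k}^{b+1}t_{1,k}^{a+1}=\mathbf c(a,b)t_{1,k}^{a-b}+D$ of Lemma~\ref{lem:del1k-bt1k-a}(ii) through by $f$. For part (i), if $a\geq 1$ then Lemma~\ref{lem:del1k-bt1k-a}(i) applied with $a':=a-1$ and $b':=b-1$ already yields $\del_{1,k}^{b}t_{1,k}^{a}\in\mathscr I_{1,k}$, whereas if $a=0$ then $b\geq 1$ forces $\del_{1,k}^{b}=\del_{1,k}^{b-1}\cdot\del_{1,k}\in\mathscr I_{1,k}$ directly; in either case, right-multiplication by $f$ combined with the stability property finishes the proof.

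The core of the argument is therefore the stability claim $\mathscr I_{1,k}\cdot f\sseq\mathscr I_{1,k}$. By induction on $\deg f$, it suffices to prove $\mathscr I_{1,k}\cdot t_{1,j'}\sseq\mathscr I_{1,k}$ for each $1\leq j'\leq k-1$. Writing a general $D\in\mathscr I_{1,k}$ as a sum $\sum_\alpha Y_\alpha\del_{i_\alpha,j_\alpha}$ with $i_\alpha\geq 1$ and $j_\alpha\geq k$, and using that $\mathscr I_{1,k}$ is a left ideal, the problem reduces to verifying $\del_{i,j}t_{1,j'}\in\mathscr I_{1,k}$ for every $i\geq 1$ and $j\geq k$. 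When $i\geq 2$, relation (R3) applies (since $i\neq 1$ and $j\geq k>j'$) and gives $\del_{i,j}t_{1,j'}=t_{1,j'}\del_{i,j}$; when $i=1$, relation (R4) applies (since $j\neq j'$) and gives $qt_{1,j'}\del_{1,j}+\sum_{c'>1}(q-q^{-1})t_{c',j'}\del_{c',j}$. Every resulting summand ends in a factor $\del_{i',j}$ with $j\geq k$, so each lies in $\mathscr I_{1,k}$.

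I do not anticipate a substantial obstacle. The key observation is simply that pushing a $\del_{i,j}$ with $j\geq k$ past a $t_{1,j'}$ with $j'<k$, via either (R3) or (R4), never creates a $\del$-head whose second index drops below $k$; the ``depth'' in $\mathscr I_{1,k}$ is preserved, and a straightforward induction on $\deg f$ takes care of general monomials $f$.
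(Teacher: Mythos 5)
Your proposal is correct and takes essentially the same approach as the paper: reduce to the $f=1$ case (Lemma~\ref{lem:del1k-bt1k-a}) and then show that right-multiplication by $f$ does not leave $\mathscr I_{1,k}$. The only difference is that the paper obtains the stability $Df\in\mathscr I_{1,k}$ by directly invoking Lemma~\ref{lem:delijonarbr} (whose hypothesis is satisfied since every second index of $f$ is $\leq k-1<k\leq j_\alpha$), whereas you re-derive the needed special case from relations (R3) and (R4) — a harmless duplication of an argument already available in the paper.
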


\begin{proof}
(i) Follows from 
Lemma~\ref{lem:del1k-bt1k-a}(i) and 
Lemma~\ref{lem:delijonarbr}.

(ii) 
From Lemma~\ref{lem:del1k-bt1k-a}(ii) we have 
$
\del_{1,k}^{b+1}t_{1,k}^{a+1}f=
\mathbf c(a,b)t_{1,k}^{a-b}f+Df
$,
where $D\in
\mathscr I_{1,k}$. The assumption on $f$ and Lemma~\ref{lem:delijonarbr} imply  that $Df\in
\mathscr I_{1,k}$. 
\end{proof}

\begin{rmk}
\label{rmk:cab}
It is easy to verify that 
$\mathbf c(a,b)=\mathbf c(a)\mathbf c(a-1)\cdots \mathbf c(a-b)$ for $a\geq b\geq 0$. We extend the domain of $\mathbf c(a,b)$ to pairs $(a,b)$ satisfying $a,b\geq -1$ by setting 
$\mathbf c(a,b)=0$ for $-1\leq a< b$ and $\mathbf c(a,b)=1$ for $a\geq b=-1$. 
Note that  $\mathbf c(a,b)$ is always a polynomial in $q^2$ with integer coefficients. Furthermore, when $a\geq b$ the degree of $\mathbf c(a,b)$ as a polynomial in $q$ is $(b+1)(2a-b)$. 
\end{rmk}
For   a $k$-tuple of non-negative integers $\mathsf a:=(a_1,\ldots,a_k)$, where $k\leq n$,  we set $t^\mathsf a:=t_{1,k}^{a_k}\cdots t_{1,1}^{a_1}$ and $\del^\mathsf a:=\del_{1,1}^{a_1}\cdots \del_{1,k}^{a_k}$. 
\begin{lem}
\label{lem:rl;kei}
Let $1\leq k'<k_r<\ldots<k_1\leq n$. Also, let 
$a_1,\ldots, a_{k'}\geq 0$ and  $b_1,\ldots,b_r\geq 0$. 
Set $\mathsf a:=(a_1,\ldots,a_{k'})$  and  $f:=t^\mathsf a:=t_{1,k'}^{a_{k'}}\cdots t_{1,1}^{a_1}$.  Then
\[
\del_{1,k'}^bt_{1,k_1}^{b_1}\cdots t_{1,k_r}^{b_r}f=f_1+D,
\]
where $f_1\in\sP$ and $D\in\mathscr I_{1,k'}$. If $a_{k'}<b$ then
$f_1=0$. If $a_{k'}\geq b$ then 
\[
f_1=q^{b(b_1+\cdots +b_r)}\mathbf c(a_{k'}-1,b-1)t_{1,k_1}^{b_1}\cdots t_{1,k_r}^{b_r} t^{\mathsf a'}
\quad
\text{ where }\quad\mathsf a':=(a_1,\ldots,a_{k'-1},a_{k'}-b).
\]
\end{lem}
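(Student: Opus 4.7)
The plan is to reduce the statement to a single-step identity and iterate. The one-step identity I would target is
\[
\del_{1,k'}\cdot t_{1,k_1}^{b_1}\cdots t_{1,k_r}^{b_r}\cdot t_{1,k'}^{c}\cdot h \equiv q^{b_1+\cdots+b_r}\,\mathbf c(c-1)\,t_{1,k_1}^{b_1}\cdots t_{1,k_r}^{b_r}\cdot t_{1,k'}^{c-1}\cdot h \pmod{\mathscr I_{1,k'}},
\]
valid for every $c\geq 0$, where $h:=t_{1,k'-1}^{a_{k'-1}}\cdots t_{1,1}^{a_1}$ and $\mathbf c(-1)=0$. Applying this identity $b$ times, starting from $c=a_{k'}$ and reducing $c$ by $1$ at each step, collects the scalar $q^{b(b_1+\cdots+b_r)}\mathbf c(a_{k'}-1)\mathbf c(a_{k'}-2)\cdots\mathbf c(a_{k'}-b)$, which equals $q^{b(b_1+\cdots+b_r)}\mathbf c(a_{k'}-1,b-1)$ by Remark~\ref{rmk:cab}; if $b>a_{k'}$, one of the factors is $\mathbf c(a_{k'}-b)=0$, forcing $f_1=0$.

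To prove the one-step identity, I would first push $\del_{1,k'}$ through each $t_{1,k_i}$ using (R4):
\[
\del_{1,k'}t_{1,k_i}=qt_{1,k_i}\del_{1,k'}+(q-q^{-1})\sum_{c'>1}t_{c',k_i}\del_{c',k'}.
\]
Iterating across the $b_1+\cdots+b_r$ factors $t_{1,k_1}^{b_1}\cdots t_{1,k_r}^{b_r}$ produces the leading term $q^{b_1+\cdots+b_r}t_{1,k_1}^{b_1}\cdots t_{1,k_r}^{b_r}\del_{1,k'}$ together with correction terms ending in some $\del_{c',k'}$ with $c'>1$. By (R3), such $\del_{c',k'}$ commutes with every $t_{1,k_j}$ (since $c'\neq 1$ and $k'\neq k_j$) in the remaining tail, so each correction can be rewritten as an element of $\sPD$ multiplied on the right by $\del_{c',k'}\cdot t_{1,k'}^{c}\cdot h$. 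Applying Lemma~\ref{lem:delijonarbr} to $\del_{c',k'}\cdot t_{1,k'}^{c}h$ places it in the left ideal generated by $\del_{i',j'}$ with $i'\geq c'$ and $j'\geq k'$, hence in $\mathscr I_{c',k'}\subseteq\mathscr I_{1,k'}$. Since $\mathscr I_{1,k'}$ is a left ideal, the entire correction lies in $\mathscr I_{1,k'}$. Finally, the remaining action on $t_{1,k'}^{c}h$ is handled by Lemma~\ref{lem:del1kt1k-a}, giving $\mathbf c(c-1)t_{1,k'}^{c-1}h$ modulo an element of $\mathscr I_{1,k'}$; that element multiplied on the right by $h$ stays in $\mathscr I_{1,k'}$ because every $\del_{i,j}$ with $j\geq k'$, acting on a product of $t_{1,\ell}$ with $\ell<k'$, is pushed into $\mathscr I_{i,j}\subseteq\mathscr I_{1,k'}$ by Lemma~\ref{lem:delijonarbr}.

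The main technical obstacle is the careful bookkeeping needed to guarantee that all correction terms land in $\mathscr I_{1,k'}$ (and not merely in the larger ideal $\mathscr I$ generated by all $\del_{i,j}$). Concretely, one must verify at each commutation step that the column index of the $\del$-factor produced is at least $k'$, so that Lemma~\ref{lem:delijonarbr} can be invoked with the correct bound $j'\geq k'$. Once this is established for a single application of $\del_{1,k'}$, the full statement follows immediately by induction on $b$, with the base case $b=0$ reducing to $\mathbf c(a_{k'}-1,-1)=1$ by the convention in Remark~\ref{rmk:cab}.
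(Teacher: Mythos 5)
Your proposal is correct and follows essentially the same route as the paper: commute $\del_{1,k'}$ past the $t_{1,k_i}$ one factor at a time via (R4), picking up a factor of $q$ each time, push the correction terms (ending in $\del_{c',k'}$ with $c'>1$) to the right using (R3) and dispose of them with Lemma~\ref{lem:delijonarbr}, then apply Lemma~\ref{lem:del1kt1k-a} to $\del_{1,k'}t_{1,k'}^{c}h$ and iterate over $b$, using Remark~\ref{rmk:cab} to assemble $\mathbf c(a_{k'}-1,b-1)$. The only cosmetic difference is that your one-step identity absorbs the paper's separately treated case $b_1=\cdots=b_r=0$ (which it handles via Lemma~\ref{lem:a,bk<=nfin}(ii)) into the uniform statement.
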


\begin{proof}
The assertion is trivial for $b=0$. If $b_1=\cdots=b_r=0$ then the assertion follows from Lemma~\ref{lem:a,bk<=nfin}(ii) and 
Remark~\ref{rmk:cab}.  
Next assume without loss of generality that $b_1\geq 1$. First suppose that $b=1$. Using Lemma~\ref{lem:delijonarbr} we obtain
\begin{align*}
\del_{1,k'}t_{1,k_1}^{b_1}
\cdots
t_{1,k_r}^{b_r}
f&=
qt_{1,k_1}\del_{1,k'}t_{1,k_1}^{b_1-1}
t_{1,k_2}^{b_2}\cdots t_{1,k_r}^{b_r}
f\\
&+(q-q^{-1})
\sum_{1<i\leq m}t_{i,k_1}\del_{i,k'}t_{1,k_1}^{b_1-1}
t_{1,k_2}^{b_2}\cdots t_{1,k_r}^{b_r}
f=
qt_{1,k_1}\del_{1,k'}t_{1,k_1}^{b_1-1}
t_{1,k_2}^{b_2}\cdots t_{1,k_r}^{b_r}f+D_1,
\end{align*}
where $D_1\in
\mathscr I_{2,k'}
$. By repeating  the above calculation 
and then using Lemma~\ref{lem:del1kt1k-a} we obtain 
\begin{align*}
\del_{1,k'}t_{1,k_1}^{b_1}
\cdots
t_{1,k_r}^{b_r} f
&=q^{b_1+\cdots +b_r} t_{1,k_1}^{b_1}
\cdots
t_{1,k_r}^{b_r} \del_{1,k'}t^{\mathsf a}+D_2,\\
 &=
q^{b_1+\cdots +b_r}
\mathbf c(a_{k'}-1) t_{1,k_1}^{b_1}
\cdots
t_{1,k_r}^{b_r}t^{\mathsf a-\mathsf e_{k'}}+D_2,
\end{align*}
where $D_2\in\mathscr I_{1,k'}$, 
$\mathsf a-\mathsf e_{k'}:=(a_1,\ldots,a_{k'-1},a_{k'}-1)$
and we define
$\mathbf{c}(-1):=0$. This completes the proof for $b=1$. For $b>1$ we just repeat the 
above argument.   
\end{proof}
\begin{lem}
\label{lem:Dbta(i)-(iii)}
Let $\mathsf a:=(a_1,\ldots, a_n)$ and 
$\mathsf b:=(b_1,\ldots,b_n)$ be $n$-tuples of non-negative integers. Then the following statements hold. 
\begin{itemize}
\item[\rm (i)] $\del^{\mathsf b}\cdot t^{\mathsf a}=0$ if $b_i>a_i$ for at least one $1\leq i\leq n$. 
\item[\rm (ii)] Assume that $a_i\geq b_i$ for all $1\leq i\leq n$. Then
\begin{equation}
\label{eq:dlkjf's|}
\del^{\mathsf b}\cdot t^{\mathsf a}=
\left(q^{\sum_{i=2}^n(a_i-b_i)(b_1+\cdots +b_{i-1})}
\prod_{i=1}^n\mathbf c(a_i-1,b_i-1)
\right)
t^{\mathsf a-\mathsf b},
\end{equation}
and
\begin{equation}
\label{eq:dlkjf's||}
t^{\mathsf b} \del^{\mathsf b}\cdot t^{\mathsf a}
=
\left(q^{\sum_{i=2}^n(2a_i-2b_i)(b_1+\cdots +b_{i-1})}
\prod_{i=1}^n\mathbf c(a_i-1,b_i-1)
\right)
t^{\mathsf a}
.\end{equation}
\end{itemize}
\end{lem}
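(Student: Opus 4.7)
The plan is to prove both~(i) and~(ii) by iteratively applying Lemma~\ref{lem:rl;kei}, acting by $\del_{1,n}^{b_n}, \del_{1,n-1}^{b_{n-1}}, \ldots, \del_{1,1}^{b_1}$ on $t^{\mathsf a}$ in that order. The key observation is that after applying $\del_{1,n}^{b_n}\cdots\del_{1,i+1}^{b_{i+1}}$ (in the nonvanishing case, i.e., when $b_j \le a_j$ for all $j>i$), the result in $\sP$ is a scalar multiple of $t_{1,n}^{a_n-b_n}\cdots t_{1,i+1}^{a_{i+1}-b_{i+1}}\,t_{1,i}^{a_i}\cdots t_{1,1}^{a_1}$. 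This is precisely a monomial of the form $t_{1,k_1}^{c_1}\cdots t_{1,k_r}^{c_r} f$ as in Lemma~\ref{lem:rl;kei}, with $k'=i$, $k_s=n+1-s$, $c_s=a_{n+1-s}-b_{n+1-s}$, and $f=t_{1,i}^{a_i}\cdots t_{1,1}^{a_1}$, so the next operator $\del_{1,i}^{b_i}$ falls within the scope of that lemma. The error terms $D\in\mathscr I_{1,k'}$ produced by the lemma cause no trouble: since $\mathscr I_{1,k'}\subseteq\mathscr I$ and $\mathscr I$ is a left ideal, any further product of $\del$'s with $D$ stays in $\mathscr I$ and so projects to zero in $\sP=\sPD/\mathscr I$.

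For part~(i), let $i_\circ$ be the largest index with $b_{i_\circ}>a_{i_\circ}$. Applying $\del_{1,j}^{b_j}$ for $j=n,n-1,\ldots,i_\circ+1$ (all satisfying $b_j\le a_j$) produces a nonzero scalar multiple of the corresponding monomial. At step $i_\circ$, Lemma~\ref{lem:rl;kei} yields $f_1=0$ because $b_{i_\circ}>a_{i_\circ}$, and the remaining applications of $\del_{1,i}^{b_i}$ for $i<i_\circ$ act on zero.

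For the first formula~\eqref{eq:dlkjf's|} of part~(ii), the scalar picked up at step $i$ is $q^{b_i\sum_{j>i}(a_j-b_j)}\,\mathbf c(a_i-1,b_i-1)$. Multiplying over all $i$ and interchanging the order of summation in the exponent via
\[
\sum_{i=1}^n b_i\sum_{j>i}(a_j-b_j) \;=\; \sum_{j=2}^n (a_j-b_j)\sum_{i<j} b_i
\]
yields exactly the $q$-exponent claimed in~\eqref{eq:dlkjf's|}. The second formula~\eqref{eq:dlkjf's||} is then deduced from~\eqref{eq:dlkjf's|} by multiplying on the left by $t^{\mathsf b}$ and rewriting $t^{\mathsf b} t^{\mathsf a-\mathsf b}$ as a scalar multiple of $t^{\mathsf a}$. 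Concretely, each block $t_{1,i}^{a_i-b_i}$ is moved leftward past $t_{1,i-1}^{b_{i-1}}\cdots t_{1,1}^{b_1}$ using the relation $t_{1,j}t_{1,i}=q\,t_{1,i}t_{1,j}$ for $j<i$ from~(R1); this contributes an additional factor $q^{\sum_{i=2}^n(a_i-b_i)(b_1+\cdots+b_{i-1})}$, which exactly doubles the $q$-exponent and yields~\eqref{eq:dlkjf's||}.

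The only real difficulty is combinatorial bookkeeping: tracking the $q$-exponents through the $n$ successive invocations of Lemma~\ref{lem:rl;kei} and, again, when reorganizing $t^{\mathsf b} t^{\mathsf a-\mathsf b}$. The structural part of the argument is straightforward, because the order of application of the $\del_{1,i}^{b_i}$ is chosen precisely so that the shape of the intermediate monomials matches the hypothesis of Lemma~\ref{lem:rl;kei} at each step.
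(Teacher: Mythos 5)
Your proof is correct and follows essentially the same route as the paper: both iterate Lemma~\ref{lem:rl;kei} by applying $\del_{1,n}^{b_n},\del_{1,n-1}^{b_{n-1}},\ldots,\del_{1,1}^{b_1}$ in turn, discard the $\mathscr I_{1,k'}$ error terms since they lie in the left ideal $\mathscr I$, and obtain \eqref{eq:dlkjf's||} from \eqref{eq:dlkjf's|} by commuting $t^{\mathsf b}t^{\mathsf a-\mathsf b}$ into $t^{\mathsf a}$ via (R1). Your bookkeeping of the $q$-exponents, including the interchange of summation, matches the paper's computation.
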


\begin{proof}
(i) Follows from Lemma~\ref{lem:a,bk<=nfin} and Lemma~\ref{lem:rl;kei}.  

(ii) 
By Lemma~\ref{lem:rl;kei}, 
$
\del^{\mathsf b}t^{\mathsf a}
=\mathbf c(a_n-1,b_n-1)
\del^{\mathsf b'}
t_{1,n}^{a_n-b_n}t^{\mathsf a'}+D_1
$
where $\mathsf a':=(a_1,\ldots,a_{n-1})$, $\mathsf b':=(b_1,\ldots,b_{n-1})$ and  $D_1\in\mathscr I_{1,n}$. 
Again by Lemma~\ref{lem:rl;kei}, 
\[
\del^{\mathsf b'}
t_{1,n}^{a_n-b_n}
t^{\mathsf a'}
=q^{(a_n-b_n)b_{n-1}}\mathbf c(a_{n-1}-1,b_{n-1}-1)\del^{\mathsf b''}t_{1,n}^{a_n-b_n}
t_{1,n-1}^{a_{n-1}-b_{n-1}}
t^{\mathsf a''}+D_2,
\]
where $\mathsf a'':=(a_1,\ldots,a_{n-2})$, $\mathsf b'':=(b_1,\ldots,b_{n-2})$ and $D_2\in\mathscr I_{1,n-1}$. Continuing in this fashion we finally obtain~\eqref{eq:dlkjf's|}. 
For~\eqref{eq:dlkjf's||} we should compute the scalar relating 
$t^\mathsf bt^{\mathsf{a-b}}$ and $t^\mathsf a$. This is straightforward using the relations 
 $t_{1,i}t_{1,j}=qt_{1,j}t_{1,i}$ for $i<j$.\end{proof}

\section{Differential operators associated to the $K_\la$}
\label{subsec:Lifting}
Let $\ULw$, $\URw$ and $\ULRw$ be defined as in Subsection~\ref{subsec:Pi}.
The main goal of this section is to prove that certain elements of the Cartan subalgebras of $U_L$ and $U_R$ belong to $\ULw$ and $\URw$. 
This is established in
 Proposition~\ref{prp:xm-yn-EF}.
\subsection{Cartan elements in $\ULw$, $\URw$ and $\ULRw$} 
\label{subsec:CEin}
 For  $1\leq a\leq m$ and $1\leq b\leq n$ we set
\begin{equation}
\label{eq:xayb}
\la_{L,a}:=-\sum_{i=a}^m2\eps_i\quad\text{and}\quad
\la_{R,b}:=-\sum_{i=b}^n 2\eps_i.
\end{equation}
As in~\eqref{eq:Klaa;f} these weights correspond to $K_{\la_{L,a}}\in U_{\g h,L}$ and $K_{\la_{R,b}}\in U_{\g h,R}$, respectively.

\begin{prp}
\label{prp:xm-yn-EF}
Let $K_{\la_{L,a}}\in U_L$ and $K_{\la_{R,b}}\in U_R$ be as in~\eqref{eq:xayb}, where $1\leq a\leq m$ and $1\leq b\leq n$. Then $K_{\la_{L,a}}\in \ULw$ and $K_{\la_{R,b}}\in\URw$.   
\end{prp}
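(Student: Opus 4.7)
By Proposition~\ref{prp:faithful-action}, $\sP$ is a faithful $\sPD$-module, so to show $K_{\la_{L,a}} \in \ULw$ it suffices to exhibit an element of $\sPD$ whose image in $\End_\Bbbk(\sP)$ equals $\phi_U(K_{\la_{L,a}}\otimes 1)$; similarly for $K_{\la_{R,b}}$. By Remark~\ref{rmk:actionformulas}, $\phi_U(K_{\la_{L,a}}\otimes 1)$ acts diagonally on the monomial basis of $\sP$, sending $f = t_{c_1,d_1}\cdots t_{c_s,d_s}$ to $q^{2N_a(f)}f$, where $N_a(f):=|\{k : a\le c_k\le m\}|$. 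My candidate operator is
\[
\widetilde{\mathbf L}_a := \sum_{r=0}^{m-a+1}(q^2-1)^r \sum_{\mathbf i,\mathbf j} M^\mathbf i_\mathbf j\,\oline M^\mathbf i_\mathbf j \in \sPD,
\]
where $\mathbf i=(i_1,\ldots,i_r)$ ranges over strictly increasing tuples in $\{a,\ldots,m\}$, $\mathbf j$ over strictly increasing tuples in $\{1,\ldots,n\}$, and the quantum minors are those of~\eqref{eq:lem:qdet2exp}. For $a=m$ this is precisely the operator of Lemma~\ref{lem:D'10}.

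To verify $\widetilde{\mathbf L}_a \cdot f = q^{2N_a(f)}f$, I write $f$ in the basis~\eqref{eq:basisII} of Remark~\ref{rmk:precorderbasis}, so that $f = f_{\ge a}f_{<a}$ with $f_{\ge a}$ (respectively $f_{<a}$) supported on rows $\ge a$ (respectively $<a$). Every $\oline M^\mathbf i_\mathbf j$ appearing in $\widetilde{\mathbf L}_a$ is a linear combination of products $\del_{i_{\sigma(1)},j_1}\cdots\del_{i_{\sigma(r)},j_r}$ whose row indices all lie in $\{a,\ldots,m\}$, so Lemma~\ref{lem:316} yields $\oline M^\mathbf i_\mathbf j \cdot f = (\oline M^\mathbf i_\mathbf j\cdot f_{\ge a})f_{<a}$. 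Left-multiplication by $M^\mathbf i_\mathbf j\in\sP$ then produces $\widetilde{\mathbf L}_a \cdot f = (\widetilde{\mathbf L}_a\cdot f_{\ge a})f_{<a}$, and since $\deg(f_{\ge a})=N_a(f)$ the claim reduces to the assertion $\widetilde{\mathbf L}_a\cdot g = q^{2\deg(g)}g$ for monomials $g$ supported on rows $\{a,\ldots,m\}$. Using the embedding $\Emb_{(m-a+1)\times n}^{m\times n}$ of Proposition~\ref{prp:existence(6)} --- whose compatibility with the $\sPD$-action on $\sP$ follows from the fact that $\Emb$ sends generators to generators supported on the appropriate rows, so that the ideal $\mathscr I$ restricts correctly --- this reduces further to a single \emph{full-support} identity: the analogously defined operator $\mathbf L^\mathrm{full}_{m'} \in \sPD_{m'\times n}$ (with $m':=m-a+1$) acts on $\sP_{m'\times n}$ by $q^{2\deg}$.

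The full-support identity is established by induction on $m'$, with Lemma~\ref{lem:D'10} as the base case $m' = 1$. For the inductive step, I decompose $\mathbf L^\mathrm{full}_{m'}$ according to whether the index $m'$ appears in $\mathbf i$, isolating the contribution of row $m'$ (handled by Lemma~\ref{lem:D'10}) from that of rows $\{1,\ldots,m'-1\}$ (handled by the inductive hypothesis, applied after embedding $\sPD_{(m'-1)\times n}$ in $\sPD_{m'\times n}$); the mixed cross-terms are analyzed using the quasi-commutation relations (R3)--(R6) together with the explicit action formulas in Subsection~\ref{subsec:sometech} (particularly Lemmas~\ref{lem:rl;kei} and~\ref{lem:Dbta(i)-(iii)}). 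The $K_{\la_{R,b}}$ assertion follows either by a symmetric construction using minors with column indices in $\{b,\ldots,n\}$ or, more efficiently, by applying the $K_{\la_{L,b}}$-case of the proposition to $\sPD_{n\times m}$ and pulling back via the isomorphism $\eta_{m,n}$ of Lemma~\ref{lem:etam,n}, which interchanges the $U_L$ and $U_R$ actions. The principal obstacle is the inductive step in the full-support identity: cross-terms arising from quantum minors whose row supports overlap produce intricate sign-and-scalar contributions whose cancellation requires a careful quantum analog of inclusion-exclusion, reducing inductively to the elementary identity $1+\sum_{i}q^{2\sum_{j>i}a_{m,j}}(q^{2a_{m,i}}-1) = q^{2\sum_i a_{m,i}}$ that appears at the end of the proof of Lemma~\ref{lem:D'10}.
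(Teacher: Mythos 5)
Your reduction steps (faithfulness of the $\sPD$-action, splitting $f=f_{\ge a}f_{<a}$ via Lemma~\ref{lem:316}, compatibility of $\Emb$ with the action on $\sP$) are all sound, but the proof has a genuine gap at exactly the point you flag as "the principal obstacle": the inductive step of the full-support identity is asserted, not proved. The statement you need — that $\sum_{r=0}^{m'}(q^2-1)^r\sum_{\mathbf i,\mathbf j}M^{\mathbf i}_{\mathbf j}\oline M^{\mathbf i}_{\mathbf j}$ acts on $\sP_{m'\times n}$ by $q^{2\deg}$ — is precisely Proposition~\ref{prp:eval-xL}, and the paper does not obtain it by elementary induction on rows. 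A quantum minor $M^{\mathbf i}_{\mathbf j}$ with $m'\in\mathbf i$ and $|\mathbf i|\ge 2$ mixes row $m'$ with lower rows inside a single alternating sum over permutations, so it does not factor into a "row-$m'$ part" times a "rows $<m'$ part", and the relations (R3)--(R6) together with the lemmas of Subsection~\ref{subsec:sometech} do not by themselves produce the cancellation you need; this is a quantum Capelli-type identity. The paper proves it by decomposing $\sP$ into irreducible $U_{LR}$-modules (Proposition~\ref{prp:glmglndecom}), computing the eigenvalue of each $\mathbf D(r,n,n)$ on $V_\la^*\otimes V_\la^*$ as a $q$-factorial Schur polynomial via \cite[Thm 1]{BKV06} (Proposition~\ref{prp:55.22.1}), and summing with Okounkov's binomial theorem (Lemma~\ref{lem:qSchurident}). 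Until you actually carry out your "quantum inclusion-exclusion", your argument only covers the base case $a=m$.

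It is worth noting that the proposition itself is much cheaper than what you are attempting: it only asks for \emph{membership} of $\phi_U(K_{\la_{L,a}}\otimes 1)$ in $\sPD$, not an explicit formula. The paper's proof uses Lemma~\ref{lem:D'10} only for $a=m$ (giving $\phi_U(K_{\la_{L,m}}\otimes 1)=\mathbf D'_{1,0}+(q^2-1)\mathbf D'_{1,1}$), and then descends from $a+1$ to $a$ purely Hopf-algebraically: since $\ULw$ is $\ad(U_L)$-stable (Lemma~\ref{lem:ax->adyx}) and is an algebra, one forms $u=E'_{\eps_a-\eps_m}K_{-\eps_a-\eps_m}$ and $v=F'_{\eps_a-\eps_m}K_{\la_{L,a+1}}$ as iterated adjoints of $K_{\la_{L,m}}$ and $K_{\la_{L,a+1}}$, and extracts $K_{\la_{L,a}}$ from $uv-q^{-2}vu$. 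The explicit operators you are trying to construct are deferred to Section~\ref{section5}, where they are derived \emph{after} the membership statement is already known. If you want to salvage your approach, you must either supply the inductive step in full (which amounts to reproving the Wallach--Okounkov-type formula) or replace it with the representation-theoretic argument; otherwise, switch to the adjoint-action descent.
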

\begin{proof}
We only prove the assertion  for $K_{\la_{L,a}}$ (for $K_{\la_{R,b}}$ the argument is similar).
 First we verify the case $a=m$. By a straightforward computation based on Remark~\ref{rmk:actionformulas} we have  \[
 K_{\la_{L,m}}\cdot t_{a_1,b_1}\cdots t_{a_r,b_r}=q^{2\sum_{i=1}^r\llbracket m,a_i\rrbracket}t_{a_1,b_1}\cdots t_{a_r,b_r}.
 \] By Lemma~\ref{lem:D'10}  the action of $\mathbf D'_{1,0}+(q^2-1)\mathbf D'_{1,1}$ on $\sP$ is the same as the action of $K_{\la_{L,m}}$. Thus, by Proposition~\ref{prp:faithful-action} we obtain 
\begin{equation}
\label{eq:Jahfl}
\phi_U(K_{\la_{L,m}}\otimes 1)=\mathbf D'_{1,0}+(q^2-1)\mathbf D'_{1,1}.
\end{equation} To complete the proof,
by Lemma~\ref{lem:ax->adyx}
 it suffices to verify that  for any $a<m$, the $\ad(U_L)$-invariant subalgebra of $U_L$ that is generated by  $K_{\la_{L,a+1}}$ and $K_{\la_{L,m}}$ also contains $K_{\la_{L,a}}$.  
Denoting the standard generators of $U_L$ by $E_i$, $F_i$, $K_i^{\pm 1}$, 
we set
\[
E'_{\eps_i-\eps_j}:=[E_i,[\ldots,E_{j-1}]_{q}]_{q}\quad\text{ and }
F'_{\eps_i-\eps_j}:=[F_{j-1},[\ldots,F_i]_{q^{-1}}]_{q^{-1}}\quad\text{ for $1\leq i<j\leq m$}.
\]
Let  $u:=E'_{\eps_a-\eps_m}K_{-\eps_a-\eps_m}$
and $v:=F'_{\eps_a-\eps_m}
K_{\la_{L,a+1}}$. 
By a simple induction we can verify that  
\[
u
=(1-q^2)^{-1}\ad_{E_a}\cdots \ad_{E_{m-1}}(K_{\la_{L,m}})
\ \text{ and }\
v=
(1-q^{-2})^{-1}
\ad_{F_{m-1}}\cdots \ad_{F_a}(K_{\la_{L,a+1}}),\] 
so that $u,v\in \ULw$ by Lemma~\ref{lem:ax->adyx}.
For $x,y\in U_L$ set $[x,y]:=xy-yx$. 
Since $\ULw$ is an algebra, 
\begin{equation}
\label{eq:[E,E]=}
\left[E'_{\eps_a-\eps_m},
F'_{\eps_a-\eps_m}\right]K_{-\eps_a-\eps_m}K_{\la_{L,a+1}}=
uv-q^{-2}vu\in\ULw.
\end{equation}
But the left hand side of~\eqref{eq:[E,E]=} is equal to \[
(q-q^{-1})^{-1}(K_{\eps_a-\eps_m}-K_{\eps_a-\eps_m}^{-1})K_{-\eps_a-\eps_m}K_{\la_{L,a+1}}=(q-q^{-1})^{-1}(K_{-2\eps_m}K_{\la_{L,a+1}}-K_{\la_{L,a}}).
\]
It follows that \[
K_{\la_{L,a}}
=-(q-q^{-1})(uv-q^{-2}vu)+K_{\la_{L,m}}K_{\la_{L,a+1}}
\in \ULw.\qedhere
\] 
\end{proof}

\begin{prp}
\label{prp:UwvsU(g)}
$U_L$ is generated as an algebra by $\ULw$ and $\{K_{\eps_i}\}_{i=1}^m$. Similarly, 
$U_R$ is generated as an algebra by $\URw$ and $\{K_{\eps_i}\}_{i=1}^n$. \end{prp}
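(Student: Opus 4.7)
The plan is to show that the subalgebra $\mathcal A \sseq U_L$ generated by $\ULw$ together with $\{K_{\eps_i}\}_{i=1}^m$ contains every standard generator of $U_L$, namely the $E_i$ and $F_i$ for $1\leq i\leq m-1$ and the $K_{\eps_i}^{\pm 1}$ for $1\leq i\leq m$; the argument for $U_R$ will be word-for-word identical with $\la_{L,a}$ replaced by $\la_{R,b}$, so I focus on $U_L$. By Proposition~\ref{prp:xm-yn-EF}, the elements $K_{\la_{L,a}}=K_{-2\eps_a-\cdots-2\eps_m}$ lie in $\ULw$ for $1\leq a\leq m$, and by Lemma~\ref{lem:ax->adyx} the subalgebra $\ULw$ is stable under $\ad_{U_L}$. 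These two facts will supply essentially all the input we need.

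First I would deal with the Cartan part. Since $K_{\la_{L,m}}=K_{\eps_m}^{-2}\in \ULw\sseq\mathcal A$, and by descending induction $K_{\eps_a}^{-2}=K_{\la_{L,a}}\cdot K_{\eps_{a+1}}^{2}\cdots K_{\eps_m}^{2}\in\mathcal A$ for $a<m$, we obtain $K_{\eps_a}^{-1}=K_{\eps_a}\cdot K_{\eps_a}^{-2}\in\mathcal A$ for all $a$. Hence $K_\mu\in\mathcal A$ for every $\mu\in\Z\eps_1+\cdots+\Z\eps_m$.

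Next I would produce the $E_i$ and $F_i$ by exploiting the adjoint action on the Cartan elements that already live in $\ULw$. A direct computation using $\Delta(E_i)=E_i\otimes K_i+1\otimes E_i$ and $\lag\la_{L,i+1},\eps_i-\eps_{i+1}\rag=2$ gives
\[
\ad_{E_i}(K_{\la_{L,i+1}})=E_iK_{\la_{L,i+1}}K_i^{-1}-K_{\la_{L,i+1}}E_iK_i^{-1}=(1-q^2)\,E_iK_\mu,
\]
where $\mu:=\la_{L,i+1}-\eps_i+\eps_{i+1}$. Since $1-q^2\neq 0$ and $K_\mu^{-1}\in\mathcal A$ by the previous paragraph, multiplying $\ad_{E_i}(K_{\la_{L,i+1}})\in\ULw$ on the right by $K_{\mu}^{-1}$ yields $E_i\in\mathcal A$ for $1\leq i\leq m-1$. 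An analogous calculation using $\Delta(F_i)=F_i\otimes 1+K_i^{-1}\otimes F_i$ gives $\ad_{F_i}(K_{\la_{L,i+1}})=(q^2-1)K_{\la_{L,i+1}}F_i$, and the same trick places $F_i$ in $\mathcal A$.

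Since $\mathcal A$ contains every standard generator of $U_L$, we conclude $\mathcal A=U_L$. There is no real obstacle here: all the work is already done in Proposition~\ref{prp:xm-yn-EF}, and what remains is a short sequence of adjoint computations together with the observation that the squares $K_{\eps_a}^{-2}$ produced by the $K_{\la_{L,a}}$'s are enough to generate all inverse Cartan elements. The $U_R$ statement follows mutatis mutandis from the $\URw$ half of Proposition~\ref{prp:xm-yn-EF}.
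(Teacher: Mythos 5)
Your proof is correct and follows essentially the same route as the paper: use Proposition~\ref{prp:xm-yn-EF} to get Cartan elements into $\ULw$, then apply $\ad_{E_i}$ and $\ad_{F_i}$ (legitimate by Lemma~\ref{lem:ax->adyx}) to peel off $E_i$ and $F_i$ after multiplying by an appropriate inverse Cartan element. The only cosmetic difference is that the paper applies the adjoint action to the single element $K_{-2\rho}=K_{\la_{L,1}}\cdots K_{\la_{L,m}}$, whereas you use the individual $K_{\la_{L,i+1}}$; your version also has the small virtue of spelling out explicitly why the $K_{\eps_a}^{-1}$ lie in $\cA$, a step the paper leaves implicit.
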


\begin{proof}
We give the proof for $U_L$ (for $U_R$ the proof is similar). 
Let $\cA$ denote the subalgebra of $U_L$ generated by $\ULw$ and $\{K_{\eps_i}\}_{i=1}^m$.
Set $\rho:=\sum_{i=1}^m i\eps_i$. 
Then $K_{-2\rho}=K_{\la_{L,1}}\cdots K_{\la_{L,m}}$, hence by Proposition~\ref{prp:xm-yn-EF} we have $K_{-2\rho}\in \ULw$. 
Lemma~\ref{lem:ax->adyx} implies that 
$E_iK_{-2\rho}=(1-q^2)^{-1}\ad_{E_i}(K_{-2\rho})\in \ULw$, so that $E_i\in\cA$. 
By a  similar argument we can prove that  $F_i\in \cA$ as well. Also by our assumption $K_{\eps_i}\in\cA$ for $	1\leq i\leq m$, hence $\cA=U_L$. 
\end{proof}

\section{Explicit formulas for $\phi_U(K_{\la_{L,a}}\otimes 1)$ and $\phi_U(1\otimes K_{\la_{R,b}})$}
\label{section5}
In this section we
compute explicit formulas for $\phi_U(K_{\la_{L,a}}\otimes 1)$ and $\phi_U(1\otimes K_{\la_{R,b}})$, where $K_{\la_{L,a}}$ and $K_{\la_{R,b}}$ are defined 
in 
\eqref{eq:xayb}. 
These explicit formulas are used in the proof of Theorem~\ref{thm:MainthmB}.

\subsection{Eigenvalues of $\mathbf D_{n,r}$ and $q$-factorial Schur polynomials}

\label{rmk:qfacSchurm}
For  any integer partition $\nu$ such that $\ell(\nu)\leq n$, let $s_\nu$ denote the $q$-factorial Schur polynomial in $n$ variables associated to $\nu$, defined by
\[
s_\nu(x_1,\ldots,x_n;q):=\frac{
\det\left(\prod_{k=0}^{\nu_j+n-j-1}
(x_i-q^k)
\right)_{1\leq i,j\leq n}}{\prod_{1\leq i<j\leq n}(x_i-x_j)
}.
\]
Recall that $\mathbf D(r,a,b)\in\sPD_{a\times b}$ is the $q$-differential operator defined in~\eqref{eq:Drab}. 
We will need the following statement, which is a variation of~\cite[Thm 1]{BKV06}.

\begin{prp}
\label{prp:55.22.1}
Let $\la$ be an integer partition satisfying  
$\ell(\la)\leq n$. 
Then the restriction of 
$\mathbf D(r,n,n)\in\sPD_{n\times n}$ 
to the irreducible $U_{LR}$-submodule $V_\la^*\otimes V_\la^*$ of $\sP_{n\times n}$
is a scalar multiple of identity, 
the scalar being
\begin{equation}
\label{eq:scalarr}
\boldsymbol\varphi_{\la,r,n}(q):=\frac{(-1)^rq^{r-r^2-2r(n-r)}}{(1-q^2)^r}
s_{(1^r)}(q^{2(\lambda_1+n-1)},\ldots,q^{2(\lambda_{n-1}+1)},
q^{2\lambda_{n}}
;q^2).
\end{equation}
\end{prp}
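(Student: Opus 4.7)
The plan is to first show that $\mathbf D(r,n,n)$ lies in the $U_{LR}$-invariant part of $\End_\Bbbk(\sP_{n\times n})$, so that by Proposition~\ref{prp:glmglndecom} and Schur's lemma it acts as a scalar $\boldsymbol\varphi_{\la,r,n}(q)$ on each isotypic component $V_\la^*\otimes V_\la^*$; the proof then reduces to computing this scalar on a distinguished highest-weight vector.

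For the invariance step, recall that for each fixed increasing $r$-tuple $\mathbf i=(i_1<\cdots<i_r)$ in $\{1,\ldots,n\}$, the quantum minors $\{M^{\mathbf i}_{\mathbf j}\}_{\mathbf j}$ are matrix coefficients of a copy of the $r$-th quantum exterior power $\Lambda^r_q V^{(n)}$ (via the right $U_R$-action on $\sP_{n\times n}$), and dually the $\{\oline M^{\mathbf i}_{\mathbf j}\}_{\mathbf j}$ are matrix coefficients of the dual module $\Lambda^r_q \breve V^{(n)}$ inside $\sD_{n\times n}$. The inner sum $\sum_{\mathbf j}M^{\mathbf i}_{\mathbf j}\oline M^{\mathbf i}_{\mathbf j}$ is therefore the canonical contraction of a finite-dimensional $U_R$-module with its dual and hence $U_R$-invariant; the analogous argument applied to the outer sum over $\mathbf i$ delivers $U_L$-invariance. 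Consequently $\mathbf D(r,n,n)$ commutes with $\phi_U(U_{LR})$, and Schur's lemma applied to the irreducible outer tensor products $V_\la^*\otimes V_\la^*$ produces the desired scalar eigenvalue.

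To identify the scalar I would take a $U_{\g h,L}\otimes U_{\g h,R}$-weight vector $f_\la$ in the highest-weight line of $V_\la^*\otimes V_\la^*$, realized concretely as a product of principal quantum minors $f_\la=\prod_k M^{(1,\ldots,\la'_k)}_{(1,\ldots,\la'_k)}$ indexed by the conjugate partition $\la'$. Expanding
\[
\mathbf D(r,n,n)\cdot f_\la=\sum_{\mathbf i,\mathbf j}M^{\mathbf i}_{\mathbf j}\,(\oline M^{\mathbf i}_{\mathbf j}\cdot f_\la)
\]
and using the technical identities of Subsection~\ref{subsec:sometech}, in particular Corollary~\ref{lem:sigmastaus-} to kill those $\oline M^{\mathbf i}_{\mathbf j}\cdot f_\la$ for which $\mathbf i$ does not refine the row set of $\la'$, together with Lemma~\ref{lem:Dbta(i)-(iii)} and the constants $\mathbf c(a,b)$ of Remark~\ref{rmk:cab} to evaluate the surviving terms, one may write $\mathbf D(r,n,n)\cdot f_\la = c\, f_\la$ where $c$ is an explicit $q$-rational sum indexed by the $r$-element subsets of $\{1,\ldots,n\}$ chosen as the $\mathbf j$-indices.

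The main obstacle will be matching the resulting combinatorial sum with the closed-form expression~\eqref{eq:scalarr}, i.e. with the determinantal $q$-factorial Schur polynomial evaluation at the shifted contents $q^{2(\la_i+n-i)}$. Tracking the precise prefactor $(-1)^r q^{r-r^2-2r(n-r)}/(1-q^2)^r$ requires careful bookkeeping of the $q$-commutation constants arising from (R1)--(R6) as each $\del_{i,j}$ is moved past the $t_{i,j}$, together with the $(1-q^2)$-factors produced in (R6). A cleaner route, given that the claim is stated as a variation of~\cite[Thm 1]{BKV06}, is to fix the normalization by direct computation in two boundary cases (say $\la=\varnothing$, where vanishing of $\mathbf D(r,n,n)\cdot 1$ matches a vanishing factor of $s_{(1^r)}$ at the origin, and $\la=(1^r)$), and then promote the identification to arbitrary $\la$ by the standard vanishing-and-interpolation characterization of $q$-factorial Schur polynomials on the shifted-content lattice; alternatively, \cite[Thm 1]{BKV06} can be imported directly after translating between the action on $\mathrm{Pol}(\mathrm{Mat}_n)_{q_\circ}$ described in Subsection~\ref{subsec:rela} and our action on $\sP_{n\times n}$ via Corollary~\ref{cor:POP} and Proposition~\ref{prp:SSVvsUS}.
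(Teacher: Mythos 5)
Your closing alternative --- importing \cite[Thm 1]{BKV06} via Corollary~\ref{cor:POP} and Proposition~\ref{prp:SSVvsUS} --- is exactly the route the paper takes, so the proposal does contain a workable proof; but the clause ``can be imported directly'' conceals essentially all of the content of the paper's argument. Since $\sPD_{n\times n}$ lives over $\Bbbk=\C(q)$ while $\mathrm{Pol}(\mathrm{Mat}_{n})_{q_\circ}$ is a $\C$-algebra at a numerical parameter $0<q_\circ<1$, the transfer goes through the integral form $\sPD^A_{n\times n}$ over $A=\Z[q,q^{-1}]$: one checks that $(1-q^2)^r\mathbf D(r,n,n)$ specializes to the operator $y_r$ of \cite{BKV06}, that an $A$-lattice $W_\la^A$ spanning $V_\la^*\otimes V_\la^*$ specializes into the submodule $\C[\mathrm{Mat}_n]_{q_\circ,\la}$ (this is where Proposition~\ref{prp:SSVvsUS} is needed), applies \cite[Thm 1]{BKV06} at each $q_\circ$, and finally concludes that $w':=(1-q^2)^r\left(\mathbf D(r,n,n)-\boldsymbol\varphi_{\la,r,n}\right)\cdot w$ vanishes because evaluation at infinitely many $q_\circ$ separates the points of $\sP_{n\times n}^A$. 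If you take this route you must supply that specialization-and-separation step; it is not a formality.

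Your primary route would not close as sketched, for two reasons. First, the invariance of $\sum_{\mathbf j}M^{\mathbf i}_{\mathbf j}\oline M^{\mathbf i}_{\mathbf j}$ is not a formal ``contraction of a module with its dual'': the product is the deformed twisted product of $\sPD$, so you would need to verify that $\{M^{\mathbf i}_{\mathbf j}\}_{\mathbf j}$ and $\{\oline M^{\mathbf i}_{\mathbf j}\}_{\mathbf j}$ are genuinely dual bases compatible with the $U_R$-module algebra structure, including the interaction of the two normalizations $(-q)^{\ell(\sigma)}$ and $(-q^{-1})^{\ell(\sigma)}$; the paper never proves invariance directly, obtaining the scalar action only as output of the eigenvalue computation. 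Second, and more seriously, the interpolation step is underdetermined. The vanishing-and-interpolation characterization of $s_{(1^r)}$ requires that the eigenvalue, viewed as a function of $\la$, be a symmetric polynomial of degree at most $r$ in the shifted contents $q^{2(\la_i+n-i)}$, together with vanishing at \emph{all} partitions $\mu$ with $|\mu|\leq r$ and $\mu\neq(1^r)$ --- far more than two boundary cases. The vanishing for $|\mu|<r$ is cheap (the $r$-fold product of $\del$'s kills $\sP^{(\leq r-1)}$ by Lemma~\ref{lem:delUsseq}), but the polynomiality and degree bound are substantial structural facts that nothing in your sketch establishes; absent them, matching finitely many values determines nothing.
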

\begin{proof}
We show that the assertion  follows from an analogous result in the setting of operators in $\mathrm{Pol}(\mathrm{Mat}_{n})_q$ acting on $\C[\mathrm{Mat}_n]_q$ (see Subsection~\ref{subsec:rela})
that is proved in~\cite[Thm 1]{BKV06}. In the following proof we use the notation introduced in Subsection~\ref{subsec:rela}. In particular we set 
$A:=\Z[q,q^{-1}]$.

\textbf{Step 1.}
 For each irreducible component $V_\la^*\otimes V_\la^*$ of $\sP_{n\times n}$ (see Proposition~\ref{prp:glmglndecom}) we choose a  lowest weight vector 
$v_\la\in\sP_{n\times n}^A$ 
 for the $U_{LR}$-action  and set  $W_\la^{A}:=U_{LR}^{A}\cdot v_\la$. From the explicit formulas of the action of $U_{LR}$ (see Remark~\ref{rmk:actionformulas}) it follows that $W_\la^A\sseq \sP_{n\times n}^A$. 
Furthermore, 
the canonical map $W_\la^A\otimes_A\Bbbk\to V_\la^*\otimes V_\la^*$ is an isomorphism. 

\textbf{Step 2.}
By Corollary~\ref{cor:POP} we obtain  a map
\begin{equation}
\label{eq:basechange}
\sPD_{n\times n}^{A}
\xrightarrow{\ D\mapsto D\otimes 1\ }
\sPD_{n\times n}^{ A}\otimes_{A}\C
\xrightarrow{\ \ \cong\ \ }
\mathrm{Pol}(\mathrm{Mat}_{n})_{q_\circ},
\end{equation}
that restricts to  a map 
\begin{equation}
\label{eq:basechange1}
\sP_{n\times n}^{ A}
\xrightarrow{\ D\mapsto D\otimes 1\ }
\sP_{n\times n}^{ A}\otimes_{ A}\C
\xrightarrow{\ \ \cong\ \ }
\C[\mathrm{Mat}_{n}]_{q_\circ}.
\end{equation}
We also have a commutative diagram
\[
\xymatrix{
U_{LR}^A\otimes_A\sP_{n\times n}^A\ar[r]
\ar[d]_{(-)\otimes_A\C}
& \sP_{n\times n}^A \ar[d]^{(-)\otimes_A\C}\\
U_{q_\circ}(\gl_n)\otimes U_{q_\circ}(\gl_n)\otimes \C[\mathrm{Mat}_n]_{q_\circ} \ar[r] & \C[\mathrm{Mat}_n]_{q_\circ}
}
\]where the top horizontal map is the restriction of the $U_{LR}$-module structure on 
$\sP_{n\times n}$ 
and the bottom horizontal map is~\eqref{eq:Uqmmmnn} in the special case $m=n$.
Let us denote both of the maps~\eqref{eq:basechange} 
and~\eqref{eq:basechange1}
by $\boldsymbol\beta_{q_\circ}$. 
Then
$\boldsymbol\beta_{q_\circ}(t_{i,j})=(1-q_\circ^2)_{}^{-\frac12} z^i_j$ and $\boldsymbol\beta_{q_\circ}(\del_{i,j})=(1-q_\circ^2)_{}^{-\frac12} (z^i_j)^*$. 
From the definition of $\mathbf D(r,n,n)$ it is clear that $\mathbf D(r,n,n)\in\sPD_{n\times n}^A$. 
In addition 
$\boldsymbol\beta_{q_\circ}\left((1-q^2)^r\mathbf D(r,n,n)\right)=y_r$, where $y_r$ is the operator defined in~\cite[Eq. (11)]{BKV06}. 

\textbf{Step 3.}
From Proposition~\ref{prp:SSVvsUS} it follows that $\boldsymbol\beta_{q_\circ}(v_\la)$ is the joint highest weight vector for the irreducible submodule 
$\C[\mathrm{Mat}_n]_{q_\circ,\la}$ of 
$\C[\mathrm{Mat}_n]_{q_\circ}$ that is defined in~\cite[Sec. 2]{BKV06}. Thus Step 1 and the commutative diagram of Step 2 imply that $\boldsymbol\beta_{q_\circ}(W_\la^A)=
\boldsymbol\beta_{q_\circ}(U_{LR}^A\cdot  v_\la)\sseq \C[\mathrm{Mat}_n]_{q_\circ,\la}$.

\textbf{Step 4.} Fix $\la$ such that $\ell(\la)\leq n$,  choose any vector $w\in W_\la^A$, and set \[w':=(1-q^2)^r\left(\mathbf D(r,n,n)-\boldsymbol\varphi_{\la,r,n}\right)\cdot w.
\]
From Steps 1--2 above it follows that 
$\boldsymbol\beta_{q_\circ}(w')=y_r\cdot \boldsymbol\beta_{q_\circ}(w)-(1-q_\circ^2)\boldsymbol\varphi_{\la,r,n}(q_\circ)\boldsymbol\beta_{q_\circ}(w)$. Since $\boldsymbol\beta_{q_\circ}(w)\in \C[\mathrm{Mat}]_{q_\circ,\la}$,  by~\cite[Thm 1]{BKV06} we obtain  $\boldsymbol\beta_{q_\circ}(w')=0$. Since 
evaluations at $q_\circ$ for infinitely many $q_\circ$ separate  the points of $\sP_{n\times n}^A$, it follows that $w'=0$. 

\textbf{Step 5.} By Step 4 we have $\mathbf D(r,n,n)\cdot w=\boldsymbol\varphi_{\la,r,n}(q)w$ for $w\in W_\la^A$. Since $W_\la^A$ spans $V_\la^*\otimes V_\la^*$ over $\Bbbk$, the same assertion holds for all $w\in V_\la^*\otimes V_\la^*$. 
\end{proof}

\begin{rmk}
In our forthcoming work~\cite{LSS22b}, we prove a broad extension of 
Proposition~\ref{prp:55.22.1} 
 for Capelli operators on quantum symmetric spaces. 
\end{rmk}

The polynomials $s_\nu$ are  specializations of the \emph{interpolation Macdonald polynomials} $R_\la$ 
defined in~\cite{Sah96} (see also~\cite{Kn97} and~\cite{Ok97}). 
In the rest of this section we follow the notation of~\cite[Sec. 0.3]{Sah11}. Let $R_\la(x_1,\ldots,x_n;q,t)$ denote the unique symmetric polynomial with coefficients in $\Q(q,t)$ that satisfies the following conditions:
\begin{itemize}
\item[(i)]
$\deg R_\la=|\la|$.
\item[(ii)] 
$R_\la(q^{\mu_1},\ldots,q^{\mu_i}t^{1-i},\ldots,q^{\mu_n}t^{1-n};q,t)=0$ for all partitions  
$\mu\neq \la$ that satisfy $|\mu|\leq |\la|$.
\item[(iii)] $R_\la$ can be expressed as 
$R_\la=m_\la+\sum_{\mu\neq \la} c_{\mu,\la}m_\mu$, where the $m_\mu$ denote 
the monomial symmetric polynomials. 
\end{itemize}
It is known  
\cite[Prop. 2.8]{Kn97}
that \[
s_\la(x_1,\ldots,x_n;q)=q^{(n-1)|\la|}R_\la(q^{1-n}x_1,\ldots,q^{1-n}x_n;q,q).
\] 
The proof of Lemma~\ref{lem:qSchurident} below uses Okounkov's binomial theorem for interpolation Macdonald polynomials~\cite{Ok97}. We remark that in~\cite{Ok97} 
the interpolation Macdonald polynomials are defined slightly differently, and are denoted by  the $P_\la^*$, but one can show that  
\begin{equation}
\label{eq:okoun}
P_\la^*(x_1,\ldots,x_n;q,t)=R_\la(x_1,x_2t^{-1},x_nt^{-n+1};q,t).
\end{equation}
For two integer partitions $\la,\mu$ such that $\ell(\la),\ell(\mu)\leq n$, let
$\left[\la\atop\mu\right]_{q,t}$  
 denote the $(q,t)$-binomial coefficient defined in~\cite{Ok97}. Thus
\begin{equation}
\label{eq:la,muqt}
\displaystyle\begin{bmatrix}
\,\,\la\,\,\\ \mu
\end{bmatrix}_{q,t}
:=\frac{P^*_\mu
(q^{\la_1},\ldots,q^{\la_n};q,t)}{
P^*_\la
(q^{\la_1},\ldots,q^{\la_n};q,t)}
.\end{equation}

\begin{lem}
\label{lem:1n1r}
For $0\leq r\leq n$ we have
$\left[
1^n\atop 1^r
\right]_{q,q}
=q^{-r(n-r)}
\frac{(q^n-1)\cdots (q^{n-r+1}-1)}
{(q^r-1)\cdots (q-1)}$.
\end{lem}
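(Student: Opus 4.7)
The plan is to reduce $\begin{bmatrix}1^n\\1^r\end{bmatrix}_{q,q}$ to a ratio of factorial Schur polynomial evaluations and then compute each evaluation.

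First, starting from the definition \eqref{eq:la,muqt} and applying \eqref{eq:okoun} together with the identity from \cite[Prop.~2.8]{Kn97} relating $R_\la$ to the factorial Schur polynomial $s_\la$, I would rewrite each evaluation $P^*_\mu(q^{\la_1},\ldots,q^{\la_n};q,q)$ as an explicit scalar multiple of $s_\mu(q^{\la_i+n-i};q)$. Carrying out this translation with $\la=1^n$, the problem reduces to computing the two factorial Schur values $s_{1^r}(q^n,q^{n-1},\ldots,q;q)$ and $s_{1^n}(q^n,q^{n-1},\ldots,q;q)$ and taking their ratio, up to the (easily tracked) power of $q$ arising from the $R_\la\to s_\la$ prefactors.

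Second, I would evaluate both factorial Schur polynomials using the determinantal formula in the excerpt. The case $s_{1^n}$ is elementary: pulling $(x_i-1)$ out of each row and recognizing a Vandermonde determinant reduces the formula to $s_{1^n}(x_1,\ldots,x_n;q)=\prod_{i=1}^n(x_i-1)$, so that $s_{1^n}(q^n,\ldots,q;q)=\prod_{k=1}^n(q^k-1)$. For $s_{1^r}$ with $r<n$ I would use the (standard) column-tableau expansion
\begin{equation*}
s_{1^r}(x_1,\ldots,x_n;q)=\sum_{1\leq i_1<\cdots<i_r\leq n}\prod_{k=1}^r\bigl(x_{i_k}-q^{i_k-k}\bigr),
\end{equation*}
which can equally be deduced by Laplace expansion of the determinant. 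At the specialization $x_i=q^{n+1-i}$, each factor is of the form $q^{i_k-k}(q^{n+1-2i_k+k}-1)$; a combinatorial accounting of individual vanishings and pairwise sign-cancellations shows that only the diagonal tableau $I=(1,2,\ldots,r)$ survives, yielding $s_{1^r}(q^n,\ldots,q;q)=\prod_{k=1}^r(q^{n-k+1}-1)$.

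Taking the ratio and reintroducing the $q$-prefactors from the first step produces exactly $q^{-r(n-r)}(q^n-1)\cdots(q^{n-r+1}-1)/[(q^r-1)\cdots(q-1)]$, as claimed. The main technical obstacle will be justifying the cancellation pattern in the tableau sum for $s_{1^r}(q^n,\ldots,q;q)$, so that only the tableau $I=(1,2,\ldots,r)$ contributes; this step is tedious by direct bookkeeping but can likely be streamlined by invoking the general vanishing/evaluation theorem for factorial Schur polynomials at principal specializations of the form $q^{\mu+\delta}$, which forces the value at the ``top'' such point $q^{1^n+\delta}$ to be determined by the unique nonvanishing tableau.
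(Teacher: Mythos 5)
Your overall strategy --- translating $P^*_\mu$ into factorial Schur polynomials via \eqref{eq:okoun} and \cite[Prop.~2.8]{Kn97} and then evaluating directly --- is genuinely different from the paper's proof, which instead invokes the tableau-sum formula of \cite[Thm 0.8]{Sah11} for $(q,t)$-binomial coefficients and observes that the skew shape $1^n\backslash 1^r$ carries a unique standard tableau. Your two factorial Schur evaluations are themselves correct: $s_{1^n}(q^n,\ldots,q;q)=\prod_{k=1}^n(q^k-1)$ and $s_{1^r}(q^n,\ldots,q;q)=\prod_{k=1}^r(q^{n+1-k}-1)$, and the cancellation pattern you describe for the latter does work out (e.g.\ via a sign-reversing pairing replacing a suitable entry $i$ by $n+1+k-i$).

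The final assembly, however, fails. Carrying the prefactor from $P^*_\mu(q,\ldots,q;q,q)=q^{-(n-1)|\mu|}s_\mu(q^n,\ldots,q;q)$ through the ratio you propose gives
\[
q^{(n-1)(n-r)}\,\frac{\prod_{k=1}^r(q^{n+1-k}-1)}{\prod_{k=1}^n(q^{k}-1)}
=\frac{q^{(n-1)(n-r)}}{(q^{n-r}-1)\cdots(q-1)},
\]
which is not the asserted value; already for $n=2$, $r=1$ this equals $q/(q-1)$ rather than $(q+1)/q$. The discrepancy comes from the denominator. In Okounkov's definition of $\left[\la\atop\mu\right]_{q,t}$ --- the one under which the lemma is true, and the one the paper's own proof via \cite{Sah11} implicitly uses; note that \eqref{eq:la,muqt} as printed would give $\left[{1^n}\atop{1^0}\right]_{q,q}=1/P^*_{1^n}(q,\ldots,q;q,q)\neq 1$, contradicting the $r=0$ case of the lemma --- the denominator is $P^*_\mu$ evaluated at the point attached to $\mu$, i.e.\ $P^*_{1^r}(q,\ldots,q,1,\ldots,1;q,q)$ with $r$ entries equal to $q$, not $P^*_{1^n}(q,\ldots,q;q,q)$. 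With that correction your method does close: the prefactors $q^{-(n-1)r}$ cancel outright, the numerator is your $\prod_{k=1}^r(q^{n+1-k}-1)$, and the denominator becomes $s_{1^r}(q^n,\ldots,q^{n-r+1},q^{n-r-1},\ldots,q,1;q)=q^{r(n-r)}\prod_{k=1}^r(q^k-1)$, yielding exactly the stated formula. So you must (a) replace the denominator evaluation and (b) supply that second (similar, and again cancellation-based) evaluation; as written, the ratio you compute does not produce the claimed expression.
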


\begin{proof}
The proof is a straightforward but somewhat tedious calculation based on a general combinatorial formula 
in~\cite[Thm 0.8]{Sah11}
for the $(q,t)$-binomial coefficients.
We give a brief outline of this calculation. In the notation 
of~\cite{Sah11}, the value of~\eqref{eq:la,muqt} can be expressed as a sum of the form $\sum_T wt(T)$, where $T$ is a standard tableau of shape $\la\bls \mu$. For $\la:=(1^n)$ and $\mu:=(1^r)$, there is only one such tableau. 
By direct calculation one obtains
\[
\la^i=(1^{n-i})\ ,\  
a_{\la^i,\la^{i+1}}=\frac{t^{-n+i+1}(1-t^{n-i})}{1-t}\ ,\ \frac{
|\oline{\la^i}|-|\oline{\la^{i+1}}|
}{
|\oline{\la}|-|\oline{\la^{i+1}}|
}=\frac{t^{i}(1-t)}{1-t^{i+1}}.
\]
From these, the assertion of the lemma follows immediately.  
\end{proof}

\begin{lem}
\label{lem:qSchurident}
Set $\nu_r:=(1^r)$ for $0\leq r\leq n$.
Then
\[
\sum_{r=0}^n
q^{-{r\choose 2}-r(n-r)}
s_{\nu_r}(q^{n-1}x_1,\ldots,q^{n-i}x_i,\ldots, x_n;q)=x_1\cdots x_n
.\] 
\end{lem}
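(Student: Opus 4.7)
My plan is to prove the identity by reducing it to a single $(n{+}1)\times(n{+}1)$ Vandermonde-type determinant and then evaluating at a distinguished point.

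Set $y_i:=q^{n-i}x_i$ and $P_m(z):=\prod_{k=0}^{m-1}(z-q^k)$, so that $P_m$ is monic of degree $m$. Writing out the definition of the $q$-factorial Schur polynomial for $\nu_r=(1^r)$, I obtain $s_{\nu_r}(y_1,\dots,y_n;q)=\det(M_r)/V(y)$, where $V(y):=\prod_{i<j}(y_i-y_j)$ and $M_r$ is the $n\times n$ matrix whose $j$-th column is the vector $(P_{n-j+1}(y_i))_i$ for $1\le j\le r$ and $(P_{n-j}(y_i))_i$ for $r+1\le j\le n$. The point is that the columns of $M_r$ realize \emph{every} degree in $\{0,1,\dots,n\}$ except $n-r$.

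Next, I will introduce an auxiliary indeterminate $u$ and consider
\[
D(u):=\det\begin{pmatrix} P_n(u) & P_{n-1}(u) & \cdots & P_0(u) \\ P_n(y_1) & P_{n-1}(y_1) & \cdots & P_0(y_1) \\ \vdots & \vdots & & \vdots \\ P_n(y_n) & P_{n-1}(y_n) & \cdots & P_0(y_n) \end{pmatrix}.
\]
Cofactor expansion along the first row gives $D(u)=\sum_{r=0}^{n}(-1)^r P_{n-r}(u)\det(M_r)$, since the minor associated to the $(r{+}1)$-st column is precisely $\det(M_r)$. Independently, since each $P_m$ is monic of degree $m$, suitable column operations transform $D(u)$ into the ordinary Vandermonde determinant in the variables $u,y_1,\dots,y_n$; equivalently, $D(u)$ is a polynomial in $u$ of degree $n$ with leading coefficient $V(y)$ that vanishes at each $u=y_i$ (as two rows coincide). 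Either way one concludes that $D(u)=V(y)\prod_{i=1}^{n}(u-y_i)$.

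The final step is to specialize at $u=0$. A direct computation gives $P_{n-r}(0)=(-1)^{n-r}q^{\binom{n-r}{2}}$ and $\prod_{i=1}^{n}(0-y_i)=(-1)^{n}y_1\cdots y_n$, so
\[
(-1)^n\sum_{r=0}^{n}q^{\binom{n-r}{2}}\det(M_r)=(-1)^n y_1\cdots y_n V(y).
\]
Applying the elementary identity $\binom{n-r}{2}=\binom{n}{2}-\binom{r}{2}-r(n-r)$ together with $y_1\cdots y_n=q^{\binom{n}{2}}x_1\cdots x_n$, the factors $q^{\binom{n}{2}}$ cancel and I obtain $\sum_{r=0}^{n}q^{-\binom{r}{2}-r(n-r)}\det(M_r)=x_1\cdots x_n\cdot V(y)$, which after dividing by $V(y)$ is exactly the desired identity. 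The only delicate point is the bookkeeping of signs and $q$-exponents; the identity itself is really just cofactor expansion of a Vandermonde.
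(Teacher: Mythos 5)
Your proof is correct, and it takes a genuinely different route from the paper. The paper deduces the identity from Okounkov's binomial theorem for interpolation Macdonald polynomials (specialized at $t=q$ and $\la=(1^n)$), which requires first computing the $(q,t)$-binomial coefficient $\left[1^n\atop 1^r\right]_{q,q}$ via the combinatorial formula of Sahi for standard tableaux (this is the content of Lemma~\ref{lem:1n1r}). Your argument is entirely elementary and self-contained: you observe that, for $\nu_r=(1^r)$, the numerator determinant in the definition of $s_{\nu_r}$ is exactly the minor of the $(n{+}1)\times(n{+}1)$ generalized Vandermonde matrix in the Newton-type basis $P_m(z)=\prod_{k=0}^{m-1}(z-q^k)$ obtained by deleting the column of degree $n-r$, so that the whole sum is the cofactor expansion of $D(u)=V(y)\prod_i(u-y_i)$ along its first row, evaluated at $u=0$. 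I checked the details: the column orderings of your $M_r$ and of the $(r{+}1)$-st minor agree, the sign $(-1)^r$ from the cofactor combines with $P_{n-r}(0)=(-1)^{n-r}q^{\binom{n-r}{2}}$ to give a uniform $(-1)^n$, and the exponent identity $\binom{n-r}{2}=\binom{n}{2}-\binom{r}{2}-r(n-r)$ together with $y_1\cdots y_n=q^{\binom{n}{2}}x_1\cdots x_n$ yields exactly the claimed normalization. What your approach buys is independence from the interpolation-polynomial machinery (it would render Lemma~\ref{lem:1n1r} unnecessary for this purpose) and a transparent explanation of where the identity comes from; what the paper's approach buys is that it situates the identity as a special case of a general binomial formula, which is more in keeping with the broader Capelli-operator program referenced elsewhere in the text.
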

\begin{proof}
This is stated in~\cite[Prop. 10]{BKV06} without a proof. We show that it  is a special case of  Okounkov's binomial 
theorem~\cite[Eq. (1.11)]{Ok97}. 
More specifically,  from~\eqref{eq:okoun} it follows that  
\[
P_{\nu_r}^*
(x_1,\ldots,x_n;q,q)
=q^{(1-n)r}s_{\nu_r}
(q^{n-1}x_1,\ldots,x_n;q)
.
\]
We now consider the 
identity~\cite[Eq. (1.11)]{Ok97}
for $t:=q$ and $\la:=(1^n)$. Then the left hand side of~\cite[Eq. (1.11)]{Ok97} is equal to  
$x_1\cdots x_n$, whereas its right hand side is equal to
\[
\sum_{r=0}^n\begin{bmatrix}
1^n\\ 1^r
\end{bmatrix}_{q,q}
q^{-{r\choose 2}}\frac{(q^r-1)\cdots (q-1)}
{(q^n-1)\cdots (q^{n-r+1}-1)}
s_{\nu_r}(q^{n-1}x_1,\ldots,x_n;q).
\]
 To complete the proof, we use Lemma~\ref{lem:1n1r}.
\end{proof}

%
%
%
%

\subsection{The explicit formulas}
\label{subsec-quantumminor}

Let $\mathbf D_{n,r}$ and $\mathbf D_{m,r}'$ be as in Section~\ref{sec:Introduction}.
For 
 $0\leq r\leq m$  we set
\begin{equation}
\label{eq:Dr=Dnr=Dmr}
\mathbf{D}_r:=\mathbf D_{n,r}^{}=\mathbf D'_{m,r}.
\end{equation}

\begin{prp}
\label{prp:eval-xL}
$
\phi_U(K_{\la_{L,1}}\otimes 1)=\phi_U(1\otimes K_{\la_{R,1}})=
\sum_{r=0}^{m}
(q^2-1)^r\mathbf D_r
$.
%
\end{prp}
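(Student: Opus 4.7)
The plan is to establish both claimed identities in $\End_\Bbbk(\sP)$, with $\sum_{r=0}^m(q^2-1)^r\mathbf{D}_r$ interpreted via $\phi_{PD}$; the conclusion then follows since $\sP$ is a faithful $\sPD$-module (Proposition~\ref{prp:faithful-action}). The first equality $\phi_U(K_{\la_{L,1}}\otimes 1)=\phi_U(1\otimes K_{\la_{R,1}})$ is immediate from Remark~\ref{rmk:actionformulas}: the group-like elements $K_{\la_{L,1}}\otimes 1$ and $1\otimes K_{\la_{R,1}}$ both scale every generator $t_{i,j}$ by $q^2$, hence act as $q^{2d}\cdot\id$ on $\sP^{(d)}$. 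The remaining task is to show that $\sum_{r=0}^m(q^2-1)^r\mathbf{D}_r$ acts by the same scalar $q^{2d}$ on $\sP^{(d)}$.

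First I would handle the square case $m=n$. The decomposition $\sP^{(d)}_{n\times n}=\bigoplus_{|\la|=d}V_\la^*\otimes V_\la^*$ is multiplicity-free by Proposition~\ref{prp:glmglndecom}, and Proposition~\ref{prp:55.22.1} provides in closed form the scalar $\boldsymbol\varphi_{\la,r,n}(q)$ by which $\mathbf{D}(r,n,n)$ acts on each isotypic component. The cancellation $(q^2-1)^r(-1)^r/(1-q^2)^r=1$ reduces the sum to
\[
\sum_{r=0}^n(q^2-1)^r\boldsymbol\varphi_{\la,r,n}(q)=\sum_{r=0}^n q^{r^2+r-2rn}\,s_{(1^r)}\!\big(q^{2(\la_1+n-1)},\ldots,q^{2\la_n};q^2\big),
\]
which by the $q\mapsto q^2$, $x_i\mapsto q^{2\la_i}$ specialization of Lemma~\ref{lem:qSchurident} equals $q^{2|\la|}$. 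Thus $\sum_{r=0}^n(q^2-1)^r\mathbf{D}_r$ acts as $q^{2d}\cdot\id$ on $\sP^{(d)}_{n\times n}$, completing the square case.

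To extend to general $m,n$, invoke Lemma~\ref{lem:etam,n} to reduce to $m\leq n$, and realize $\sP_{m\times n}$ as the subalgebra of $\sP_{n\times n}$ generated by the $t_{i,j}$ with $1\leq i\leq m$ (Definition~\ref{dfn-PmnPnn}). For any $f\in\sP_{m\times n}$ and any $\mathbf i=(i_1,\ldots,i_r)$ containing an index exceeding $m$, Lemma~\ref{lem:5.1.2} yields $\oline M^\mathbf i_\mathbf j\cdot f=0$, since a row index appearing in every monomial of $\oline M^\mathbf i_\mathbf j$ does not occur among the row indices of the $t$-factors of $f$. Consequently $\mathbf{D}(r,n,n)\cdot f=\mathbf{D}(r,m,n)\cdot f$ for every $f\in\sP_{m\times n}$, and the square-case conclusion propagates to give $\sum_{r=0}^m(q^2-1)^r\mathbf{D}_r\cdot f=q^{2d}f$ on $\sP_{m\times n}^{(d)}$. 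The main obstacle I anticipate is arithmetic rather than conceptual: carefully verifying the exponent identity $r-r^2-2r(n-r)=r^2+r-2rn=-2\binom{r}{2}-2r(n-r)$ and confirming that the specialization points in the $q$-factorial Schur polynomial align precisely with those in $\boldsymbol\varphi_{\la,r,n}$.
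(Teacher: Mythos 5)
Your first equality and your treatment of the square case $m=n$ coincide with the paper's own argument: both reduce to Proposition~\ref{prp:55.22.1} and the $q\mapsto q^2$, $x_i\mapsto q^{2\la_i}$ specialization of Lemma~\ref{lem:qSchurident}, and your exponent bookkeeping checks out. The gap is in the passage from $n\times n$ to $m\times n$. You embed $\sP_{m\times n}$ into $\sP_{n\times n}$ as the \emph{top} $m$ rows and then assert $\mathbf D(r,n,n)\cdot f=\mathbf D(r,m,n)\cdot f$ for $f\in\sP_{m\times n}$. The vanishing of the minors $\oline M^{\mathbf i}_{\mathbf j}$ whose row index set leaves $\{1,\dots,m\}$ is fine, but after discarding them you are left with the action on $f$ of an element of $\sPD_{n\times n}$ written by the same formula as $\mathbf D(r,m,n)$ --- not with the action of $\mathbf D_r=\mathbf D(r,m,n)\in\sPD_{m\times n}$ on $\sP_{m\times n}$, which is what $\phi_{PD}$ in the statement refers to. These two actions are not a priori equal: the relations (R4)--(R6) involve sums over \emph{larger} indices, so the assignment $t_{i,j}\mapsto t_{i,j}$, $\del_{i,j}\mapsto \del_{i,j}$ onto the top-left corner does \emph{not} define an algebra homomorphism $\sPD_{m\times n}\to\sPD_{n\times n}$ (relation (R6) in $\sPD_{m\times n}$ truncates the sum at $c'=m$, while in $\sPD_{n\times n}$ it runs to $n$), so there is no map along which to transport the module computation.

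The paper avoids exactly this by using the bottom-right embedding $\Emb_{m\times n}^{n\times n}$ of Proposition~\ref{prp:existence(6)}, for which the relations match literally and which is therefore an algebra monomorphism; Corollary~\ref{lem:sigmastaus-} is stated for that convention ($\oline M^{\mathbf i}_{\mathbf j}$ kills $\Emb(\sP)$ unless $i_1\geq n-m+1$), and a separate step, $\Emb(D)\cdot\Emb(f)=\Emb(D\cdot f)$, proved by comparing the left ideals $\mathscr I$ and $\mathscr I'$, identifies the two module actions. To close your gap, either switch to that embedding, or add an inductive argument (on the degree of $f$, using Lemma~\ref{lem:delijonarbr} to kill the extra terms $t_{c',\cdot}\del_{c',\cdot}$ with $c'>m$ produced by (R4)--(R6)) showing directly that the corner action of $\del_{i,j}$ inside $\sP_{n\times n}$ agrees with its action inside $\sP_{m\times n}$. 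A minor further point: you apply Lemma~\ref{lem:5.1.2} using only the row condition, which is consistent with its proof (the hypothesis should read ``or'' rather than ``and'') but not with its literal statement.
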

\begin{proof}
Both $K_{\la_{L,1}}\otimes 1$ and $1\otimes K_{\la_{R,1}}$ act on
$\sP^{(d)}$ by the scalar $q^{2d}$ (this is easy to verify using Remark~\ref{rmk:actionformulas}).
Since $\sP$ is a faithful $\sPD$-module, 
by Proposition~\ref{prp:glmglndecom} it suffices to verify that for every partition $\la$ that satisfies $\ell(\la)\leq \min\{m,n\}$ and $|\la|=d$, 
the restriction of $\sum_{r=0}^m (q^2-1)^r\mathbf D_r$ to  
the irreducible $U_{LR}$-submodule $V_\la^*\otimes V_\la^*$ of $\sP$ is multiplication by the scalar $q^{2d}$.

\textbf{Step 1.} First we prove the assertion in the case  $m=n$. In this case $\mathbf D_r=\mathbf D(r,n,n)$, hence by Proposition~\ref{prp:55.22.1}  it is enough to verify that\begin{equation}
\label{eq:qr-r2}
\sum_{r=0}^n
q^{r-r^2-2r(n-r)}
s_{\nu_r}(q^{2(\lambda_1+n-1)},\ldots,q^{2(\lambda_{n-1}+1)},
q^{2\lambda_{n}}
;q^2)=q^{2(\la_1+\cdots+\la_n)}
.\end{equation}
Equality~\eqref{eq:qr-r2} follows from
Lemma~\ref{lem:qSchurident} after substituting $q$ by $q^\frac{1}{2}$. 

\textbf{Step 2.} Henceforth assume $m<n$ (by Lemma~\ref{lem:etam,n}
the proof when $m>n$ is similar). 
Let
 \[
 \Emb=\Emb_{m\times n}^{n\times n}:\sPD\to\sPD_{n\times n}
 \] be the embedding of algebras defined in~\eqref{eq:embeD}, so that 
 $\Emb(t_{i,j})=t_{i+n-m,j}$ and $\Emb(\del_{i,j})=\del_{i+n-m,j}$.
Set $\tilde{\mathbf D}_r:=\mathbf D(r,n,n)$. 
%
By Corollary~\ref{lem:sigmastaus-}, $\oline M^\mathbf i_\mathbf j\cdot\mathsf (\Emb(\sP))=0$ unless $\mathbf i=(u_1,\ldots,u_r)$ satifies $u_1\geq n-m+1$. 
Thus, for every $f\in\sP$ we have $\tilde {\mathbf D}_r\cdot \Emb(f)=\Emb(\mathbf D_r)\cdot \Emb(f)$ when $0\leq r\leq m$, and $\tilde {\mathbf D}_r\cdot \Emb(f)=0$ when $m<r\leq n$. 

\textbf{Step 3.}
Recall that $\mathscr I$ denotes the left ideal of $\sPD$ that is generated by $\sD^{(1)}$. Let $\mathscr I'$ denote the left ideal of $\sPD_{n\times n}$ that is generated by $\sD_{n\times n}^{(1)}$. Let $\Emb:\sPD\to\sPD_{n\times n}$ be as in Step 2. For $D\in\sPD$ and $f\in\sP$ we have
$
(D\cdot f-Df)\in\mathscr I
$, hence \[
\Emb(D\cdot f)-\Emb(D)\Emb(f)=\Emb(D\cdot f-Df)\in\mathscr I'.
\]
 But also 
$\Emb(D)\cdot \Emb(f)-\Emb(D)\Emb(f)\in\mathscr I'$. From the last two relations we obtain $\Emb(D)\cdot \Emb(f)-\Emb(D\cdot f)\in\mathscr I'$. But in addition $\Emb(D)\cdot \Emb(f)-\Emb(D\cdot f)\in\sP_{n\times n}$, hence 
$
\Emb(D)\cdot \Emb(f)=\Emb(D\cdot f)
$.

\textbf{Step 4.} Let $f\in\sP^{(d)}$. From Step 3 and Step 2 it follows that 
\begin{equation}
\label{eq:PUJ}
\Emb\left(
\sum_{r=0}^m(q^2-1)^r\mathbf D_r\cdot f
\right)
=
\Emb\left(\sum_{r=0}^m(q^2-1)^r\mathbf D_r\right)\cdot \Emb(f)=\sum_{r=0}^n
(q^2-1)^r
\tilde {\mathbf D}_r\cdot \Emb(f).
\end{equation}
From Step 1 it follows that 
$\sum_{r=0}^n(q^2-1)^r\tilde{\mathbf  D}_r\cdot \Emb(f)=q^{2d}\Emb(f)$. Since $\Emb$ is an injection, from~\eqref{eq:PUJ} we obtain
$\sum_{r=0}^m (q^2-1)^r\mathbf D_r\cdot f=q^{2d}f$. 
\end{proof}

%
%

\begin{prp}
\label{prp:phiUxaphiUyb}
For $1\leq a\leq m$ and $1\leq b\leq n$ we have 
\begin{equation}
\label{eq:phixaphiyb}
\phi_U(K_{\la_{L,a}}\otimes 1)=
\sum_{r=0}^{m-a+1}(q^2-1)^r\mathbf D'_{m-a+1,r}
\quad\text{and}\quad
\phi_U(1\otimes K_{\la_{R,b}})=
\sum_{r=0}^{n-b+1}(q^2-1)^r\mathbf D_{n-b+1,r}.
\end{equation}
\end{prp}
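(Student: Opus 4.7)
The second displayed equality follows from the first via the symmetry isomorphism of Lemma~\ref{lem:etam,n}, after verifying that $\eta_{m,n}$ interchanges $K_{\la_{L,a}}$ with $K_{\la_{R,a}}$ and the operators $\mathbf D'_{m-a+1,r}$ with $\mathbf D_{n-a+1,r}$; I therefore focus on the first equality. Set $m' := m - a + 1$, and let $\la'_{L,1} := -\sum_{i=1}^{m'} 2\eps_i$, regarded as an element of the Cartan subalgebra of $U_q(\gl_{m'})$. From the definitions in~\eqref{eq:mapKappa} and~\eqref{eq:xayb} one reads off that $\boldsymbol\kappa_{m',m}(K_{\la'_{L,1}}) = K_{\la_{L,a}}$.

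The plan is to apply Proposition~\ref{prp:eval-xL} in the $(m' \times n)$-setting and transport the result to $\sPD_{m \times n}$ via the embedding $\Emb := \Emb_{m' \times n}^{m \times n}$ of Proposition~\ref{prp:existence(6)}. Explicitly, Proposition~\ref{prp:eval-xL} gives
\[
\phi_U^{(m' \times n)}(K_{\la'_{L,1}} \otimes 1) = \sum_{r=0}^{m'}(q^2-1)^r \mathbf D(r, m', n)
\]
in $\sPD_{m' \times n}$, and applying $\Emb$ (using $\mathbf D'_{m',r} = \Emb(\mathbf D(r, m', n))$) produces an identity in $\sPD_{m \times n}$ whose right-hand side is exactly the one appearing in~\eqref{eq:phixaphiyb}. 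It therefore suffices to prove the identity $\Emb\bigl(\phi_U^{(m' \times n)}(K_{\la'_{L,1}} \otimes 1)\bigr) = \phi_U^{(m \times n)}(K_{\la_{L,a}} \otimes 1)$ in $\sPD_{m \times n}$. By faithfulness of the $\sPD_{m \times n}$-module $\sP_{m \times n}$ (Proposition~\ref{prp:faithful-action}), this reduces to an equality of operators on $\sP_{m \times n}$.

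On the subalgebra $\Emb(\sP_{m' \times n}) \subset \sP_{m \times n}$ generated by the $t_{i,j}$ with $a \leq i \leq m$, the two operators agree: this combines the $\boldsymbol\kappa_{m',m}$-equivariance of $\Emb$ (Proposition~\ref{prp:existence(6)}(ii)), which yields $K_{\la_{L,a}} \cdot \Emb(f) = \Emb(K_{\la'_{L,1}} \cdot f)$ for $f \in \sP_{m' \times n}$, with the compatibility $\Emb(D) \cdot \Emb(f) = \Emb(D \cdot f)$ for $D \in \sPD_{m' \times n}$, which is proved by the same ideal-theoretic argument as Step~3 of the proof of Proposition~\ref{prp:eval-xL}.

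The main obstacle will be extending the agreement from $\Emb(\sP_{m' \times n})$ to the whole of $\sP_{m \times n}$. Both operators commute with $1 \otimes U_R$ and with the Cartan $U_{\g h, L}$, hence preserve each $U_{\g h, L}$-weight space $\sP^{(\mathbf i)} \subset \sP$ indexed by the multi-set $\mathbf i$ of row-indices of a monomial. On $\sP^{(\mathbf i)}$ the left-hand side acts by the scalar $q^{2 N}$, where $N := \#\{k : i_k \geq a\}$, by the Cartan weight formula of Remark~\ref{rmk:actionformulas}. The crux is to show the right-hand side acts by the same scalar on every $\sP^{(\mathbf i)}$; since $\Emb(\sP_{m' \times n})$ meets $\sP^{(\mathbf i)}$ only when $\mathbf i \subseteq \{a, \ldots, m\}$, the remaining cases require more work. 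I would first show that the right-hand side commutes with the Levi $L \subset U_L$ generated by $E_j, F_j, K_{\eps_j}^{\pm 1}$ for $j \neq a-1$, using the $U_q(\gl_{m'}) \otimes U_q(\gl_n)$-invariance of $\mathbf D(r, m', n)$ (an adaptation of Proposition~\ref{prp:55.22.1} to the $(m' \times n)$-setting) transported through the equivariant $\Emb$; then compute the action on $F_{a-1}$-translates of monomials in $\Emb(\sP_{m' \times n})$ using the explicit commutation of both operators with $F_{a-1}$; and finally reduce the general case by induction on the number of row-indices strictly below $a$.
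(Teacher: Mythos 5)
Your reduction of the second identity to the first via $\eta_{m,n}$, and your overall plan of transporting Proposition~\ref{prp:eval-xL} from $\sPD_{m'\times n}$ into $\sPD_{m\times n}$ through $\Emb=\Emb_{m'\times n}^{m\times n}$, both match the paper; your treatment of the subalgebra $\Emb(\sP_{m'\times n})$ (equivariance of $\Emb$ plus the compatibility $\Emb(D)\cdot\Emb(f)=\Emb(D\cdot f)$) is exactly what the paper borrows from Steps 2--3 of its proof of Proposition~\ref{prp:eval-xL}. The gap is in your final extension step. The $U_{LR}$-submodule of $\sP$ generated by $\Emb(\sP_{m'\times n})$ is a \emph{proper} submodule whenever $m'=m-a+1<\min\{m,n\}$: by Proposition~\ref{prp:glmglndecom}, $\Emb(\sP_{m'\times n})^{(d)}$ meets only those isotypic components $V_\la^*\otimes V_\la^*$ of $\sP^{(d)}$ with $\ell(\la)\leq m'$ (its $U_R$-isotypic types are exactly these $\la$), and each component is $U_{LR}$-stable, so no amount of applying the Levi, $F_{a-1}$, or anything else in $U_{LR}$ reaches the components with $m'<\ell(\la)\leq\min\{m,n\}$. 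Concretely, for $m=n=2$ and $a=2$ the quantum minor $t_{1,1}t_{2,2}-qt_{2,1}t_{1,2}$ spans $V_{(1,1)}^*\otimes V_{(1,1)}^*$ and does not lie in $U_{LR}\cdot\Emb(\sP_{1\times 2})$, so your induction on the number of row indices below $a$ never sees it; on such components you would only know that the right-hand side is a $U_R$-endomorphism commuting with $U_{\g h,L}$ and the Levi, which does not force it to be the scalar $q^{2N}$.

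The paper closes this step with a computational device rather than representation theory. Sort each monomial as $t_{i_1,j_1}\cdots t_{i_k,j_k}$ with $i_1\geq\cdots\geq i_k$ and split it as $f\cdot g$, where $f$ collects the factors with row index $\geq a$ and $g$ those with row index $<a$. Every quantum minor $\oline M^{\mathbf i}_{\mathbf j}$ occurring in $\mathbf D'_{m-a+1,r}$ involves only $\del_{i,j}$ with $i\geq a$, while every factor of $g$ has row index $<a$, so Lemma~\ref{lem:316} (in its ``$\min\{i_u\}>\max\{i'_u\}$'' form) yields $\mathbf D'_{m-a+1,r}\cdot(fg)=\left(\mathbf D'_{m-a+1,r}\cdot f\right)g$, and likewise $K_{\la_{L,a}}$ contributes no factor of $q^{2}$ from $g$. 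This reduces the whole statement to monomials supported on rows $\geq a$, which is precisely the part you have already handled. Replacing your Levi/$F_{a-1}$ induction by this single application of Lemma~\ref{lem:316} makes the argument complete and brings it into line with the paper's proof.
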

\begin{proof}
We give the proof for  $K_{\la_{L,a}}\otimes 1$ only (the proof for $1\otimes K_{\la_{R,b}}$ is similar). Every element of $\sP$ is expressible as a linear combination of monomials of the form
$t_{i_1,j_1}\cdots t_{i_k,j_k}$ where $i_1\geq \cdots \geq i_k$. Choose $k'\leq k$ such that $i_{k'}\geq a$
and $i_{k'+1}<a$.  Then \[
(K_{\la_{L,a}}\otimes 1)\cdot t_{i_1,j_1}\cdots t_{i_k,j_k}=q^{2k'}t_{i_1,j_1}\cdots t_{i_k,j_k}.
\] 
Set $D:=\sum_{r=0}^{m-a+1}(q^2-1)\mathbf D_{m-a+1,r}'
$. From Lemma~\ref{lem:316} it follows that 
\[
D\cdot \left(
t_{i_1,j_1}\cdots t_{i_k,j_k}
\right)=
(D\cdot t_{i_1,j_1}\cdots t_{i_{k'},j_{k'}})
t_{i_{k'+1},j_{k'+1}}\cdots t_{i_k,j_k}.
\]
Hence it suffices to prove that \begin{equation}
\label{eq:tterjk}
D\cdot t_{i_1,j_1}\cdots t_{i_{k'},j_{k'}}=q^{2k'}
t_{i_1,j_1}\cdots t_{i_{k'},j_{k'}}.
\end{equation}
Set $\tilde m:=m-a+1$. Let $\Emb=\Emb_{\tilde m\times n}^{m\times n}:\sPD_{\tilde m\times n}\to\sPD $ be the embedding of algebras defined in~\eqref{eq:embeD}, so that 
$\Emb(t_{i,j})=t_{i+a-1,j}$ and $\Emb(\del_{i,j})=\del_{i+a-1,j}$.  
Set $\tilde {\mathbf D}_r:=\mathbf D(r,\tilde m,n)\in\sPD_{\tilde m\times n}$.
Similar to the proof of Proposition~
\ref{prp:eval-xL}
we have $\Emb(\tilde {\mathbf D}_r)=\mathbf D'_{m-a+1,r}$ and $\Emb(\tilde {\mathbf D}_r\cdot f)=\Emb(\tilde {\mathbf D}_r)\cdot \Emb(f)$ for $f\in \sP_{\tilde m\times n}$. 
Proposition~\ref{prp:eval-xL} for $\sPD_{\tilde m\times  n}$ yields 
$
\sum_{r=0}^{m-a+1}\tilde {\mathbf D}_r\cdot f=q^{2k'}f
$ for $f:=t_{i_1-a+1,j_1}\cdots t_{i_{k'}-a+1,j_{k'}}\in\sP_{\tilde m\times n}$. 
By applying $\Emb$ to both sides we obtain~\eqref{eq:tterjk}.
\end{proof}

\section{Some properties of polarization operators}
\label{sec-new6}
In this section we investigate invariance and generation properties of the  $\mathsf L_{i,j}$, the $\mathsf R_{i,j}$, and their variants.
\subsection{Invariants and the operators $\mathsf L_{i,j}$, $\mathsf R_{i,j}$}
\label{subsec:Now6.1}
Recall from~\eqref{eq:YtoZ}
that $\mathcal Y^\mathcal Z$ denotes the centralizer of $\mathcal Z$ in $\mathcal Y$. 
\begin{lem}
\label{lem:commwithUW}
$\End_\Bbbk(\sP)^{\Lmw}=\End_\Bbbk(\sP)^\Lm$ 
and\,
$\End_\Bbbk(\sP)^{\Rnw}=\End_\Bbbk(\sP)^\Rn$.
%
\end{lem}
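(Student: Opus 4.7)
I will prove only the first equality; the second follows by a symmetric argument (one can either repeat the proof verbatim with the roles of $L$ and $R$ interchanged, or invoke Lemma~\ref{lem:etam,n} to transport the statement from the $n\times m$ situation).

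The inclusion $\End_\Bbbk(\sP)^\Lm\sseq \End_\Bbbk(\sP)^{\Lmw}$ is immediate from $\Lmw\sseq \Lm$. For the reverse inclusion, the strategy is dictated by Proposition~\ref{prp:UwvsU(g)}: the algebra $U_L$ is generated by $\ULw$ together with the $K_{\eps_i}^{\pm 1}$ for $1\leq i\leq m$, so $\Lm$ is generated by $\phi_U(\ULw\otimes 1)\sseq\Lmw$ and the operators $\phi_U(K_{\eps_i}^{\pm 1}\otimes 1)$. Thus, given $T\in\End_\Bbbk(\sP)^{\Lmw}$, it suffices to show that $T$ commutes with each $\phi_U(K_{\eps_i}^{\pm 1}\otimes 1)$.

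The first key input is Proposition~\ref{prp:xm-yn-EF}, which places each $K_{\la_{L,a}}$ in $\ULw$ for $1\leq a\leq m$; hence $\phi_U(K_{\la_{L,a}}\otimes 1)\in\Lmw$ and $T$ commutes with these operators. Since each $\phi_U(K_{\la_{L,a}}\otimes 1)$ is invertible in $\End_\Bbbk(\sP)$ with inverse $\phi_U(K_{\la_{L,a}}^{-1}\otimes 1)$, the element $T$ in fact commutes with $\phi_U(K_{\la_{L,a}}^{\pm 1}\otimes 1)$ for all $a$. From the explicit formulas $\la_{L,a}=-2\sum_{i\geq a}\eps_i$, a telescoping identity yields $K_{\eps_a}^{2}=K_{\la_{L,a+1}}K_{\la_{L,a}}^{-1}$ for $a<m$ and $K_{\eps_m}^{2}=K_{\la_{L,m}}^{-1}$, so $T$ commutes with every $\phi_U(K_{\eps_a}^{\pm 2}\otimes 1)$.

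The remaining task, which is really the only non-formal step, is to pass from the squares $\phi_U(K_{\eps_a}^{2}\otimes 1)$ to the $\phi_U(K_{\eps_a}\otimes 1)$ themselves. Here I exploit the grading: the element $\phi_U(K_{\la_{L,1}}\otimes 1)$ acts on the monomial basis of $\sP^{(d)}$ by the scalar $q^{2d}$ (a direct computation using Remark~\ref{rmk:actionformulas}, or alternatively Proposition~\ref{prp:eval-xL} applied to $\sP^{(d)}$). Since the scalars $\{q^{2d}\}_{d\geq 0}$ are pairwise distinct in $\Bbbk=\C(q)$ and $T$ commutes with $\phi_U(K_{\la_{L,1}}\otimes 1)$, the operator $T$ preserves each homogeneous component $\sP^{(d)}$.

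On the finite dimensional space $\sP^{(d)}$, the operator $\phi_U(K_{\eps_a}\otimes 1)$ is diagonalizable with eigenvalues in the finite set $\{q^{-k}:0\leq k\leq d\}$ (again by Remark~\ref{rmk:actionformulas}). Consequently the eigenvalues of $\phi_U(K_{\eps_a}^{2}\otimes 1)$ on $\sP^{(d)}$ lie in $\{q^{-2k}:0\leq k\leq d\}$ and are pairwise distinct, so Lagrange interpolation produces a polynomial $P_d\in\Bbbk[X]$ with $P_d(q^{-2k})=q^{-k}$ for $0\leq k\leq d$. Therefore
\[
\phi_U(K_{\eps_a}\otimes 1)\big|_{\sP^{(d)}}=P_d\!\left(\phi_U(K_{\eps_a}^{2}\otimes 1)\big|_{\sP^{(d)}}\right),
\]
and since $T|_{\sP^{(d)}}$ commutes with $\phi_U(K_{\eps_a}^{2}\otimes 1)|_{\sP^{(d)}}$, it commutes with any polynomial in it. Reassembling over $d$ (and inverting) shows that $T$ commutes with $\phi_U(K_{\eps_a}^{\pm 1}\otimes 1)$ on all of $\sP$, which completes the proof. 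The only real content is the spectral separation provided by $\phi_U(K_{\la_{L,1}}\otimes 1)$; everything else is formal manipulation of generators.
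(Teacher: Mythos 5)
Your proof is correct and follows essentially the same route as the paper: reduce to commutation with the $\phi_U(K_{\eps_a}^{\pm 1}\otimes 1)$ via Proposition~\ref{prp:UwvsU(g)}, obtain commutation with $\phi_U(K_{2\eps_a}\otimes 1)$ from the $K_{\la_{L,a}}\in\ULw$, and pass from the square to $K_{\eps_a}$ itself using diagonalizability with eigenvalues that are powers of $q$. Your restriction to the graded pieces $\sP^{(d)}$ and the Lagrange interpolation merely make explicit the paper's one-line observation that $\phi_U(K_{2\eps_a}\otimes 1)$ and $\phi_U(K_{\eps_a}\otimes 1)$ have the same eigenspaces.
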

\begin{proof}
We only give the proofs of the  two assertions for $\Lm$.  
The inclusion $\End_\Bbbk(\sP)^{\Lmw}\supseteq\End_\Bbbk(\sP)^\Lm$ is trivial because $\Lmw\sseq \Lm$. To prove 
$\End_\Bbbk(\sP)^{\Lmw}\sseq\End_\Bbbk(\sP)^\Lm$,  choose any $T\in\End_\Bbbk(\sP)^{\Lmw}$. 
From Proposition~\ref{prp:phiUxaphiUyb} it follows that $T$ commutes with \[
\phi_U(K_{2\eps_a}\otimes 1)=\phi_U\left((K_{\la_{L,a}}^{-1}\otimes 1)(K_{\la_{L,a+1}}^{}\otimes 1)\right) \quad\text{for }1\leq a\leq m,
\]
where we assume $K_{\la_{L,m+1}}:=1$.   From
Proposition~\ref{prp:glmglndecom}
(and also from Remark~\ref{rmk:actionformulas}) it follows that 
$\phi_U(K_{\eps_i}\otimes 1)$ is a  diagonalizable operator whose eigenvalues are powers of $q$.
In particular, the eigenspaces of $\phi_U(K_{2\eps_i}\otimes 1)$ and $\phi_U(K_{\eps_i}\otimes 1)$ are the same. Thus $T$ also commutes with $\phi_U(K_{\eps_i}\otimes 1)$. 
Finally, Proposition~\ref{prp:UwvsU(g)} implies that $T\in \End_\Bbbk(\sP)^\Lm$.
\end{proof}
As in Subsection~\ref{subsec-H-inv} set \[
\sPD_{(\epsilon_L)}:=\left\{D\in\sPD\,:\,
x\cdot D:=\epsilon_L(x)D\text{ for }x\in U_L\right\},
\] where $\epsilon_L$ denotes the counit of $U_L$. We define $\sPD_{(\epsilon_R)}$, $\left(\mathscr A_{k,l,n}\right)_{(\epsilon_R)}$, 
$\left(\mathscr A^\mathrm{gr}_{k,l,n}\right)_{(\epsilon_R)}$ and $\left(\mathscr A_{k,l,n}^{\mathrm{gr},(r,s)}\right)_{(\epsilon_R)}
$
 similarly (where $\epsilon_R$ denotes the counit of $U_R$).  
\begin{lem}
\label{prp:u-EiFiKi}
$\sPD^\Lmw=
\sPD^\Lm=\sPD_{(\epsilon_L)}
$
and  
$\sPD^\Rnw=\sPD^\Rn=\sPD_{(\epsilon_R)}$.
\end{lem}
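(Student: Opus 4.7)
The plan is to reduce both halves of the statement to results already in hand, namely Lemma~\ref{lem:commwithUW}, Lemma~\ref{rmk:Hopf-eps}, and Proposition~\ref{prp:xBDvsxD}. I will treat the $L$-side only, the $R$-side being identical.

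First I would handle the equality $\sPD^{\Lmw}=\sPD^{\Lm}$. Lemma~\ref{lem:commwithUW} already establishes the corresponding identity $\End_\Bbbk(\sP)^{\Lmw}=\End_\Bbbk(\sP)^{\Lm}$ in the ambient algebra $\End_\Bbbk(\sP)$. Since $\sPD\subseteq \End_\Bbbk(\sP)$ via the faithful action of Proposition~\ref{prp:faithful-action}, intersecting both sides with $\sPD$ yields the desired equality. The inclusion $\Lmw\subseteq\Lm$ makes the reverse inclusion $\sPD^{\Lmw}\supseteq\sPD^{\Lm}$ automatic, so this step is essentially immediate.

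Next I would derive $\sPD^{\Lm}=\sPD_{(\epsilon_L)}$. Apply Lemma~\ref{rmk:Hopf-eps} with $H:=U_L$, $V:=\sP$, and $\psi$ the restriction of $\phi_U$ to $U_L\otimes 1$. That lemma yields
\[
\End_\Bbbk(\sP)_{(\epsilon_L)}=\End_\Bbbk(\sP)^{\Lm},
\]
where the $U_L$-action defining the left-hand side is the conjugation action $x\cdot T=\sum\phi_U(x_1\otimes 1)\,T\,\phi_U(S(x_2)\otimes 1)$. By Proposition~\ref{prp:xBDvsxD}, this conjugation action agrees on $\sPD$ with the canonical $U_L$-action arising from the $U_{LR}$-module algebra structure on $\sPD$. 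Consequently, intersecting the above equality with $\sPD$ produces $\sPD_{(\epsilon_L)}=\sPD\cap\End_\Bbbk(\sP)^{\Lm}=\sPD^{\Lm}$, as required.

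The $R$-side is proved verbatim, with $U_L$, $\Lm$, $\Lmw$, $\epsilon_L$ replaced by $U_R$, $\Rn$, $\Rnw$, $\epsilon_R$. There is no real obstacle here: both chains of equalities are essentially formal consequences of previously established facts. The only point demanding a moment's attention is the compatibility of the two a priori distinct $U_L$-actions on $\sPD$ (the conjugation action inherited from $\End_\Bbbk(\sP)$ and the module-algebra action coming from the construction of $\sPD$), but this compatibility is precisely the content of Proposition~\ref{prp:xBDvsxD}.
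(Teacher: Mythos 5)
Your proposal is correct and follows essentially the same route as the paper: it invokes Lemma~\ref{lem:commwithUW} for the first equality, and combines Lemma~\ref{rmk:Hopf-eps} with Proposition~\ref{prp:xBDvsxD} (to identify the conjugation action on $\sPD$ with its module-algebra action) for the second. The only cosmetic difference is that you phrase the passage from $\End_\Bbbk(\sP)$ to $\sPD$ explicitly as an intersection, which is a fine way to make the restriction step precise.
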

\begin{proof}
From Lemma~\ref{lem:commwithUW} it follows that 
$\sPD^\Lmw=
\sPD^\Lm$ and 
$\sPD^\Rnw=\sPD^\Rn$. By Proposition~\ref{prp:xBDvsxD}, the action of $U_{LR}$ on $\sPD$ is the restriction of  the action of $U_{LR}$ on $\End_\Bbbk(\sP)$ that is defined in Lemma~\ref{rmk:Hopf-eps}. Thus 
Lemma~\ref{rmk:Hopf-eps} implies that
$\sPD^\Lm=\sPD_{(\epsilon_L)}$ and $
\sPD^\Rn=\sPD_{(\epsilon_R)}$.
\end{proof}

\begin{lem}
$\left(\mathscr A_{k,l,n}\right)_{(\epsilon_R)}$ is  a subalgebra of $\mathscr A_{k,l,n}$ and $\left(\mathscr A^\mathrm{gr}_{k,l,n}\right)_{(\epsilon_R)}$ 
 is a subalgebra of $\mathscr A_{k,l,n}^\mathrm{gr}$. 
\end{lem}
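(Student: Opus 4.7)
The assertion is an instance of the general fact that for any Hopf algebra $H$ and any $H$-module algebra $A$, the subspace $A_{(\epsilon)} = \{a \in A : x \cdot a = \epsilon(x)a \text{ for all } x \in H\}$ is a subalgebra. My plan is to recall this general statement and then apply it to the two cases at hand.

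First, I would verify the general fact. Suppose $a, b \in A_{(\epsilon)}$. Since $A$ is an $H$-module algebra, for any $x \in H$ with $\Delta(x) = \sum x_1 \otimes x_2$ we have
\[
x \cdot (ab) = \sum (x_1 \cdot a)(x_2 \cdot b) = \sum \epsilon(x_1)\epsilon(x_2)\, ab = \epsilon\!\left(\sum x_1 \epsilon(x_2)\right) ab = \epsilon(x)\, ab,
\]
so $ab \in A_{(\epsilon)}$. In addition $1 \in A_{(\epsilon)}$, since for a unital $H$-module algebra the unit map $\Bbbk \to A$ is $H$-equivariant, giving $x \cdot 1 = \epsilon(x)\, 1$. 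Hence $A_{(\epsilon)}$ is a unital subalgebra of $A$.

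To conclude, I would invoke this fact for $H := U_R$ and $A := \mathscr A_{k,l,n}$ (respectively $A := \mathscr A^\mathrm{gr}_{k,l,n}$). The $U_R$-module algebra structure on each of these algebras was already established in the text just before Theorem~\ref{thm:C} and in Subsection~\ref{subsec:Now3.9}, namely as the restriction of the $U_R$-action on $\sPD_{m\times n}$ (respectively, on $\sPD^\mathrm{gr}_{m\times n}$) to the $U_R$-stable subalgebra $\mathscr A_{k,l,n}$ (respectively, $\mathscr A^\mathrm{gr}_{k,l,n}$). There is no real obstacle here; the lemma is a formal consequence of the $U_R$-module algebra structure, and the only thing to check is that invariance is preserved under multiplication, which is the short computation above.
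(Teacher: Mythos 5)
Your proof is correct and follows the same route as the paper, which simply notes that the claim is immediate from the fact that $\mathscr A_{k,l,n}$ and $\mathscr A_{k,l,n}^\mathrm{gr}$ are $U_R$-module algebras. Your write-up merely spells out the standard Sweedler computation underlying that general fact, so there is nothing further to add.
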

\begin{proof}
Follows immediately from the fact that both 
$\mathscr A_{k,l,n}$ and $\mathscr A_{k,l,n}^\mathrm{gr}$ are $U_R$-module algebras.
\end{proof}

Recall that by definition, $\mathscr A_{k,l,n}^\mathrm{gr}$ is a subalgebra of $\sPD^\mathrm{gr}:=\sPD^\mathrm{gr}_{m\times n}$ where $m:=\max\{k,l\}$. 
For
$1\leq i\leq k$ and $1\leq j\leq l$ define
$\tilde{\mathsf L}_{i,j}^\mathrm{gr}\in\mathscr A_{k,l,n}^\mathrm{gr}$ by
\begin{equation}
\label{eq:tildjf;lkjf}
\tilde{\mathsf L}_{i,j}^\mathrm{gr}:=\sum_{r=1}^n\tilde t_{i,r}\tilde \del_{j,r}
=
\sum_{r=1}^n t_{m-i+1,r} \del_{m-j+1,r}.
\end{equation}
Under the isomorphism of Corollary~\ref{cor:Agrkln-vs-gr(A)} the $\tilde{\mathsf L}_{i,j}^\mathrm{gr}$ correspond to the 
$\mathrm{gr}(\tilde{\mathsf L}_{i,j})\in\mathrm{gr}(\mathscr A_{k,l,n})$.

\begin{lem}
\label{lem:LijRijLLRR}
$\tilde{\mathsf L}^\mathrm{gr}_{i,j} \in \left(\mathscr A^\mathrm{gr}_{k,l,n}\right)_{(\epsilon_R)}$
and
$\tilde{\mathsf L}_{i,j} \in \left(\mathscr A_{k,l,n}\right)_{(\epsilon_R)}$
for $1\leq i\leq k$ and $1\leq j\leq l$. 
%
%
\end{lem}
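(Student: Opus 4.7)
The strategy is to reduce both invariance statements to a short direct verification using the explicit formulas for the $U_R$-action on the generators $t_{i,j}$ and $\del_{i,j}$ that are recorded in Remark~\ref{rmk:actionformulas}.

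First, I would note that by Proposition~\ref{prp:explicit-prs-Akln} the algebras $\mathscr A_{k,l,n}$ and $\mathscr A^{\mathrm{gr}}_{k,l,n}$ admit identical presentations in terms of generators $t_{i,j},\del_{i',j}$, with the only difference being the relation (R6) vs.\ (R6$'$). Since the expressions $\tilde{\mathsf L}_{i,j}$ and $\tilde{\mathsf L}^{\mathrm{gr}}_{i,j}$ are given by the same formula in these generators, and since the $U_R$-module algebra structure of $\sPD$ and $\sPD^{\mathrm{gr}}$ (restricted from $U_{LR}$) acts identically on generators, the two statements reduce to a single formal verification. Moreover, the substitution $r\mapsto n+1-r$ and the definitions of $\tilde t_{i,j},\tilde \del_{i,j}$ show that each $\tilde{\mathsf L}_{i,j}$ is of the form $\sum_{r=1}^n t_{a,r}\del_{b,r}$ for suitable $a,b$, so it suffices to prove that each such element is $U_R$-invariant.

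Second, since $U_R$ is generated as an algebra by $\{E_k,F_k\}_{k=1}^{n-1}\cup\{K_{\eps_k}^{\pm 1}\}_{k=1}^n$, and $\epsilon_R(E_k)=\epsilon_R(F_k)=0$, $\epsilon_R(K_{\eps_k}^{\pm 1})=1$, it is enough to verify termwise that each $K_{\eps_k}$ fixes $t_{a,r}\del_{b,r}$, and that $E_k$ and $F_k$ annihilate $\sum_r t_{a,r}\del_{b,r}$. The $K_{\eps_k}$ case is immediate: $t_{a,r}$ has $U_R$-weight $-\eps_r$ and $\del_{b,r}$ has weight $\eps_r$, so each summand is of weight $0$. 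For $E_k$, I would apply the coproduct $\Delta(E_k)=E_k\otimes K_k+1\otimes E_k$ together with the formulas of Remark~\ref{rmk:actionformulas}. The only values of $r$ producing a nonzero contribution are $r=k$ (giving $-t_{a,k+1}\del_{b,k}$ from the first term of $\Delta(E_k)$) and $r=k+1$ (giving $+t_{a,k+1}\del_{b,k}$ from the second term); the sign and $q$-power discrepancies cancel exactly because of the factor $q^{\llbracket k,r\rrbracket-\llbracket k+1,r\rrbracket}$ coming from $K_k\cdot\del_{b,r}$. The $F_k$ case is entirely analogous, using $\Delta(F_k)=F_k\otimes 1+K_k^{-1}\otimes F_k$; the two surviving terms (at $r=k$ and $r=k+1$) yield $\pm q\,t_{a,k}\del_{b,k+1}$, which again cancel.

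There is essentially no serious obstacle here: the whole proof is a pair of short, mechanical computations with the coproduct. The only thing to be careful about is the bookkeeping of the $-q^{\pm 1}$ factors in the $U_R$-action on $t_{a,r}$ and the extra $K_k$ (resp.\ $K_k^{-1}$) picked up from the coproduct, which together produce the exact sign and weight cancellations that ensure invariance.
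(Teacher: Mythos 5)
Your proof is correct and follows essentially the same route as the paper: a direct verification using the coproduct and the explicit action formulas of Remark~\ref{rmk:actionformulas}, with the surviving terms at $r=k$ and $r=k+1$ cancelling exactly as you describe. The only cosmetic difference is in how the statement for $\mathscr A_{k,l,n}$ is deduced from the graded case — the paper transports the graded result via the $U_R$-equivariant map $\mathsf P_{k,l,n}$, whereas you observe that the module-algebra computation is formally identical in $\sPD$ and $\sPD^{\mathrm{gr}}$ since the $U_R$-action on generators agrees; both arguments are valid.
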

\begin{proof}
Recall that $\mathscr A_{k,l,n}^\mathrm{gr}$ is a $U_R$-module algebra because it is a $U_R$-stable subalgebra of $\sPD^\mathrm{gr}$. For $\tilde{\mathsf L}^\mathrm{gr}_{i,j}$ the assertion follows from the formulas of Remark~\ref{rmk:actionformulas}. For example 
\begin{align*}
E_s\cdot \tilde{\mathsf L}_{i,j}^\mathrm{gr}
=E_s\cdot \sum_{r=1}^n \tilde t_{i,r}\tilde \del_{j,r}
&=\sum_{r=1}^n 
(E_s\cdot \tilde t_{i,r}) (K_s\cdot \tilde \del_{j,r})+
\sum_{r=1}^n 
\tilde t_{i,r} (E_s\cdot \tilde \del_{j,r}) \\
&=(-q^{-1}\tilde t_{i,n-s})(q\tilde \del_{j,n+1-s})+(\tilde t_{i,n-s})(\tilde\del_{j,n+1-s})=0.
\end{align*}
Since the map $\mathsf P_{k,l,n}:\mathscr A_{k,l,n}^\mathrm{gr}\to\mathscr A_{k,l,n}$ is a $U_R$-module homomorphism, we have 
$\tilde{\mathsf L}_{i,j} \in \left(\mathscr A_{k,l,n}\right)_{(\epsilon_R)}$.
\end{proof}

\begin{lem}

\label{U_R-on-Psinn}
The $U_L$-submodule of  $\sPD$ that is generated by $\mathsf{L}_{m,m}$ contains  $\mathsf{L}_{i,j}$ for $1\leq i,j\leq m$. Similarly, the $U_R$-submodule of $\sPD$ that is generated by $\mathsf R_{n,n}$ contains $\mathsf R_{i,j}$ for $1\leq i,j\leq n$.  
\end{lem}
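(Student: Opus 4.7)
The plan is to establish the $\mathsf L$-part by explicit computation with the Chevalley generators of $U_L$, and to deduce the $\mathsf R$-part via the symmetry isomorphism $\eta_{m,n}$ of Lemma~\ref{lem:etam,n}. Let $M$ denote the $U_L$-submodule of $\sPD$ generated by $\mathsf L_{m,m}$. I will show $\mathsf L_{i,j}\in M$ for every $1\leq i,j\leq m$ by two nested descending inductions: first lowering the second index via $E$-type generators, then lowering the first index via $F$-type generators.

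Using the coproducts $\Delta(E_k)=E_k\otimes K_k+1\otimes E_k$ and $\Delta(F_k)=F_k\otimes 1+K_k^{-1}\otimes F_k$ together with the action formulas of Remark~\ref{rmk:actionformulas} applied termwise to $\mathsf L_{i,j}=\sum_{r=1}^{n}t_{i,r}\del_{j,r}$, two key identities drop out. For $2\leq j\leq m$ the $E_{j-1}$-contribution coming from $t_{m,r}$ vanishes (since $j-1<m$), while $E_{j-1}\cdot\del_{j,r}=\del_{j-1,r}$, producing
\begin{equation*}
E_{j-1}\cdot\mathsf L_{m,j}=\mathsf L_{m,j-1}.
\end{equation*}
For $2\leq i\leq m$, the formulas $F_{i-1}\cdot t_{i,r}=-q\,t_{i-1,r}$ and $F_{i-1}\cdot\del_{j,r}=\llbracket i-1,j\rrbracket\,\del_{i,r}$, combined with the factor $q^{-1}$ coming from $K_{i-1}^{-1}\cdot t_{i,r}$, yield after summation
\begin{equation*}
F_{i-1}\cdot\mathsf L_{i,j}=-q\,\mathsf L_{i-1,j}+q^{-1}\llbracket i-1,j\rrbracket\,\mathsf L_{i,i}.
\end{equation*}

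The first identity, iterated, gives by descending induction on $j$ that $\mathsf L_{m,j}\in M$ for every $1\leq j\leq m$. The second identity, rewritten as
\begin{equation*}
\mathsf L_{i-1,j}=-q^{-1}\,F_{i-1}\cdot\mathsf L_{i,j}+q^{-2}\llbracket i-1,j\rrbracket\,\mathsf L_{i,i},
\end{equation*}
then drives a descending induction on $i$: assuming the entire row $\{\mathsf L_{i,j'}\}_{j'=1}^{m}$ already lies in $M$, each $\mathsf L_{i-1,j}$ is exhibited as an element of $M$. Iterating down to $i=1$ exhausts the range $1\leq i,j\leq m$. For the $\mathsf R$-statement, the isomorphism $\eta_{m,n}:\sPD_{m\times n}\to\sPD_{n\times m}$ sends $\mathsf R_{i,j}$ to the analogue of $\mathsf L_{i,j}$ inside $\sPD_{n\times m}$ and, by Lemma~\ref{lem:etam,n}(i), intertwines the $U_R$-action on the source with the corresponding $U_q(\gl_n)$-action on the target; applying the already-proved first statement to $\sPD_{n\times m}$ then yields the conclusion.

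I do not expect any serious obstacle here—the argument is essentially bookkeeping with the explicit action formulas. The one feature that shapes the proof is the correction term $q^{-1}\llbracket i-1,j\rrbracket\,\mathsf L_{i,i}$ in the $F_{i-1}$-identity, which forces the induction on $i$ to proceed ``one row at a time'' rather than one element at a time; but $\mathsf L_{i,i}$ is automatically in $M$ as part of the inductive hypothesis on the row indexed by $i$, so no real difficulty arises.
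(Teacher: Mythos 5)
Your proposal is correct and follows essentially the same route as the paper: both compute the action of the Chevalley generators on the polarization operators via the coproduct and the explicit formulas of Remark~\ref{rmk:actionformulas}, obtaining the same recursions (your two identities are exactly the paper's three relations~\eqref{eq:3rels}, with the diagonal correction term packaged into the $F$-identity), and then induct. The only differences are organizational — the paper treats $\mathsf R$ directly and sweeps out a column, then the off-diagonal entries, then the diagonal, whereas you treat $\mathsf L$ row-by-row and transfer to $\mathsf R$ via $\eta_{m,n}$ — and both are valid.
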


\begin{proof}
We only give the proof for the assertion about the $U_R$-submodule generated by $\mathsf R_{n,n}$ (the other assertion  is proved similarly). Denote this submodule by $\mathscr M$. First we prove the following relations for the $U_R$-action:
\begin{equation}
\label{eq:3rels}
E_{j}\cdot\mathsf{R}_{i,j+1}
=\mathsf{R}_{i,j}
\text{ and }
F_{i}\cdot\mathsf{R}_{i+1,j}=-q\mathsf{R}_{i,j}
\text{ for }j\neq i\quad,\quad
F_i\cdot \mathsf R_{i+1,i}=-q\mathsf{R}_{i,i}+q^{-1}\mathsf R_{i+1,i+1}.\end{equation}
The proofs of these relations are similar and based on the explicit formulas given in 
Remark~\ref{rmk:actionformulas}.
For example using $\Delta(F_i)=F_i\otimes 1+K_i^{-1}\otimes F_i$ we have 
\begin{align*}
F_{i}\cdot \mathsf{R}_{i+1,i}=
F_i\cdot \sum_{r=1}^n t_{r,i+1}\del_{r,i}
&=
\sum \left(
(F_i\cdot t_{r,i+1})(\del_{r,i})+
(K_i^{-1}\cdot t_{r,i+1})(F_i\cdot \del_{r,i})\right)
\\
&=\sum_{r=1}^n
\left(
-qt_{r,i}\del_{r,i}+q^{-1}t_{r,i+1}\del_{r,i+1}
\right)
=-q\mathsf{R}_{i,i}+q^{-1}\mathsf{R}_{i+1,+1}.
\end{align*}
Since $\mathsf{R}_{n,n}\in\mathscr M$, 
from the second relation
in ~\eqref{eq:3rels} 
 for $j=n$ we obtain 
$\mathsf{R}_{i,n}\in\mathscr M$ for $i\leq n$. 
Then using the first relation in~\eqref{eq:3rels} successively for $j=n-1,\ldots,i+1$ we obtain $\mathsf{R}_{i,j}\in \mathscr M$ for $i<j$. 
The above argument can be repeated  with the roles of  $E_i$ and $F_i$ switched. This yields $\mathsf{R}_{i,j}\in\mathscr M$ for $i>j$. Finally, from $\mathsf{R}_{n,n}$ and the third relation in~\eqref{eq:3rels} we obtain $\mathsf{R}_{i,i}\in \mathscr M$ for $1\leq i\leq n$. 
\end{proof}

\begin{cor}
\label{cor:Rij-in-L}
$\mathsf L_{i,j}\in\Lmw$ for all $1\leq i,j\leq m$ and $\mathsf R_{i,j}\in \Rnw$ for $1\leq i,j\leq n$. 
\end{cor}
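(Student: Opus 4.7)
The plan is to combine Proposition~\ref{prp:phiUxaphiUyb}, Lemma~\ref{U_R-on-Psinn}, and the invariance properties of $\Lmw$ and $\Rnw$ under the respective $U_L$- and $U_R$-actions on $\sPD$. By symmetry it suffices to give the argument for $\mathsf L_{i,j}$ (the case of $\mathsf R_{i,j}$ is completely analogous, or can be deduced via Lemma~\ref{lem:etam,n}).

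First I would verify the ``anchor'' case: $\mathsf L_{m,m}\in\Lmw$. Setting $a=m$ in Proposition~\ref{prp:phiUxaphiUyb}, only the $r=0$ and $r=1$ terms survive, giving
\[
\phi_U(K_{\la_{L,m}}\otimes 1)=\mathbf D'_{1,0}+(q^2-1)\mathbf D'_{1,1}=1+(q^2-1)\mathsf L_{m,m}.
\]
Since $1=\phi_U(1\otimes 1)\in\Lm$, this rearranges to express $\mathsf L_{m,m}$ as an element of $\Lm$. On the other hand $\mathsf L_{m,m}\in\sPD$ by its defining formula~\eqref{eq:Liii}, so $\mathsf L_{m,m}\in\Lmw$.

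Next I would argue that $\Lmw$ is stable under the $U_L$-action on $\sPD$ described in Subsection~\ref{subsec:sPDgr6}. Since $\sPD$ itself is a $U_L$-module (by Proposition~\ref{prp:Hmd-desc}), I only need to check that $\Lm$ is $U_L$-stable under the action~\eqref{eq:phiUadYY}. But this is immediate: for $x\in U_L$ and $D=\phi_U(z\otimes 1)\in \Lm$, relation~\eqref{eq:phiUadYY} yields
\[
x\cdot D=\phi_U\bigl(\ad_{x\otimes 1}(z\otimes 1)\bigr)\in\phi_U(U_L\otimes 1)=\Lm,
\]
using that $U_L\otimes 1$ is closed under the adjoint action of $U_L\otimes 1$. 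Thus $\Lmw=\Lm\cap\sPD$ is indeed $U_L$-stable.

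Finally, by Lemma~\ref{U_R-on-Psinn}, every $\mathsf L_{i,j}$ (for $1\leq i,j\leq m$) lies in the $U_L$-submodule of $\sPD$ generated by $\mathsf L_{m,m}$. Combining this with the previous two steps gives $\mathsf L_{i,j}\in\Lmw$ for all $i,j$. The analogous chain of reasoning---anchored by $\phi_U(1\otimes K_{\la_{R,n}})=1+(q^2-1)\mathsf R_{n,n}$ (using the description of $\mathbf D_{1,1}=\boldsymbol e_{m\times 1}^{m\times n}(\mathbf D(1,m,1))=\mathsf R_{n,n}$) and Lemma~\ref{U_R-on-Psinn} for the $U_R$-submodule generated by $\mathsf R_{n,n}$---handles the statement for $\mathsf R_{i,j}$. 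No step presents a serious obstacle; the only small bookkeeping item is identifying $\mathbf D'_{1,1}$ with $\mathsf L_{m,m}$ and $\mathbf D_{1,1}$ with $\mathsf R_{n,n}$ from the definitions in Section~\ref{sec:Introduction}, which is a direct unpacking of the embedding $\boldsymbol e$ via~\eqref{eq:tildetd}.
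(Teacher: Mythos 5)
Your proposal is correct and follows essentially the same route as the paper: anchor at $\mathsf L_{m,m}$ via the identity $\phi_U(K_{\la_{L,m}}\otimes 1)=1+(q^2-1)\mathsf L_{m,m}$ (equation~\eqref{eq:Jahfl}, equivalently the $a=m$ case of Proposition~\ref{prp:phiUxaphiUyb}), note that $\Lmw$ is $U_L$-stable by~\eqref{eq:phiUadYY}, and then invoke Lemma~\ref{U_R-on-Psinn}, with the symmetric argument for the $\mathsf R_{i,j}$. Your identifications $\mathbf D'_{1,1}=\mathsf L_{m,m}$ and $\mathbf D_{1,1}=\mathsf R_{n,n}$ are also correct.
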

\begin{proof}
From
~\eqref{eq:phiUadYY} it follows that  $\Lmw$ is $U_L$-stable. 
By~\eqref{eq:Jahfl}
we have $\phi_U(K_{\la_{L,m}}\otimes 1)=1+(q^2-1)\mathsf L_{m,m}$, hence $\mathsf L_{m,m}\in \Lmw$. Hence by  Lemma~\ref{U_R-on-Psinn} we have $\mathsf L_{i,j}\in \Lmw$. The proof of  $\mathsf R_{i,j}\in\Rnw$ is similar.  
\end{proof}

\subsection{The $U_q(\gl_k)\otimes U_q(\gl_l)$-module decomposition of $\left(\mathscr A_{k,l,n}^\mathrm{gr}\right)_{(\epsilon_R)}$}
Given two irreducible $U_R$-modules $V_\la$ and $V_\mu$, the canonical isomorphism 
$
\left(V_\la^*\otimes V_\mu\right)_{(\epsilon_R)}\cong \Hom_{U_R}(V_\la,V_\mu)
$ implies that 
\begin{equation}
\label{eq:VlaVmu*}
\dim\left(V_\la^*\otimes V_\mu\right)_{(\epsilon_R)}=\begin{cases}
1& \text{if } \la=\mu,\\
0&\text{if }\la\neq \mu.
\end{cases}
\end{equation}
 Thus, from~\eqref{eq:PD(r,s)d} it follows that
as   $U_q(\gl_k)\otimes U_q(\gl_l)$-modules we have 
\begin{equation}
\label{eq:grPDrrUR22}
\left(\mathscr A^{\mathrm{gr},(r,s)}_{k,l,n}\right)_{(\epsilon_R)}=0
\ \text{ for }\ r\neq s\ \text{ and }\ 
\left(\mathscr A^{\mathrm{gr},(r,r)}_{k,l,n}\right)_{(\epsilon_R)}
\cong \bigoplus_{\la\in\Lambda_{d,r}} V_\la^*\otimes V_\la^{},\ \text{ where }\ d:=\min\{k,l,n\}.
\end{equation}

\section{The map ${{}\Gamma}_{k,l,n}$}
\label{sec:pfofThmA}
Let $k,l,n\geq 1$ be integers such that $k,l\leq n$. In this section we define a map \[
{{}\Gamma}_{k,l,n}:\sP_{k\times l}\to\mathscr A^\mathrm{gr}_{k,l,n}
\] that  is a bijection onto the subalgebra $\big(\mathscr A_{k,l,n}^\mathrm{gr}\big)_{(\epsilon_R)}$ of $\mathscr A_{k,l,n}^\mathrm{gr}$. 
A similar map was also used in~\cite{LZZ11}. 
 The ideas of the proofs of Lemma~\ref{lem:injGamgr} and Lemma~\ref{lem:PsiSURJ} are taken from~\cite{LZZ11}. Recall from Definition~\ref{dfn-PDPD1} that $\sPD^\mathrm{gr}_{n\times n}\cong \sP_{n\times n}\otimes \sD_{n\times n}$ as a vector space.

\subsection{Construction of ${{}\Gamma}_{k,l,n}$}
\label{subsec:Now7.1}
For $n\geq 1$ set 
\begin{equation}
\label{eq:Gammagr--n}
{}\Gamma_n:\sP_{n\times n}\to
\sPD_{n\times n}^\mathrm{gr}\ ,\ 
{}\Gamma_n:=
(1\otimes \iota)\circ \Delta_{\sP},
\end{equation}
where
$\iota:\sP_{n\times n}\to\sD_{n\times n}$ is the anti-isomorphism of bialgebras defined in~\eqref{eq:defn-iota-map} and  
$\Delta_{\sP}$ is the coproduct of $\sP_{n\times n}$. In particular in Sweedler's notation we have
${{}\Gamma}_n(u)=\sum u_1\otimes \iota(u_2)$ for $u\in\sP_{n\times n}$. 
\begin{lem}
\label{lem:injGamgr}
Let $\epsilon_\mathscr D$ be the counit of 
$\mathscr D_{n\times n}$. 
Then 
the map $1\otimes \epsilon_\mathscr D:\sPD^\mathrm{gr}_{n\times n}\to\sP_{n\times n}$ is a left inverse to ${{}\Gamma}_n$. In particular, ${{}\Gamma}_n$ is an injection. 
\end{lem}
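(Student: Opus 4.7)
The plan is to unwind the definition of $\Gamma_n$ and apply $1 \otimes \epsilon_\mathscr{D}$ directly, reducing the claim to the counit axiom for the bialgebra $\sP_{n\times n}$. This is essentially a formal computation once one recognizes that $\iota$ intertwines the counits.

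First, I would recall from Lemma~\ref{lem:howiota} that $\iota:\sP_{n\times n}\to\sD_{n\times n}^{\mathrm{op,cop}}$ is an isomorphism of bialgebras. In particular, $\iota$ is a coalgebra map (after accounting for the co-opposite structure, which is irrelevant for the counit), so $\epsilon_\mathscr{D}\circ \iota = \epsilon_\mathscr{P}$, where $\epsilon_\mathscr{P}$ denotes the counit of $\sP_{n\times n}$. Concretely, this can be checked on the generators $t_{i,j}$: we have $\iota(t_{i,j}) = \del_{j,i}$, so $\epsilon_\mathscr{D}(\iota(t_{i,j})) = \epsilon_\mathscr{D}(\del_{j,i}) = \llbracket i,j\rrbracket = \epsilon_\mathscr{P}(t_{i,j})$, and both sides are algebra homomorphisms.

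Next, for any $u \in \sP_{n\times n}$, writing $\Delta_\sP(u) = \sum u_1 \otimes u_2$ in Sweedler notation, I would compute directly:
\begin{align*}
(1 \otimes \epsilon_\mathscr{D}) \circ {}\Gamma_n(u)
&= (1 \otimes \epsilon_\mathscr{D})\left(\sum u_1 \otimes \iota(u_2)\right) \\
&= \sum u_1 \,\epsilon_\mathscr{D}(\iota(u_2)) \\
&= \sum u_1\, \epsilon_\mathscr{P}(u_2) = u,
\end{align*}
where the last equality is the counit axiom for the bialgebra $\sP_{n\times n}$. This proves that $1 \otimes \epsilon_\mathscr{D}$ is a left inverse to $\Gamma_n$, and injectivity of $\Gamma_n$ follows immediately. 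There is no real obstacle here; the only subtle point is verifying $\epsilon_\mathscr{D}\circ \iota = \epsilon_\mathscr{P}$, which is a one-line check on generators.
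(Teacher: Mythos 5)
Your proof is correct and follows essentially the same route as the paper's: both reduce the claim to the counit axiom combined with the compatibility of $\iota$ with the coalgebra structures (the paper applies $\iota$ and uses the counit axiom in $\sD_{n\times n}$, while you pull the counit back via $\epsilon_{\mathscr D}\circ\iota=\epsilon_{\mathscr P}$ and use the counit axiom in $\sP_{n\times n}$ — a trivially equivalent reformulation).
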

\begin{proof}
This is equivalent to the relation $\sum \epsilon_\mathscr D(\iota(u_2))u_1=u$ for $u\in\sP_{n\times n}$. Since $\iota$ is a linear bijection, it suffices to verify that 
$
\sum \epsilon_\mathscr D(\iota(u_2))\iota(u_1)=\iota(u)
$. Since $\iota$ is an anti-isomorphism of coalgebras, the latter relation follows from the defining property of the counit $\epsilon_\mathscr D$.  
\end{proof}


Recall from Proposition~\ref{prp:existence(6)}(i)
that the assignments $\tilde{t}_{i,j}\mapsto \tilde{t}_{i,j}$ and $\tilde{\partial}_{i,j}\mapsto \tilde{\partial}_{i,j}$ result in an embedding of algebras
\begin{equation}
\label{eq:eandegrmn}
\Emb_{k\times l}^{n\times n}:\sPD_{k\times l}\to \sPD_{n\times n}.
\end{equation}
We denote the restriction of the map~\eqref{eq:eandegrmn}
 to the subalgebra $\sP_{k\times l}$ by the same notation, that is
\begin{equation}
\label{eq:emm-restricted}
\Emb_{k\times l}^{n\times n}:\sP_{k\times l}\to \sP_{n\times n}.
\end{equation}

\begin{lem}
\label{lem:gtrGMA}
For $\Emb=\Emb_{k\times l}^{n\times n}$  as
in~\eqref{eq:emm-restricted}
we have
${}\Gamma_n\left(\Emb(\sP_{k\times l})\right)\sseq \mathscr A_{k,l,n}^\mathrm{gr}$.

\end{lem}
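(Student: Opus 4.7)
The plan is to prove the inclusion by first realizing $\Emb^\mathrm{gr}(\mathscr A_{k,l,n}^\mathrm{gr})$ concretely inside $\sP_{n\times n}\otimes\sD_{n\times n}\cong\sPD_{n\times n}^\mathrm{gr}$, and then reducing the desired inclusion to a statement about the coproduct $\Delta_\sP$ on generators of $\Emb(\sP_{k\times l})$.

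First, using Proposition~\ref{prp:existence(6)} I would embed $\sPD_{m\times n}^\mathrm{gr}\hookrightarrow \sPD_{n\times n}^\mathrm{gr}$ via $(\Emb^\mathrm{gr})_{m\times n}^{n\times n}$, which is available since $m:=\max\{k,l\}\leq n$.  Because the mixed relations {\rm (R3)--(R5)} and {\rm (R6$'$)} of $\sPD^\mathrm{gr}$ are homogeneous, every element of $\mathscr A_{k,l,n}^\mathrm{gr}$ can be sorted into a linear combination of products of a $t$-monomial and a $\del$-monomial in its generators.  Tracking the reindexing $\tilde t_{i,j}=t_{m+1-i,n+1-j}$ in the definition of $\mathscr A_{k,l,n}^\mathrm{gr}$ against the bottom-right-corner convention of $\Emb^\mathrm{gr}$, I would conclude that $\Emb^\mathrm{gr}(\mathscr A_{k,l,n}^\mathrm{gr})$ coincides, as a subspace of $\sP_{n\times n}\otimes\sD_{n\times n}$, with $A\otimes\iota(B')$, where $A\sseq\sP_{n\times n}$ is the subalgebra generated by $\{t_{a,b}:n-k+1\leq a\leq n\}$, and $B'\sseq\sP_{n\times n}$ is the subalgebra generated by $\{t_{c,d}:n-l+1\leq d\leq n\}$ (note $\iota(B')\sseq\sD_{n\times n}$ is generated by the $\del_{a,b}$ with $n-l+1\leq a\leq n$, in view of $\iota(t_{c,d})=\del_{d,c}$).

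Next, since ${}\Gamma_n(\Emb(u))=\sum u'_1\otimes \iota(u'_2)$ with $u':=\Emb(u)$ and $\Delta_\sP(u')=\sum u'_1\otimes u'_2$, the desired inclusion amounts to showing
\[
\Delta_\sP(u')\in A\otimes B'\quad\text{for every }u'\in\Emb(\sP_{k\times l}).
\]
The subset $\{v\in\sP_{n\times n}:\Delta_\sP(v)\in A\otimes B'\}$ is a subalgebra of $\sP_{n\times n}$, since $\Delta_\sP$ is an algebra homomorphism and $A\otimes B'$ is a subalgebra of $\sP_{n\times n}\otimes\sP_{n\times n}$ under the componentwise product.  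Hence it suffices to verify the inclusion on the generators $u'=\Emb(t_{a_0,b_0})=t_{n-k+a_0,\,n-l+b_0}$ of $\Emb(\sP_{k\times l})$; for these,
\[
\Delta_\sP(t_{n-k+a_0,\,n-l+b_0})=\sum_{c=1}^n t_{n-k+a_0,\,c}\otimes t_{c,\,n-l+b_0},
\]
and each summand lies in $A\otimes B'$ because $n-k+a_0\in\{n-k+1,\ldots,n\}$ and $n-l+b_0\in\{n-l+1,\ldots,n\}$.

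The main obstacle I anticipate is the bookkeeping in the first step: it is not entirely obvious at first glance that the tilde reindexing in the definition of $\mathscr A_{k,l,n}^\mathrm{gr}$, composed with $\Emb^\mathrm{gr}$, carves out precisely those generators with first index in $\{n-k+1,\ldots,n\}$ (for $t$-generators) and $\{n-l+1,\ldots,n\}$ (for $\del$-generators).  Once this identification is pinned down, the inductive reduction via the algebra homomorphism property of $\Delta_\sP$ is essentially formal.
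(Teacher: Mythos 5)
Your proposal is correct and follows essentially the same route as the paper: the paper likewise computes $\Delta_\sP$ on a monomial of $\Emb(\sP_{k\times l})$, observes that $\Gamma_n$ then produces sums of tensors built only from the generators $t_{a,b}$ with $a\geq n-k+1$ and $\del_{c,d}$ with $c\geq n-l+1$, and concludes via the realization of $\sPD^{\mathrm{gr}}_{n\times n}$ as $\sP_{n\times n}\otimes\sD_{n\times n}$. Your reorganization (reducing to generators via the fact that $\{v:\Delta_\sP(v)\in A\otimes B'\}$ is a subalgebra) is a cosmetic repackaging of the paper's direct computation, and your bookkeeping of the tilde reindexing is accurate.
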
 
\begin{proof}
For a monomial $t_{i_1,j_1}\cdots t_{i_r,j_r}\in\sP_{n\times n}$ we have \begin{align*}\Delta(t_{i_1,j_1}\cdots t_{i_r,j_r})
&=\Delta(t_{i_1,j_1})\cdots \Delta(t_{i_r,j_r})
=\sum_{1\leq a_1,\ldots,a_r\leq n}
t_{i_1,a_1}\cdots t_{i_r,a_r}\otimes t_{a_1,j_1}\cdots t_{a_r,j_r}.
\end{align*}
Set $m':=n-k$ and $n':=n-l$. From the above equality it follows that 
\[
{{}\Gamma}_n\left(\Emb(t_{i_1,j_1}\cdots t_{i_r,j_r})\right)=
\sum_{1\leq a_1,\ldots,a_r\leq n}
t_{i_1+m',a_1}\cdots t_{i_r+m',a_r}\otimes \del_{j_r+n',a_r}\cdots \del_{j_1+n',a_1}.
\]
Thus
${{}\Gamma}_n(\Emb(t_{i_1,j_1}\cdots t_{i_r,j_r}))\in\mathscr A_{k,l,n}^\mathrm{gr}$ (see Remark~\ref{rmk:presentationofPDgr}).
\end{proof}
Lemma~\ref{lem:gtrGMA} justifies that  the following definition is valid.
\begin{dfn}
\label{dfn:PsI}
We define $
{{}\Gamma}_{k,l,n}:\sP_{k\times l}\to\mathscr A_{k,l,n}^\mathrm{gr}
$ to be the
 unique map that makes the diagram 
\[
\xymatrix@C+2pc{\sP_{k\times l} \ar[r]^{\Emb_{k\times l}^{n\times n}} \ar@{.>}[dr]_{{{}\Gamma}_{k,l,n}} & \sP_{n\times n}\ar[r]^{{{}\Gamma}_n} & \sPD_{n\times n}^\mathrm{gr}\\
 & \mathscr A_{k,l,n}^\mathrm{gr}\ar@{^{(}->}[ur]}
\]
commutative. 
\end{dfn}

\begin{lem}
\label{lem:Psi-1-1}
The map ${{}\Gamma}_{k,l,n}$ is injective.
\end{lem}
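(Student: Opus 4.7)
The plan is to show that ${{}\Gamma}_{k,l,n}$ is the composition of two injective maps, and hence injective itself. By Definition~\ref{dfn:PsI}, we have
\[
{{}\Gamma}_{k,l,n} = {{}\Gamma}_n \circ \Emb_{k\times l}^{n\times n},
\]
where both maps are viewed as landing in the appropriate codomains.

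First, I would observe that $\Emb_{k\times l}^{n\times n} : \sP_{k\times l} \to \sP_{n\times n}$ is injective. This is the restriction of the embedding of algebras $\sPD_{k\times l} \to \sPD_{n \times n}$ constructed in Proposition~\ref{prp:existence(6)}(i), which was shown to be injective using Proposition~\ref{diamond-app} (it takes a basis of monomials to a linearly independent set). Since $\sP_{k \times l}$ is a subalgebra of $\sPD_{k \times l}$, the restriction is also injective.

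Next, I would invoke Lemma~\ref{lem:injGamgr}, which shows that ${{}\Gamma}_n : \sP_{n\times n} \to \sPD_{n\times n}^\mathrm{gr}$ admits the left inverse $1 \otimes \epsilon_\mathscr D$, and is therefore injective. Composing the two injective maps yields the injectivity of ${{}\Gamma}_{k,l,n}$. There is no real obstacle here: the lemma is an immediate corollary of the two preceding injectivity results, and no further computation is needed.
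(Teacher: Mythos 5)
Your proposal is correct and is essentially identical to the paper's proof: the paper likewise notes that ${{}\Gamma}_{k,l,n}$ factors as ${{}\Gamma}_n\circ\Emb_{k\times l}^{n\times n}$, cites the injectivity of the embedding, and invokes Lemma~\ref{lem:injGamgr}. Your additional remarks on why the embedding is injective are accurate but not needed beyond what the paper already established.
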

\begin{proof}
Since  ${\Emb}_{k\times l}^{n\times n}$  is an injective map, this follows from Lemma~\ref{lem:injGamgr}.
\end{proof}

\begin{lem}
\label{lem:PsiSURJ}
${{}\Gamma}_{k,l,n}(\sP_{k\times l}^{(d)})=\left(
\mathscr A^{\mathrm{gr},(d,d)}_{k,l,n}\right)_{(\epsilon_R)}$ for $d\geq 0$. 
\end{lem}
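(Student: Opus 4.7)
The plan is to establish the claimed equality in three stages: first, show that $\Gamma_{k,l,n}$ maps $\sP_{k\times l}^{(d)}$ into $\mathscr A_{k,l,n}^{\mathrm{gr},(d,d)}$; second, show that the image lies in the $U_R$-invariant part $\left(\mathscr A_{k,l,n}^\mathrm{gr}\right)_{(\epsilon_R)}$; and third, deduce surjectivity onto $\left(\mathscr A_{k,l,n}^{\mathrm{gr},(d,d)}\right)_{(\epsilon_R)}$ by a dimension count in conjunction with the injectivity from Lemma~\ref{lem:Psi-1-1}.

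The first stage is bookkeeping. The embedding $\Emb_{k\times l}^{n\times n}$ preserves degree; the coproduct $\Delta_\sP$ of $\sP_{n\times n}$ is bihomogeneous of bidegree $(1,1)$ on generators since $\Delta_\sP(t_{i,j}) = \sum_s t_{i,s}\otimes t_{s,j}$; and $\iota$ preserves degree since $\iota(t_{i,j}) = \del_{j,i}$. Combined with Lemma~\ref{lem:gtrGMA}, these observations show that $\Gamma_{k,l,n}(\sP_{k\times l}^{(d)})$ lies in $\mathscr A_{k,l,n}^{\mathrm{gr},(d,d)}$.

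The second stage is the main computation. I would in fact prove the stronger invariance $\Gamma_n(w) \in \left(\sPD_{n\times n}^\mathrm{gr}\right)_{(\epsilon_R)}$ for every $w \in \sP_{n\times n}$, from which the statement for $\Gamma_{k,l,n}$ follows via Definition~\ref{dfn:PsI}. For $y \in U_R$ the diagonal action yields
\[
y\cdot\Gamma_n(w) = \sum \cR_\sP(y_{(1)})w_{(1)} \otimes \cR_\sD(y_{(2)})\iota(w_{(2)}).
\]
Inserting the explicit formulas for $\cR_\sP$ and $\cR_\sD$ from Subsection~\ref{subsec:PnnnDnan}, applying coassociativity of $\Delta_\sP$, and using that $\iota$ is an anti-morphism of coalgebras to split $\Delta_\sD(\iota(w_{(2)}))$, the right-hand side becomes
\[
\sum \lag W_{(2)}\circ\xi_{-1/q},y_{(1)}\rag\,\lag \iota(W_{(3)}),y_{(2)}\rag\, W_{(1)} \otimes \iota(W_{(4)}),
\]
where $\sum W_{(1)}\otimes W_{(2)}\otimes W_{(3)}\otimes W_{(4)}$ denotes the $4$-fold iterated coproduct of $w$. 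Using~\eqref{eq:iota-uoXi} to rewrite $\lag \iota(W_{(3)}),y_{(2)}\rag = \lag W_{(3)}\circ\xi_{-1/q},S(y_{(2)})\rag$, the product formula~\eqref{eq:laxy)copro} for the dual pairing, and the fact that $\xi_{-1/q}$ is a Hopf algebra automorphism, the product of the two scalar factors is recognised as the evaluation of $\Delta(\widetilde W)$ at $\xi_{-1/q}(y_{(1)}) \otimes \xi_{-1/q}(S(y_{(2)}))$, where $\widetilde W$ is the factor in the $3$-fold coproduct $\sum W_{(1)}\otimes \widetilde W \otimes W_{(4)}$ of $w$ whose own coproduct is $\sum W_{(2)} \otimes W_{(3)}$. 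The antipode axiom $\sum y_{(1)} S(y_{(2)}) = \epsilon(y)$ then collapses this scalar to $\epsilon(y)\epsilon(\widetilde W)$, and the counit axiom contracts $\sum \epsilon(\widetilde W)W_{(1)} \otimes \iota(W_{(4)})$ back to $\Gamma_n(w)$.

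Finally, a dimension count completes the argument. Proposition~\ref{prp:glmglndecom} applied to $\sP_{k\times l}$ gives $\dim\sP_{k\times l}^{(d)} = \sum_{\la\in\Lambda_{\min\{k,l\},d}}(\dim V_\la)^2$, and~\eqref{eq:grPDrrUR22} together with the hypothesis $k,l\leq n$ (so that $\min\{k,l,n\} = \min\{k,l\}$) yields exactly the same value for $\dim\left(\mathscr A_{k,l,n}^{\mathrm{gr},(d,d)}\right)_{(\epsilon_R)}$. In conjunction with the injectivity from Lemma~\ref{lem:Psi-1-1}, the containments from stages one and two are therefore equalities. The principal obstacle is the coproduct bookkeeping in stage two, since one must carefully track multiple layers of iterated coproducts and the precise way in which the anti-coalgebra map $\iota$ intertwines with the right-translation actions of $U_R$ on $\sP_{n\times n}$ and $\sD_{n\times n}$.
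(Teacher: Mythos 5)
Your proof is correct and follows essentially the same three-stage strategy as the paper: degree preservation, a Hopf-algebra computation establishing $\Gamma_n(\sP_{n\times n})\subseteq\left(\sPD_{n\times n}^\mathrm{gr}\right)_{(\epsilon_R)}$, and a dimension count combined with the injectivity from Lemma~\ref{lem:Psi-1-1}; your stage-two computation is a harmless repackaging that converts both pairings to $(\cdot)\circ\xi_{-1/q}$ and invokes $\sum y_{(1)}S(y_{(2)})=\epsilon(y)$ where the paper works directly with~\eqref{eq:xoy=<><>u2},~\eqref{eq:xoy=<><>u3} and $\sum x_2S^{-1}(x_1)=\epsilon(x)$, and these are equivalent since $\lag\iota(u_2),S^{-1}(x)\rag=\lag u_2\circ\xi_{-1/q},x\rag$. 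One small notational slip: $\dim\sP_{k\times l}^{(d)}$ should be written as $\sum_{\la}\mathsf d(\la,k)\mathsf d(\la,l)$ rather than $\sum_\la(\dim V_\la)^2$, since the two tensor factors in Proposition~\ref{prp:glmglndecom} are modules over $U_q(\gl_k)$ and $U_q(\gl_l)$ respectively and have different dimensions when $k\neq l$; your conclusion is unaffected because the identical formula $\sum_\la\mathsf d(\la,k)\mathsf d(\la,l)$ arises on both sides.
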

\begin{proof}
First we prove that
${{}\Gamma}_{k,l,n}(\sP_{k\times l})\sseq 
\left(\mathscr A^\mathrm{gr}_{k,l,n}\right)_{(\epsilon_R)}$.
By Definition~\ref{dfn:PsI}
 it suffices to prove that ${{}\Gamma}_n(\sP_{n\times n})\sseq \left(\sPD_{n\times n}^\mathrm{gr}\right)_{(\epsilon_R)}$. 
By standard properties of the antipode of $U_R$, 
\begin{equation}
\label{S-1NewEq}
\sum x_2S^{-1}(x_1)=\epsilon_R(x)1 \quad\text{for }x\in U_R.
\end{equation} It follows that
for $x\in U_R$ and
$u\in \sP_{n\times n}$ we have \begin{align*}
x\cdot{{}\Gamma}_n(u)&
=\sum 
(x_1\cdot u_1)
\otimes (x_2\cdot \iota(u_2))\\
&=
\sum
\big( u_{11}
\lag 
\iota(u_{12}),S^{-1}(x_1)\rag
\big)
\otimes 
\big(
\iota(u_2)_1
\lag 
\iota(u_2)_2,x_2
\rag
\big) 
& 
\text{(By~\eqref{eq:xoy=<><>u2} and~\eqref{eq:xoy=<><>u3})}
\\
&=
\sum \lag \iota(u_{2}),S^{-1}(x_1)\rag
\lag 
\iota(u_{3}),x_2\rag
u_{1}
\otimes \iota(u_{4})
&
\text{(By coassociativity)}
\\
&=\sum 
\epsilon_{R}(x)
\lag \iota(u_2),1\rag u_1\otimes \iota(u_3)
&
\text{(By~\eqref{eq:laxy)copro} and~\eqref{S-1NewEq})}
\\
&
=
\epsilon_{R}(x)\sum u_1
\otimes \iota(u_2)
=
\epsilon_{R}(x){{}\Gamma}_n(u).
&\text{(By counit relation of $\sD_{n\times n}$)}
\end{align*}
Thus we have proved $x\cdot {{}\Gamma}_n(u)=\epsilon_R(x)\,{{}\Gamma}_n(u)$, that is, 
${{}\Gamma}_n(u)\in\left(\sPD^\mathrm{gr}_{n\times n}\right)_{(\epsilon_R)}$.
From~\eqref{eq:Gammagr--n} it follows  that
 ${{}\Gamma}_{k,l,n}\left(\sP_{k\times l}^{(r)}\right)\sseq\mathscr A^{\mathrm{gr},(r,r)}_{k,l,n}$.
 Consequently,
\begin{equation}
\label{gammainclusssss-}
{{}\Gamma}_{k,l,n}\left(\sP_{k\times l}^{(r)}\right)\sseq\mathscr A^{\mathrm{gr},(r,r)}_{k,l,n}\cap\left(\sPD_{n\times n}^\mathrm{gr}\right)_{(\epsilon_R)}=
\left(
\mathscr A^{\mathrm{gr},(r,r)}_{k,l,n}
\right)_{(\epsilon_R)}.
\end{equation}    
By Lemma~\ref{lem:Psi-1-1} , to complete the proof 
it suffices to verify that  the two  sides of~\eqref{gammainclusssss-} have equal dimensions. 
Since $k,l\leq n$,  from~\eqref{eq:grPDrrUR22} and 
Proposition~\ref{prp:glmglndecom} it follows that both of these vector spaces have dimension equal to $\sum_{\la\in \Lambda_{d,r}}
\mathsf d(\la,k)\mathsf d(\la,l)$ where 
$\mathsf d(\la,k)$ (respectively, $\mathsf d(\la,l)$) denotes the dimension of the  $U_q(\gl_k)$-module (respectively, $U_q(\gl_l)$-module) associated to $\la$. 
\end{proof}

\section{The product $\star_{k,l,n}^{}$ on $\sP_{k\times l}$}
\label{sec:specialcasem=n}
Throughout this section we assume that  $m=n$ (so that 
$
U_L\cong U_R\cong U_q(\gl_n)$) and  $1\leq k,l\leq n$. 

\subsection{An explicit formula for the product of $\sPD_{n\times n}^\mathrm{gr}$}

Recall that  $\sP_{n\times n}$ and $\sD_{n\times n}$ are subalgebras of $U_q(\gl_n)^\bullet$ (see Definition~\ref{dfn:Ubul} and 
Definition~\ref{dfn-f--P-D}). 
Also recall that given $f,g\in U_q(\gl_n)^\bullet$, we define $\big\lag f\otimes g,\EuScript R^{(n)}\big\rag$ 
and $\big\lag f\otimes g,\underline{\EuScript R}^{(n)}\big\rag$
as in 
\eqref{eq:<fog,R>}. 

\begin{prp}
\label{prp:prod-of-PDgr}
Let $a,a'\in\sP_{n\times n}$ and $b,b'\in\sD_{n\times n}$.
Then the  product of 
$\sPD_{n\times n}^{\mathrm{gr}}$ satisfies  
\[
(a\otimes b)(a'\otimes b')=
\sum
\lag 
\iota(a_1')^\natural
\otimes
(b_1)^\natural,\EuScript R^{(n)}\rag
\lag
\iota(a_3')\otimes b_3,\EuScript R^{(n)}\rag
aa_2'\otimes b_2b',
\]
where $(\Delta_\sP\otimes 1)\circ\Delta_\sP(a')=\sum a'_1\otimes a'_2\otimes a'_3$ and 
$(\Delta_\sP\otimes 1)\circ\Delta_\sP(b)=\sum b_1\otimes b_2\otimes b_3$, with $\Delta_\sP$ and $\Delta_\sD$ denoting the coproducts of $\sP_{n\times n}$ and $\sD_{n\times n}$ respectively. 
\end{prp}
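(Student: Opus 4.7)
The plan is to unpack the definition of the product in $\sPD^{\mathrm{gr}}_{n\times n}$ as the $\check{\underline{\EuScript R}}_{LR}$-twisted tensor product of $\sP_{n\times n}$ and $\sD_{n\times n}$, and to compute the braiding $\check{\underline{\EuScript R}}_{LR,\sD,\sP}(b\otimes a')$ explicitly. By Definition~\ref{dfn:ARBB},
\[(a\otimes b)(a'\otimes b')=(m_{\sP}\otimes m_{\sD})\bigl(a\otimes \check{\underline{\EuScript R}}_{LR,\sD,\sP}(b\otimes a')\otimes b'\bigr),\]
so the task reduces to computing $\sigma\bigl(\underline{\EuScript R}_{LR}\cdot(b\otimes a')\bigr)$. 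By Definition~\ref{dfn:product-loc-fin}, the action of $\underline{\EuScript R}_{LR}$ factors as the commuting actions of $(\underline{\EuScript R}^{(n)}_L)_{13}$ and $(\underline{\EuScript R}^{(n)}_R)_{24}$, where the first leg of each tensor acts on $b$ and the second on $a'$.

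Writing $\underline{\EuScript R}^{(n)}_L=\sum l_i\otimes l'_i$ and $\underline{\EuScript R}^{(n)}_R=\sum r_j\otimes r'_j$ formally, and applying formula~\eqref{eq:xoy=<><>u3} for the action of $l_i\otimes r_j$ on $b\in\sD$ together with~\eqref{eq:xoy=<><>u2} for the action of $l'_i\otimes r'_j$ on $a'\in\sP$, I would obtain
\[\underline{\EuScript R}_{LR}\cdot(b\otimes a')=\sum\lag b_1,l_i^\natural\rag\,\lag b_3,r_j\rag\,\lag \iota(a'_1),S^{-1}(l'_i)^\natural\rag\,\lag \iota(a'_3),S^{-1}(r'_j)\rag\, b_2\otimes a'_2,\]
where the summations run over $i,j$ and over the Sweedler indices produced by the double coproducts of $a'$ and $b$. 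Using the duality identity~\eqref{eq:naturalduali} to transfer each $\natural$ from the algebra to the functional side and then collapsing the sums over $i$ and over $j$ into formal pairings in the sense of Subsection~\ref{subsec::quas}, I would arrive at the scalars $\lag b_1^\natural\otimes(\iota(a'_1)^\natural\circ S^{-1}),\underline{\EuScript R}^{(n)}\rag$ and $\lag b_3\otimes(\iota(a'_3)\circ S^{-1}),\underline{\EuScript R}^{(n)}\rag$.

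The final step will be to apply the identity $\lag f\otimes(g\circ S^{-1}),\underline{\EuScript R}^{(n)}\rag=\lag g\otimes f,\EuScript R^{(n)}\rag$ from~\eqref{lem:fxgoS} to both pairings; this converts each of them into a pairing with $\EuScript R^{(n)}$ while swapping the tensor factors, producing exactly $\lag \iota(a'_1)^\natural\otimes b_1^\natural,\EuScript R^{(n)}\rag$ and $\lag \iota(a'_3)\otimes b_3,\EuScript R^{(n)}\rag$ as in the statement. After composing with $\sigma$ and with $m_{\sP}\otimes m_{\sD}$, the claimed formula drops out. The main obstacle is the careful bookkeeping: ensuring that the positions $(1,3)$ versus $(2,4)$ in Definition~\ref{dfn:product-loc-fin} line up correctly with the $L$ and $R$ contributions, that the twists $\natural$, $\iota$, and $S^{-1}$ appearing in the $\sP$-action are tracked through the conversion from $\underline{\EuScript R}^{(n)}$ to $\EuScript R^{(n)}$, and that the double coproducts on $a'$ and $b$ align properly with the two tensor legs of $U_L\otimes U_R$ acting on each element.
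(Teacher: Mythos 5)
Your proposal is correct and follows essentially the same route as the paper's proof: both unwind the twisted product via the factorization $\check{\underline{\EuScript R}}_{LR}=(\check{\underline{\EuScript R}}_L)_{13}(\check{\underline{\EuScript R}}_R)_{24}$, identify the resulting scalars as pairings against $\underline{\EuScript R}^{(n)}$, and convert them to pairings against $\EuScript R^{(n)}$ via~\eqref{lem:fxgoS}. The only cosmetic difference is that the paper first rewrites the module actions as right translations through the intertwiners $a\mapsto a\circ\xi_{\mp q^{\pm1}}$ and $b\mapsto b^\natural$ and then applies Lemma~\ref{lem:fog,Rexpand}, converting the $\xi$-twists to $\iota(\cdot)^\natural\circ S^{-1}$ and $\iota(\cdot)\circ S^{-1}$ afterwards, whereas you work directly from the explicit action formulas~\eqref{eq:xoy=<><>u2} and~\eqref{eq:xoy=<><>u3} — the locally finite braided triple machinery of Subsection~\ref{subsec:prdcts} is exactly what legitimizes your formal leg-by-leg manipulation of the $R$-matrices.
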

\begin{proof}
 We need to compute $\left(\check{\underline{\EuScript R}}_{LR}\right)_{B,A}(b\otimes a')$ where $B:=\sD_{n\times n}$ and $A:=\sP_{n\times n}$.
Recall from~\eqref{eq:dfnofRcheck} that 
$\check{\underline{\EuScript R}}_{LR}=(\check{\underline{\EuScript R}}_L)_{13}
(\check{\underline{\EuScript R}}_R)_{24}$. The map 
\[
\sP_{n\times n}\to
U_q(\gl_n)^\circ\ ,\ 
a\mapsto a\circ\xi_{-1/q}
\] intertwines between the $U_R$-module structures $\cR_\sP$ 
and  right translation 
on $U_q(\gl_n)^\circ$, in the sense of Remark~\ref{rmk:LRactionsHcirc}. 
Thus 
Lemma~\ref{lem:fog,Rexpand} implies that 
\begin{equation}
\label{eq:R2444-1}
(\underline{\EuScript R}_R)_{24}
(b\otimes a')=\sum b_1^{}\otimes a'_1
\lag b_2\otimes (a_2'\circ \xi_{-1/q}),\underline{\EuScript R}^{(n)}\rag.
\end{equation}
Similarly, the maps  
\[
\sD_{n\times n}\to U_q(\gl_n)^\circ\ ,\ 
b\mapsto b^\natural\quad\text{ and }\quad
\sP_{n\times n}\to 
U_q(\gl_n)^\circ
\ ,\ 
a\mapsto (a\circ \xi_{-q})^\natural
\]
intertwine the actions $\cL_\sD$  and $\cL_\sP$ (on $\sD_{n\times n}$ and 
$\sP_{n\times n}$ respectively)
with right translation. 
This is because the map $u\mapsto u^\natural$ on $U_q(\gl_n)^\circ$ that is defined in~\eqref{eq:naturalduali} is an anti-automorphism of the coalgebra structure of $U_q(\gl_n)^\circ$. 
Thus
\begin{equation}
\label{eq:R2444-2}
\left(\underline{\EuScript R}_{L}\right)_{13}(b\otimes a')
=\sum b_2^{} \otimes a'_2
\lag (b_1)^\natural\otimes (a_1'\circ\xi_{-q})^\natural,\underline{\EuScript R}^{(n)}\rag
.\end{equation}
From~\eqref{eq:R2444-1} and~\eqref{eq:R2444-2}
it follows that 
\begin{equation}
\label{eq:first-rpduct}
(a\otimes b)(a'\otimes b')=
\sum
\lag (b_1)^\natural \otimes (a_1'\circ\xi_{-q})^\natural,\underline{\EuScript R}^{(n)}\rag
\lag
b_3\otimes (a_3'\circ \xi_{-1/q}),\underline{\EuScript R}^{(n)}\rag
aa_2'\otimes b_2b'.
\end{equation}
Since $S^2(x)=\xi_{q^2}(x)$ for $x\in U_q(\gl_n)$, we have
$
\iota(v)\circ S=v\circ \xi_{-1/q}\circ S^2=
v\circ \xi_{-q}
$
for $v\in U_q(\gl_n)^\circ$, and  thus Lemma~\ref{lem:rel-S-natural} implies that
$
(v\circ \xi_{-q})^\natural=(\iota(v)\circ S)^\natural=\iota(v)^\natural\circ S^{-1}
$. By a similar argument  $v\circ 
\xi_{-1/q}=\iota(v)\circ S^{-1}$. 
Thus in~\eqref{eq:first-rpduct} we can substitute $(a_1'\circ\xi_{-q})^\natural$ by $\iota(a_1')^\natural\circ S^{-1}$ and $a_3'\circ\xi_{-1/q}$ by $\iota(a'_3)\circ S^{-1}$.
After these substitutions, the assertion of the proposition follows from~\eqref{lem:fxgoS}.
\end{proof}

\subsection{The product $\star_{k,l,n}^{}$ on $\sP_{k\times l}$ and the map 
${{}\Gamma}_{k,l,n}$}
\label{subsec:newprodt}
We start by defining a binary product 
\[
\sP_{n\times n}\otimes \sP_{n\times n}\to \sP_{n\times n}\ ,\ u\otimes v\mapsto u\star_n v.
\]
\begin{dfn}
\label{dfn:deffstarrrprd}
For $u,v\in \sP_{n\times n}$ we
set
\begin{equation}
\label{eq:starprod-symm-formula}
u\star_{n} v:=
\sum
\big\lag
\iota(v_1)^\natural\otimes
\iota(u_3)^\natural,
\EuScript R^{(n)}
\big\rag
\big\lag
\iota(v_3)\otimes \iota(u_2),\EuScript R^{(n)}
\big\rag
u_1v_2,
\end{equation}
where the  sum ranges over summands of $(\Delta_{\sP}\otimes 1)\circ \Delta_{\sP}(u)=\sum u_1\otimes u_2\otimes u_3$ and 
$(\Delta_{\sP}\otimes1)\circ  \Delta_{\sP}(v)=\sum v_1\otimes v_2\otimes v_3$.
\end{dfn}
For the next proposition recall that ${{}\Gamma}_n:\sP_{n\times n}\to\sPD_{n\times n}^\mathrm{gr}$ is the map defined in~\eqref{eq:Gammagr--n}. 
\begin{prp}
\label{prp:grPsi}
${{}\Gamma_n}(u\star_n v)={{}\Gamma_n}(u){{}\Gamma_n}(v)$ for $u,v\in \sP_{n\times n}$. 
\end{prp}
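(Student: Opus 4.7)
The plan is to evaluate both sides of the claimed equality in terms of the 4-fold iterated coproducts of $u$ and $v$, using Proposition~\ref{prp:prod-of-PDgr} on the right-hand side and the definition of $\star_n$ together with the coalgebra/algebra properties of ${{}\Gamma_n}$ on the left-hand side, and then to observe that the remaining discrepancy is killed by the $RTT$-type identity~\eqref{eq:g1f1R}.

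First I would compute ${{}\Gamma_n}(u)\,{{}\Gamma_n}(v)$ directly from Proposition~\ref{prp:prod-of-PDgr} applied with $a=u_{(1)}$, $b=\iota(u_{(2)})$, $a'=v_{(1)}$, $b'=\iota(v_{(2)})$, in Sweedler notation $\Delta_\sP(u)=\sum u_{(1)}\otimes u_{(2)}$ and similarly for $v$. The triple coproducts of $v_{(1)}$ in $\sP_{n\times n}$ and of $\iota(u_{(2)})$ in $\sD_{n\times n}$ that appear in Proposition~\ref{prp:prod-of-PDgr} should be unwound using coassociativity together with the fact that $\iota:\sP_{n\times n}\to \sD_{n\times n}^{\mathrm{op},\mathrm{cop}}$ is an anti-morphism of coalgebras; this yields a sum indexed by $\Delta^{(4)}_\sP(u)=\sum u^1\otimes u^2\otimes u^3\otimes u^4$ and $\Delta^{(4)}_\sP(v)=\sum v^1\otimes v^2\otimes v^3\otimes v^4$, in which the second $\EuScript R^{(n)}$-pairing has the form $\langle \iota(v^3)\otimes \iota(u^2),\EuScript R^{(n)}\rangle$ and the second tensor component is $\iota(u^3)\iota(v^4)$.

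Next I would compute ${{}\Gamma_n}(u\star_n v)$ by substituting the formula of Definition~\ref{dfn:deffstarrrprd} into ${{}\Gamma_n}=(1\otimes\iota)\circ \Delta_\sP$ and using that $\Delta_\sP$ is an algebra homomorphism while $\iota$ is an anti-homomorphism of algebras. After reindexing in terms of $\Delta^{(4)}_\sP(u)$ and $\Delta^{(4)}_\sP(v)$ via coassociativity, the result will agree with that of the previous paragraph except that the second pairing becomes $\langle \iota(v^4)\otimes \iota(u^3),\EuScript R^{(n)}\rangle$ and the second tensor factor becomes $\iota(v^3)\iota(u^2)$; the common factor $\langle \iota(v^1)^\natural\otimes \iota(u^4)^\natural,\EuScript R^{(n)}\rangle$ and the first tensor factor $u^1 v^2$ will be identical on both sides.

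Thus the identity reduces to the claim that
\[
\sum \langle \iota(q_{(2)})\otimes \iota(p_{(2)}),\EuScript R^{(n)}\rangle\,\iota(q_{(1)})\iota(p_{(1)})
=\sum \langle \iota(q_{(1)})\otimes \iota(p_{(1)}),\EuScript R^{(n)}\rangle\,\iota(p_{(2)})\iota(q_{(2)})
\]
for all $p,q\in\sP_{n\times n}$, applied with $\Delta_\sP(p)=u^2\otimes u^3$ and $\Delta_\sP(q)=v^3\otimes v^4$ (both bona fide coproduct decompositions, by coassociativity). Translating through $\iota$, so that $\Delta_\sD(\iota(p))=\sum \iota(p_{(2)})\otimes \iota(p_{(1)})$ and similarly for $\iota(q)$, this is precisely~\eqref{eq:g1f1R} applied in $\sD_{n\times n}\sseq U_q(\gl_n)^\bullet$ with $f:=\iota(q)$ and $g:=\iota(p)$. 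The main obstacle throughout is the bookkeeping: every appearance of $\iota$ reverses the Sweedler ordering of coproduct pieces, and the triple coproducts of $a'$ and $b$ in Proposition~\ref{prp:prod-of-PDgr} are formed independently, so coassociativity must be used repeatedly to align all terms into a single coherent indexing by $\Delta^{(4)}_\sP(u)$ and $\Delta^{(4)}_\sP(v)$ before the single invocation of~\eqref{eq:g1f1R} closes the argument.
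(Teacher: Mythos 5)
Your proposal is correct and follows essentially the same route as the paper's proof: both sides are expanded via Proposition~\ref{prp:prod-of-PDgr} and the definition of $\star_n$ into sums indexed by four-fold coproducts of $u$ and $v$, the common factors $\lag \iota(v_1)^\natural\otimes\iota(u_4)^\natural,\EuScript R^{(n)}\rag$ and $u_1v_2$ are identified, and the residual identity is recognized (after transporting through the anti-coalgebra map $\iota$) as an instance of~\eqref{eq:g1f1R}. The bookkeeping you describe, including the reversal of Sweedler indices under $\iota$ and the realignment by coassociativity, matches the paper's computation exactly.
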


\begin{proof}
By Proposition~\ref{prp:prod-of-PDgr} for $a:=u_1$, $b:=\iota(u_2)$, $a':=v_1$ and $b':=\iota(v_2)$ we obtain
\begin{align*}
{{}\Gamma_n}(u){{}\Gamma_n}(v)
&
=
\sum (u_1 
\otimes \iota(u_2))(v_1\otimes \iota(v_2))\\
&
=
\sum \lag
\iota(v_1)^\natural\otimes
\iota(u_4)^\natural,\EuScript R^{(n)}\rag
\lag
\iota(v_3)\otimes \iota(u_2),\EuScript R^{(n)}\rag
u_1v_2\otimes \iota(u_3)\iota(v_4).
\end{align*}
Since $u\mapsto\iota(u)$ is an anti-automorphism of algebras, by~\eqref{eq:starprod-symm-formula} we also have
\begin{align*}
{{}\Gamma_n}(u\star_n v)& 
=
\sum
\lag
\iota(v_1)^\natural\otimes 
\iota(u_4)^\natural 
,
\EuScript R^{(n)}
\rag
\lag
\iota(v_4)\otimes 
\iota(u_3) 
,\EuScript R^{(n)}
\rag
u_1v_2\otimes \iota(v_3)\iota(u_2)
.\end{align*}
After changing the indices as $(u_1,u_2,u_3,u_4)=(u_1,u_{21},u_{22},u_3)
$ and $(v_1,v_2,v_3,v_4)=(v_1,v_2,v_{31},v_{32})$ by coassociativity, the equality ${{}\Gamma_n}(u\star_n v)={{}\Gamma_n}(u){{}\Gamma_n}(v)$ reduces to
\begin{equation}
\label{eq:fjlffjf-1}
\sum
\lag 
\iota(v_{31})\otimes 
\iota(u_{21}),\EuScript R^{(n)}\rag
\iota(u_{22})\iota(v_{32})
=
\sum
\lag
\iota(v_{32})\otimes 
\iota(u_{22}),\EuScript R^{(n)}\rag
\iota(v_{31})\iota(u_{21}).
\end{equation}
Set $f:=\iota(v_3)$ and $g:=\iota(u_2)$. Since $u\mapsto \iota(u)$ is an anti-automorphism of coalgebras, from~\eqref{lem:fxgoS} it follows 
that~\eqref{eq:fjlffjf-1} is equivalent to
\[
\sum \lag f_2\otimes g_2,\EuScript R^{(n)}\rag
g_1f_1=\sum \lag f_1\otimes g_1,\EuScript R^{(n)}\rag f_2g_2,
\]
which is a consequence of~\eqref{eq:g1f1R}.
\end{proof}

Recall that throughout this section $1\leq k,l\leq n$. 
\begin{lem}
\label{lem:e(Pm)pres}
Let $\Emb:=\Emb_{k\times l}^{n\times n}$ be as 
in~\eqref{eq:emm-restricted}. 
Then 
$\Emb(u)\star_n\Emb(v)\in\Emb(\sP_{k\times l})$ for $u,v\in\sP_{k\times l}$. 
\end{lem}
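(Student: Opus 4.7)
The plan is to reduce the claim to a purely algebraic statement inside $\mathscr A_{k,l,n}^{\mathrm{gr}}$ by transporting everything through the injective map ${{}\Gamma}_n:\sP_{n\times n}\to\sPD_{n\times n}^{\mathrm{gr}}$. Concretely, I would apply ${{}\Gamma}_n$ to $\Emb(u)\star_n \Emb(v)$ and use Proposition~\ref{prp:grPsi} to rewrite
\[
{{}\Gamma}_n\bigl(\Emb(u)\star_n\Emb(v)\bigr)={{}\Gamma}_n(\Emb(u))\,{{}\Gamma}_n(\Emb(v)).
\]
By Definition~\ref{dfn:PsI}, ${{}\Gamma}_n\circ \Emb={{}\Gamma}_{k,l,n}$ (viewed as landing inside $\mathscr A_{k,l,n}^{\mathrm{gr}}\sseq \sPD_{n\times n}^{\mathrm{gr}}$), so both factors on the right hand side lie in $\mathscr A_{k,l,n}^{\mathrm{gr}}$, and in fact by Lemma~\ref{lem:PsiSURJ} they even lie in the subalgebra $\bigl(\mathscr A_{k,l,n}^{\mathrm{gr}}\bigr)_{(\epsilon_R)}$ of $U_R$-invariants.

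Next, since $\bigl(\mathscr A_{k,l,n}^{\mathrm{gr}}\bigr)_{(\epsilon_R)}$ is closed under multiplication (the invariants of a module algebra always form a subalgebra), the product ${{}\Gamma}_{k,l,n}(u)\,{{}\Gamma}_{k,l,n}(v)$ belongs to it. Invoking Lemma~\ref{lem:PsiSURJ} again — this time its surjectivity onto the invariant subalgebra on each graded piece — yields some $w\in\sP_{k\times l}$ with
\[
{{}\Gamma}_{k,l,n}(w)={{}\Gamma}_{k,l,n}(u)\,{{}\Gamma}_{k,l,n}(v).
\]
Unwinding the identification ${{}\Gamma}_{k,l,n}={{}\Gamma}_n\circ\Emb$, this reads ${{}\Gamma}_n(\Emb(w))={{}\Gamma}_n(\Emb(u)\star_n\Emb(v))$.

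Finally, Lemma~\ref{lem:injGamgr} tells us ${{}\Gamma}_n$ is injective on $\sP_{n\times n}$, so $\Emb(u)\star_n\Emb(v)=\Emb(w)\in\Emb(\sP_{k\times l})$, as required. There is no real obstacle here: the argument is a direct assembly of the already-established properties of ${{}\Gamma}_n$ (it is injective, multiplicative with respect to $\star_n$, and restricts to a bijection $\sP_{k\times l}\to(\mathscr A_{k,l,n}^{\mathrm{gr}})_{(\epsilon_R)}$). The only thing to be careful about is the bookkeeping that $\bigl(\mathscr A_{k,l,n}^{\mathrm{gr}}\bigr)_{(\epsilon_R)}$ is genuinely a subalgebra, which is immediate because $\mathscr A_{k,l,n}^{\mathrm{gr}}$ is a $U_R$-module algebra. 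Thus the whole proof is essentially one line: $\star_n$ on $\Emb(\sP_{k\times l})$ corresponds under ${{}\Gamma}_n$ to the algebra product on the subalgebra $(\mathscr A_{k,l,n}^{\mathrm{gr}})_{(\epsilon_R)}$.
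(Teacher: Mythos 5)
Your proof is correct and follows essentially the same route as the paper's: apply $\Gamma_n$, use Proposition~\ref{prp:grPsi} for multiplicativity, Lemma~\ref{lem:PsiSURJ} to land in and then surject back out of $\big(\mathscr A_{k,l,n}^{\mathrm{gr}}\big)_{(\epsilon_R)}$, and Lemma~\ref{lem:injGamgr} for injectivity. No differences worth noting.
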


\begin{proof}
Set 
$D:= {{}\Gamma}_n(\Emb(u)\star_n \Emb(v))$. By Lemma~\ref{lem:PsiSURJ} we have
\[
{{}\Gamma}_n(\Emb(u))=
{{}\Gamma}_{k,l,n}(u)\in\left(
\mathscr A^\mathrm{gr}_{k,l,n}\right)_{(\epsilon_R)}
\quad\text{and}\quad  
{{}\Gamma}_n(\Emb(v))=
{{}\Gamma}_{k,l,n}(v)\in\left(
\mathscr A^\mathrm{gr}_{k,l,n}\right)_{(\epsilon_R)}
.\]
Since $\left(\mathscr A_{k,l,n}^\mathrm{gr}\right)_{(\epsilon_R)}$ is a subalgeba of $\mathscr A_{k,l,n}^\mathrm{gr}$,  from Proposition~\ref{prp:grPsi} it follows that  
$D\in\left(\mathscr A_{k,l,n}^\mathrm{gr}\right)_{(\epsilon_R)}$. 
Again by Lemma~\ref{lem:PsiSURJ}
there exists $w\in \sP_{k\times l}$ such that $D={{}\Gamma}_{k,l,n}(w)=\Gamma_n(\Emb(w))$.
From injectivity of $\Gamma_n$ (see Lemma~\ref{lem:injGamgr}) it follows that $\Emb(u)\star_n\Emb(v)=\Emb(w)$. Consequently we obtain $\Emb(u)\star_n\Emb(v)\in \Emb(\sP_{k\times l})$.  
\end{proof}

 Lemma~\ref{lem:e(Pm)pres} validates the following definition.
\begin{dfn}
\label{def:prodtPkl}
For $u,v\in\sP_{k\times l}$ we 
define a binary product 
\[
\sP_{k\times l}\otimes \sP_{k\times l}\to\sP_{k\times l}\ ,\ 
u\otimes v\mapsto u\star_{k,l,n}^{}v,
\]
by setting
$u\star_{k,l,n}^{} v:=\Emb^{-1}\left(\Emb(u)\star_n \Emb(v)\right)$ where $\Emb:=\Emb_{k\times l}^{n\times n}
$ is the map~\eqref{eq:emm-restricted}.
\end{dfn}

\begin{prp}
\label{prp:PsiuPsiv-Puv}
Let $u,v\in \sP_{k\times l}$. Then the following statements hold:
\begin{itemize}
\item[\rm (i)]
${{}\Gamma}_{k,l,n}(u\star_{k,l,n}^{} v)={{}\Gamma}_{k,l,n}(u){{}\Gamma}_{k,l,n}(v)$.

\item[\rm (ii)]
If $u\in\sP_{k\times l}^{(r)}$
and $v\in\sP_{k\times l}^{(s)}$ then 
$u\star_{k,l,n}^{} v\in \sP_{k\times l}^{(r+s)}$ and ${{}\Gamma}_{k,l,n}(u\star_{k,l,n}^{} v)\in\left(\mathscr A^{\mathrm{gr},(r+s,r+s)}_{k,l,n}\right)_{(\epsilon_R)}$. 
\end{itemize} 
\end{prp}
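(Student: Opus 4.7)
The plan is that both parts will follow almost immediately by unpacking the definitions and combining them with Proposition~\ref{prp:grPsi} and the bigrading of $\sPD^{\mathrm{gr}}$. For part (i), Definition~\ref{dfn:PsI} and Definition~\ref{def:prodtPkl} give
\[
{{}\Gamma}_{k,l,n}(u\star_{k,l,n}^{} v)={{}\Gamma}_n\bigl(\Emb(u)\star_n \Emb(v)\bigr),
\]
where $\Emb:=\Emb_{k\times l}^{n\times n}$. Applying Proposition~\ref{prp:grPsi} converts the $\star_n$-product into the actual product of $\sPD_{n\times n}^\mathrm{gr}$, yielding ${{}\Gamma}_n(\Emb(u))\,{{}\Gamma}_n(\Emb(v))={{}\Gamma}_{k,l,n}(u)\,{{}\Gamma}_{k,l,n}(v)$, which is (i).

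For the degree assertion in part (ii), the key observation is that the matrix coproduct satisfies $\Delta_{\sP}(\sP_{n\times n}^{(d)})\sseq \sP_{n\times n}^{(d)}\otimes \sP_{n\times n}^{(d)}$, because $\Delta_{\sP}(t_{i,j})=\sum_k t_{i,k}\otimes t_{k,j}$ sends a degree-one generator to a sum of terms that are degree-one on each side. Iterating, for $u\in\sP^{(r)}_{n\times n}$ and $v\in\sP^{(s)}_{n\times n}$ all the Sweedler components $u_i, v_j$ appearing in formula~\eqref{eq:starprod-symm-formula} lie in $\sP^{(r)}_{n\times n}$ and $\sP^{(s)}_{n\times n}$ respectively, so every summand $u_1 v_2$ belongs to $\sP^{(r+s)}_{n\times n}$ (the scalar pairings with $\EuScript R^{(n)}$ do not affect the degree). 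Hence $\Emb(u)\star_n \Emb(v)\in \sP^{(r+s)}_{n\times n}$, and since $\Emb$ is degree-preserving, Definition~\ref{def:prodtPkl} yields $u\star_{k,l,n}^{} v\in \sP^{(r+s)}_{k\times l}$.

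To finish part (ii), I would combine (i) with the bigrading. The defining relations of $\sPD^{\mathrm{gr}}$ — namely (R1), (R1$'$), (R2), (R2$'$), (R3)--(R5), and (R6$'$) — are all homogeneous with respect to the $(t,\partial)$-bidegree, so $\sPD^{\mathrm{gr}}$ is a bigraded algebra and the subalgebra $\mathscr A_{k,l,n}^{\mathrm{gr}}$ inherits this bigrading. The inclusion ${{}\Gamma}_{k,l,n}(\sP^{(d)}_{k\times l})\sseq \mathscr A^{\mathrm{gr},(d,d)}_{k,l,n}$ recorded in the proof of Lemma~\ref{lem:PsiSURJ} then shows that ${{}\Gamma}_{k,l,n}(u)\,{{}\Gamma}_{k,l,n}(v)\in \mathscr A_{k,l,n}^{\mathrm{gr},(r+s,r+s)}$, while the $U_R$-invariance is automatic because both factors already lie in $(\mathscr A_{k,l,n}^{\mathrm{gr}})_{(\epsilon_R)}$ by Lemma~\ref{lem:PsiSURJ}. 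I do not foresee any real obstacle; the only subtle point is to remember that the coproduct of a matrix bialgebra preserves degree separately on each tensor factor, which is precisely what forces $\star_n$ to be graded.
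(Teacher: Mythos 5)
Your proposal is correct and follows essentially the same route as the paper: part (i) is the same chain of identities through $\Emb$ and Proposition~\ref{prp:grPsi}, and part (ii) rests on the same key fact that $\Delta_\sP(\sP_{n\times n}^{(d)})\sseq\sP_{n\times n}^{(d)}\otimes\sP_{n\times n}^{(d)}$ applied to formula~\eqref{eq:starprod-symm-formula}. The only cosmetic difference is that for the final containment the paper simply applies $\Gamma_{k,l,n}$ to the degree-$(r+s)$ element $u\star_{k,l,n}v$ and invokes Lemma~\ref{lem:PsiSURJ}, whereas you route through (i) and the bigrading; both are immediate.
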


\begin{proof}
(i)
By Proposition~\ref{prp:grPsi} we have
\begin{align*}
{{}\Gamma}_{k,l,n}(u\star_{k,l,n}^{} v)&=
{{}\Gamma}_{k,l,n}(\Emb^{-1}(\Emb(u)\star_n\Emb(v)))\\
&=
{{}\Gamma}_{n}(\Emb(u)\star_n\Emb(n))=
{{}\Gamma}_{n}(\Emb(u))
{{}\Gamma}_{n}(\Emb(v))=
{{}\Gamma}_{k,l,n}(u){{}\Gamma}_{k,l,n}(v).
\end{align*}

(ii) By definitions of $\star_{k,l,n}^{}$ and 
${{}\Gamma}_{k,l,n}$ the assertions reduce to proving that for $u\in\sP_{n\times n}^{(r)}$ and 
$v\in\sP_{n\times n}^{(s)}$
 we have
$u\star_n v\in\sP_{n\times n}^{(r+s)}$ and 
${{}\Gamma}_n(u\star_n v)\in\sPD^{\mathrm{gr},(r+s,r+s)}_{n\times n}$. The latter assertions follow
 from~\eqref{eq:starprod-symm-formula} and the fact that for $d\geq 0$ we have $\Delta_\sP(\sP_{n\times n}^{(d)})\sseq
\sP_{n\times n}^{(d)}\otimes \sP_{n\times n}^{(d)}$. 
\end{proof}

\subsection{The map $\Upsilon$}
Let 
$
\Upsilon:\sP_{n\times n}\otimes \sP_{n\times n}\to \sP_{n\times n}\otimes \sP_{n\times n}
$ be the map defined by 
\begin{align}
\label{eq:USP}
\Upsilon(u\otimes v)&:= 
\sum\lag \iota(v_1)^\natural\otimes \iota(u_3)^\natural,{\EuScript R^{(n)}}
\rag
\lag
\iota(v_3)\otimes \iota(u_2),\EuScript R^{(n)}\rag u_1\otimes v_2.
\end{align}
\begin{lem}
\label{lem:star=moUps} 
Let $\mathsf m_{n\times n}:\sP_{n\times n}\otimes \sP_{n\times n}\to\sP_{n\times n}$ denote the usual product of the algebra $\sP_{n\times n}$.
Then the following statements hold.
\begin{itemize}
\item[\rm (i)]
 $
u\star_n v=\mathsf m_{n\times n}\circ\Upsilon(u\otimes v)$ for $u,v\in\sP_{n\times n}$.
\item[\rm (ii)]
$
u\star_{k,l,n}^{} v=\Emb^{-1}\big(\mathsf m_{n\times n} (\Upsilon(\Emb(u)\otimes \Emb(v)))\big)
$ for $u,v\in\sP_{k\times l}$ where $\Emb:=\Emb_{k\times l}^{n\times n}$ is as in~\eqref{eq:emm-restricted}.
\end{itemize}

\end{lem}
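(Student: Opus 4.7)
The plan is essentially to observe that both assertions follow by direct comparison of definitions; no genuinely new computation is required.

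For part (i), I would simply write out $\Upsilon(u \otimes v)$ using~\eqref{eq:USP} and then apply $\mathsf{m}_{n\times n}$. Since $\mathsf m_{n\times n}$ sends a pure tensor $u_1 \otimes v_2$ to the product $u_1 v_2$ in $\sP_{n \times n}$, and since the two scalar coefficients $\bigl\langle \iota(v_1)^\natural\otimes \iota(u_3)^\natural, \EuScript R^{(n)}\bigr\rangle$ and $\bigl\langle \iota(v_3)\otimes \iota(u_2), \EuScript R^{(n)}\bigr\rangle$ depend only on the $u_i, v_j$ and not on which algebra the summand $u_1 \otimes v_2$ lives in, we get
\[
\mathsf m_{n\times n}\circ\Upsilon(u\otimes v)=
\sum
\bigl\lag \iota(v_1)^\natural\otimes \iota(u_3)^\natural,\EuScript R^{(n)}\bigr\rag
\bigl\lag \iota(v_3)\otimes \iota(u_2),\EuScript R^{(n)}\bigr\rag u_1 v_2.
\]
The right-hand side is precisely formula~\eqref{eq:starprod-symm-formula} for $u \star_n v$, proving (i).

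For part (ii), I would invoke Definition~\ref{def:prodtPkl}, which states $u\star_{k,l,n}^{} v = \Emb^{-1}\bigl(\Emb(u)\star_n \Emb(v)\bigr)$, and then apply part (i) with $\Emb(u)$ and $\Emb(v)$ in place of $u$ and $v$. This yields
\[
u\star_{k,l,n}^{} v=\Emb^{-1}\Bigl(\mathsf m_{n\times n}\bigl(\Upsilon(\Emb(u)\otimes \Emb(v))\bigr)\Bigr),
\]
as required. The only thing that must be checked here is that the right-hand side indeed lies in $\Emb(\sP_{k\times l})$, so that applying $\Emb^{-1}$ makes sense; but this is guaranteed by Lemma~\ref{lem:e(Pm)pres}, which was used to justify Definition~\ref{def:prodtPkl} in the first place.

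Since both parts are purely bookkeeping on top of the formulas already in place, there is no real obstacle. If anything, the only point worth emphasizing is that the formula~\eqref{eq:USP} was designed precisely so that composition with $\mathsf m_{n\times n}$ recovers $\star_n$; the lemma essentially records this design. Thus the proof will consist of two short paragraphs, one for each part, each amounting to a comparison of two displayed formulas.
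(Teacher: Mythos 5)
Your proposal is correct and matches the paper, whose entire proof is ``Straightforward from Definition~\ref{dfn:deffstarrrprd}'': part (i) is exactly the observation that applying $\mathsf m_{n\times n}$ to~\eqref{eq:USP} reproduces~\eqref{eq:starprod-symm-formula}, and part (ii) is Definition~\ref{def:prodtPkl} combined with part (i), with Lemma~\ref{lem:e(Pm)pres} ensuring $\Emb^{-1}$ is applicable. Nothing further is needed.
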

\begin{proof}
Straightforward from 
Definition~\ref{dfn:deffstarrrprd}.
\end{proof}

\begin{lem}
\label{lem:del(1)=0}
$\lag\del_{i_1,j_1}\cdots \del_{i_r,j_r},1\rag=0$ unless when $i_k=j_k$ for $1\leq k\leq r$. 
\end{lem}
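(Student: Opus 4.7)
The plan is to use the matrix-coefficient description of the generators $\del_{i,j}$ together with the multiplicativity of matrix coefficients under tensor products, as recalled in Subsection~\ref{subsec:matrx}. Recall that by construction $\del_{i,j}=\sfm_{e_i^*,e_j}$, where $\{e_i\}_{i=1}^n$ and $\{e_i^*\}_{i=1}^n$ are dual bases of $\breve V^{(n)}$ and its right dual. By the product formula for matrix coefficients, together with an easy induction on $r$, we have
\[
\del_{i_1,j_1}\cdots \del_{i_r,j_r}\;=\;\sfm_{e_{i_1}^*\otimes\cdots\otimes e_{i_r}^*,\,e_{j_1}\otimes\cdots\otimes e_{j_r}}
\]
as elements of $U_q(\gl_n)^\bullet$, where the right-hand side is the matrix coefficient of the tensor-product module $\left(\breve V^{(n)}\right)^{\otimes r}$ (which again lies in $\mathrm{Obj}(\cC^{(n)})$, so the expression makes sense).

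Next I would simply evaluate both sides at $1\in U_q(\gl_n)$. Since the action of $1$ on any module is the identity, we obtain
\[
\langle \del_{i_1,j_1}\cdots \del_{i_r,j_r},\,1\rangle \;=\; \big\langle e_{i_1}^*\otimes\cdots\otimes e_{i_r}^*,\ e_{j_1}\otimes\cdots\otimes e_{j_r}\big\rangle\;=\;\prod_{k=1}^r\llbracket i_k,j_k\rrbracket,
\]
where the last equality uses that $\{e_i\}$ and $\{e_i^*\}$ are dual bases. The product on the right is nonzero if and only if $i_k=j_k$ for every $1\leq k\leq r$, and this is precisely the statement of the lemma.

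There is essentially no obstacle here: the argument is a direct unwinding of definitions. The only minor point is to justify the first displayed identity, which follows from iterating $\sfm_{v^*\otimes w^*,v\otimes w}=\sfm_{v^*,v}\sfm_{w^*,w}$ recorded in Subsection~\ref{subsec:matrx}. Alternatively, one could argue inductively using the coproduct formula \eqref{eq:proddual} for the product on $U_q(\gl_n)^\circ$: since $\Delta(1)=1\otimes 1$, evaluating a product of functionals at $1$ factors as the product of their values at $1$, and $\del_{i,j}(1)=\langle e_i^*,e_j\rangle=\llbracket i,j\rrbracket$.
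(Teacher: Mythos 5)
Your proof is correct and matches the paper's, which simply says the claim "follows immediately from the definition of the $\del_{i,j}$ and the canonical pairing." You have spelled out the short computation: since $\Delta(1)=1\otimes 1$, evaluation at $1$ is multiplicative on $U_q(\gl_n)^\circ$, and $\del_{i,j}(1)=\llbracket i,j\rrbracket$.
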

\begin{proof}
Follows immediately from the definition of the $\del_{i,j}$ and the canonical pairing $\lag \cdot,\cdot\rag$ between $U_q(\gl_n)^\circ$ and $U_q(\gl_n)$. 
\end{proof}

\begin{prp}
\label{prp:compatib-Ups}
Let $\Emb:=\Emb_{k\times l}^{n\times n}$ be as in~\eqref{eq:emm-restricted}. Set $\mathscr W_{d_1,d_2}:=\sP_{k\times l}^{(d_1)}\otimes \sP_{k\times l}^{(d_2)}$ for $d_1,d_2\geq 0$ and 
$\mathscr W'_{d_1,d_2}:= (\Emb\otimes \Emb)(\mathscr W_{d_1,d_2})$. Then $\Upsilon(\mathscr W'_{d_1,d_2})\sseq \mathscr W'_{d_1,d_2}$.

\end{prp}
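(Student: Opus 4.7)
The plan is to fix monomials $u = t_{a_1,b_1}\cdots t_{a_{d_1},b_{d_1}}\in \Emb(\sP_{k\times l}^{(d_1)})$ and $v = t_{c_1,e_1}\cdots t_{c_{d_2},e_{d_2}}\in \Emb(\sP_{k\times l}^{(d_2)})$, so that $a_i,c_j\geq n-k+1$ and $b_i,e_j\geq n-l+1$, and to expand~\eqref{eq:USP} directly. Iterating $\Delta_{\sP}(t_{i,j}) = \sum_p t_{i,p}\otimes t_{p,j}$ introduces summation indices $k_i,\ell_i$ for $(\Delta_\sP\otimes 1)\Delta_\sP(u)$ and $\lambda_j,\kappa_j$ for $(\Delta_\sP\otimes 1)\Delta_\sP(v)$, so that the displayed tensor $u_1\otimes v_2$ in $\Upsilon(u\otimes v)$ has the form $(\prod_i t_{a_i,\ell_i})\otimes(\prod_j t_{\lambda_j,\kappa_j})$. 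Membership of this tensor in $\mathscr W'_{d_1,d_2}$ is equivalent to the inequalities $\ell_i\geq n-l+1$, $\lambda_j\geq n-k+1$, and $\kappa_j\geq n-l+1$, with the degree condition automatic since $\Delta_{\sP}$ is degree-preserving. The proposition thus reduces to showing that the two $R$-matrix pairings in~\eqref{eq:USP} vanish unless these inequalities hold.

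Next I would translate each pairing into a matrix coefficient of the $R$-matrix on $\breve V^{(n),\otimes s}\otimes\breve V^{(n),\otimes r}$, using the identifications $\iota(t_{i,j}) = \del_{j,i}$ and $\del_{j,i}^\natural = \del_{i,j}$ from Lemma~\ref{lem:naturalonTD} together with the multiplicativity of matrix coefficients from Subsection~\ref{subsec:matrx}. Concretely, the first pairing becomes the coefficient of the basis vector with left positions $\breve e_{c_s},\ldots,\breve e_{c_1}$ and right positions $\breve e_{k_r},\ldots,\breve e_{k_1}$ in the action of $R$ on the basis vector with left positions $\breve e_{\lambda_s},\ldots,\breve e_{\lambda_1}$ and right positions $\breve e_{b_r},\ldots,\breve e_{b_1}$; the second pairing admits an analogous description in terms of $d_j,\kappa_j,k_i,\ell_i$.

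The heart of the proof is the following structural property of $R := R_{\breve V^{(n),\otimes s},\breve V^{(n),\otimes r}}$: whenever the matrix coefficient from input $(x_1,\ldots,x_{s+r})$ to output $(y_1,\ldots,y_{s+r})$ is nonzero, (i) the multisets $\{x_j\}$ and $\{y_j\}$ coincide, and for every $1\leq T\leq n$, (ii) $\#\{j\in\{s+1,\ldots,s+r\} : y_j\geq T\} \geq \#\{j\in\{s+1,\ldots,s+r\} : x_j\geq T\}$, and dually (iii) $\#\{j\in\{1,\ldots,s\} : y_j\geq T\} \leq \#\{j\in\{1,\ldots,s\} : x_j\geq T\}$. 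One decomposes $R$ into a product of single-pair $R$-matrices $R_{ij}$ acting at positions $(i,j)$ with $i\leq s<j$ using $(\Delta\otimes 1)\EuScript R^{(n)} = \EuScript R^{(n)}_{13}\EuScript R^{(n)}_{23}$ and $(1\otimes\Delta)\EuScript R^{(n)} = \EuScript R^{(n)}_{13}\EuScript R^{(n)}_{12}$, then expands into ``path configurations'' and checks, via the explicit formula $R(\breve e_a\otimes\breve e_b) = q^{\delta_{a,b}}\breve e_a\otimes\breve e_b + (q-q^{-1})\delta_{a>b}\breve e_b\otimes\breve e_a$, that the swap branch at each $R_{ij}$ is available only when the current index at position $i$ exceeds that at position $j$, and in that case the larger index migrates rightward, so (i)--(iii) are preserved along every contributing path.

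Applying the structural lemma completes the argument. For the first pairing, (ii) with $T = n-l+1$ yields $k_i\geq n-l+1$ (from the right-input indices $b_i\geq n-l+1$), and (iii) with $T = n-k+1$ yields $\lambda_j\geq n-k+1$ (from the left-output indices $c_j\geq n-k+1$). For the second pairing, (iii) with $T = n-l+1$ yields $\kappa_j\geq n-l+1$ (from the left-output indices $d_j\geq n-l+1$); combined with the already-obtained $k_i\geq n-l+1$, the full output multiset of the second pairing lies in $\{n-l+1,\ldots,n\}$, and then (i) forces $\ell_i\geq n-l+1$ as well. These are exactly the inequalities we needed. The main obstacle is the clean verification of properties (ii) and (iii) for the full product of $R_{ij}$'s, requiring careful tracking of how indices migrate through the braid while ensuring that only compatible swaps contribute; the remaining bookkeeping of Sweedler indices, $\iota$, and $\natural$ is routine.
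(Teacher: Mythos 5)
Your reduction is exactly the one the paper makes: expand the iterated coproducts, observe that the output tensors $u_1\otimes v_2$ automatically have the right bidegree, and show that the two $R$-matrix pairings in~\eqref{eq:USP} kill every term whose connecting Sweedler indices fall outside the embedded range. Your translation of the pairings into matrix coefficients of $R_{\breve V^{(n),\otimes s},\breve V^{(n),\otimes r}}$ is also correct (the identifications via $\iota$, $\natural$ and multiplicativity of matrix coefficients check out), and your bookkeeping of which index plays which role matches the paper's $\mathsf a,\mathsf b,\mathsf c,\mathsf d$.

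Where you genuinely diverge is in how the vanishing is proved. The paper works at the level of the quasi-$R$-matrix itself: it expands the factors $\mathrm{Exp}_q((q-q^{-1})E_\beta\otimes F_\beta)$, takes coproducts of the root vectors $E_\beta$, $F_\beta$, and tracks how these act on monomials in the $\del_{i,j}$ (Steps 2--5 of its proof), with Step 5 --- the analogue of your $\ell_i\geq n-l+1$ --- requiring a rather delicate case analysis of which $F_\beta$'s can appear. You instead factor the big $R$-matrix into elementary two-position operators via $(\Delta\otimes 1)(\EuScript R)=\EuScript R_{13}\EuScript R_{23}$ and $(1\otimes\Delta)(\EuScript R)=\EuScript R_{13}\EuScript R_{12}$, and extract a uniform structural lemma (multiset preservation plus threshold-count monotonicity in each block). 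This is correct: each elementary factor pairs one left position with one right position, its swap branch fires only when the left entry exceeds the right entry and then sends the larger entry rightward, so (i)--(iii) are preserved along every contributing path. Your lemma delivers all four of the paper's vanishing statements at once, and your handling of the last condition --- using multiset preservation (i) together with the conclusion $k_i\geq n-l+1$ from the first pairing --- is a cleaner substitute for the paper's Step 5, while still reflecting the same essential dependence of that condition on the other pairing. The trade-off is that you must justify the factorization of $R$ on tensor powers and the path-by-path monotonicity carefully, whereas the paper leans on the explicit action formulas of Remark~\ref{rmk:actionformulas} and Lemma~\ref{lem:del(1)=0}; both routes are sound.
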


\begin{proof}
From the defining formula of $\Upsilon$ and the fact that the coproduct of $\sP_{n\times n}$ maps $\sP_{n\times n}^{(a)}$ into $\sP_{n\times n}^{(a)}\otimes \sP_{n\times n}^{(a)}$ we obtain $\Upsilon\left(\sP_{n,n}^{(a)}\otimes\sP_{n\times n}^{(b)}\right)\sseq \sP_{n,n}^{(a)}\otimes\sP_{n\times n}^{(b)}$. The claim follows if we prove that 
\begin{equation}
\label{eq:eotimese}
\Upsilon(\Emb(u)\otimes \Emb(v))
\in 
\Emb\otimes \Emb\left(\sP_{k\times l}\otimes \sP_{k\times l}\right)\quad\text{ for } 
u,v\in\sP_{k\times l}.
\end{equation}
It suffices to prove this assertion for monomials $u=t_{i_1,j_1}\cdots t_{i_r,j_r}$ and $v=t_{p_1,q_1}\cdots t_{p_s,q_s}$ in $\sP_{k\times l}$.  
Set $m':=n-k$ and $n':=n-l$. Then 
\[
((\Delta\otimes 1)\circ\Delta)(\Emb(u))=\sum \Emb(u)_1\otimes \Emb(u)_2\otimes \Emb(u)_3
,\] where
for indices $1\leq a_1,b_1,\ldots,a_r,b_r\leq n$ we have
\[
\Emb(u)_1=t_{m'+i_1,a_1}
\cdots t_{m'+i_r,a_r}\quad,\quad
\Emb(u)_2=t_{a_1,b_1}
\cdots t_{a_r,b_r}
\quad,\quad
\Emb(u)_3=t_{b_1,n'+j_1}
\cdots t_{b_r,n'+j_r}.
\]
Similarly, 
\[
((\Delta\otimes 1)\circ \Delta)(\Emb(v))=
\sum \Emb(v)_1\otimes \Emb(v)_2\otimes \Emb(v)_3,
\] where for indices $1\leq c_1,d_1,\ldots,c_s,d_s\leq n$ we have
\[
\Emb(v)_1=t_{m'+p_1,c_1}
\cdots t_{m'+p_s,c_s}\quad,\quad
\Emb(v)_2=t_{c_1,d_1}
\cdots t_{c_s,d_s}
\quad,\quad
\Emb(v)_3=t_{d_1,n'+q_1}
\cdots t_{d_s,n'+q_s}.
\]
In the rest of this proof we set 
\[
\mathsf a:=(a_1,\ldots,a_r)\ ,\ \mathsf b:=(b_1,\ldots,b_r)\ ,\ \mathsf c:=(c_1,\ldots,c_s)\ ,\ 
\mathsf d:=(d_1,\ldots,d_s).
\] 

\textbf{Step 1.}
From~\eqref{eq:USP} it follows that
\begin{equation}
\label{eq:explicitusv}
\Upsilon(\Emb(u)\otimes \Emb(v))=
\sum_{\mathsf{a,b,c,d}}
M(\mathsf{b,c})M'(\mathsf{a,b,d})
t_{m'+i_1,a_1}\cdots t_{m'+i_r,a_r}
\otimes t_{c_1,d_1}\cdots t_{c_s,d_s},
\end{equation}
where using the fact that the map $u\mapsto u^\natural$  of~\eqref{eq:naturalduali} induces an isomorphism between
$U(\gl_n)^\circ$ and $\left(U(\gl_n)^\circ\right)^\mathrm{cop}$ we have
\[
M(\mathsf{b,c}):=\lag 
\del_{m'+p_s,c_s}
\cdots
\del_{m'+p_1,c_1}
\otimes
\del_{b_r,n'+j_r}\cdots 
\del_{b_1,n'+j_1}\,,\,
\EuScript R^{(n)}
\rag
\]
and 
\[
M'(\mathsf{a,b,d}):=\lag
\del_{n'+q_s,d_s}\cdots \del_{n'+q_1,d_1}
\otimes \del_{b_r,a_r}\cdots \del_{b_1,a_1}
\,,\,\EuScript R^{(n)}\rag
.\]
Note that by definition, both of the $R$-matrix pairings
$M(\mathsf b,\mathsf c)$ and $M'(\mathsf a,\mathsf b,\mathsf d)$  
 correspond to the action of $U_q(\gl_n)$ on $\sD_{n\times n}\sseq U_q(\gl_n)^\circ$ by \emph{right} translation $\cR_\sD$ (See Subsection~\ref{subsec::quas}). 

\textbf{Step 2.} 
We prove that if $c_t\leq m'$ for some $1\leq t\leq s$ then 
$M(\mathsf{b,c})=0$. To this end, we investigate the effect of the action of $\EuScript R^{(n)}$ on the first component in $M(\mathsf{b,c})$, i.e., on $\del_{m'+p_s,c_s}
\cdots
\del_{m'+p_1,c_1}
$. 
Recall that 
  $\EuScript R^{(n)}$ acts by a product of 2-tensors of the form $E_\beta\otimes F_\beta$, where $\beta=\eps_{\ell_1}-\eps_{\ell_2}$ for $1\leq \ell_1<\ell_2\leq n$, followed by  $e^{h\sum_{i=1}^n H_{i}\otimes H_{i}}$ (which acts by scalars on tensor product of monomials in the $\del$'s and we can ignore it in the argument that follows). The $s$-fold coproduct of $E_\beta$ is a sum of $s$-tensors of the form $X:=X_s\otimes \cdots\otimes X_1$ with components in $ \{E_\beta,K_\beta,1\}$. From Remark~\ref{rmk:actionformulas} it follows that the action of $X$ on any monomial $\del_{m'+p_s,\bar c_s}\cdots 
\del_{m'+p_1,\bar c_1}$
does not increase the indices $\bar c_1,\ldots,\bar c_s$ and leaves the indices $n'+p_1,\ldots,n'+p_s$ unchanged. 
Thus Lemma~\ref{lem:del(1)=0} implies that $M(\mathsf{b,c})=0$.

\textbf{Step 3.}
We prove that if  $d_t\leq n'$ for some $1\leq t\leq s$ then $M'(\mathsf{a,b,d})=0$.
 The argument is similar to Step 2,  by investigating the action of root vectors $E_\beta$ on the first component of $M'(\mathsf{a,b,d})$, that is on $\del_{n'+q_s,d_s}\cdots \del_{n'+q_1,d_1}
$.

\textbf{Step 4.} We prove that if  $b_t\leq n'$ for some $1\leq t\leq r$ then $M(\mathsf{b,c})=0$. Again the argument is similar to Step 2. This time use use the fact that the action of the root vectors $F_\beta$  does not decrease the indices $n'+j_1,\ldots,n'+j_r$.

\textbf{Step 5.}
We prove that if  $a_t\leq n'$ for some $1\leq t\leq r$ then $ M(\mathsf{b,c})M'(\mathsf{a,b,d})=0$. The proof is slightly more complicated than Steps 2--4. By Step 4 we can assume that $\min\{b_1,\ldots,b_r\}\geq n'+1$. 
As in Steps 2--4 we can express
$M'(\mathsf{a,b,d})$ as a sum over the values
\begin{equation}
\label{eq:bigscalardelEdelF}
C_{\del_E,\del_F,\beta_1,\ldots,\beta_N}\big(\lag\left(E_{\beta_1}\cdots E_{\beta_N}
\right)\cdot\del_E\,,1\rag\big)
\big(\lag\left(F_{\beta_1}\cdots F_{\beta_N}\right)\cdot
\del_F\,,1\rag\big),
\end{equation}
where $\del_E:=\del_{n'+q_s,d_s}\cdots \del_{n'+q_1,d_1}$,
 $\del_F:=\del_{b_r,a_r}\cdots \del_{b_1,a_1}$ and
$C_{\del_E,\del_F,\beta_1,\ldots,\beta_N}$ is a scalar in $\Bbbk$ that results from the action of $e^{h\sum_{i=1}^n H_{i}\otimes H_{i}}$ (again, this scalar does not play a role in the argument that follows). For $\beta=\eps_{\ell_1}-\eps_{\ell_2}$ with $1\leq \ell_1<\ell_2\leq n$ we have 
\begin{equation}
\label{Ebeta-action}
E_\beta\cdot \del_{\ell',\ell''}=\begin{cases}
0& \text{ if }\ell_2\neq \ell'',\\
(-1)^{\ell_2-\ell_1-1}\del_{\ell',\ell_1}& \text{ if }\ell_2= \ell'',
\end{cases}
\end{equation}
and 
\begin{equation}
\label{Fbeta-action}
F_\beta\cdot \del_{\ell',\ell''}=\begin{cases}
0& \text{ if }\ell_1\neq \ell'',\\
(-1)^{\ell_2-\ell_1-1}\del_{\ell',\ell_2}& \text{ if }\ell_1= \ell''.
\end{cases}
\end{equation}
First suppose that there exists $1\leq N'\leq N$ such that $\beta_{N'}=\eps_{\ell_1}-\eps_{\ell_2}$ with 
$\ell_1\leq n'$. Then by~\eqref{Ebeta-action} and
an argument similar to Step 2 we obtain 
$\lag\left(E_{\beta_1}\cdots E_{\beta_N}
\right)\cdot\del_E\,,1\rag=0
$, hence the corresponding value~\eqref{eq:bigscalardelEdelF} vanishes. Next suppose that 
for all  $1\leq N'\leq N$ we have
$\beta_{N'}=\eps_{\ell_1}-\eps_{\ell_2}$ where $\ell_1\geq n'+1$. 
The
$r$-fold coproduct of $F_{\beta_1}\cdots F_{\beta_N}$ is a sum of $r$-tensors $X=X_r\otimes \cdots \otimes X_1$ whose components are products of the $F_{\beta_{N'}}$ and the $K_{\beta_{N'}}^{-1}$. If no $F_{\beta_{N'}}$ occurs in $X_t$  then
Lemma~\ref{lem:del(1)=0} implies that $\lag X\cdot \del_F\,,1\rag= 0$  unless $a_t=b_t\geq n'+1$. If at least one $F_{\beta_{N'}}$ occurs in $X_t$ then from~\eqref{Fbeta-action} it follows that 
$\lag X\cdot \del_F\,,1\rag= 0$ unless $a_t\geq n'+1$ (because we must have $a_t=\ell_1$). Since we have assumed that $a_t\leq n'$ we obtain  $\lag (F_{\beta_1}\cdots F_{\beta_N})\cdot \del_F\,,1\rag=0$. Thus all the values~\eqref{eq:bigscalardelEdelF} vanish and we have $M'(\mathsf{a,b,d})=0$. 

\textbf{Step 6.} From Steps 
2--5 it follows that the only 2-tensors on the right hand side of~\eqref{eq:explicitusv} that have a nonzero coefficient are those that
belong to $(\Emb\otimes \Emb)\left(\sP_{k\times l}\otimes \sP_{k\times l}\right)$.  This completes the proof of~\eqref{eq:eotimese}.
\end{proof}

\begin{prp}
\label{prp:Ups-is-bij}
$\Upsilon$ induces linear bijections $\sP_{n,n}^{(r)}\otimes\sP_{n\times n}^{(s)}\to \sP_{n,n}^{(r)}\otimes\sP_{n\times n}^{(s)}$ for all $r,s\geq 0$. 
\end{prp}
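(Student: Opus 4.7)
The argument at the very start of the proof of Proposition~\ref{prp:compatib-Ups} shows that $\Upsilon$ carries $\sP_{n\times n}^{(r)}\otimes\sP_{n\times n}^{(s)}$ into itself, so since this space is finite dimensional, it suffices to produce a two-sided inverse of $\Upsilon$ on $\sP_{n\times n}\otimes\sP_{n\times n}$.

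My strategy is to take as the candidate inverse the map defined by the same formula as $\Upsilon$ but with $\EuScript R^{(n)}$ replaced by $(\EuScript R^{(n)})^{-1}$:
$$
\Psi(u\otimes v):=\sum\bigl\lag\iota(v_3)\otimes\iota(u_2),(\EuScript R^{(n)})^{-1}\bigr\rag\bigl\lag\iota(v_1)^\natural\otimes\iota(u_3)^\natural,(\EuScript R^{(n)})^{-1}\bigr\rag\,u_1\otimes v_2.
$$
The pairings with $(\EuScript R^{(n)})^{-1}$ are well-defined in the sense of~\eqref{eq:Rfg}, because by Proposition~\ref{prp:Uqglnlocfin} and Definition~\ref{dfn:braidedfinite}(ii), the action of $\EuScript R^{(n)}$ on any pair of finite-dimensional modules in $\cC^{(n)}$ is given by an invertible 2-tensor in $U_q(\gl_n)\otimes U_q(\gl_n)$.

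The key identity for the verification is
$$
\sum\bigl\lag f_1\otimes g_1,\EuScript R^{(n)}\bigr\rag\bigl\lag f_2\otimes g_2,(\EuScript R^{(n)})^{-1}\bigr\rag = f(1)\,g(1)\quad\text{for }f,g\in U_q(\gl_n)^\bullet,
$$
which follows by applying the map $R^{-1}$ of~\eqref{eq:Rfg} to both sides of Lemma~\ref{lem:fog,Rexpand}(i), expanding via the analog of that lemma for $R^{-1}$, and evaluating at $1\otimes 1$. To verify $\Psi\circ\Upsilon=\mathrm{id}$, I will substitute the formulas for $\Upsilon$ and $\Psi$ and use coassociativity of $\Delta_\sP$ (together with the fact that $\iota$ is an anti-homomorphism of coalgebras) to express $\Psi(\Upsilon(u\otimes v))$ as a single sum indexed by the five-fold coproducts $\Delta^4(u)$ and $\Delta^4(v)$, weighted by a product of four $R$-matrix pairings. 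Applying the above identity to collapse these pairings in pairs yields a sum in which three positions of each five-fold coproduct are multiplied by $f(1)$-style counit factors; the counit axiom of $\sP_{n\times n}$ then absorbs them, leaving $u\otimes v$. The calculation $\Upsilon\circ\Psi=\mathrm{id}$ is analogous, using the same identity with $\EuScript R^{(n)}$ and $(\EuScript R^{(n)})^{-1}$ interchanged.

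The main obstacle will be the bookkeeping of the five-fold coproduct indices. The $\EuScript R^{(n)}$-pairings in $\Upsilon$ involve outer positions $(v_1,u_3)$ and $(v_3,u_2)$ of the first three-fold coproducts, whereas the corresponding $(\EuScript R^{(n)})^{-1}$-pairings in $\Psi$, after coassociativity, involve positions coming from the subsequent three-fold coproducts of $u_1$ and $v_2$; these must be reorganized via coassociativity so the above cancellation applies to each $R$-$R^{-1}$ pair. A further subtlety is the $\natural$-twist appearing in one of the pairings, which via Lemma~\ref{lem:rel-S-natural} and~\eqref{lem:fxgoS} passes through unchanged: the twisted analog of the key identity holds because the same derivation applies after conjugating the pairings by $\natural$.
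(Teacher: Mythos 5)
Your reduction to finite dimensionality is fine, and the key identity
$\sum\lag f_1\otimes g_1,\EuScript R^{(n)}\rag\lag f_2\otimes g_2,(\EuScript R^{(n)})^{-1}\rag=f(1)g(1)$
is correct (it follows from Lemma~\ref{lem:fog,Rexpand} exactly as you indicate). The gap is in the choice of $\Psi$: it is not the inverse of $\Upsilon$, and the collapse you describe cannot be carried out. To see this, factor $\Upsilon$ as $\Upsilon=R'\circ L$, where
$L(u\otimes v):=\sum\lag\iota(v_1)^\natural\otimes\iota(u_2)^\natural,\EuScript R^{(n)}\rag\, u_1\otimes v_2$ and
$R'(u\otimes v):=\sum\lag\iota(v_2)\otimes\iota(u_2),\EuScript R^{(n)}\rag\, u_1\otimes v_1$;
a bookkeeping of coproduct legs shows that $R'\circ L$ has its pairings on the legs $(v_1,u_3)$ and $(v_3,u_2)$, i.e.\ equals $\Upsilon$, whereas $L\circ R'$ has them on $(v_1,u_2)$ and $(v_3,u_3)$ and is a genuinely different operator. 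Each of $L$ and $R'$ is invertible, with inverse obtained by replacing $\EuScript R^{(n)}$ by its inverse on the \emph{same} legs (both are actions of invertible $2$-tensors by left/right translation on the tensor factors). Hence $\Upsilon^{-1}=L^{-1}\circ(R')^{-1}$, whose pairings sit on $(v_1,u_2)$ (with $\natural$) and $(v_3,u_3)$. Your $\Psi$ has them on $(v_1,u_3)$ (with $\natural$) and $(v_3,u_2)$, i.e.\ $\Psi=(R')^{-1}\circ L^{-1}$, which equals $\Upsilon^{-1}$ only if $L$ and $R'$ commute. They do not: both pairings of $\Upsilon$ eat \emph{trailing} coproduct legs of $u$ (legs $2$ and $3$, with only $u_1$ surviving), so on the first tensor factor both operators act by right translation, by tensor components of the $R$-matrix that do not commute. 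Concretely, expanding $\Psi\circ\Upsilon$ over five-fold coproducts puts the four pairings on the legs $(v_1,u_5)$, $(v_5,u_4)$, $(v_4,u_2)$, $(v_2,u_3)$; in each candidate $\EuScript R$--$\EuScript R^{-1}$ pair the $u$-legs are non-adjacent (legs $5,3$, resp.\ $4,2$), so the key identity does not apply and no counit factors appear.

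The repair is either to take the correct candidate
$\Psi'(u\otimes v):=\sum\lag\iota(v_3)\otimes\iota(u_3),(\EuScript R^{(n)})^{-1}\rag\lag\iota(v_1)^\natural\otimes\iota(u_2)^\natural,(\EuScript R^{(n)})^{-1}\rag\, u_1\otimes v_2$,
for which the two-stage collapse does go through (first on the adjacent legs $(u_3,u_4)$ and $(v_4,v_5)$, then on the remaining ones), or to argue as the paper does. The paper only needs injectivity (finite dimensionality gives surjectivity) and obtains it by conjugating $\Upsilon$ with a chain of explicit bijections --- $\iota\otimes\iota$, a left-leg pairing operator built from $\underline{\EuScript R}^{(n)}$, the $\natural$-twist on the first factor, and the coalgebra isomorphism $v\mapsto(v\circ S)^\natural$ --- after which $\Upsilon$ becomes the action of $\underline{\EuScript R}^{(n)}$ itself, which is manifestly invertible. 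Either route works, but as written your verification step fails at the collapse.
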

\begin{proof}
 Since $\sP_{n,n}^{(r)}\otimes\sP_{n\times n}^{(s)}$ is finite dimensional, it suffices to prove that $\Upsilon$ is an injection.
Set $\Upsilon^\iota:=(\iota^{-1}\otimes \iota^{-1})\circ\Upsilon\circ(\iota\otimes \iota)$ with $\iota$ as in~\eqref{eq:iota-uoXi}. Since $\iota$ is an antiautomorphism of bialgebras and preseves the grading of $\sP_{n\times n}$, we have
\[
\Upsilon^\iota:\sD_{n\times n}^{(r)}\otimes \sD_{n\times n}^{(s)}\to 
\sD_{n\times n}^{(r)}\otimes \sD_{n\times n}^{(s)}\quad,\quad
\Upsilon^\iota(u\otimes v)=\sum
\lag (v_3)^\natural\otimes (u_1)^\natural,\EuScript R^{(n)}\rag\lag v_1\otimes u_2,\EuScript R^{(n)}\rag u_3\otimes v_2.
\]
It suffices to prove injectivity of $\Upsilon^\iota$.

\textbf{Step 1.} 
Define $\Upsilon^{(1)}:\sD_{n\times n}^{(r)}\otimes \sD_{n\times n}^{(s)}
\to \sD_{n\times n}^{(r)}\otimes \sD_{n\times n}^{(s)}$  by
\[
\Upsilon^{(1)}(u\otimes v):=\sum
\lag
u_1\otimes v_1,\underline{\EuScript R}^{(n)}\rag u_2\otimes v_2
.\]
From Lemma~\ref{lem:fog,Rexpand}(i)
we have
\[
\sum f_1\otimes g_1\lag
g_2\otimes f_2,\underline{\EuScript R}^{(n)}\rag=\sigma\left(
\sum g_1\otimes f_1\lag
g_2\otimes f_2,\underline{\EuScript R}^{(n)}\rag
\right)=\sigma\circ \underline{\EuScript R}^{(n)}(g\otimes f)=
\big(\EuScript R^{(n)}\big)^{-1}(f\otimes g).
\] From this and Lemma~\ref{lem:fog,Rexpand}(ii) it follows that
\begin{align*}
\Upsilon^{(1)}\circ& \Upsilon^\iota (u\otimes v)
=\sum
\lag (v_4)^\natural \otimes (u_1)^\natural,\EuScript R^{(n)}\rag
\lag
v_1\otimes u_2,\EuScript R^{(n)}\rag
\lag u_3\otimes v_2,\underline{\EuScript R}^{(n)}\rag u_4\otimes v_3
\\
&=
\sum
\lag (v_3)^\natural \otimes (u_1)^\natural,\EuScript R^{(n)}\rag
\lag
\big({\EuScript R}^{(n)}\big)^{-1}
(v_1\otimes u_2),\EuScript R^{(n)}\rag
 u_3\otimes v_2
 =\sum 
 \lag (v_2)^\natural \otimes (u_1)^\natural,\EuScript R^{(n)}\rag
 u_2\otimes v_1.
\end{align*}
\textbf{Step 2.}
Define $\Upsilon^{(2)}:\sD_{n\times n}^{(r)}\otimes \sD_{n\times n}^{(s)}\to \sD_{n\times n}^{(r)}\otimes \sD_{n\times n}^{(s)}$ by $\Upsilon^{(2)}(u\otimes v):=u^\natural \otimes v$. 
Set \[
\breve\Upsilon:=\left(\Upsilon^{(2)}\right)^{-1}
\circ 
\left(\Upsilon^{(1)}\circ \Upsilon^\iota\right)\circ \Upsilon^{(2)}.
\] 
Since $u\mapsto u^\natural $ is an antiautomorphism of coalgebras on $\sD_{n\times n}$, we obtain
\[
\breve{\Upsilon}(u\otimes v)=
\sum \lag (v_2)^\natural\otimes u_2,\EuScript R^{(n)}\rag u_1\otimes v_1.
\]
\textbf{Step 3.} 
From Lemmas~\ref{lem:naturalonTD},~\ref{lem:howxc} and~\ref{lem:howiota} it follows that the assignment $v\mapsto (v\circ S)^\natural$ induces an isomorphism of coalgebras 
$\sP_{n\times n}\to \sD_{n\times n}$
that preserves the grading. 
Define a map \[\Upsilon^{(3)}:\sD_{n\times n}^{(r)}\otimes \sP_{n\times n}^{(s)}\to \sD_{n\times n}^{(r)}\otimes \sD_{n\times n}^{(s)}\ ,\ 
\Upsilon^{(3)}(u\otimes v):=u\otimes ((v\circ S)^\natural).
\] 
Using~\eqref{lem:fxgoS} we obtain
\[
\left(\Upsilon^{(3)}\right)^{-1}\circ \breve{\Upsilon}\circ\Upsilon^{(3)}(u\otimes v)=\sum 
\lag (v_2\circ S)\otimes u_2,\EuScript R^{(n)}\rag u_1\otimes v_1
=\sum \lag u_2\otimes v_2,\underline{\EuScript R}^{(n)}\rag u_1\otimes v_1.
\]
Lemma~\ref{lem:fog,Rexpand}(i) implies that 
$
\left(\Upsilon^{(3)}\right)^{-1}\circ \breve{\Upsilon}\circ\Upsilon^{(3)}(u\otimes v)=\underline{\EuScript R}^{(n)}(u\otimes v)
$.
Since the map $u\otimes v\mapsto
\underline{\EuScript R}^{(n)}(u\otimes v)$ is an injection,  $\Upsilon^\iota$ is also an injection. 
\end{proof}

\begin{cor}
\label{cor:star-surj}
Let $\Emb:=\Emb_{k\times l}^{n\times n}$ be as in~\eqref{eq:emm-restricted} and let 
$\mathsf m_{k\times l}:\sP_{k\times l}\otimes\sP_{k\times l}\to\sP_{k\times l}$ be the usual product of $\sP_{k\times l}$.
Then the following statements hold. 
\begin{itemize}
\item[\rm(i)] $u\star_{k,l,n}^{} v=
\mathsf m_{k\times l}\circ (\Emb^{-1}\otimes \Emb^{-1})\circ\Upsilon(\Emb(u)\otimes \Emb(v)) $
for $u,v\in \sP_{k\times l}$. 

\item[\rm (ii)]
For $r,s\geq 0$ the map $\sP_{k\times l}^{(r)}\otimes \sP_{k\times l}^{(s)}\to \sP_{k\times l}^{(r+s)}$ given by $u\otimes v\mapsto u\star_{k,l,n}^{} v$ is surjective.
\end{itemize}
\end{cor}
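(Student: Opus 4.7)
\smallskip

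\noindent\textbf{Proof plan for Corollary~\ref{cor:star-surj}.}

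For part (i), the plan is to simply unpack Lemma~\ref{lem:star=moUps}(ii) and use that $\Emb$ is an algebra homomorphism. By Proposition~\ref{prp:compatib-Ups}, the element $\Upsilon(\Emb(u)\otimes \Emb(v))$ lies in $(\Emb\otimes\Emb)(\sP_{k\times l}\otimes \sP_{k\times l})$, so we may write $\Upsilon(\Emb(u)\otimes\Emb(v))=\sum \Emb(u')\otimes \Emb(v')$. Since $\Emb$ is an algebra monomorphism, $\mathsf m_{n\times n}(\Emb(u')\otimes\Emb(v'))=\Emb(\mathsf m_{k\times l}(u'\otimes v'))$. Applying $\Emb^{-1}$ and comparing with Lemma~\ref{lem:star=moUps}(ii) gives the claimed formula.

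For part (ii), the idea is to factor the map $u\otimes v\mapsto u\star_{k,l,n}^{}v$ as the composition
\[
\sP_{k\times l}^{(r)}\otimes \sP_{k\times l}^{(s)}\xrightarrow{\ \widetilde\Upsilon\ }\sP_{k\times l}^{(r)}\otimes \sP_{k\times l}^{(s)}\xrightarrow{\ \mathsf m_{k\times l}\ }\sP_{k\times l}^{(r+s)},
\]
where $\widetilde\Upsilon:=(\Emb^{-1}\otimes\Emb^{-1})\circ\Upsilon\circ(\Emb\otimes\Emb)$, which makes sense and preserves bidegree by Proposition~\ref{prp:compatib-Ups}. First I will argue that $\widetilde\Upsilon$ is a bijection: by Proposition~\ref{prp:Ups-is-bij}, $\Upsilon$ restricts to a bijection of $\sP_{n\times n}^{(r)}\otimes \sP_{n\times n}^{(s)}$, and since Proposition~\ref{prp:compatib-Ups} says $\Upsilon$ maps the finite-dimensional subspace $(\Emb\otimes \Emb)(\sP_{k\times l}^{(r)}\otimes \sP_{k\times l}^{(s)})$ into itself, the restriction of $\Upsilon$ to this subspace is injective and hence, by dimension, bijective. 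Therefore $\widetilde\Upsilon$ is a bijection.

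It then remains to observe that $\mathsf m_{k\times l}:\sP_{k\times l}^{(r)}\otimes \sP_{k\times l}^{(s)}\to \sP_{k\times l}^{(r+s)}$ is surjective, which is immediate because $\sP_{k\times l}$ is generated in degree one (so each degree-$(r+s)$ monomial splits as a product of a degree-$r$ monomial and a degree-$s$ monomial).

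There is no serious obstacle here: both parts are essentially formal consequences of results already established, namely Lemma~\ref{lem:star=moUps}, Proposition~\ref{prp:compatib-Ups}, and Proposition~\ref{prp:Ups-is-bij}. The only point requiring any care is the bijectivity of the restriction of $\Upsilon$ to the embedded subspace, and this follows by the standard injective-plus-finite-dimensional-implies-bijective argument once Proposition~\ref{prp:compatib-Ups} is in hand.
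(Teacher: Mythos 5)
Your proposal is correct and follows essentially the same route as the paper: part (i) is the identity $\mathsf m_{k\times l}=\Emb^{-1}\circ\mathsf m_{n\times n}\circ(\Emb\otimes\Emb)$ combined with Lemma~\ref{lem:star=moUps}(ii) and Proposition~\ref{prp:compatib-Ups}, and part (ii) factors $\star_{k,l,n}$ through the conjugated map $(\Emb^{-1}\otimes\Emb^{-1})\circ\Upsilon\circ(\Emb\otimes\Emb)$, whose bijectivity on each bigraded piece follows from Propositions~\ref{prp:Ups-is-bij} and~\ref{prp:compatib-Ups}. Your explicit injective-plus-finite-dimensional argument for the restricted bijectivity is exactly the step the paper leaves implicit.
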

\begin{proof}
(i) By Proposition~\ref{prp:compatib-Ups} we have $\Upsilon(\Emb(u)\otimes \Emb(v))\in(\Emb\otimes \Emb)(\sP_{k\times l}\otimes \sP_{k\times l})$. The assertion follows from the relation $\mathsf m_{k\times l}=\Emb^{-1}\circ \mathsf m_{n\times n}\circ (\Emb\otimes \Emb)$ and Lemma~\ref{lem:star=moUps}(ii).  

(ii)
By Proposition~\ref{prp:Ups-is-bij} and 
Proposition~\ref{prp:compatib-Ups} the map
\[
\Upsilon_{k,l,n}:=(\Emb^{-1}\otimes \Emb^{-1})\circ \Upsilon\circ (\Emb\otimes \Emb)
\] is a linear bijection on $\sP_{k\times l}^{(r)}\otimes \sP_{k\times l}^{(s)}$. From (i) it follows that $u\star_{k,l,n}^{} v=\mathsf m_{k\times l}(\Upsilon_{k,l,n}(u\otimes v))$. 
The latter equality reduces the assertion to surjectivity of $\mathsf m_{k\times l}:\sP_{k\times l}^{(r)}\otimes \sP_{k\times l}^{(s)}\to \sP_{k\times l}^{(r+s)}$, which is a trivial statement. 
\end{proof}

\section{Proofs of Theorem~\ref{thm-Main-A} and Theorem~\ref{thm:C}}
\label{subsec"ThmAm=n}
We begin by describing our strategy for proving Theorems~\ref{thm-Main-A}
and~\ref{thm:C}.
Lemma~\ref{lem:etam,n}  implies  that Theorem~\ref{thm-Main-A}(ii) follows by symmetry from Theorem~\ref{thm-Main-A}(i). 
Furthermore,  Lemma~\ref{prp:u-EiFiKi}
implies that 
 Theorem~\ref{thm-Main-A}(i)  is the special case of Theorem~\ref{thm:C} for $k=l=m$. Thus, it suffices to prove Theorem~\ref{thm:C}. 
 
We now give an outline of the proof of Theorem~\ref{thm:C}. By Corollary~\ref{cor:Agrkln-vs-gr(A)} we have a $U_R$-equivariant isomorphism of $\Bbbk$-algebras $\mathrm{gr}(\mathscr A_{k,l,n})\cong \mathscr A_{k,l,n}^\mathrm{gr}$. 
Recall that by definition, $\mathscr A_{k,l,n}^\mathrm{gr}$ is a subalgebra of 
$\sPD^\mathrm{gr}:=\sPD^\mathrm{gr}_{m\times n}$ where $m:=\max\{k,l\}$. 
 Theorem~\ref{thm:C} for $\mathrm{gr}(\mathscr A_{k,l,n})$ is equivalent to the following assertion for  $\mathscr A^\mathrm{gr}_{k,l,n}$.

\begin{thmy}
\label{thm:B'}
The algebra $\left(\mathscr A_{k,l,n}^\mathrm{gr}\right)_{(\epsilon_R)}$ is generated by 
the
 $\tilde{\mathsf L}_{i,j}^\mathrm{gr}$
for
$1\leq i\leq k$ and $1\leq j\leq l$, where
 $\tilde{\mathsf L}_{i,j}^\mathrm{gr}$ is defined as in
\eqref{eq:tildjf;lkjf}.
\end{thmy}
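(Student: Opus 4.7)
The plan is to combine the results of Sections~\ref{sec:pfofThmA} and~\ref{sec:specialcasem=n} to realize $(\mathscr A_{k,l,n}^\mathrm{gr})_{(\epsilon_R)}$ as a graded algebra generated in degree one, and then to identify those degree-one generators with the polarization operators $\tilde{\mathsf L}_{i,j}^\mathrm{gr}$.

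I would first treat the main case $k,l\leq n$, in which the map ${{}\Gamma}_{k,l,n}$ and the deformed product $\star_{k,l,n}$ have been constructed. By Lemma~\ref{lem:PsiSURJ}, the map ${{}\Gamma}_{k,l,n}:\sP_{k\times l}\to(\mathscr A_{k,l,n}^\mathrm{gr})_{(\epsilon_R)}$ is a grading-preserving linear bijection, and Proposition~\ref{prp:PsiuPsiv-Puv}(i) promotes it to an isomorphism of graded algebras
\[
(\sP_{k\times l},\star_{k,l,n})\xrightarrow{\ \sim\ }(\mathscr A_{k,l,n}^\mathrm{gr})_{(\epsilon_R)}.
\]
By Corollary~\ref{cor:star-surj}(ii) the product $\star_{k,l,n}$ is degree-surjective, so $(\sP_{k\times l},\star_{k,l,n})$ is generated as an algebra by its degree-one subspace $\sP_{k\times l}^{(1)}=\spn\{t_{a,b}\,:\,1\leq a\leq k,\,1\leq b\leq l\}$. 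Consequently $(\mathscr A_{k,l,n}^\mathrm{gr})_{(\epsilon_R)}$ is generated by the images ${{}\Gamma}_{k,l,n}(\tilde t_{i,j})$, where $\tilde t_{i,j}=t_{k+1-i,l+1-j}$ is the relabeling of~\eqref{eq:tildetd}. A direct unwinding of Definition~\ref{dfn:PsI} and equation~\eqref{eq:Gammagr--n} gives
\[
{{}\Gamma}_{k,l,n}(\tilde t_{i,j})=\sum_{r=1}^n t_{n+1-i,r}\del_{n+1-j,r}\quad\text{in }\sPD_{n\times n}^\mathrm{gr},
\]
which is precisely the image of $\tilde{\mathsf L}_{i,j}^\mathrm{gr}$ under the canonical embedding $\Emb_{m\times n}^{n\times n}$ with $m:=\max\{k,l\}$. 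This establishes Theorem~B$'$ when $k,l\leq n$.

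The main obstacle is the remaining range $\max\{k,l\}>n$, since then the embedding $\sP_{k\times l}\hookrightarrow\sP_{n\times n}$ underpinning the construction of ${{}\Gamma}_{k,l,n}$ is unavailable. I would handle this by induction on $\max\{k,l\}$, with the base case $\max\{k,l\}=n$ covered above. The inductive step uses the $U_q(\gl_k)\otimes U_q(\gl_l)$-equivariance granted by Proposition~\ref{prp:existence(6)}(ii) to manufacture the missing $\tilde{\mathsf L}_{i,j}^\mathrm{gr}$ from previously constructed ones via the adjoint action of the Chevalley root vectors, in the spirit of Lemma~\ref{U_R-on-Psinn}. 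The hard part will be verifying that the resulting subalgebra accounts for every new $U_q(\gl_k)\otimes U_q(\gl_l)$-isotypic component predicted by the decomposition~\eqref{eq:grPDrrUR22}; this should reduce to a dimension check via highest-weight theory, but the combinatorial bookkeeping is nontrivial.
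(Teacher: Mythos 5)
For the range $k,l\leq n$ your argument is correct and is essentially the paper's own proof of Subsection~\ref{subsec:specialm=n}: Lemma~\ref{lem:PsiSURJ} makes ${{}\Gamma}_{k,l,n}$ a graded bijection onto the invariants, Proposition~\ref{prp:PsiuPsiv-Puv}(i) makes it multiplicative for $\star_{k,l,n}$, Corollary~\ref{cor:star-surj}(ii) gives generation in degree one, and the degree-one generators are exactly the ${{}\Gamma}_{k,l,n}(\tilde t_{i,j})=\tilde{\mathsf L}^{\mathrm{gr}}_{i,j}$. No issues there.

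The case $\max\{k,l\}>n$ is where your proposal has a genuine gap. Your plan is to act on the already-constructed invariants by $U_q(\gl_k)\otimes U_q(\gl_l)$ and then "reduce to a dimension check via highest-weight theory." But a dimension/isotypic-component comparison only makes sense once you know that the subalgebra $\mathcal B$ generated by the $\tilde{\mathsf L}^{\mathrm{gr}}_{i,j}$ is a $U_q(\gl_k)\otimes U_q(\gl_l)$-submodule of $\sPD^{\mathrm{gr}}$, and this is precisely the point that does \emph{not} follow routinely. The relevant action is that of $U_L^{(k,l)}=\boldsymbol\kappa_{k,m}(U_q(\gl_k))\otimes 1\otimes\boldsymbol\kappa_{l,m}(U_q(\gl_l))\otimes 1$ inside $U_{LR}\otimes U_{LR}$, which acts on the $\sP$- and $\sD$-tensor factors through \emph{different} copies of $U_{LR}$; the algebra $\sPD^{\mathrm{gr}}$ is a module algebra only for the diagonal $U_{LR}$, so stability of the span of the generators under $U_L^{(k,l)}$ does not propagate to products of generators by the usual coproduct argument. (This is also why the adjoint-action trick of Lemma~\ref{U_R-on-Psinn} does not suffice: it produces the individual $\tilde{\mathsf L}^{\mathrm{gr}}_{i,j}$, which you already have, not stability of the algebra they generate.) The paper resolves this with Proposition~\ref{prp:AotBUrUR}, which requires enlarging $U_L^{(k,l)}$ to $\tilde U_L^{(k,l)}$ and controlling in which tensor factors the finite truncations $\omega_{V,W},\oline\omega_{V,W}$ of the universal $R$-matrix can be taken; this is the content of Proposition~\ref{prp:Holine-stab}. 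Separately, the reduction statement itself — that the $U_L^{(k,l)}$-module generated by $\Emb^{\mathrm{gr}}\bigl(\bigl(\mathscr A_{\underline k,\underline l,n}^{\mathrm{gr}}\bigr)_{(\epsilon_R)}\bigr)$ is all of $\bigl(\mathscr A_{k,l,n}^{\mathrm{gr}}\bigr)_{(\epsilon_R)}$ — requires the extreme-weight-vector argument of Proposition~\ref{prp:ULkl} rather than a bare dimension count, since one must locate a generating vector of each isotypic component of~\eqref{eq:grPDrrUR22} inside the image of the smaller algebra. Until these two points are supplied, the second half of your proof is incomplete.
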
  
Let 
 $\mathcal B\sseq \mathscr A_{k,l,n}^\mathrm{gr}$ denote the subalgebra generated by the $\tilde{ \mathsf L}_{i,j}^\mathrm{gr}$ for $1\leq i\leq k$ and $1\leq j\leq l$. 
Lemma~\ref{lem:LijRijLLRR} implies that $\mathcal B\sseq \big(\mathscr A_{k,l,n}^\mathrm{gr}\big)_{(\epsilon_R)}$. 
Let $\mathscr A_{k,l,n}^{\mathrm{gr},(r,s)}$ be defined as in~\eqref{eq:Aklnrs}. 
 Since by~\eqref{eq:grPDrrUR22}
 we have $\left(\mathscr A^{\mathrm{gr},(r,s)}_{k,l,n}\right)_{(\epsilon_R)}=0$ for $r\neq s$,  
to prove Theorem~\ref{thm:B'} it suffices to verify that 
\begin{equation}
\label{eq:sPDsseqxdi}
\left(\mathscr A_{k,l,n}^{\mathrm{gr},(r,r)}\right)_{(\epsilon_R)}\sseq \mathcal B\quad\text{for }r\geq 0.
\end{equation}
 We will verify~\eqref{eq:sPDsseqxdi} in the case
$n\geq \max\{k,l\}$
in Subsection~\ref{subsec:specialm=n}
  and 
in the case $n<\max\{k,l\}$
in Subsection~\ref{subsec-prep-A(ii)}.
This completes the proof of Theorem~\ref{thm:B'} in both cases.
Then in Subsection~\ref{subsec:thmCforAkln} we
 reduce Theorem~\ref{thm:C} for $\mathscr A_{k,l,n}$ to Theorem~\ref{thm:B'}.

\subsection{Proof of Theorem~\ref{thm:B'} when $n\geq \max\{k,l\}$ }
\label{subsec:specialm=n}
We prove by induction on $r$
that
\begin{equation*}
\left(\mathscr A_{k,l,n}^{\mathrm{gr},(r,r)}\right)_{(\epsilon_R)}\sseq \mathcal B_r\quad\text{for }r\geq 0,
\end{equation*} 
where
\[
\mathcal B_r:=\spn_\Bbbk\left\{\tilde{\mathsf L}^\mathrm{gr}_{i_1,j_1}\cdots \tilde{\mathsf L}^\mathrm{gr}_{i_r,j_r}\,:\,1\leq i_1,\ldots,i_r\leq k\ ,\ 
1\leq j_1,\ldots,j_r\leq l\right\}
.
\]
For $r=0$ the assertion is trivial. For $r=1$,
 from Lemma~\ref{lem:PsiSURJ} it follows that
$\left(\mathscr A_{k,l,n}^{\mathrm{gr},(1,1)}\right)_{(\epsilon_R)}$ is spanned by the $\tilde{\mathsf L}^\mathrm{gr}_{i,j}=\Gamma_{k,l,n}(\tilde t_{i,j})$ for $1\leq i\leq k$ and $1\leq j\leq l$, where $\tilde{t}_{i,j}$ is defined as in~\eqref{eq:tildetd} for $a:=k$ and $b:=l$. 
Finally, assume $r>1$ and 
choose any $D\in \left(\mathscr A_{k,l,n}^{(r,r)}\right)_{(\epsilon_R)}$. By  Lemma~\ref{lem:PsiSURJ} we have  
 $D={{}\Gamma}_{k,l,n}(u)$ for some
$u\in\sP_{k\times l}^{(r)}$.
By Corollary~\ref{cor:star-surj}(ii) the linear map \[
\sP_{k\times l}^{(1)}\otimes \sP_{k\times l}^{(r-1)}\to\sP_{k\times l}^{(r)}\ ,\ 
u\otimes v\mapsto u\star_{k,l,n}^{} v
\] is a surjection. Thus, we can express $u$ as a sum of products of the form $u'\star_{k,l,n}^{} u''$ where $u'\in\sP_{k\times l}^{(1)}$ and 
$u''\in\sP_{k\times l}^{(r-1)}$. By Proposition~\ref{prp:PsiuPsiv-Puv}(i),
\[
{{}\Gamma}_{k,l,n}(u'\star_{k,l,n}^{} u'')={{}\Gamma}_{k,l,n}(u')\,{{}\Gamma}_{k,l,n}(u'').
\]
From Lemma~\ref{lem:PsiSURJ}  and the induction hypothesis
it follows that   
$\Gamma_{k,l,n}(u')\in\mathcal B_1$ and 
$\Gamma_{k,l,n}(u'')
\in\mathcal B_{r-1}$.
Consequently, $D={{}\Gamma}_{k,l,n}(u)=\sum {{}\Gamma}_{k,l,n}(u'\star_{k,l,n}^{} u'')
=\sum{{}\Gamma}_{k,l,n}(u')\,{{}\Gamma}_{k,l,n}(u'')
\in\mathcal B_r$.

\subsection{Proof of Theorem~\ref{thm:B'}  when $n< \max\{k,l\}$ }
\label{subsec-prep-A(ii)}
  
Set $\underline k:=\min\{k,n\}$ and $\underline l:=\min\{l,n\}$. 
We use  a reduction to  
Theorem~\ref{thm:B'}
for the case of $\mathscr A_{\underline k,\underline l,n}^\mathrm{gr}$, which follows from Subsection~\ref{subsec:specialm=n}. This technique is also  used in \cite{LZZ11}. However, 
the arguments of~\cite{LZZ11} do \emph{not} extend routinely to the present setting. The reason is that unlike~\cite{LZZ11}, the 
products of the generators $\tilde {\mathsf L}_{i,j}^\mathrm{gr}$ are not weight vectors for the Cartan subalgebras of $U_q(\gl_k)$ and $U_q(\gl_l)$. 
As explained below, in order to circumvent this technical difficulty  we use Proposition~\ref{prp:AotBUrUR}. 

Set $\underline m:=\max\{\underline k,\underline l\}$ and $m:=\max\{k,l\}$. Recall that by definition, 
$\mathscr A_{k,l,n}^\mathrm{gr}$ is 
a subalgebra of $\sPD^\mathrm{gr}=\sPD_{m\times n}^\mathrm{gr}$ and
$\mathscr A_{\underline k,\underline l,n}^\mathrm{gr}$ is a subalgebra of 
$\sPD_{\underline m\times n}^\mathrm{gr}$.  
Let \[
\Emb^\mathrm{gr}:=(\Emb^\mathrm{gr})_{\underline m\times n}^{m\times n}:\sPD_{\underline m\times n}^\mathrm{gr}\to\sPD_{m\times n}^\mathrm{gr}
\] be the map defined  in Proposition \ref{prp:existence(6)}(i).
By checking the images of generators of $\mathscr A_{\underline k,\underline l,n}^\mathrm{gr}$ we obtain
\[
\Emb^\mathrm{gr}\left(\mathscr A_{\underline k,\underline l,n}^\mathrm{gr}\right)\sseq \mathscr A_{k,l,n}^\mathrm{gr}
.\]
\begin{lem}
$\Emb^\mathrm{gr}\left(
\left(\mathscr A_{\underline k,\underline l,n}^\mathrm{gr}\right)_{(\epsilon_R)}
\right)
=
\left(\mathscr A_{k,l,n}^\mathrm{gr}\right)_{(\epsilon_R)}
\cap
\Emb^\mathrm{gr}\left(\mathscr A_{\underline k,\underline l,n}^\mathrm{gr}\right) $.
\end{lem}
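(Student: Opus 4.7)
The plan is to deduce both inclusions from the fact that $\Emb^\mathrm{gr}$ is a $U_R$-equivariant injective homomorphism of algebras. The $U_R$-equivariance follows from Proposition~\ref{prp:existence(6)}(ii) applied to $m':=\underline m$ and $n':=n$: the ambient $U_L\otimes U_R$-action is identified via $\boldsymbol\kappa_{\underline m,m}\otimes\boldsymbol\kappa_{n,n}=\boldsymbol\kappa_{\underline m,m}\otimes\id$, so in particular $\Emb^\mathrm{gr}$ intertwines the $U_R$-actions on $\mathscr A_{\underline k,\underline l,n}^\mathrm{gr}$ and $\mathscr A_{k,l,n}^\mathrm{gr}$. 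Injectivity of $\Emb^\mathrm{gr}$ is established in the same proposition.

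For the inclusion $\subseteq$, I would take $D\in \left(\mathscr A_{\underline k,\underline l,n}^\mathrm{gr}\right)_{(\epsilon_R)}$ and verify directly from $U_R$-equivariance that
\[
x\cdot \Emb^\mathrm{gr}(D)=\Emb^\mathrm{gr}(x\cdot D)=\epsilon_R(x)\Emb^\mathrm{gr}(D)\quad\text{for all }x\in U_R,
\]
so $\Emb^\mathrm{gr}(D)\in \left(\mathscr A_{k,l,n}^\mathrm{gr}\right)_{(\epsilon_R)}\cap \Emb^\mathrm{gr}\left(\mathscr A_{\underline k,\underline l,n}^\mathrm{gr}\right)$.

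For the reverse inclusion $\supseteq$, I would start with $D$ in the right-hand side and use injectivity of $\Emb^\mathrm{gr}$ to write $D=\Emb^\mathrm{gr}(D')$ for a unique $D'\in\mathscr A_{\underline k,\underline l,n}^\mathrm{gr}$. For any $x\in U_R$, applying $U_R$-equivariance gives
\[
\Emb^\mathrm{gr}(x\cdot D')=x\cdot\Emb^\mathrm{gr}(D')=x\cdot D=\epsilon_R(x)D=\Emb^\mathrm{gr}(\epsilon_R(x)D'),
\]
and injectivity of $\Emb^\mathrm{gr}$ then forces $x\cdot D'=\epsilon_R(x)D'$, so $D'\in \left(\mathscr A_{\underline k,\underline l,n}^\mathrm{gr}\right)_{(\epsilon_R)}$.

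There is no substantive obstacle here: the entire content of the lemma is that $U_R$-equivariance and injectivity together guarantee that the $U_R$-isotypic component of the counit behaves well under pullback along $\Emb^\mathrm{gr}$. The only point worth being careful about is confirming, via Proposition~\ref{prp:existence(6)}(ii), that the identification of $U_R$ on the source and target of $\Emb^\mathrm{gr}$ is via the identity map $\boldsymbol\kappa_{n,n}=\id$, so that $U_R$-equivariance really holds in the naive sense used above.
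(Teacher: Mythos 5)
Your proof is correct and follows exactly the route the paper intends: the paper's own proof is the one-line remark that the identity ``follows from $U_R$-equivariance of the map $\Emb^\mathrm{gr}$,'' and you have simply written out the two inclusions that this equivariance (together with the injectivity from Proposition~\ref{prp:existence(6)}) yields. Your care in noting that $\boldsymbol\kappa_{n,n}=\id$, so that equivariance holds in the naive sense, is a worthwhile detail the paper leaves implicit.
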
  
\begin{proof}
This follows from $U_R$-equivariance of the map $\Emb^\mathrm{gr}$ (see Proposition~\ref{prp:existence(6)}).
\end{proof}
Recall that $\sPD^\mathrm{gr}$ is a
module over  $U_{LR}\otimes U_{LR}=U_L\otimes U_R\otimes U_L\otimes U_R$. Let $U_L^{(k,l)}$ be the subalgebra of $U_{LR}\otimes U_{LR}$ defined by 
\[
U_L^{(k,l)}:=\boldsymbol\kappa_{k,m}(U_q(\gl_k))\otimes 1\otimes \boldsymbol\kappa_{l,m}(U_q(\gl_l))\otimes 1.
\]

\begin{prp} 
\label{prp:ULkl}
The $U_L^{(k,l)}$-submodule of $\sPD^\mathrm{gr}$ that is generated by  
$
\Emb^\mathrm{gr}\left(\left(\mathscr A_{\underline k,\underline l,n}^\mathrm{gr}\right)_{(\epsilon_R)}\right) $ is equal to
$\left(\mathscr A_{k,l,n}^\mathrm{gr}\right)_{(\epsilon_R)}
$.
\end{prp}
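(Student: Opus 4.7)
The plan is to establish the claimed equality through two opposite inclusions. Let $\mathcal M \sseq \sPD^\mathrm{gr}$ denote the $U_L^{(k,l)}$-submodule generated by $\Emb^\mathrm{gr}\bigl(\bigl(\mathscr A_{\underline k, \underline l, n}^\mathrm{gr}\bigr)_{(\epsilon_R)}\bigr)$; the goal is $\mathcal M = \bigl(\mathscr A_{k,l,n}^\mathrm{gr}\bigr)_{(\epsilon_R)}$.

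For the inclusion $\mathcal M \sseq \bigl(\mathscr A_{k,l,n}^\mathrm{gr}\bigr)_{(\epsilon_R)}$, I would first observe that the subalgebras $U_L^{(k,l)} \sseq U_L \otimes 1 \otimes U_L \otimes 1$ and $\Delta(U_R) \sseq 1 \otimes U_R \otimes 1 \otimes U_R$ of $U_{LR} \otimes U_{LR}$ commute componentwise, so the $U_L^{(k,l)}$-action automatically preserves the $(\epsilon_R)$-invariant subspace of any $U_R$-stable subset of $\sPD^\mathrm{gr}$. Combined with the $U_R$-equivariance of $\Emb^\mathrm{gr}$ from Proposition~\ref{prp:existence(6)}(ii), this reduces the inclusion to showing that $\mathscr A_{k,l,n}^\mathrm{gr}$ is itself $U_L^{(k,l)}$-stable. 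I would establish the latter via Proposition~\ref{prp:AotBUrUR}, applied to the spanning set $\mathcal E$ of the generators of $\mathscr A_{k,l,n}^\mathrm{gr}$: stability of $\mathcal E$ under $U_L^{(k,l)}$ is immediate from Remark~\ref{rmk:actionformulas}, while the R-matrix hypothesis is verified by taking $\oline H := \boldsymbol\kappa_{l,m}(U_q(\gl_l))$ (and symmetrically $\boldsymbol\kappa_{k,m}(U_q(\gl_k))$) and observing that on the finite-dimensional weight-restricted $U_L$-submodules of $\sD$ and $\sP$ that carry the relevant vectors, the matrix units of $\underline{\EuScript R}_L$ that act nontrivially originate from the Levi subalgebra.

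For the reverse inclusion $\bigl(\mathscr A_{k,l,n}^\mathrm{gr}\bigr)_{(\epsilon_R)} \sseq \mathcal M$, I would use the $U_q(\gl_k) \otimes U_q(\gl_l)$-isotypic decomposition derived from~\eqref{eq:PD(r,s)d} and~\eqref{eq:VlaVmu*}:
\[
\left(\mathscr A_{k,l,n}^{\mathrm{gr},(r,r)}\right)_{(\epsilon_R)} \cong \bigoplus_{\la \in \Lambda_{d,r}} V_\la^* \otimes V_\la, \quad d := \min\{k,l,n\},
\]
where $V_\la^*$ is the irreducible $U_q(\gl_k)$-module and $V_\la$ the irreducible $U_q(\gl_l)$-module. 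The analogous decomposition for $\left(\mathscr A_{\underline k, \underline l, n}^{\mathrm{gr},(r,r)}\right)_{(\epsilon_R)}$ as a $U_q(\gl_{\underline k}) \otimes U_q(\gl_{\underline l})$-module uses the same index set, since $\min\{\underline k, \underline l, n\} = \min\{k,l,n\} = d$. Because $\Emb^\mathrm{gr}$ is $U_R$-equivariant, it preserves the $\la$-labeling (which is controlled by the $U_R$-isotype in~\eqref{eq:PD(r,s)d}), and because it is also $U_q(\gl_{\underline k}) \otimes U_q(\gl_{\underline l})$-equivariant via the composition of Levi embeddings, each summand $V_\la^* \otimes V_\la$ on the smaller side maps into the $\la$-isotypic piece of the larger side. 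Injectivity of $\Emb^\mathrm{gr}$ then delivers a nonzero $U_q(\gl_{\underline k}) \otimes U_q(\gl_{\underline l})$-submodule inside the irreducible $U_q(\gl_k) \otimes U_q(\gl_l)$-module $V_\la^* \otimes V_\la$; irreducibility forces its $U_L^{(k,l)}$-span to be the whole isotypic component. Summing over $\la \in \Lambda_{d,r}$ and over $r \geq 0$ completes the argument.

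The hard part will be the $U_L^{(k,l)}$-stability of $\mathscr A_{k,l,n}^\mathrm{gr}$. As the authors flag in the paragraph preceding this proposition, products of the generators $\tilde{\mathsf L}_{i,j}^\mathrm{gr}$ fail to be weight vectors under the outer Cartan of $U_L^{(k,l)}$, because the $\check R$-twist appearing in the product of $\sP \otimes_{\check R} \sD$ preserves only the inner $U_{LR}$-weights, so a direct weight-space argument along the lines of~\cite{LZZ11} is unavailable. The remedy is precisely Proposition~\ref{prp:AotBUrUR}, which transfers stability from the generators to the subalgebra they generate through R-matrix bookkeeping; the technical core consists in identifying the correct finite-dimensional $U_L$-submodules of $\sP$ and $\sD$ arising at each inductive step and in verifying that their R-matrices admit the required Levi-restricted form.
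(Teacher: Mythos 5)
Your argument for the reverse inclusion $\left(\mathscr A_{k,l,n}^\mathrm{gr}\right)_{(\epsilon_R)}\sseq\mathcal M$ is correct and takes a genuinely different, and shorter, route than the paper's. The paper reduces to the same statement — that each summand $\bar V_\la^*\otimes\bar V_\la$ of \eqref{eq:AepsRR2} generates the summand $V_\la^*\otimes V_\la$ of \eqref{eq:AepsRR1} — but it establishes the compatibility of the two labelings by an explicit computation: it exhibits the extreme weight vector $v_\la^*\otimes v_\la$, expands it in the monomial basis of $\sPD^\mathrm{gr}$, and uses a Cartan-eigenvalue argument to force the occurring monomials into $\Emb^\mathrm{gr}\left(\mathscr A^\mathrm{gr}_{\underline k,\underline l,n}\right)$, arriving at \eqref{eq:vbarV-ajld}. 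You instead match isotypic components abstractly and invoke irreducibility. One caveat: the diagonal (inner) $U_R$-action is useless for locating the image, since it acts trivially on all the invariant spaces involved; what you need, and what does hold, is equivariance of $\Emb^\mathrm{gr}$ for the two right-translation copies of $U_R$ separately, i.e.\ for $1\otimes U_R\otimes 1\otimes U_R\sseq U_{LR}\otimes U_{LR}$, because in \eqref{eq:PD(r,s)d} the outer label $(\la,\mu)$ is locked to the $U_R\otimes U_R$-isotype. With that made precise, your argument is a clean alternative that buys you a shorter proof at the cost of leaning a bit harder on the multiplicity-free structure of \eqref{eq:PD(r,s)d}.

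You have, however, misidentified the hard part. The $U_L^{(k,l)}$-stability of $\mathscr A^\mathrm{gr}_{k,l,n}$ and of its $(\epsilon_R)$-invariants requires no R-matrix bookkeeping: by Proposition~\ref{prp:explicit-prs-Akln} and \eqref{eq:PD(r,s)d}, the subspace $\mathscr A^{\mathrm{gr},(r,s)}_{k,l,n}$ of $\sP^{(r)}\otimes\sD^{(s)}$ is the tensor product of the images of $\sP^{(r)}_{k\times n}$ and $\sD^{(s)}_{l\times n}$, and $U_L^{(k,l)}$ acts factorwise through the Levi embeddings, so stability is immediate and is already recorded in \eqref{eq:PD(r,s)d} — which you yourself cite for the isotypic decomposition. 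Your proposed detour through Proposition~\ref{prp:AotBUrUR} is not only unnecessary here but would not go through as sketched: that proposition requires the 2-tensors $\omega_{V,W},\oline\omega_{V,W}$ to be chosen with one leg in the sub-bialgebra $\oline H$, and a Levi subalgebra such as $\boldsymbol\kappa_{l,m}(U_q(\gl_l))$ contains neither all the Cartan elements $K_{\eps_i}$ of $U_q(\gl_m)$ nor the root vectors attached to roots outside the Levi, both of which occur in $\underline{\EuScript R}_L$. This is exactly why the paper, at the point where Proposition~\ref{prp:AotBUrUR} genuinely is needed — namely Proposition~\ref{prp:Holine-stab}, which concerns the subalgebra $\mathcal B$ generated by the $\tilde{\mathsf L}^\mathrm{gr}_{i,j}$, whose products fail to be weight vectors — takes $\oline U_{k,l}$ to be a parabolic-type subalgebra containing all the $E_i$ (or all the $F_i$) together with all the $K_{\eps_i}$, rather than a Levi.
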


\begin{proof}
Set
$d:=\min\{\underline{k},\underline{l}\}=\min\{k,l,n\}$.
First note that by~\eqref{eq:VlaVmu*} and~\eqref{eq:grPDrrUR22} we have  isomorphisms of $U_q(\gl_k)\otimes U_q(\gl_l)$-modules 
\begin{equation}
\label{eq:AepsRR1}
\left(
\mathscr A_{k,l,n}^{\mathrm{gr},(r,s)}
\right)_{(\epsilon_R)}
=0\text{\quad for $r\neq s$\quad and\quad }
\left(
\mathscr A_{k,l,n}^{\mathrm{gr},(r,r)}
\right)_{(\epsilon_R)}\cong
\bigoplus_{\la\in\Lambda_{d,r}}
V_\la^*\otimes  V_\la^{},
\end{equation}
where $V_\la^*$ (respectively, $V_\la^{}$) denotes an  irreducible $U_q(\gl_k)$-module (respectively,
$U_q(\gl_l)$-module).
Similarly, using
the equivariance of $\Emb^\mathrm{gr}$ from 
Proposition~\ref{prp:existence(6)}(ii) we obtain 
\[
\left(\mathscr A_{k,l,n}^{\mathrm{gr}}\right)_{(\epsilon_R)}
\cap
\Emb^\mathrm{gr}\left(\mathscr A_{\underline k,\underline l,n}^{\mathrm{gr},(r,s)}\right)
=0\quad\text{ for }r\neq s,
\] and 
an isomorphism of $U_q\left(\gl_{\underline k}\right)\otimes U_q\left(\gl_{\underline l}\right)$-modules \begin{equation}
\label{eq:AepsRR2}
\left(\mathscr A_{k,l,n}^{\mathrm{gr}}\right)_{(\epsilon_R)}
\cap
\Emb^\mathrm{gr}\left(\mathscr A_{\underline k,\underline l,n}^{\mathrm{gr},(r,r)}\right) 
=
\Emb^\mathrm{gr}
\left(
\left(
\mathscr A_{\underline k,\underline l,n}^{\mathrm{gr},(r,r)}
\right)_{(\epsilon_R)}
\right)
\cong
\bigoplus_{\la\in\Lambda_{d,r}}
\bar{V}_\la^*\otimes  \bar V_\la^{},
\end{equation} 
where $\bar V_\la^*$ 
(respectively, $\bar V_\mu^{}$) is an  irreducible $U_q\left(\gl_{\underline k}\right)$-module (respectively,
$U_q\left(\gl_{\underline l}\right)$-module). In the latter relation we use the bar on $\bar V_\la^*$ and $\bar V_\la^{}$ to distinguish $U_q(\gl_{\underline k})$-modules from $U_q(\gl_k)$-modules and
$U_q(\gl_{\underline l})$-modules from $U_q(\gl_l)$-modules.
To complete the proof, we need to  verify that the summand 
$\bar V_\la^*\otimes \bar V_\la^{}$ 
of~\eqref{eq:AepsRR2} generates the summand $V_\la^*\otimes V_\la^{}$ of ~\eqref{eq:AepsRR1} as a $U_q(\gl_k)\otimes U_q(\gl_l)$-module.
In what follows, we prove the latter assertion.
  Let $v_\la^*$ be a highest weight vector of 
the $U_q(\gl_k)$-module $V_\la^*$ and let 
$v_\la^{}$ be a lowest weight vector of 
the $U_q(\gl_l)$-module
$V_\la^{}$. It suffices to prove that \begin{equation}
\label{eq:vbarV-ajld}
v_\la^*\otimes v_\la^{}\in \bar V_\la^*\otimes \bar V_\la.
\end{equation} 
The weight of $v_\la^{}$ with respect to the standard Cartan subalgebra of $U_q(\gl_l)$ is 
obtained by applying the longest element of the Weyl group $S_l$ to the coefficients of $q^{\sum_{i=1}^d\lambda_i\eps_i}$ (which is the highest weight of $\bar V_\lambda$). hence the weight of $v_\lambda$  is 
$q^{\sum_{i=1}^d\eps_{l-i+1}\la_i}$. By a similar reasoning,  the 
weight of $v_\la^*$ with respect to the standard Cartan subalgebra of $U_q(\gl_k)$ is 
$q^{-\sum_{i=1}^d\eps_{k-i+1}\la_i}$.
Since $k-\underline k\leq k-d$, we have $K_{\eps_i}\cdot v_\la^*=v_\la^*$ for $K_{\eps_i}\in U_q(\gl_k)$ satisfying $1\leq i\leq k-\underline k$. Similarly, from $l-\underline l\leq l-d$ it follows that $K_{\eps_i}\cdot v_\la^{}=v_\la^{}$
for $K_{\eps_i}\in U_q(\gl_l)$ satisfying $1\leq i\leq l-\underline l$. Next we express $v_\la^*\otimes v_\la^{}$ as a linear combination of  the basis of $\sPD^\mathrm{gr}$ that consists of the monomials~\eqref{eq:bassis} (see Proposition~\ref{diamond-app-gr}). Since $v_\la^*\otimes v_\la^{}\in\mathscr A_{k,l,n}^\mathrm{gr}$, the monomials that occur 
must satisfy 
\begin{equation}
\label{monom-1}
a_{i,r}=b_{j,r}=0\quad\text{for }
1\leq i\leq m-k,\ 1\leq j\leq m-l,\ 1\leq r\leq n.
\end{equation}
By Remark~\ref{rmk:actionformulas}
each of the occurring monomials is a joint eigenvector for the action of \[
\boldsymbol\kappa_{k,m}(K_{\eps_i})\otimes 1\otimes \boldsymbol\kappa_{l,m}(K_{-\eps_j})\otimes 1\quad\text{ where }1\leq i\leq k\text{ and }1\leq j\leq l,
\]
with eigenvalue \[q^{-\sum_{r=1}^n (a_{m-k+i,r}+b_{m-l+j,r})}.
\] 
If this eigenvalue is 1 for $i\leq k-\underline k$ and $j\leq l-\underline l$, then 
we must have 
\begin{equation}
\label{monom-2}
a_{m-k+i,r}=b_{m-l+j,r}=0\quad\text{ for }1\leq i\leq k-\underline k,\ 1\leq j\leq l-\underline l,\ 1\leq r\leq n.
\end{equation}
 From~\eqref{monom-1} and~\eqref{monom-2} (and the general fact that joint eigenfunctions with distinct eigenvalues are linearly independent) it follows that all of the occurring monomials belong to $\Emb^\mathrm{gr}(\mathscr A_{\underline k,\underline l,n}^\mathrm{gr})$. 
Consequently, $v_\la^*\otimes v_\la^{}$ belongs to the left hand side of~\eqref{eq:AepsRR2}. In addition,  $v_\la^*\otimes v_\la^{}$ is the tensor product of a  lowest weight vector for a $U_q(\gl_{\underline k})$-module isomorphic to $\bar V_\la^*$ and a highest weight vector for a
$U_q(\gl_{\underline l})$-module isomorphic to $\bar V_\la^{}$. From the decomposition of the right hand side 
of~\eqref{eq:AepsRR2} we obtain
that $v_\la^*\in \bar V_\la^*$ and $v_\la^{}\in\bar V_\la^{}$. This completes the proof of~\eqref{eq:vbarV-ajld}.  
 \end{proof}

We are now ready to complete the proof of~\eqref{eq:sPDsseqxdi}.
From Theorem~\ref{thm:B'} for $\mathscr A_{\underline k,\underline l,n}^\mathrm{gr}$ (which is established in Subsection~\ref{subsec:specialm=n}) it follows that 
\[
\Emb^\mathrm{gr}\left(
\left(\mathscr A_{\underline k,\underline l,n}^\mathrm{gr}\right)_{(\epsilon_R)}
\right)\sseq \mathcal B.\]
Thus, by Proposition~\ref{prp:ULkl} it suffices to prove that $\mathcal B$ is stable under the action of $U_L^{(k,l)}$. 
The key idea to prove this  is that the span of the generators of $\cB$ is stable under an algebra larger than $U_L^{(k,l)}$. This enables us to use 
Proposition~\ref{prp:AotBUrUR}. 
Let $\tilde U_L^{(k,l)}$ be the subalgebra of $U_{LR}\otimes U_{LR}=U_{L}\otimes U_R\otimes U_L\otimes U_{R}$  defined as follows:
\begin{itemize}
\item[(i)]
If $k\leq l$, then $\tilde U_L^{(k,l)}:=\oline U_{k,l}\otimes 1\otimes U_L\otimes 1$ where $\oline U_{k,l}$ is the subalgebra of $U_L= U_q(\gl_l)$ that is generated by \[
\{E_i\}_{i=1}^{l-1}\cup\{F_i\}_{i=l-k+1}^{l-1}\cup \{K_{\eps_i}\}_{i=1}^{l}.
\]
\item[(ii)]
If $k>l$, then 
$\tilde U_L^{(k,l)}:=U_L\otimes 1\otimes \oline U_{k,l}\otimes 1$ where 
$\oline U_{k,l}$ is the subalgebra of $U_L= U_q(\gl_k)$ that is generated by \[
\{E_i\}_{i=k-l+1}^{k-1}\cup\{F_i\}_{i=1}^{k-1}\cup \{K_{\eps_i}\}_{i=1}^{k}.
\]
\end{itemize}
Note that in both cases we have $U_L^{(k,l)}\sseq \tilde U_L^{(k,l)}$.
\begin{prp}
\label{prp:Holine-stab}
$\mathcal B$ is stable under the action of $\tilde U_L^{(k,l)}$. 
\end{prp}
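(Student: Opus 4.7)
The plan is to deduce this from Proposition~\ref{prp:AotBUrUR} applied to the locally finite braided triple $(U_{LR},\cC_{LR},\check{\underline{\EuScript R}}_{LR})$ with $A:=\sP$, $B:=\sD$, so that $\mathscr A_{k,l,n}^\mathrm{gr}$ sits inside $\sPD^\mathrm{gr}=\sP\otimes_{\check{\underline{\EuScript R}}_{LR}}\sD$. One takes $\mathcal E$ to be the linear span of the $\tilde{\mathsf L}^\mathrm{gr}_{i,j}$ for $1\leq i\leq k$ and $1\leq j\leq l$, so that $\mathcal B$ is precisely the subalgebra of $\sPD^\mathrm{gr}$ generated by $\mathcal E$, and one takes $\oline H:=\oline U_{k,l}$ as the sub-bialgebra of $H:=U_L$. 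It is immediate from the explicit coproduct formulas, together with the fact that $\oline U_{k,l}$ contains all Cartan generators $K_{\eps_i}^{\pm 1}$, that $\oline U_{k,l}$ is indeed a sub-bialgebra of $U_L$. In case (i) ($k\leq l$, so $m=l$) I would invoke part~(ii) of Proposition~\ref{prp:AotBUrUR}, and in case (ii) ($k>l$, so $m=k$) I would invoke part~(i) of the same proposition.

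Next I would check that $\mathcal E$ is stable under $\tilde U_L^{(k,l)}$. Focusing on case (i), the first-slot action of $\oline U_{k,l}$ on $\sP$ preserves the span of the $t_{a,r}$ with $a\in\{l-k+1,\ldots,l\}$: by the formulas of Remark~\ref{rmk:actionformulas}, the generators $E_s$ and $K_{\eps_s}^{\pm 1}$ clearly preserve this range, while $F_s$ sends $t_{s+1,r}$ to a multiple of $t_{s,r}$ and therefore preserves the range precisely when $s\geq l-k+1$, which is exactly the constraint defining $\oline U_{k,l}$. Meanwhile the third-slot action of the full $U_L$ on $\sD$ trivially preserves the span of all $\del_{b,r}$ with $1\leq b\leq l$. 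Since the two actions occupy commuting tensor slots of $U_{LR}\otimes U_{LR}$, their combined action preserves $\mathcal E$; case (ii) is handled by the symmetric argument with the roles of $\sP$ and $\sD$ (and of $t_{i,j}$ and $\del_{i,j}$) swapped.

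Finally, I would verify the $R$-matrix hypothesis of Proposition~\ref{prp:AotBUrUR}. From formula~\eqref{eq:formulaRnn} for $\EuScript R^{(m)}$ and the definition $\underline{\EuScript R}^{(m)}=(\EuScript R^{(m)})^{-1}_{21}$, the first tensor factor of $\underline{\EuScript R}^{(m)}$ involves only $F_\beta$'s and Cartan elements, while the second tensor factor involves only $E_\beta$'s and Cartan elements; the same is true of its inverse. In case (i) all $E_s$ and $K_{\eps_s}^{\pm 1}$ lie in $\oline U_{k,l}$, so the truncation procedure from the proof of Proposition~\ref{prp:Uqglnlocfin} — truncating the $q$-exponentials and replacing $e^{h\sum H_i\otimes H_i}$ by a finite Vandermonde-interpolation expression in the $K_\nu$'s — produces 2-tensors $\omega_{V,W},\oline\omega_{V,W}\in U_L\otimes\oline U_{k,l}$ satisfying Definition~\ref{dfn:braidedfinite}(ii); this is precisely the hypothesis of Proposition~\ref{prp:AotBUrUR}(ii). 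Case (ii) is symmetric: the first factor of $\underline{\EuScript R}^{(k)}$ lies in $\oline U_{k,l}$, giving the hypothesis of Proposition~\ref{prp:AotBUrUR}(i). The main obstacle is the bookkeeping that ensures this replacement-by-finite-2-tensor step of Proposition~\ref{prp:Uqglnlocfin} can actually be carried out inside $U_L\otimes\oline U_{k,l}$ (respectively $\oline U_{k,l}\otimes U_L$) rather than merely inside $U_L\otimes U_L$; once this is in place, Proposition~\ref{prp:AotBUrUR} yields the desired $\tilde U_L^{(k,l)}$-stability of $\mathcal B$ immediately.
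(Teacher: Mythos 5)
Your proposal is correct and follows essentially the same route as the paper: both apply Proposition~\ref{prp:AotBUrUR} with $A:=\sP$, $B:=\sD$, $\oline H:=\oline U_{k,l}$ and $\mathcal E$ the span of the $\tilde{\mathsf L}^{\mathrm{gr}}_{i,j}$, check stability of $\mathcal E$ via Remark~\ref{rmk:actionformulas}, and verify the hypothesis on $\omega_{V,W},\oline\omega_{V,W}$ from the fact that $\underline{\EuScript R}^{(m)}=(\EuScript R^{(m)})^{-1}_{21}$ has only $F_\beta$'s and Cartan elements in its first tensor factor and only $E_\beta$'s and Cartan elements in its second. Your explicit matching of case (i)/(ii) with parts (ii)/(i) of Proposition~\ref{prp:AotBUrUR} is the correct bookkeeping and agrees with the paper's (terser) argument.
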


\begin{proof}
This follows from Proposition~\ref{prp:AotBUrUR} by setting $H:=U_L$, $\cC:=\cC^{(m)}$ where $m:=\max\{k,l\}$, $\check R:=\check{\underline{\EuScript R}}_L$, $H':=U_R$, $\cC':=\cC^{(n)}$, $\check R':=\check{\underline{\EuScript R}}_R$, $A:=\sP$, $B:=\sD$, $\oline H:=\oline U_{k,l}$ 
and \[
\mathcal E:=\spn_\Bbbk\left\{\tilde{\mathsf L}_{i,j}^\mathrm{gr}\,:\,1\leq i\leq k\text{ and }1\leq j\leq l\right\}.
\]
Checking that  $\mathcal E$ 
is stable under the action of ${\tilde U}_L^{(k,l)}$ is a direct calculation based on Remark~\ref{rmk:actionformulas}. Also, 
according to Proposition~\ref{prp:Uqglnlocfin}
we can choose  $\omega_{V,W},\oline \omega_{V,W}\in H\otimes H$ satisfying the condition of  Definition~\ref{dfn:braidedfinite}(ii)
to be finite linear combinations of $2$-tensors of the form
\[
K F_\beta F_{\beta'}F_{\beta''} 
\cdots \otimes K' 
E_\beta E_{\beta'}E_{\beta''}
\cdots,
\]
where $\beta,\beta',\beta'',\ldots$ are positive roots (see Definition~\ref{dfn:univRmatgln}) and $K,K'$ are in the Cartan subalgebra. Verifying that the assumptions of Proposition~\ref{prp:AotBUrUR}(i) and
Proposition~\ref{prp:AotBUrUR}(ii)
 on $\omega_{V,W}$ and $\oline\omega_{V,W}$ hold is then a direct calculation based on the formulas that express the root vectors as commutators of the $E_i$ and the $F_i$ (see   Definition~\ref{dfn:univRmatgln}).
\end{proof}

\subsection{Proof of Theorem~\ref{thm:C} for $\mathscr A_{k,l,n}$  }
\label{subsec:thmCforAkln}
In this subsection we deduce 
Theorem~\ref{thm:C} for $\mathscr A_{k,l,n}$ from 
Theorem~\ref{thm:B'}. Let $\mathbb K$ be any field. 
As usual
a  $\mathbb K$-algebra $\cA$ is called \emph{filtered} if it has a filtration 
\[
\cA^0\sseq \cA^1\sseq \cA^2\sseq\cdots 
\] such that $\cA^i\cA^j\sseq \cA^{i+j}$ for $i,j\geq 0$. We assume that filtered algebras always satisfy $\cA^0=\mathbb K$. 
As usual $\mathrm{gr}(\cA):=\bigoplus_{i=-1}^\infty \cA^{i+1}/\cA^i$ denotes the associated graded algebra of $\cA$, where by convention $\cA^{-1}=0$. 
The following general lemma is standard and can be proved by induction.  
\begin{lem}\label{lem:alggrfilt}
Let $\cA$ be a filtered algebra and let $a_1,\ldots,a_r\in\cA^1$ be such that their images in
$\cA^1/\cA^0$ generate $\mathrm{gr}(\cA)$. Then $a_1,\ldots,a_r$ generate $\cA$. 
\end{lem}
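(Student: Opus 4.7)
The plan is to prove by induction on $n$ that $\cA^n$ is contained in the subalgebra $\cB\subseteq\cA$ generated by $a_1,\ldots,a_r$. The base case $n=0$ is immediate from $\cA^0=\mathbb K\subseteq \cB$. For the inductive step, given $a\in\cA^n$, I would consider its class $\bar a\in \cA^n/\cA^{n-1}$, a homogeneous element of degree $n$ in $\mathrm{gr}(\cA)$. Write $\bar a_i\in\cA^1/\cA^0$ for the class of $a_i$; by hypothesis these generate $\mathrm{gr}(\cA)$ as a $\mathbb K$-algebra, and since each $\bar a_i$ lives in degree $1$, only noncommutative monomials of length exactly $n$ in the $\bar a_i$ contribute to the degree-$n$ piece of $\mathrm{gr}(\cA)$. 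Thus there exist scalars $c_{i_1,\ldots,i_n}\in\mathbb K$ with
\[
\bar a=\sum_{1\leq i_1,\ldots,i_n\leq r} c_{i_1,\ldots,i_n}\,\bar a_{i_1}\cdots \bar a_{i_n}.
\]

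Lifting this identity to $\cA$, the element
\[
a':=a-\sum_{1\leq i_1,\ldots,i_n\leq r} c_{i_1,\ldots,i_n}\,a_{i_1}\cdots a_{i_n}
\]
lies in $\cA^{n-1}$, hence in $\cB$ by the induction hypothesis. Since the sum of products of the $a_i$ also lies in $\cB$, it follows that $a\in\cB$, completing the induction.

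There is no real obstacle here; the one point to be careful about is the grading bookkeeping, namely that because the $\bar a_i$ are concentrated in degree $1$ of $\mathrm{gr}(\cA)$, the lift can be taken to be a polynomial of total degree exactly $n$ in the $a_i$, so that the difference $a-\sum c_{\mathbf i}a_{i_1}\cdots a_{i_n}$ genuinely drops into filtration level $n-1$ and the induction applies.
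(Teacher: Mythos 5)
Your induction is correct and is precisely the standard argument the paper alludes to when it says the lemma "can be proved by induction": reduce a class in $\cA^n/\cA^{n-1}$ to a sum of length-$n$ monomials in the $\bar a_i$ (using that the generators sit in degree $1$), lift, and absorb the error term into $\cA^{n-1}$ via the inductive hypothesis. No gaps; this matches the intended proof.
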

The passage from Theorem~\ref{thm:B'} to Theorem~\ref{thm:C} relies on the folowing proposition.
\begin{prp}
\label{prp:filtgrAF}
Let $\cA:=\bigoplus_{i=0}^\infty \cA^{(i)}$ be a graded $\mathbb K$-algebra  and 
let $\cB$ be a filtered $\mathbb K$-algebra. Set $\cA^i:=\bigoplus_{j=0}^i \cA^{(j)}$ for $r\geq 0$, so that $\mathbb K:=\cA^0\sseq \cA^1\sseq \cA^2\sseq\cdots$ is a filtration of $\cA$.
Let $\mathsf F:\cA\to\cB$ be a filtration-preserving linear map such that 
$\mathrm{gr}(\mathsf F):\mathrm{gr}(\cA)\to \mathrm{gr}(\cB)$ is an isomorphism of algebras. 
Suppose that 
$a_1,\ldots,a_r\in\cA^{(1)}$ generate $\cA$. Then $\mathsf F(a_1),\ldots, \mathsf F(a_r)$ generate $\cB$. 
\end{prp}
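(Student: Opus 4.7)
The plan is to reduce Proposition~\ref{prp:filtgrAF} to Lemma~\ref{lem:alggrfilt} applied to the filtered algebra $\cB$. The only things one really needs to check are (a) that each $\mathsf F(a_i)$ lies in $\cB^1$, and (b) that the images of the $\mathsf F(a_i)$ in $\cB^1/\cB^0$ generate $\mathrm{gr}(\cB)$ as an algebra.

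First I would note that since $a_i\in\cA^{(1)}\subseteq \cA^1$ and $\mathsf F$ is filtration-preserving, we have $\mathsf F(a_i)\in\cB^1$. Moreover, the class of $\mathsf F(a_i)$ in the degree-one component $\cB^1/\cB^0$ of $\mathrm{gr}(\cB)$ is by construction equal to $\mathrm{gr}(\mathsf F)(\bar a_i)$, where $\bar a_i\in \cA^1/\cA^0$ denotes the class of $a_i$. Because $\cA$ is \emph{graded} with $\cA^i=\bigoplus_{j\le i}\cA^{(j)}$, the canonical map $\cA\to\mathrm{gr}(\cA)$ is an isomorphism of graded algebras, and under this identification $\bar a_i$ corresponds to $a_i\in \cA^{(1)}$.

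Next I would use the assumption that the $a_i$ generate $\cA$. Under the canonical isomorphism $\cA\cong\mathrm{gr}(\cA)$ above, the $\bar a_i$ therefore generate $\mathrm{gr}(\cA)$. Since $\mathrm{gr}(\mathsf F)$ is an \emph{isomorphism of algebras}, the images $\mathrm{gr}(\mathsf F)(\bar a_i)$ generate $\mathrm{gr}(\cB)$. By the observation in the previous paragraph these images are precisely the classes of $\mathsf F(a_i)$ in $\cB^1/\cB^0$. Thus condition (b) holds, and Lemma~\ref{lem:alggrfilt} applied to $\cB$ immediately yields that $\mathsf F(a_1),\ldots,\mathsf F(a_r)$ generate $\cB$.

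There is no serious obstacle here; the only subtle point, and the one that must be handled cleanly, is that $\mathsf F$ itself is merely a $\mathbb K$-linear map and need not be an algebra homomorphism — so the multiplicativity used to pass from generators of $\mathrm{gr}(\cA)$ to generators of $\mathrm{gr}(\cB)$ must be attributed entirely to $\mathrm{gr}(\mathsf F)$. Once this is made explicit, the proposition is a two-line consequence of Lemma~\ref{lem:alggrfilt}.
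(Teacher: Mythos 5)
Your proof is correct and follows essentially the same route as the paper: identify $\cA$ with $\mathrm{gr}(\cA)$, observe that the class of $\mathsf F(a_i)$ in $\cB^1/\cB^0$ is $\mathrm{gr}(\mathsf F)(\bar a_i)$, use that $\mathrm{gr}(\mathsf F)$ is an algebra isomorphism to conclude these classes generate $\mathrm{gr}(\cB)$, and then invoke Lemma~\ref{lem:alggrfilt}. Your explicit remark that the multiplicativity must come from $\mathrm{gr}(\mathsf F)$ rather than from $\mathsf F$ is exactly the right point to flag, and it is implicit in the paper's argument as well.
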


\begin{proof}
Set $\oline{\mathsf F}:=\mathrm{gr}(\mathsf F)$. Since $\cA\cong\mathrm{gr}(\cA)$, we can consider $\oline {\mathsf F}$ as a map $\cA\to \mathrm{gr}(\cB)$. Set $b_i:=\mathsf F(a_i)$ 
for $1\leq i\leq r$. Then $b_i+\cB^0=\oline{\mathsf F}(a_i)$, hence $b_1+\cB^0,\ldots,b_r+\cB^0$ generate $\mathrm{gr}(\cB)$. Thus  Lemma~\ref{lem:alggrfilt} implies that $b_1,\ldots,b_r$ generate $\cB$. 
\end{proof}

We return to the proof of Theorem~\ref{thm:C} for $\cA_{k,l,n}$. We verify that the assumptions of Proposition~\ref{prp:filtgrAF} hold for $\cA:=\left(\mathscr A_{k,l,n}^\mathrm{gr}\right)_{(\epsilon_R)}$, $\cB:=
\left(\mathscr A_{k,l,n}\right)_{(\epsilon_R)}$ and $\mathsf F:=\mathsf P_{k,l,n}$. Since $\mathsf P_{k,l,n}:\mathscr A_{k,l,n}^\mathrm{gr}\to\mathscr A_{k,l,n}$ is an isomorphism of $U_R$-modules, we have
$\mathsf F(\cA)=\cB$.
For $r\geq 0$ set 
\begin{equation}
\label{eq:A(r)}
\cA^{(r)}:=\mathscr A_{k,l,n}^{\mathrm{gr},(r,r)}\cap \cA
\quad\ ,\  
\cA^r:=\bigoplus_{s=0}^r \cA^{(s)}
\ ,\ 
\tilde\cB^{(r)}:=
\mathsf P_{k,l,n}\left(\mathscr A_{k,l,n}^{\mathrm{gr},(r,r)}\right)\ \text{and}\  
\tilde{\cB}^{r}:=\bigoplus_{s=0}^{r}\tilde{\cB}^{(s)}
.
\end{equation}
Since the $U_R$-action on $\mathscr A_{k,l,n}^\mathrm{gr}$ leaves the subspaces $\mathscr A_{k,l,n}^{\mathrm{gr},(r,s)}$ stable,  we have $\cA=\bigoplus_{r=0}^\infty \cA^{(r)}$. 
Define a filtration on $\cB$ by setting $\cB^r:=\mathsf F(\cA^r)$ for $r\geq 0$. 
Since we also have $\cB=\mathsf F(\cA)$, the map  $\mathrm{gr}(\mathsf F):\mathrm{gr}(\cA)\to \mathrm{gr}(\cB)$ is an isomorphism of graded vector spaces.  Next we prove that the latter map is an isomorphism of algebras. To this end, it suffices to verify that 
\begin{equation}
\label{eq:FDFD'=}{\mathsf F}(D)
{\mathsf F}(D')-\mathsf F(DD')\in\cB^{i+j-1}\quad\text{for }
D\in \cA^{(i)}\text{ and }D'\in\cA^{(j)}.
\end{equation}
By Proposition~\ref{rmk:PtildeRvsPD} the left hand side of~\eqref{eq:FDFD'=} belongs to $\tilde{\cB}^{i+j-1}$. Since $\cA_{k,l,n}$ is a $U_R$-module algebra and $\mathsf F$ is a $U_R$-module homomorphism,  we have $\mathsf F(D),\mathsf F(D'),\mathsf F(DD')\in\cB$. It follows that the left hand side of~\eqref{eq:FDFD'=}  also belongs to $\cB$. But since the map $\mathsf P_{k,l,n}:\mathscr A_{k,l,n}^\mathrm{gr}\to \mathscr A_{k,l,n}$ is a bijection,
\[
\tilde{\cB}^{r}\cap \cB=\mathsf P_{k,l,n}\left(\mathscr A_{k,l,n}^{\mathrm{gr},(r,r)}\right)\cap\mathsf P_{k,l,n}\left(\cA\right)=
\mathsf P_{k,l,n}\left(
\mathscr A_{k,l,n}^{\mathrm{gr},(r,r)}\cap\cA
\right)
\subseteq \cB^{r}\quad\text{for }r\geq 0.
\]
For $r=i+j-1$ this implies the inclusion~\eqref{eq:FDFD'=}. Thus we have proved that
the assumptions of 
Proposition~\ref{prp:filtgrAF}
hold for $\cA$, $\cB$ and $\mathsf F:\cA\to\cB$  chosen as above.  
By Theorem~\ref{thm:B'} the $\tilde{\mathsf L}^\mathrm{gr}_{i,j}$ for $1\leq i\leq k$ and $1\leq j\leq l$ generate $\cA$, hence by Proposition~\ref{prp:filtgrAF}
 the 
 $\tilde{\mathsf L}_{i,j}$  generate $\cB$.  This completes the proof of Theorem~\ref{thm:C} for $\mathscr A_{k,l,n}$.

\section{Proof of Theorem~\ref{thm:MainthmB}}
\label{sec:genofLhbRhb}

In this section we give the proof of 
Theorem~\ref{thm:MainthmB}(i). The proof of 
Theorem~\ref{thm:MainthmB}(ii) is analogous.
As a byproduct, 
in  Corollary~\ref{cor:IZie}
we obtain explicit generators for $\phi_U^{-1}(\Lhb)$ and 
$\phi_U^{-1}(\Rhb)$. 
Henceforth we use $E_i$, $F_i$ and the $K_\la$ for $\la\in\Z\eps_1+\cdots +\Z\eps_n$ to denote elements of  $U_R=U_q(\gl_n)$.

\subsection{Parity condition on the $\la$}
\label{subsec:partial-ord}
For $\la,\mu\in\Z\eps_1+\cdots+\Z\eps_n$ 
expressed as $\la:=\sum_{i=1}^n\la_i\eps_i$ and 
$\mu:=\sum_{i=1}^n\mu_i\eps_i$
we define $\lag \la,\mu\rag$ as in~\eqref{eq:pairingg}.
We also set $\la<\mu $ if there exists $1\leq r< n$ such that   $\la_i=\mu_i$ for all $i\leq r$ and $\la_{r+1}<\mu_{r+1}$. This defines a total order on $\Z\eps_1+\cdots +\Z\eps_n$. The following lemma is trivial. 
\begin{lem}
\label{lem:10.0.1}
Let $\mathcal S$ be a finite subset of $\Z\eps_1+\cdots +\Z\eps_n$
and let $\la_{\max}$ denote the maximum of $\mathcal S$ with respect to $<$. Let $\gamma_1,\ldots,\gamma_n\in\Z$ be such that $\gamma_n\geq 1$ and $\gamma_i\geq 1+\max_{\la,\mu\in\mathcal S}\left\{
\sum_{i<j\leq n}|\la_j-\mu_j|\gamma_j\right\}$ for $i<n$. Set $\gamma:=\sum_{i=1}^n \gamma_i\eps_i$. Then   
$\lag \la_{\max},\gamma\rag>\lag\mu,\gamma\rag$ for all $\mu\in\mathcal S$ such that
$\mu\neq\la_{\max}$. 
\end{lem}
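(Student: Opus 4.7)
The plan is to prove the strict inequality $\lag \la_{\max} - \mu, \gamma\rag > 0$ directly by isolating the first coordinate at which $\la_{\max}$ and $\mu$ differ. Fix $\mu \in \mathcal S$ with $\mu \neq \la_{\max}$, and write $\la_{\max} = \sum_i (\la_{\max})_i\eps_i$ and $\mu = \sum_i \mu_i\eps_i$. Since $\mu < \la_{\max}$, there exists an index $r$ with $(\la_{\max})_i = \mu_i$ for all $i \leq r$ and $(\la_{\max})_{r+1} - \mu_{r+1} \geq 1$ (the lower bound is an integer gap). The first $r$ coordinates then cancel in the pairing, yielding
\[
\lag \la_{\max} - \mu, \gamma \rag = \big((\la_{\max})_{r+1} - \mu_{r+1}\big)\gamma_{r+1} + \sum_{j=r+2}^n \big((\la_{\max})_j - \mu_j\big)\gamma_j.
\]

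Next I would observe that every $\gamma_i$ is at least $1$: this is immediate from the hypotheses, since $\gamma_n \geq 1$ and for $i < n$ the bound $\gamma_i \geq 1 + (\text{nonnegative})$ forces $\gamma_i \geq 1$. Hence the leading term of the above display satisfies $\big((\la_{\max})_{r+1} - \mu_{r+1}\big)\gamma_{r+1} \geq \gamma_{r+1}$. The tail can be controlled by the triangle inequality together with the defining bound on $\gamma_{r+1}$ applied to the pair $(\la_{\max}, \mu) \in \mathcal S \times \mathcal S$:
\[
\left|\sum_{j=r+2}^n \big((\la_{\max})_j - \mu_j\big)\gamma_j\right| \leq \sum_{j=r+2}^n \big|(\la_{\max})_j - \mu_j\big|\gamma_j \leq \gamma_{r+1} - 1.
\]
Combining the two estimates yields $\lag \la_{\max} - \mu, \gamma\rag \geq \gamma_{r+1} - (\gamma_{r+1} - 1) = 1 > 0$, which is the assertion of the lemma.

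The edge case $r = n-1$ (when the tail sum is empty) must be handled separately but follows even more easily: one has $\lag \la_{\max} - \mu, \gamma\rag = \big((\la_{\max})_n - \mu_n\big)\gamma_n \geq \gamma_n \geq 1$, directly from $\gamma_n \geq 1$. There is no serious obstacle here; the proof is an elementary estimate, and the hypothesis on the $\gamma_i$ has been tailored precisely so that the leading-term contribution dominates the tail. What makes the argument possible is the recursive, top-down hierarchy $\gamma_1 \gg \gamma_2 \gg \cdots \gg \gamma_n$ built into the hypothesis together with the finiteness of $\mathcal S$, which guarantees that the maximum appearing in the defining bound on each $\gamma_i$ is a well-defined finite integer.
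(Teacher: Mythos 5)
Your argument is correct, and it is the argument the authors have in mind: the paper gives no proof at all (the statement is preceded only by ``The following lemma is trivial''), so the expected justification is precisely this kind of elementary domination estimate, with the leading non-cancelling coordinate contributing at least $\gamma_{r+1}$ and the hypothesis on $\gamma_{r+1}$ (applied to the pair $(\la_{\max},\mu)$) bounding the tail in absolute value by $\gamma_{r+1}-1$. One small point you might make more explicit: establishing $\gamma_i\geq 1$ for all $i$ is a downward induction from $i=n$ (you need $\gamma_{i+1},\dots,\gamma_n\geq 0$ before you can conclude the maximum in the bound on $\gamma_i$ is nonnegative), rather than being entirely immediate from the hypothesis; but this is a minor presentational matter and the proof is complete.
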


\begin{prp}
\label{prp:xinPD1}
Let $\cI$ be a finite subset of $\Z\eps_1+\cdots+\Z\eps_n$. Let   $x:=\sum_{\la\in\cI} c_\la K_\la\in U_{\g h,R}$ where $c_\la\in\Bbbk^\times$ for $\la\in\cI$, and  assume that $x\in\URw$. Then for all $\la:=\sum \la_i\eps_i\in\cI$ 
and $1\leq i\leq n-1$
we have $\la_i-\la_{i+1}\in 2\Z^{\geq 0}$.
\end{prp}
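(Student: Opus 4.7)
Set $D:=\phi_U(1\otimes x)\in\sPD$. Since $x$ lies in the Cartan subalgebra $U_{\g h,R}$, the operator $D$ commutes with $\phi_U(U_L\otimes 1)$ and with $\phi_U(1\otimes U_{\g h,R})$, hence preserves every joint $(U_L,U_R)$-weight space of $\sP$. The test vectors I will use are
\[
f_\nu\;:=\;t_{1,n}^{\nu_n}\cdots t_{1,1}^{\nu_1}\in\sP\qquad\text{for }\nu=\sum_{i=1}^n\nu_i\eps_i,\ \nu_i\in\Z^{\geq 0}.
\]
By Remark~\ref{rmk:actionformulas} the vector $f_\nu$ has $U_L$-weight $-|\nu|\eps_1$ and $U_R$-weight $-\nu$; using Proposition~\ref{prp:glmglndecom} together with the observation that the extremal $U_L$-weight $|\nu|\eps_1$ only occurs in $V_{(|\nu|)}$ and that all weight spaces of $V_{(|\nu|)}=\mathrm{Sym}^{|\nu|}V^{(n)}$ are one-dimensional, the joint weight space is one-dimensional, so $D\cdot f_\nu=\boldsymbol\chi_x(\nu)f_\nu$ where $\boldsymbol\chi_x(\nu):=\sum_{\la\in\cI}c_\la q^{-\lag\la,\nu\rag}$.

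I next expand $D$ in the monomial basis~\eqref{eq:basisII} of $\sPD$ as $D=\sum c_{\mathbf a,\mathbf b}\,t^{\mathbf a}\del^{\mathbf b}$ with $\mathbf a=(a_{i,j}),\mathbf b=(b_{i,j})\in\N^{m\times n}$. The preservation of $U_L$-weight and $U_R$-weight forces every monomial appearing to be simultaneously \emph{row-balanced} ($\sum_j a_{i,j}=\sum_j b_{i,j}$ for each $i$) and \emph{column-balanced} ($\sum_i a_{i,j}=\sum_i b_{i,j}$ for each $j$). Lemma~\ref{lem:delijonarbr} gives $\del^{\mathbf b}\cdot f_\nu=0$ whenever $b_{i,j}>0$ for some $i\geq 2$; row-balance then eliminates those monomials from $D\cdot f_\nu$, restricting the surviving contributions to monomials with $\mathbf a$ and $\mathbf b$ both supported in the first row, and column-balance forces $a_{1,j}=b_{1,j}=:a_j$. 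Applying Lemma~\ref{lem:Dbta(i)-(iii)}(ii) to each surviving monomial $t_{1,n}^{a_n}\cdots t_{1,1}^{a_1}\del_{1,1}^{a_1}\cdots\del_{1,n}^{a_n}$ yields the identity
\begin{equation}
\label{eq:planid}
\sum_{\la\in\cI}c_\la q^{-\lag\la,\nu\rag}\;=\;\sum_{\mathbf a\in\N^n}c'_{\mathbf a}\,q^{\sum_{i=2}^n(2\nu_i-2a_i)(a_1+\cdots+a_{i-1})}\prod_{i=1}^n\mathbf c(\nu_i-1,a_i-1)
\end{equation}
in $\Bbbk$, valid for every $\nu\in(\Z^{\geq 0})^n$, with $c'_{\mathbf a}\in\Bbbk$ coming from the expansion of $D$.

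To conclude, set $y_i:=q^{\nu_i}$ and expand $\mathbf c(\nu_i-1,a_i-1)=\sum_{s_i=0}^{a_i}\gamma_{a_i,s_i}(q)\,y_i^{2s_i}$. Each summand on the right-hand side of~\eqref{eq:planid} contributes $\Bbbk$-multiples of monomials $y_1^{2d_1}\cdots y_n^{2d_n}$ with $d_1:=s_1$ and $d_i:=s_i+(a_1+\cdots+a_{i-1})$ for $i\geq 2$; a direct check gives $d_{i+1}-d_i=(a_i-s_i)+s_{i+1}\geq 0$, so only monomials with $0\leq d_1\leq d_2\leq\cdots\leq d_n$ appear on the right. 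The left-hand side is $\sum_{\la\in\cI}c_\la\prod_i y_i^{-\la_i}$. Since the functions $\nu\mapsto q^{\lag\mathbf k,\nu\rag}$ for distinct $\mathbf k\in\Z^n$ are linearly independent over $\Bbbk$ as functions on $(\Z^{\geq 0})^n$ (by the standard Vandermonde argument, specialising $\nu$ to scalar multiples of each $\eps_j$ in turn), matching coefficients forces each $\la\in\cI$ with $c_\la\neq 0$ to satisfy $-\la_i=2d_i$ for some sequence $0\leq d_1\leq\cdots\leq d_n$ of non-negative integers. Hence $\la_i-\la_{i+1}=2(d_{i+1}-d_i)\in 2\Z^{\geq 0}$ for every $1\leq i\leq n-1$, as required. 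The two main technical points will be the clean reduction to first-row diagonal monomials via the balance conditions combined with Lemma~\ref{lem:delijonarbr}, and the bookkeeping of the monotonicity $d_i\leq d_{i+1}$ when collecting pure monomials in the $y_i$'s on the right side of~\eqref{eq:planid}.
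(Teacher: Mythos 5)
Your proof is correct, but it takes a genuinely different route from the paper's. The paper proves this proposition via local finiteness of the adjoint action: since $\sPD$ is a locally finite $U_R$-module, $\dim \phi_U(\ad_{U_R}(1\otimes x))$ gives a uniform bound on $\dim(\ad_{U_R}(x)\cdot f)$, while the identity $\ad_{E_i^r}K_\la=\prod_{j=0}^{r-1}\bigl(1-q^{\lag\la,\alpha_i\rag-2j}\bigr)E_i^rK_\la K_i^{-r}$ eventually vanishes in $r$ precisely when $\lag\la,\alpha_i\rag\in2\Z^{\geq0}$; a careful choice of weight vector then produces too many independent vectors $\ad_{E_i^s}(x)\cdot f$ if some $\lag\la,\alpha_i\rag\notin2\Z^{\geq0}$. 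Your argument instead computes the eigenvalue of $D=\phi_U(1\otimes x)$ on the monomials $t_{1,n}^{\nu_n}\cdots t_{1,1}^{\nu_1}$ exactly and matches characters of $(\Z^{\geq0})^n$; it is in the spirit of the paper's proofs of Propositions~\ref{prp:xinPD2} and~\ref{prp:xinPD3} (same reduction to first-row diagonal monomials, via Remark~\ref{rmk:Deigven} and Lemma~\ref{lem:Dbta(i)-(iii)}), but replaces their asymptotic degree and parity comparisons by exact linear independence of the characters $\nu\mapsto q^{\lag\mathbf k,\nu\rag}$. This buys a lot: your conclusion $-\la=(2d_1,\ldots,2d_n)$ with $0\leq d_1\leq\cdots\leq d_n$ yields Propositions~\ref{prp:xinPD1}, \ref{prp:xinPD2} and~\ref{prp:xinPD3}, and hence Corollary~\ref{cor:IZie}, in a single stroke. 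Three points to make explicit in a write-up: the scalar action $D\cdot f_\nu=\boldsymbol\chi_x(\nu)f_\nu$ already follows from $f_\nu$ being a $U_{\g h,R}$-weight vector of weight $q^{-\nu}$, so the one-dimensionality of the joint weight space is not needed; the row- and column-balance of the monomials occurring in $D$ is cleanest to justify by observing that $\ad_{K_\mu}(1\otimes x)=1\otimes x$, so by \eqref{eq:phiUadYY} and Proposition~\ref{prp:xBDvsxD} the element $D$ lies in the zero-weight space of $\sPD$ for the $U_{\g h,L}\otimes U_{\g h,R}$-action, while the basis \eqref{eq:basisII} consists of weight vectors; and the vanishing $\del^{\mathbf b}\cdot f_\nu=0$ for $\mathbf b$ not supported in the first row follows from Lemma~\ref{lem:delijonarbr} applied to the rightmost factor of $\del^{\mathbf b}$ in the ordering \eqref{eq:basisII}, which necessarily has row index at least $2$.
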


\begin{proof}
\textbf{Step 1.}
Set $D:=\phi_U(x)$. By~\eqref{eq:phiUadYY} we have  $\phi_U(\ad_y(x))=(1\otimes y)\cdot D$ for $y\in U_R$. Since $\sPD$ is a locally finite $U_R$-module,  $\phi_U(\ad_{U_R}(x))$ is a finite dimensional subspace of $\sPD$. Furthermore for every $f\in \sP$, if we set 
$W_f:=\ad_{U_R}(x)\cdot f:=\{\ad_y(x)\cdot f\,:\,y\in U_R\}$,
then $
\dim W_f\leq d_\circ
$
where
$d_\circ:=\dim (\phi_U(\ad_{U_R}(x)))
$.   
Note that the  upper bound $d_\circ$ on $\dim W_f$ is independent of $f$.

\textbf{Step 2.}
Fix $\alpha_i:=\eps_i-\eps_{i+1}$
where
$1\leq i\leq n-1$. It suffices to prove that $\lag \la,\alpha_i\rag\in2\Z^{\geq 0}$ for $\la\in\cI$. 
For $r\geq 1$ we have \[
\ad_{E_i^r}K_\la=\prod_{j=0}^{r-1}\left(1-q^{\lag \la,\alpha_i\rag-2j}\right)E_i^rK_{\la}K_i^{-r}.
\]
Now take a nonzero $U_{\g h,R}$-weight vector $f\in\sP$ of weight $q^{-\gamma}$ for $\gamma:=\sum_{i=1}^n \gamma_i\eps_i$, where $(\gamma_1,\ldots,\gamma_n)$ is an $n$-tuple of non-negative integers. 
We have  
\begin{align}
\label{eq:adE--i}
\notag
\ad_{E_i^r}(x)\cdot f
&=
\left(\sum_{\la\in\cI}
c_\la \ad_{E_i^r}K_\la\right)
\cdot 
f\\
&=
q^{r\lag \gamma,\alpha_i\rag}
\left(
\sum_{\la\in\cI}
c_\la
q^{-\lag \la,\gamma\rag } 
\prod_{j=0}^{r-1}\left(1-q^{\lag \la,\alpha_i\rag-2j}\right)
\right) E_i^r\cdot f.
\end{align}

\textbf{Step 3.}
For any $\la\in\cI$, if $\lag\la,\alpha_i\rag\in2\Z^{\geq 0}$ then $\prod_{j=0}^{r-1}\left(1-q^{\lag \la,\alpha_i\rag-2j}\right)=0
$ for all sufficiently large~$r$. Thus, if we set  $\cI':=\left\{\la\in \cI\,:\,\lag\la,\alpha_i\rag\not\in 2\Z^{\geq 0}\right\}$ then there exists $r_\circ=r_\circ(\cI)$ such that for all $r\geq r_\circ$ we have 
\begin{equation}
\label{eq:clhlfh;s}
\sum_{\la\in\cI}
c_\la
q^{-\lag \la,\gamma\rag } 
\prod_{j=0}^{r-1}\left(1-q^{\lag \la,\alpha_i\rag-2j}\right)
=
\sum_{\la\in\cI'}
c_\la
q^{-\lag \la,\gamma\rag } 
\prod_{j=0}^{r-1}\left(1-q^{\lag \la,\alpha_i\rag-2j}\right).
\end{equation}
Note that the lower bound $r_\circ$ is independent of $\gamma$. 

\textbf{Step 4.}
Assume that  $\cI'\neq\varnothing$. Choose $r_\circ\in\N$ according to Step 3. Without loss of generality we can also assume that $r_\circ\geq d_\circ$.  Next choose $r\geq r_\circ$. 
After possibly scaling $x$ by a nonzero element of the polynomial ring $\C[q]$ we can assume that the $c_\la$ are nonzero elements in  $\C[q]$. Let $\la_{\max}$ denote the maximum of $\cI'$ with respect to $<$.   
Choose $\gamma$ as in Lemma~\ref{lem:10.0.1} (with $\mathcal S:=\cI'$). Since the condition on the coefficient $\gamma_i$ only depends on $\gamma_j$ for $j>i$, we can also assume that $\gamma_i-\gamma_{i+1}\geq 1$. 
For $\la\in\cI'$ let $q^{N(r,\la)}$ be the lowest power of $q$ that occurs after expanding and simplifying  
$c_\la
q^{-\lag \la,\gamma\rag } 
\prod_{j=0}^{r-1}\left(1-q^{\lag \la,\alpha_i\rag-2j}\right)
$. 
We have  \[
N(r,\la_{\max})\leq -\lag \la_{\max},\gamma\rag+\deg c_{\la_{\max}}(q),\] 
because the lowest power $q^{N(r,\la)}$ is obtained as follows:  
from each factor 
$\left(1-q^{\lag \la,\alpha_i\rag-2j}\right)$ we can choose $1$ if 
${\lag \la,\alpha_i\rag-2j}>0$ and $q^{\lag \la,\alpha_i\rag-2j}$ otherwise.
For all other $\la\in\cI'$ we have 
\[
N(r,\la)\geq -\deg c_\la(q^{-1})-\lag \la,\gamma\rag-r|\lag \la,\alpha_i\rag|-r(r-1).
\]
By the choice of $\gamma$, for 
$\la\in\cI'$ such that $\la\neq\la_{\max}$ we have $\lag\la_{\max},\gamma\rag\geq 1+\lag \la,\gamma\rag$. Thus 
\[
\lag\la_{\max},k\gamma\rag\geq k+\lag \la,k\gamma\rag
\quad\text{
for }k\in\N.
\]
Next choose $k\in\N$ such that $k\geq 2r_\circ$ and 
 \begin{equation}
 \label{eq:k>=}k\geq \max_{\la\in\cI',\la\neq\la_{\max}}\big\{\deg c_{\la_{\max}}(q)+\deg c_\la(q^{-1})+2r_\circ|\lag \la,\alpha_i\rag|+2r_\circ(2r_\circ-1)\big\}.
 \end{equation}  If we substitute 
 $\gamma$ by $k\gamma$, 
 from~\eqref{eq:k>=} we obtain
 that 
$N(r,\la_{\max})<N(r,\la)$ for all $\la\in \cI'\bls\{\la_{\max}\}$ and $r_\circ\leq  r\leq 2r_\circ$.  Together with Step 3, this proves that for the latter choice of $\gamma$ we have 
\[
\sum_{\la\in\cI}
c_\la
q^{-\lag \la,\gamma\rag } 
\prod_{j=0}^{r-1}\left(1-q^{\lag \la,\alpha_i\rag-2j}\right)\neq 0\quad\text{for }r_\circ\leq r\leq 2r_\circ,
\]
because the coefficient of $q^{N(r,\la_{\max})}$ is nonzero. 

\textbf{Step 5.}  The $\gamma$ chosen at the end of  Step 4 satisfies $\gamma_i-\gamma_{i+1}\geq 2r_\circ$, or equivalently 
$\lag -\gamma,\alpha_i\rag\leq -2r_\circ$ (because $k\geq 2r_\circ$). Choose $f\in \sP$ of $U_{\g h,R}$-weight $q^{-\gamma}$ (for example $f:=t_{1,1}^{\gamma_1}\cdots t_{1,n}^{\gamma_n}$). A standard argument based on representation theory of $U_q(\g{sl}_2)$ implies $E_i^r\cdot f\neq 0$ for $0\leq r\leq 2r_\circ$. 
Since the vectors $E_i^{s}\cdot f$ for $0\leq s\leq 2r_\circ$ have distinct $U_{\g h,R}$-weights, they are linearly independent. 
From Step 2 and Step 4 it follows that
the vectors $\ad_{E_i^{s}}(x)\cdot f$ for $r_\circ\leq s\leq 2r_\circ$ 
are also linearly independent. Consequently, $\dim W_f\geq r_\circ+1\geq d_\circ+1$.  This contradicts Step 1. 
\end{proof}

\subsection{Proof of $\la_1\leq 0$}
We begin with the following observation.
\begin{rmk}
\label{rmk:Deigven}
Let $D\in\sPD$ and let $\mathsf a:=(a_1,\ldots,a_n)$ be an $n$-tuple of non-negative integers. We use  the notation
$\del^\mathsf a:=\del_{1,1}^{a_1}\cdots \del_{1,n}^{a_n}$
and 
$t^\mathsf a:=t_{1,n}^{a_n}\cdots t_{1,1}^{a_1}$
for an $n$-tuple of integers $(a_1,\ldots,a_n)$. 
Assume that $D\cdot t^\mathsf a=c t^\mathsf a$ for some $c\in\Bbbk$. 
Recall the basis of $\sPD$ that consists of the monomials~\eqref{eq:basisII}.
We can write $D$ as $D=D_1+D_2+D_3$ where
\begin{itemize}
\item[(i)] $D_1$ is a linear combination of 
basis vectors  of the form $t^{\mathsf b'}\del^{\mathsf b'}$ where $\mathsf b'$ is an $n$-tuple of non-negative integers,  
\item[(ii)] $D_2$ is a linear combination of basis vectors  of the form $t^{\mathsf a'}\del^{\mathsf b'}$ where $\mathsf a'$ and $\mathsf b'$  are $n$-tuples of non-negative integers and $\mathsf a'\neq \mathsf b'$, and

\item[(iii)] $D_3$ is a linear combination of the remaining basis vectors in~\eqref{eq:basisII}. 
\end{itemize} 
Using Lemma~\ref{lem:delijonarbr} 
and then 
Lemma~\ref{lem:Dbta(i)-(iii)}
we obtain $D\cdot t^\mathsf a=(D_1+D_2)\cdot t^\mathsf a =D_1\cdot t^\mathsf a$. 

\end{rmk}

\begin{ex}
\label{ex:1}
Set $\la:=\eps_1+\cdots+\eps_n$ and  $x:=K_\la\in U_R$. Then $x\cdot t_{1,1}^r=q^{-r}t_{1,1}^r$ for $r\geq 1$. From  Remark~\ref{rmk:Deigven},
Lemma~\ref{lem:Dbta(i)-(iii)}
and Remark~\ref{rmk:cab}
it follows that if $\phi_U(1\otimes x)\in\Rnw$ then the eigenvalue of $t_{1,1}^r$ with respect to $\phi_U(1\otimes x)$ should be a ratio of two polynomials such as  $\phi_1(q)/\phi_2(q)$ where $\deg\phi_2$ is bounded above (independently of $r$). Thus,
$\phi_U(1\otimes x)\not\in\Rnw$ and 
in particular $\Rnw\subsetneq\Rn$. Consequently, $K_\la$ is a locally finite element of $U_R$ that does not belong to $\URw$.  \end{ex}

\begin{prp}
\label{prp:xinPD2}
Let $\cI$ be a finite subset of $\Z\eps_1+\cdots+\Z\eps_n$. Let   $x:=\sum_{\la\in\cI} c_\la K_\la\in U_{\g h,R}$ where $c_\la\in\Bbbk^\times$ for $\la\in\cI$, and  assume that $x\in\URw$. Then for all $\la:=\sum \la_i\eps_i\in\cI$ we have $\la_1\leq 0$.
\end{prp}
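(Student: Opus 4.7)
The plan is to test $D:=\phi_U(1\otimes x)\in\sPD$ against a carefully chosen sequence of monomial weight vectors $f_k\in\sP$, indexed by a positive integer $k$, and to derive a contradiction if some $\la\in\cI$ has $\la_1>0$ by comparing the $q$-valuation of the resulting eigenvalue in two different ways. This is a quantitative refinement of the obstruction already visible in Example~\ref{ex:1}: once the largest value of $\la_1$ across $\cI$ is positive, the eigenvalue necessarily contains an arbitrarily negative power of $q$, whereas the PBW expansion of $D$ given by~\eqref{eq:basisII} produces, after clearing a common denominator in $q$, only polynomials in $q$ with non-negative powers.

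Assume for contradiction that some $\la\in\cI$ has $\la_1>0$, and let $\la^*$ denote the maximum of $\cI$ with respect to $<$, so that $\la^*_1=\max\{\la_1:\la\in\cI\}\geq 1$. First I would apply Lemma~\ref{lem:10.0.1} with $\mathcal S:=\cI\cup\{0\}$ to produce positive integers $\gamma_1,\ldots,\gamma_n$ such that $\lag\la^*,\gamma\rag>\lag\mu,\gamma\rag$ for every $\mu\in(\cI\cup\{0\})\setminus\{\la^*\}$; in particular $\lag\la^*,\gamma\rag>0$. I then set $f_k:=t_{1,n}^{k\gamma_n}\cdots t_{1,1}^{k\gamma_1}\in\sP$. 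By Remark~\ref{rmk:actionformulas}, $f_k$ is a $U_{\g h,R}$-weight vector of weight $q^{-k\gamma}$, hence
\[
D\cdot f_k=E(k)\,f_k,\qquad E(k):=\sum_{\la\in\cI}c_\la\,q^{-k\lag\la,\gamma\rag}.
\]
On the $\sPD$ side, Remark~\ref{rmk:Deigven} reduces the computation to the subsum $D_1=\sum_{\mathsf b'}f_{\mathsf b'}(q)\,t^{\mathsf b'}\del^{\mathsf b'}$ of $D$ involving only the generators $t_{1,j}$ and $\del_{1,j}$ with matched multi-indices, and Lemma~\ref{lem:Dbta(i)-(iii)}(ii) together with Remark~\ref{rmk:cab} rewrites $E(k)=\sum_{\mathsf b'}f_{\mathsf b'}(q)\,g_k(\mathsf b')$, where each $g_k(\mathsf b')\in\Z[q]$ has non-negative integer coefficients and only non-negative powers of $q$. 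After multiplying by a common denominator $d(q)\in\C[q]$ so that $P_\la:=d(q)c_\la$ and $p_{\mathsf b'}:=d(q)f_{\mathsf b'}(q)$ all lie in $\C[q]$, I arrive at the identity
\[
\sum_{\la\in\cI}P_\la(q)\,q^{-k\lag\la,\gamma\rag}=\sum_{\mathsf b'}p_{\mathsf b'}(q)\,g_k(\mathsf b'),
\]
whose right-hand side belongs to $\C[q]$ uniformly in $k\geq 1$.

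The contradiction then follows from tracking the $q$-valuation of the left-hand side. Since $c_{\la^*}\neq 0$, the polynomial $P_{\la^*}$ is nonzero, and its lowest-degree monomial $q^{m_{\la^*}}$ (with $m_{\la^*}\geq 0$) is shifted to $q^{m_{\la^*}-k\lag\la^*,\gamma\rag}$. Once $k$ is large enough that $k\big(\lag\la^*,\gamma\rag-\lag\mu,\gamma\rag\big)$ exceeds $\deg P_{\la^*}$ for every $\mu\in\cI\setminus\{\la^*\}$ -- which is possible because the gaps are positive integers -- this monomial cannot be cancelled by any other summand on the left. Hence the minimal $q$-exponent of the left-hand side equals $m_{\la^*}-k\lag\la^*,\gamma\rag$, which tends to $-\infty$ as $k\to\infty$ because $\lag\la^*,\gamma\rag>0$; this contradicts the $\C[q]$-membership of the right-hand side. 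Thus $\la^*_1\leq 0$ and therefore $\la_1\leq 0$ for every $\la\in\cI$. The delicate point is the choice of $\gamma$: both the strict separation of the pairings $\lag\la,\gamma\rag$ and the strict positivity of $\lag\la^*,\gamma\rag$ must hold simultaneously, which is precisely why one enlarges $\cI$ to $\cI\cup\{0\}$ before applying Lemma~\ref{lem:10.0.1}.
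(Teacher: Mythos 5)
Your proposal is correct and follows essentially the same route as the paper's proof: test $D$ against the weight vectors $t^{k\gamma}$, reduce to the diagonal part $D_1$ via Remark~\ref{rmk:Deigven}, compute its eigenvalue through Lemma~\ref{lem:Dbta(i)-(iii)} and Remark~\ref{rmk:cab}, and derive a contradiction by letting the $q$-valuation of $\sum_\la c_\la q^{-k\lag\la,\gamma\rag}$ tend to $-\infty$ while the other side stays bounded below. The only (harmless) variations are cosmetic: you secure $\lag\la^*,\gamma\rag>0$ by adjoining $0$ to the set fed into Lemma~\ref{lem:10.0.1} rather than by enlarging $\gamma_1$, and you clear denominators instead of rescaling $x$.
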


\begin{proof}
Set $D:=\phi_U(x)$, so that $D\in \sPD$. 
Write $D=D_1+D_2+D_3$ as in Remark~\ref{rmk:Deigven}
 and suppose that $D_1=\sum_{\mathsf a\in\mathcal Z}\mathbf z_{\mathsf a}t^\mathsf a\del^\mathsf a$ where $\mathcal Z$ is a finite set of $n$-tuples of non-negative integers and the $\mathbf z_{\mathsf a}\in\Bbbk^\times $.
After scaling $x$ by a nonzero element of $\C[q]$ if necessary, we can assume that the $c_\la$ and the $\mathbf z_{\mathsf a}$ are nonzero polynomials in $q$.
Recall that 
$t^\gamma:=t_{1,n}^{\gamma_n}\cdots t_{1,1}^{\gamma_1}$
for
 $\gamma:=\sum_{i=1}^n\gamma_i\eps_i$ in $\Z\eps_1+\cdots+\Z\eps_n$. 
Then \[D\cdot t^\gamma=\sum_{\la\in\cI} c_\la q^{-\lag \la,\gamma\rag}t^\gamma.
\]   Also, 
 by Lemma~\ref{lem:Dbta(i)-(iii)} 
 and Remark~\ref{rmk:Deigven} we obtain $D\cdot t^\gamma=D_1\cdot t^\gamma=\sum_{\mathsf a\in\mathcal Z}\mathbf z_{\mathsf a}(q)\phi_{\mathsf a}(q^2)t^\gamma$, where the $\phi_\mathsf a$ are polynomials in $q$ with integer coefficients. Note that the $\mathbf z_{\mathsf a}$ are  independent of $\gamma$, but the $\phi_\mathsf a$ can depend on $\gamma$. 

Set $\tilde\la:=\la_{\max}$ where $\la_{\max}$ is the maximum of $\cI$ according to the total order introduced in Subsection~\ref{subsec:partial-ord}. 
By Lemma~\ref{lem:10.0.1} we can choose $\gamma$ such that 
we have $\lag \tilde\la,\gamma\rag>\lag\mu,\gamma\rag$ for all $\mu\in\cI\bls\{\tilde\la\}$. If the assertion of the proposition is not true, then $\tilde\la_1>0$ and thus  by choosing $\gamma_1$ sufficiently large we can also assume that $\lag \tilde\la,\gamma\rag\geq 1$. 
Thus, for all sufficiently large $k\in\N$ the lowest power of $q$ that occurs in $\sum_{\la\in\cI} c_\la q^{-\lag \la,k\gamma\rag}$ is from the summand $c_{\tilde\la}q^{-\lag \tilde\la,k\gamma\rag}$, and is equal to $d-k\lag \tilde\la,\gamma\rag$, where $d$ is the lowest power of $q$ that occurs in $c_{\tilde\la}$.
By comparing with $\sum_{\mathsf a}\mathbf z_{\mathsf a}(q)\phi_{\mathsf a}(q^2)$ it follows that \[
d-k\lag \tilde\la,\gamma\rag\geq 
\min_{\mathsf a\in\mathcal Z}\left\{-\deg \mathbf z_\mathsf a(q^{-1})\right\}.
\] The right hand side is independent  of $k$ and $\gamma$. However, this is a contradiction since  $k$ can be chosen arbitrarily large
and $\lag \tilde\la,\gamma\rag\geq 1$. \end{proof}

\subsection{Proof of $\la_1\in 2\Z^{\leq 0}$}
\label{subsec:10-4}
In this subsection we strengthen Proposition~\ref{prp:xinPD2}, as follows.

\begin{prp}
\label{prp:xinPD3}
Let $\cI$ be a finite subset of $\Z\eps_1+\cdots+\Z\eps_n$. Let   $x:=\sum_{\la\in\cI} c_\la K_\la\in U_{\g h,R}$ where $c_\la\in\Bbbk^\times$ for $\la\in\cI$, and  assume that $x\in\URw$. Then for every $\la:=\sum \la_i\eps_i\in\cI$ we have $\la_1\in 2\Z^{\leq 0}$.
\end{prp}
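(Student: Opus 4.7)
The strategy follows the pattern of the proof of Proposition~\ref{prp:xinPD2}: I would compute $D\cdot t^\gamma$ in two ways for suitably chosen $\gamma$, and then extract the parity of $\la_1$ from the resulting identity using a Vandermonde-type linear independence argument.

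Set $D := \phi_U(x) \in \sPD$ and decompose $D = D_1 + D_2 + D_3$ in the basis~\eqref{eq:basisII} exactly as in Remark~\ref{rmk:Deigven}, writing $D_1 = \sum_{\mathsf a \in \mathcal Z} \mathbf z_{\mathsf a}(q)\, t^{\mathsf a}\del^{\mathsf a}$ where $\mathcal Z$ is a finite set of $n$-tuples $\mathsf a = (a_1,\ldots,a_n)$ and $\mathbf z_{\mathsf a}(q)\in\Bbbk^\times$. For every $\gamma = (\gamma_1,\ldots,\gamma_n) \in (\Z^{\geq 0})^n$ we have $D \cdot t^\gamma = D_1 \cdot t^\gamma = e(\gamma)\, t^\gamma$ with $e(\gamma) := \sum_{\la \in \cI} c_\la\, q^{-\lag \la, \gamma\rag}$ (Remark~\ref{rmk:Deigven}). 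Formula~\eqref{eq:dlkjf's||} of Lemma~\ref{lem:Dbta(i)-(iii)}(ii), extended to all $\gamma$ via the convention of Remark~\ref{rmk:cab}, gives
\[
e(\gamma) \;=\; \sum_{\mathsf a \in \mathcal Z} \mathbf z_{\mathsf a}(q)\; q^{\sum_{i \geq 2}(2\gamma_i - 2a_i)(a_1 + \cdots + a_{i-1})} \prod_{i=1}^n \mathbf c(\gamma_i - 1,\, a_i - 1).
\]
The crucial observation is that each $\mathbf c(\gamma_i - 1, a_i - 1)$ is a polynomial in $q^{2\gamma_i}$ of degree $a_i$ (by Remark~\ref{rmk:cab}), while the $\gamma$-dependent part of the $q$-prefactor equals the monomial $\prod_{i \geq 2}(q^{2\gamma_i})^{a_1 + \cdots + a_{i-1}}$ with non-negative integer exponents. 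Hence there is a polynomial $P \in \Bbbk[X_1, \ldots, X_n]$ with $e(\gamma) = P(q^{2\gamma_1},\ldots,q^{2\gamma_n})$ for every $\gamma \in (\Z^{\geq 0})^n$.

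Next I would choose an $n$-tuple of positive integers $\gamma_0 = (g_1,\ldots,g_n)$ with the genericity property that $\la \mapsto \lag \la, \gamma_0\rag$ is injective on $\cI$; such a $\gamma_0$ exists because the finitely many rational hyperplanes $\lag \la - \la', \gamma_0\rag = 0$ (for distinct $\la, \la' \in \cI$) do not cover $(\Z^{\geq 1})^n$. Specialising $\gamma = N\gamma_0$ for $N \in \Z^{\geq 0}$ and setting $y := q^N$, the identity becomes
\[
\sum_{\la \in \cI} c_\la(q)\, y^{-\lag \la, \gamma_0\rag} \;=\; P(y^{2g_1},\ldots,y^{2g_n}),
\]
valid for all $N \in \Z^{\geq 0}$. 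The right-hand side is a $\Bbbk$-linear combination of $y^{2(k_1 g_1 + \cdots + k_n g_n)}$ with $k_i \in \Z^{\geq 0}$, hence involves only non-negative even powers of $y$; the left-hand side is a $\Bbbk$-linear combination of monomials $y^{-\lag\la,\gamma_0\rag}$ with pairwise distinct integer exponents (by injectivity) and nonzero coefficients $c_\la \in \Bbbk^\times$. Since $q$ is transcendental over $\C$, the geometric sequences $\{(q^{-\lag\la,\gamma_0\rag})^N\}_{N \geq 0}$ for distinct $\la$'s and $\{(q^{2m})^N\}_{N\geq 0}$ for distinct $m \in \Z^{\geq 0}$ are all linearly independent over $\Bbbk$; comparing the two sides forces $\lag \la, \gamma_0\rag \in 2\Z^{\leq 0}$ for every $\la \in \cI$. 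Repeating the argument with $\gamma_0$ replaced by $\gamma_0 + \eps_1 = (g_1 + 1, g_2, \ldots, g_n)$---which still separates $\cI$ provided $\gamma_0$ was chosen generic enough---and subtracting the two parity constraints yields $\la_1 \in 2\Z$ for every $\la \in \cI$. Combined with Proposition~\ref{prp:xinPD2}, this gives $\la_1 \in 2\Z^{\leq 0}$.

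The principal technical point is the polynomial-form observation in the second paragraph: one must verify that the $\gamma$-dependent part of the $q$-prefactor in~\eqref{eq:dlkjf's||} contributes only non-negative integer powers of each $q^{2\gamma_i}$, so that $e(\gamma)$ is genuinely polynomial (not merely Laurent polynomial) in the $q^{2\gamma_i}$'s. This hinges on the non-negativity of the partial sums $a_1 + \cdots + a_{i-1}$, and it is exactly what restricts the right-hand exponents to $2\Z^{\geq 0}$ rather than $2\Z$. Once this polynomial form is in place, the rest of the argument is a direct application of Vandermonde-type linear independence together with the two-choice variation of $\gamma_0$.
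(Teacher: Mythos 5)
Your proof is correct, and while it shares its setup with the paper's argument, the endgame is genuinely different. Both proofs start from the decomposition $D=D_1+D_2+D_3$ of Remark~\ref{rmk:Deigven} and the eigenvalue formula~\eqref{eq:dlkjf's||}; but the paper first reduces (via Propositions~\ref{prp:xinPD1} and~\ref{prp:xinPD2} and the $K_{\la_{R,b}}$) to the case where every $\la_1$ is odd and negative, and then derives a contradiction by tracking only the \emph{top} power of $q$ in the eigenvalue of $t^{k\gamma}$: on the Cartan side its degree $d-k\lag\tilde\la,\gamma\rag$ changes parity with $k$, while on the operator side it has the form $d'+2\deg\phi_{\mathsf b_{\max}}$, of fixed parity. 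You instead package the entire operator-side eigenvalue as $P(q^{2\gamma_1},\ldots,q^{2\gamma_n})$ for a single $P\in\Bbbk[X_1,\ldots,X_n]$ and match \emph{all} exponents against $\sum_\la c_\la q^{-\lag\la,\gamma\rag}$ by linear independence of geometric sequences, then vary $\gamma_0$ by $\eps_1$. Your route avoids the paper's delicate choices of $\gamma$ and $k$ and the degree estimates of its Steps 3--4, does not need the Step-1 reduction (so Proposition~\ref{prp:xinPD1} is not used), and in fact proves the stronger statement $\lag\la,\gamma_0\rag\in2\Z^{\leq 0}$ for all sufficiently generic $\gamma_0$; the price is the uniform-polynomiality claim, which you correctly isolate as the technical crux. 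For the record, that claim does hold on all of $(\Z^{\geq 0})^n$, not just where $\gamma_i\geq b_i$: writing $\mathbf c(\gamma_i-1,b_i-1)=\prod_{j=1}^{b_i}\frac{q^{2\gamma_i}q^{2-2j}-1}{q^2-1}$, the factor with $j=\gamma_i+1$ vanishes when $0\leq\gamma_i<b_i$, matching Lemma~\ref{lem:Dbta(i)-(iii)}(i), so a single polynomial $P$ independent of $\gamma$ really does interpolate the eigenvalue everywhere. You should also state explicitly that $\gamma_0$ is chosen so that both $\gamma_0$ and $\gamma_0+\eps_1$ separate $\cI$ (a finite union of hyperplanes to avoid), as you indicate parenthetically.
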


\begin{proof}
We assume that the assertion is false, and arrive at a contradiction.

\textbf{Step 1.}
Recall from Proposition~\ref{prp:xm-yn-EF} that the  $K_{\la_{R,b}}$  for $1\leq b\leq n$ are contained in $\URw$. The $K_\la\in U_{\g h,R}$ satisfying $\la_i-\la_{i+1}\in2\Z^{\geq 0}$ for $1\leq i\leq n-1$ and $\la_1\in2\Z^{\leq 0}$ can be expressed as products of the $K_{\la_{R,b}}$. Thus by Proposition~\ref{prp:xinPD1}
 and Proposition~\ref{prp:xinPD2}  we can assume that $\la_1\in\{-1,-3,-5,\ldots\}$  for all 
$\la\in\cI$.

\textbf{Step 2.} 
Set $D:=\phi_U(x)$ so that $D\in \sPD$. 
Write $D$ as $D=D_1+D_2+D_3$ according to Remark~\ref{rmk:Deigven}. 
Suppose that 
$D_1=\sum_{\mathsf b\in\mathcal Z}
\mathbf z_{\mathsf b}
t^\mathsf b\del^\mathsf b$, where $\mathcal Z$ is a finite set of $n$-tuples of non-negative integers and the $\mathbf z_{\mathsf b}\in\Bbbk^\times$. 
After scaling $x$ by a nonzero element of $\C[q]$  we can assume that the $c_\la$ and the $\mathbf z_{\mathsf b}$ are nonzero elements of $\C[q]$.
 We keep using the notation $t^\gamma$ for $\gamma:=\sum_{i=1}^n\gamma_i\eps_i$ from the proof of Proposition~\ref{prp:xinPD2}.
Then $D\cdot t^\gamma=\sum_{\la\in\cI} c_\la q^{-\lag \la,\gamma\rag}t^\gamma$. 
From Lemma~\ref{lem:Dbta(i)-(iii)}(ii) and Remark~\ref{rmk:cab}
it follows that $t^\mathsf b\del^{\mathsf b}\cdot t^\gamma=\phi_\mathsf b(q^2)t^\gamma$  
where $\phi_\mathsf b\in\C[q]$ and 
\begin{equation}
\label{eq:degdphib}
\deg\phi_\mathsf b=
\sum_{i=1}^n\gamma_i(b_1+\cdots+b_i)-\sum_{i=1}^n
\left(b_i(b_1+\cdots +b_i)
-
\frac{b_i(b_i-1)}{2}
\right).\end{equation}
For $\mathsf b\in\mathcal Z$ define $\la_\mathsf b\in\Z\eps_1+\cdots+\Z\eps_n$ by \[
\la_\mathsf b:=b_1\eps_1+(b_1+b_2)\eps_2+\cdots+(b_1+\cdots+b_n)\eps_n.
\]
 By~\eqref{eq:degdphib} we have
$
\deg\phi_{\mathsf b}=\lag \la_{\mathsf b},\gamma\rag+C(\mathsf b)$, 
where $C(\mathsf b)$ is independent of $\gamma$. 

\textbf{Step 3.}
Let $\tilde\la\in\cI$ be such that $-\tilde\la$ is the maximum of $-\cI:=\{-\la\,:\,\la\in\cI\}$ 
with respect  to the total order 
$<$ of  Subsection~\ref{subsec:partial-ord}. 
Using Lemma~\ref{lem:10.0.1} for $-\cI$, we can choose $\gamma$ such that 
$-\lag \tilde\la,\gamma\rag>-\lag \mu,\gamma\rag$ for all $\mu\in\cI\bls \{\tilde\la\}$.
Since $\tilde\la_1\in\{-1,-3,-5,\ldots\}$, by choosing the parity of $\gamma_1$ suitably  we can also assume that  $-\lag \tilde\la,\gamma\rag$ is an odd integer. Then for $k\in\N$ sufficiently large, the highest  power of $q$ that occurs in $\sum_{\la\in\cI} c_\la q^{-\lag \la,k\gamma\rag}$ is from
the summand $c_{\tilde\la} q^{-\lag\tilde \la,k\gamma\rag}$, and is  equal to $d-k\lag \tilde\la,\gamma\rag$, where $d:=\deg c_{\tilde\la}$. 

\textbf{Step 4.} 
Let $\mathsf b_{\max}\in\mathcal Z$  be such that $\la_{\mathsf b_{\max}}=\max\{\la_\mathsf b\,:\,\mathsf b\in\mathcal Z\}$, where the maximum is taken with respect to $<$. Note that the map $\mathsf b\mapsto\la_{\mathsf b}$ is an injection.
From~\eqref{eq:degdphib} and 
Lemma~\ref{lem:10.0.1} applied to the set $\{\la_\mathsf b\,:\,\mathsf b\in \mathcal Z\}$ it follows that we can choose $\gamma$ and $k$ in Step 3 such that the following additional property holds: the highest power of $q$ that occurs in 
$\sum_{\mathsf b\in\mathcal Z}\mathbf z_\mathsf b(q)
\phi_{\mathsf b}(q^2)$ is from the summand $\mathbf z_{\mathsf b_{\max}}(q)\phi_{\mathsf b_{\max}}(q^2)$, and is equal to $d'+2\deg\phi_{\mathsf b_{\max}}$, where $d':=\deg\mathbf z_{\mathsf b_{\max}}$. Note that the values $\deg \phi_{\mathsf b}$ depend on $\gamma$ and $k$, but the values $\deg \mathsf z_{\mathsf b}$ only depend on $x$ and in particular they are independent of the choices of $\gamma$ and $k$.

\textbf{Step 5.} Recall that $t^{k\gamma}$ is an eigenvector  of $D$, hence $D\cdot t^{k\gamma}=D_1\cdot t^{k\gamma}=\sum _{\mathsf b\in\mathcal Z}\mathbf z_{\mathsf b}(q)\phi_\mathsf b(q^2)t^{k\gamma}$ by Remark~\ref{rmk:Deigven}.
By comparing the highest power of $q$ in the eigenvalue of $t^{k\gamma}$ from  Step 3 and Step 4 it follows that
\begin{equation}
\label{eq:parritty}
d'+2\deg\phi_{\mathsf b_{\max}}
=
d-k\lag \tilde \la,\gamma\rag.
\end{equation}
Since $d'$ is independent of $\gamma$ and $k$, the parity of the left hand side of~\eqref{eq:parritty} does not change by varying $k$ and $\gamma$. However, recall that $\lag \tilde\la,\gamma\rag$ is an odd integer and the only constraint on $k$ is that it should be sufficiently large. Thus,  we can choose $k$ such that the parities of the two sides of~\eqref{eq:parritty} are different. This is a contradiction. 
\end{proof}

\subsection{Completing the proof of Theorem~\ref{thm:MainthmB}(i)}
Theorem~\ref{thm:MainthmB}(i) is an immediate consequence of the following corollary and Proposition~\ref{prp:phiUxaphiUyb}.

\begin{cor}
\label{cor:IZie}
Let $\cI$ be a finite subset of $\Z\eps_1+\cdots+\Z\eps_n$. Let $x:=\sum_{\la\in\cI}c_\la K_\la\in U_{R,\g h}$ where $c_\la\in\Bbbk^\times$ for $\la\in\cI$, and assume that $x\in \URw$. Then $x$ belongs to the subalgebra of $U_{\g h,R}$ that is generated by the $K_{\la_{R,b}}$ for  $1\leq b\leq n$. 
\end{cor}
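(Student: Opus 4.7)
The plan is essentially bookkeeping: Propositions~\ref{prp:xinPD1},~\ref{prp:xinPD2}, and~\ref{prp:xinPD3} have already produced strong constraints on each weight $\la\in\cI$, and combining them one sees immediately that $K_\la$ lies in the subalgebra generated by the $K_{\la_{R,b}}$.

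Fix $\la=\sum_{i=1}^n\la_i\eps_i\in\cI$. First I would apply Proposition~\ref{prp:xinPD1} to obtain $\la_i-\la_{i+1}\in 2\Z^{\geq 0}$ for $1\leq i\leq n-1$, so the coordinates of $\la$ form a weakly decreasing sequence whose successive differences are all even. Then I would invoke Proposition~\ref{prp:xinPD3} to pin down the additional condition $\la_1\in 2\Z^{\leq 0}$. Combined with the monotonicity, this forces every $\la_i$ to be an even integer with $\la_i\leq \la_1\leq 0$.

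With the convention $\la_0:=0$ I would set $n_b:=(\la_{b-1}-\la_b)/2$ for $1\leq b\leq n$; the parity and monotonicity conditions derived above ensure that each $n_b$ is a non-negative integer, and a direct computation using $\la_{R,b}=-2\sum_{i=b}^n\eps_i$ yields $\la=\sum_{b=1}^n n_b\la_{R,b}$. Consequently
\[
K_\la=\prod_{b=1}^n K_{\la_{R,b}}^{n_b},
\]
so $K_\la$ lies in the subalgebra of $U_{\g h,R}$ generated by $K_{\la_{R,1}},\ldots,K_{\la_{R,n}}$. Summing over $\la\in\cI$ gives the same conclusion for $x$.

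I do not anticipate any real obstacle: the entire analytic content of the statement is absorbed into the three propositions cited above, and what remains is just the observation that the parity and monotonicity conditions they impose on each $\la\in\cI$ are exactly what is needed to express $\la$ as a \emph{non-negative} integer combination of the generators $\la_{R,b}$.
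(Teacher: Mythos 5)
Your proposal is correct and follows the same route as the paper, which simply states that the corollary follows immediately from Propositions~\ref{prp:xinPD1} and~\ref{prp:xinPD3}; you have just written out the (correct) bookkeeping that the paper leaves implicit, namely that $n_b:=(\la_{b-1}-\la_b)/2$ are non-negative integers and $\la=\sum_b n_b\la_{R,b}$. Note that Proposition~\ref{prp:xinPD2} is not actually needed, since Proposition~\ref{prp:xinPD3} already gives $\la_1\in2\Z^{\leq 0}$.
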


\begin{proof}
Follows immediately from Proposition~\ref{prp:xinPD1}
and  
Proposition~\ref{prp:xinPD3}.
\end{proof}

Corollary~\ref{cor:IZie}
implies that
$\left\{K_{\la_{R,b}}\right\}_{b=1}^n$ is a generating set of the algebra  $\URw$. 
An analogous statement holds for $U_L$. That is,
$\left\{K_{\la_{L,a}} \right\}_{a=1}^m$
is a generating set of  the algebra $\ULw$.

\section{Appendix: commonly used notation}
\label{appendix}

In this section we list the commonly used symbols and notation along with the subsection in which each item is defined.

\medskip

\noindent\textbf{Introduction:} $\Bbbk$,  $U_L$, $U_R$, $U_{LR}$, $\Lm$, $\Rn$, $\Lmw$, $\Rnw$, $\mathsf L_{i,j}$, $\mathsf R_{i,j}$, $\mathcal Y^\mathcal Z$, $\Emb_{a\times b}^{m\times n}$,
$\tilde L_{i,j}$, $M^{\mathbf i}_{\mathbf j}$,
$\oline M^{\mathbf i}_{\mathbf j}$, $\mathbf D(r,a,b)$, $\mathbf D_{k,r}$, $\mathbf D'_{k,r}$, 
$\mathbf R_a$, $\mathbf L_b$, $\mathscr L_{\g h}$, $\mathscr R_{\g h}$, $\Lhb$, $\Rhb$.

\medskip
\noindent\textbf{Subsection~\ref{subsec:lofin}:}
$\ad_y(x)$, $\mathscr F(H,I)$, $\mathscr F(H)$.

\medskip
\noindent\textbf{Subsection~\ref{subsec:matrx}:}
$V^*$, $\lag v^*,v\rag$, $\sfm_{v^*,v}$, $\Delta^\circ$, $H^\circ$.

\medskip
\noindent\textbf{Subsection~\ref{subsec-H-inv}:}
$V_{(\epsilon)}$.

\medskip
\noindent\textbf{Subsection~\ref{subsec:twisted}:} $\check R_{V,W}$, $A\otimes_{\check R}B$, $A\otimes_{\check R,\psi} B$.

\medskip
\noindent\textbf{Subsection~\ref{subsec:prdcts}:}
$R_{V,W}$, $\omega_{V,W}$, $\oline\omega_{V,W}$.

\medskip
\noindent\textbf{Subsection~\ref{subsec::quas}:}
$H^\circ_\cC$, $\lag f\otimes g,R\rag$.

\medskip
\noindent\textbf{Subsection~\ref{subsec:Uqglndf}:} $U_q(\gl_n)$, $\eps_i$, 
$\llbracket a,b\rrbracket$, 
$K_{\eps_i}$, $K_i$, $K_\la$, $E_i$, $F_i$, $U_{\g h,L}$, $U_{\g h,R}$.

\medskip
\noindent\textbf{Subsection~\ref{subsec:R-matUq}:}
$\cC^{(n)}$, $\EuScript R^{(n)}$, $\underline{\EuScript R}^{(n)}$, $\mathrm{Exp}_q$, $\lag\mu,\nu\rag$, 
$\check{\EuScript R}^{(n)}$, $\underline {\check{ \EuScript R}}^{(n)}$.

\medskip
\noindent\textbf{Subsection~\ref{subsec:naturalUqcirc}:}
  $x^\natural$.

\medskip
\noindent\textbf{Subsection~\ref{subsec:PnnnDnan}:} $\Delta_n^+$, $\mathsf E_{i,j}$, $\sP_{n\times n}$, $\sD_{n\times n}$, $t_{i,j}$, $\del_{i,j}$, $t_{i,j}^\natural$, $\del_{i,j}^\natural$, 
$\cL_\sP$, $\cR_\sP$, $\cL_\sD$, $\cR_\sD$, $\Delta_\sP$, $\Delta_\sD$, $\xi_c$, $\Xi$, $\iota$, $\underline\iota$.

\medskip
\noindent\textbf{Subsection~\ref{subsec:Now3.5}:}
 $\sP$,  $\sP_{m\times n}$, $\sD$, $\sD_{m\times n}$, $\phi_U$.
 
\medskip
\noindent\textbf{Subsection~\ref{subsec:ULR-mdec}:}
$\ell(\la)$, $\sP^{(d)}$, $\sD^{(d)}$, $V_\la$, $\Lambda_{d,r}$, $|\la|$.

\medskip
\noindent\textbf{Subsection~\ref{subsec:sPDgr}:}
$\cC_L$, $\cC_R$, 
$\EuScript R_L$, $\underline{\EuScript R}_L$, $\EuScript R_R$, $\underline{\EuScript R}_R$, 
$\check{\EuScript R}_L$, $\underline{\check{\EuScript R}}_L$, $\check{\EuScript R}_R$, $\underline{\check{\EuScript R}}_R$,
$\EuScript R_{LR}$, $\underline{\EuScript R}_{LR}$,
$\check{\EuScript R}_{LR}$, $\underline{\check{\EuScript R}}_{LR}$,
$\sPD^\mathrm{gr}$, $\sPD$. 

\medskip
\noindent\textbf{Subsection~\ref{subsec:Now3.9}:}
$\mathscr A_{k,l,n}$, $\mathscr A_{k,l,n}^\mathrm{gr}$,  $\boldsymbol\kappa_{r,n}$, $(\Emb^\mathrm{gr})_{m'\times n'}^{m\times n}$. 

\medskip
\noindent\textbf{Subsection~\ref{subsec:sPDgr6}:}
$\phi_{PD}$, $X\cdot f$.

\medskip
\noindent\textbf{Subsection~\ref{subsec:now3.11}:} $\sP^{(\leq k)}$, 
$\prec$, 
$\mathbf c(a)$.

\medskip
\noindent\textbf{Subsection~\ref{subsec:Pkln}:}
$\mathsf P$, $\mathsf P_{k,l,n}$, 
$\sPD^{\mathrm{gr},(r,s)}$, $\mathscr A_{k,l,n}^{\mathrm{gr},(r,s)}$,

\medskip
\noindent\textbf{Subsection~\ref{subsec:Pi}:}
$\ULw$, $\URw$, $\ULRw$.

\medskip
\noindent\textbf{Subsection~\ref{subsec:relthms}:} $\eta_{m,n}$.

\medskip
\noindent\textbf{Subsection~\ref{subsec:sometech}:} $\lhd$, 
$\mathbf c(a,b)$, $t^\mathsf a$, $\del^\mathsf a$.

\medskip
\noindent\textbf{Subsection~\ref{subsec:CEin}:}
$\la_{L,a}$, $\la_{R,b}$, $K_{\la_{L,a}}$, $K_{\la_{R,b}}$

\medskip
\noindent\textbf{Subsection~\ref{subsec:Now6.1}:}
$\sPD_{(\epsilon_L)}$,
$\sPD_{(\epsilon_R)}$, 
 $\epsilon_L$, $\epsilon_R$, 
 $\left(\mathscr A_{k,l,n}\right)_{(\epsilon_R)}$,  
$\left(\mathscr A^\mathrm{gr}_{k,l,n}\right)_{(\epsilon_R)}$,
$\left(\mathscr A_{k,l,n}^{\mathrm{gr},(r,s)}\right)_{(\epsilon_R)}
$,
 $\sPD^\mathscr L$, $\sPD^\mathscr R$,  
 $\sPD^{\Lmw}$, $\sPD^{\Rnw}$, $\tilde{\mathsf L}_{i,j}^\mathrm{gr}$.
 
\medskip
\noindent\textbf{Subsection~\ref{subsec:Now7.1}:}
$\Gamma_n$, $\Gamma_{k,l,n}$. 

\medskip
\noindent\textbf{Subsection~\ref{subsec:newprodt}:} $u\star_n v$, $u\star_{k,l,n} v$.

\end{document}